\documentclass{JTPaper}

\title{A General Connected Sum Formula for the Families Bauer-Furuta Invariant}
\author{Joshua Tomlin}
\address{School of Mathematical Sciences, University of Adelaide, Adelaide SA 5005, Australia}
\email{joshua.tomlin@adelaide.edu.au} 

\begin{document}

\pagenumbering{arabic}
\maketitle

\begin{abstract}
    The Bauer-Furuta invariant of a family of smooth 4-manifolds is a stable cohomotopy refinement of the families Seiberg-Witten invariant and is constructed from a finite dimensional approximation of the Seiberg-Witten monopole map. We prove a general formula for the families Bauer-Furuta invariant of a fibrewise connected sum, extending Bauer's non-parameterised formula \cite{BF2}. In a subsequent paper \cite{Part2}, we will use this formula to derive a general connected sum formula for the families Seiberg-Witten invariant which incorporates both the families blow-up formula of Liu \cite{FamiliesBlowupFormula} and the gluing formula of Baraglia-Konno \cite{BaragliaSWGluingFormula}.
\end{abstract}

    \section{Introduction}

The Bauer-Furuta invariant \cite{BF1} of a 4-manifold is a stable cohomotopy refinement of its integer valued Seiberg-Witten invariant. Specifically, it is the equivariant stable cohomotopy class of a finite dimensional approximation of the Seiberg-Witten monopole map. This approach takes a new perspective of studying the monopole map, rather than its moduli space of solutions. It is possible to recover the Seiberg-Witten invariant from the Bauer-Furuta invariant, hence techniques from algebraic topology can be used to circumvent laborious analytical arguments.

In subsequent work, Bauer derived a formula \cite{BF2} for the Bauer-Furuta invariant of a connected sum of 4-manifolds. His idea was to analyse behaviour of monopoles on a 4-manifold with an $n$-component separating neck $N(L) = \coprod_n S^3 \times [-L,L]$ of varying length $2L$. He showed that given a 4-manifold with a separating neck, ends of the necks can be permuted without changing the Bauer-Furuta class of the monopole map. The key insight was that monopoles decay exponentially towards the middle of the neck, hence stretching the neck could be used to control the dynamics in the middle. 

Since Donaldson's suggestion in 1996 \cite{DonaldsonResearchReport}, there has been much interest in studying the Seiberg-Witten equations of 4-manifold families. Several authors including Li-Liu, Nakamura and Ruberman have generalised Seiberg-Witten theory to the families setting \cite{FamiliesSWFirstDef, NakamuraFSW, RubermanFSW}. This body of work involves wall crossing formulas, non-existence of positive scalar curvature metrics, and a particularly noteworthy families blow-up formula \cite{FamiliesBlowupFormula} due to Liu. One striking application of families Seiberg-Witten theory applied to mapping tori is the construction of 4-manifolds with diffeomorphisms that are continuously isotopic to the identity, but not smoothly isotopic \cite{SmoothIsotopyObstruction}.

Since 2019, Baraglia has contributed to the theory of families Seiberg-Witten invariants in several papers \cite{BaragliaObstruction, BaragliaSWGluingFormula, BaragliaConstraints}. In \cite{BaragliaSWGluingFormula}, Baraglia-Konno proved a connected sum formula for the families Seiberg-Witten invariant under some restrictive assumptions. These assumptions simplified the moduli space of one of the summands and avoided cases involving chambers. The overarching goal of this paper and upcoming work \cite{Part2} is to derive a completely general connected sum formula for families Seiberg-Witten invariants extending both Baraglia-Konno's formula and Liu's families blow-up formula. 

This is accomplished by first proving a similar result for the families Bauer-Furuta invariant. Szymik illustrated in \cite{BFExtFamilies} that the Bauer-Furuta invariant naturally extends to the families setting. In this paper, we prove the following families connected sum formula, generalising Bauer's formula for the unparameterised case.
\begin{theorem}\label{T:FamBFCSF}
    For $j \in \{1,2\}$, let $E_j \to B$ be a smooth family of closed, oriented 4-manifolds equipped with a \spinc structure $\sfrak_j$ on the vertical tangent bundle. Assume a section $i_j : B \to E_j$ exists with normal bundle $V_j$ and suppose that $\vphi : V_1 \to V_2$ is an orientation reversing isomorphism satisfying 
    \begin{align*}
        \vphi(i_1^*(\sfrak_{E_1})) \cong i_2^*(\sfrak_{E_2}).
    \end{align*} 
    Then the families Bauer-Furuta class of the fiberwise connected sum $E = E_1 \#_B E_2$ is
    \begin{align}
        [\mu_{E}] &= [\mu_{E_1}] \wedge_\Jcal [\mu_{E_2}].
    \end{align}
\end{theorem}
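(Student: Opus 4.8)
The strategy is to run Bauer's neck-stretching argument \cite{BF2} fibrewise over $B$, upgrading each pointwise estimate to one that is uniform in the base parameter. The fibrewise connected sum along the sections produces a separating hypersurface bundle $S(V_1)\to B$, identified with $S(V_2)$ via $\vphi$; the assertion then is that the families monopole map is multiplicative across such a separating neck, the $3$-sphere case yielding precisely the smash product $\wedge_\Jcal$.

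First I would fix the data. Using that $\vphi$ reverses orientation, choose fibrewise metrics so that $E=E_1\#_B E_2$ contains near the gluing locus a region fibrewise isometric to $S(V_1)\times[-L,L]$ with the fibrewise round metric of positive scalar curvature on the $3$-sphere bundle, and so that $E_j^{\circ}:=E_j\setminus\nu(i_j(B))$ carries a fibrewise cylindrical-end metric. The hypothesis $\vphi(i_1^{*}\sfrak_{E_1})\cong i_2^{*}\sfrak_{E_2}$ identifies the \spinc data across the neck, so the vertical spinor and self-dual form bundles and the families Dirac and $d^{+}+d^{*}$ operators are compatibly matched there. Applying the fibrewise finite-dimensional approximation of Bauer--Szymik \cite{BFExtFamilies}, the families monopole map becomes a fibrewise $G$-equivariant map of Thom spaces $\mu_E\colon(\mathcal{A}_E)^{+}\to(\mathcal{C}_E)^{+}$ over $B$ --- here $G$ is the relevant compact group ($\mathrm{Pin}(2)$ or $S^1$) and $\mathcal{A}_E,\mathcal{C}_E$ are finite-rank $G$-subbundles of the $L^2_k$ configuration and equation bundles --- and likewise for $E_1,E_2$.

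The argument then splits into a linear and a nonlinear half. For the linear part: a section of the relevant bundle over a fibre $E_b$ with long neck is, up to an error decaying like $e^{-cL}$, reconstructed from its restrictions to $E_{1,b}^{\circ}$ and $E_{2,b}^{\circ}$ together with a piece on the neck, and positive scalar curvature renders the linearised equation on $S(V_1)_b\times\mathbb{R}$ invertible off the finite-dimensional space of bounded harmonic objects; linear gluing of the families elliptic operators then yields a fibrewise $G$-equivariant homotopy through surjections between the linear part of $\mu_E$ and the direct sum of those of $\mu_{E_1},\mu_{E_2}$. The neck contributes only the bounded harmonic $0$- and $1$-forms on the cylinder, reflecting the identities $b_{+}(E)=b_{+}(E_1)+b_{+}(E_2)$, $b_1(E)=b_1(E_1)+b_1(E_2)$, together with the drop $b_0(E_1)+b_0(E_2)-b_0(E)=1$; these correction summands are exactly what the definition of $\wedge_\Jcal$ absorbs. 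For the nonlinear part, decompose $a\in\mathcal{A}_E$ as $a=a_1+a_2+a_{\mathrm{neck}}$ and expand the quadratic term $q_E(a)$: the cross terms are supported near the neck and bounded by $e^{-cL}\|a\|^{2}$, so the straight-line homotopy from $q_E$ to $q_{E_1}\oplus q_{E_2}$ keeps the finite-dimensional approximation proper once $L$ is large and is manifestly fibrewise $G$-equivariant. Reassembling the linear and quadratic homotopies identifies $\mu_E$ with the fibrewise smash $\mu_{E_1}\wedge\mu_{E_2}$, that is $[\mu_E]=[\mu_{E_1}]\wedge_\Jcal[\mu_{E_2}]$.

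I expect the main obstacle to be making all of this uniform over $B$ \emph{while} remaining equivariant and compatible with the structure defining $\wedge_\Jcal$. Specifically: a single approximation level and a single neck length $L$ must serve all fibres simultaneously --- immediate when $B$ is compact, but otherwise requiring a naturality/patching argument; the exponential-decay and linear-gluing estimates must have constants independent of $b\in B$, which should follow from continuity in $b$ of the relevant spectral gaps together with compactness; and every homotopy above must respect the fibrewise $G$-action --- in particular the reducible locus where the $G$-stabiliser jumps and, when $b_1>0$, the fibrewise $H^1$-torus acting by translations --- and must be a homotopy of $\Jcal$-maps, so that the conclusion is an equality of families Bauer--Furuta classes rather than merely of representatives. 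The last two points together are where I anticipate the real work; the remainder is a careful fibrewise transcription of \cite{BF2}.
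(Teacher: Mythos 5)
Your proposal is not the route the paper takes, and it also misdescribes Bauer's argument in \cite{BF2}. Bauer did not directly glue the linear and quadratic parts of $\mu_E$ against those of $\mu_{E_1}\wedge\mu_{E_2}$; he proved a \emph{permutation theorem}: for a manifold with an $n$-component separating neck, the ends of the neck components may be permuted without changing the Bauer--Furuta class. The connected sum formula then drops out of a short formal argument. The paper follows that route exactly. One sets $F = E_1 \amalg E_2$, each given a separating neck of length $2L$ near the gluing sphere, so that $[\mu_F] = [\mu_{E_1}]\wedge_\Jcal[\mu_{E_2}]$ by Proposition~\ref{P:MuSmashProduct}. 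Applying the transposition $\tau=(12)$ regroups the pieces as $F^\tau = E \amalg S_{V_2}$; Corollary~\ref{C:FamMuTriv} gives $[\mu_{S_{V_2}}]=[\id]$, so $[\mu_{F^\tau}]=[\mu_E]$, and Theorem~\ref{T:FamiliesGluingTheorem} gives $[\mu_F]=[\mu_{F^\tau}]$. All of the hard analysis --- the $L$-uniform Sobolev, elliptic-bootstrap, and exponential-decay estimates that you correctly anticipate --- is invested in proving the permutation theorem, not the connected sum statement itself.

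The direct-gluing plan you sketch has a concrete gap that the permutation route is designed to sidestep. You want a compact homotopy between $\mu_E$ and $\mu_{E_1}\wedge\mu_{E_2}$, but these are bundle maps over a priori different bases with a priori different Hilbert bundles: $\mu_E$ is built from $L^2_k$-sections over the connected sum $E$, while $\mu_{E_1}\wedge\mu_{E_2}$ is built from sections over the disjoint union $E_1\amalg E_2$ --- different total spaces, different vertical spinor bundles, a different number of components (hence a different residual torus $\bT^n$), and a priori different Jacobian and index bundles. Writing $a = a_1 + a_2 + a_{\mathrm{neck}}$ and taking ``the straight-line homotopy from $q_E$ to $q_{E_1}\oplus q_{E_2}$'' presupposes a single ambient $\bT^n$-Hilbert bundle over a single $\Jcal$ containing both endpoints, and constructing that identification compatibly with the group action, the codomain trivialisation, and the $\Jcal$-bundle structure --- so that the result is an equality in $\pi^{b^+}_{\bT^n,\Ucal}(\Jcal;\ind D)$ and not just of representatives --- is itself most of the difficulty. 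The permutation theorem avoids it by comparing two gluings $F$ and $F^\tau$ of literally the \emph{same} pieces (same neck $N_B(L)$, same complement $M$), so $\Acal$, $\Ccal$, $\Jcal$ and $\Ucal$ are identical on both sides and the comparison reduces to conjugation by the explicit bundle automorphism $V$ of (\ref{E:VDefinitionFamilies}). If you wanted to pursue your direct approach, you would need to build that identification first; given that the auxiliary sphere bundle $S_{V_2}$ has trivial Bauer--Furuta class and the neck carries no reducibles, the permutation route is both cleaner and the one actually in \cite{BF2}.
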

In 2021, Baraglia-Konno demonstrated how to recover the families Seiberg-Witten invariant from the families Bauer-Furuta invariant via a formulation of the families Seiberg-Witten invariant in equivariant cohomology \cite{BaragliaKonnoBFandSW}. In upcoming work \cite{Part2}, we will use this formulation and the above formula to prove a connected sum formula for the families Seiberg-Witten invariant.

\subsection*{Acknowledgements} The author thanks his advisor, David Baraglia, for his guidance and many helpful discussions.

    \section{Finite dimensional approximation}\label{Ch:FDA}

The Bauer-Furuta invariant is obtained from the stable homotopy class of an approximation of the Seiberg-Witten monopole map by finite dimensional subspaces. In \cite{BF1}, two methods of finite dimensional approximation are described, one method due to Schwarz \cite{Schwarz} and one due to Bauer-Furuta. The Bauer-Furuta method is useful for formally defining the invariant, while the Schwarz method is more useful for practical calculations. Bauer further clarifies their construction in \cite{RefinedBF} using Spanier-Whitehead spectra. We begin by reviewing these two constructions and showing that they are equivalent.

Let $X$ and $Y$ denote pointed topological spaces. We will assume that all maps $f : X \to Y$ are continuous and basepoint preserving. Denote by $[X, Y]$ the set of based homotopy classes of maps between $X$ and $Y$. Let $S^n$ denote the unit sphere in $\R \oplus \R^n$ with $\infty = (1,0) \in S^n$ as the basepoint. The $n$-th homotopy group of $X$ is 
\begin{align*}
    \pi_n(X) &= [S^n, X].
\end{align*}
The suspension functor $\Sigma X = S^1 \wedge X$ defines a map of homotopy groups
\begin{align*}
    \Sigma : \pi_{n}(\Sigma^n X) \to \pi_{n+1}(\Sigma^{n+1} X).
\end{align*} 
The Freudenthal suspension theorem \cite{FreudenthalSusThm} states that this map is an isomorphism for large enough $n$ and the $n$-th stable homotopy group is defined by
\begin{align*}
    \pi_n^s(X) &= \Colim_{\longrightarrow k} \pi_{n+k} (\Sigma^k X).
\end{align*}
In the stable range, the homotopy group $\pi_{n+k} (\Sigma^k X) = [S^{n+k}, \Sigma^k X]$ does not depend on the dimension of the domain and codomain, but only on the difference in dimensions. In the opposite fashion, the $n$-th cohomotopy set of $X$ is given by $\pi^n(X) = [X, S^n]$. The functor $\pi^n$ is now contravariant, but suspension still defines a map $\Sigma : \pi^n(X) \to \pi^{n+1}(\Sigma X)$. The $n$-th stable cohomotopy group of $X$ is defined as
\begin{align*}
    \pi^n_s(X) &= \Colim_{\longrightarrow k} \pi^{n+k} (\Sigma^k X).
\end{align*}
The stable cohomotopy groups define a generalised cohomology theory, and Brown's representability theorem \cite{BrownCohomTheories} guarantees that this cohomology theory is representable. The natural objects for representing stable cohomotopy groups are spectra, in particular, the sphere spectrum $\Sspec^n$ represents the above groups. In order to define the Bauer-Furuta invariant, it will be more convenient to work with spaces that are indexed by finite dimensional subspaces of an infinite dimensional Hilbert space. This is more general than indexing by the natural numbers and allows us to keep track of coordinates when taking suspensions. 

Let $G$ be a compact lie group. For our purposes, $G$ will always be a product of circles. A $G$-space is a pointed topological space $X$ with a continuous left action $G \times X \to X$ that fixes the basepoint. For two $G$-spaces $X$ and $Y$, let $[X,Y]^G$ denote the set of homotopy classes through equivariant pointed maps. The diagonal subgroup of $G \times G$ naturally defines a $G$-action on the smash product $X \wedge Y$. 
\begin{definition}\label{D:GUniverse}
    A $G$-universe $\Ucal$ is an infinite dimensional separable Hilbert space which $G$ acts on by isometeries. It is required that $\Ucal$ contains the trivial representation and that for any irreducible $G$-module $M$, $\Hom_G(M, \Ucal)$ is either zero or infinite dimensional.
\end{definition}
The above condition on $\Hom_G(M, \Ucal)$ guarantees that if we ever suspend by an irreducible representation $M$, then we can suspend by $M$ an arbitrary number of times. A $G$-universe is called complete if it contains a copy of every irreducible representation \cite{ESHTBook}.

For any subspace $U \subset \Ucal$ let $S_U$ denote the unit sphere in $\R \oplus U$, which has a natural basepoint $\infty = (1, 0) \in \R \oplus U$. If $U$ is finite dimensional, then $S_U$ is the one-point compactification of $U$. For any direct sum $V \oplus U$, we have $S_{V \oplus U} = S_V \wedge S_U$. We say that $U$ is a subrepresentation if it is $G$-invariant. In this case the $G$-action can be extended to $S_U \subset \R \oplus U$ by acting trivially on the $\R$ component. Since $G$ acts orthogonally, this fixes the basepoint of $S_U$.

\begin{definition}\label{D:GSpectrum}
    A $G$-spectrum $\Acal = \{\Acal_U\}$ (indexed by $\Ucal$) is a collection of $G$-spaces indexed by subrepresentations $U \subset \Ucal$. Additionally, for any subrepresentation $W \supset U$ with orthogonal decomposition $W = V \oplus U$, there is an equivariant structure homeomorphism
    \begin{align*}
        \sigma_{U,W} : S_V \wedge \Acal_U \to \Acal_W.
    \end{align*}
    The structure maps have the property that for any other subrepresentation $W' \supset W$ with $W' = V' \oplus W$ orthogonally, the following diagram commutes up to homotopy.
    \begin{equation}
        \begin{tikzcd}
            S_{V' \oplus V} \wedge \Acal_U \arrow{rr}{\sigma_{U, W'}} \arrow[swap]{d}{=} && \Acal_{W'}\\
            S_{V'} \wedge S_V \wedge \Acal_U \arrow{rr}{\id \wedge \sigma_{U, W}} && S_{V'} \wedge \Acal_W \arrow[swap]{u}{\sigma_{W,W'}}
        \end{tikzcd}    
    \end{equation}
\end{definition}
\begin{definition}\label{D:SpanierWhiteheadMorphisms}
    The set of morphisms $\Hom_\Ucal(\Acal, \Bcal)$ between two $G$-spectra $\Acal$ and $\Bcal$, both indexed by $\Ucal$, is 
    \begin{align*}
        \Hom_{G,\Ucal}(\Acal, \Bcal) &= \Colim_{U \subset \Ucal} [\Acal_U, \Bcal_U]^G.
    \end{align*}
    This colimit is taken over morphisms of the form 
    \begin{align*}
        [\Acal_U, \Bcal_U]^G \overset{\id_{S_V} \wedge -}{\longrightarrow} [&S_V \wedge \Acal_U, S_V \wedge \Bcal_U]^G = [\Acal_W, \Bcal_W]^G
    \end{align*}
    for $W = V \oplus U$ orthogonally. The identification of $[S_V \wedge \Acal_U, S_V \wedge \Bcal_U]$ with $[\Acal_W, \Bcal_W]$ is given by the structure maps $\sigma^\Acal_{U,W}$ and $\sigma^\Bcal_{U,W}$.
\end{definition}
From the above definition, we see that morphisms between spectra are only defined stably and up to homotopy. This means to define a $G$-spectrum $\Acal$ up to isomorphism, it is enough to specify $\Acal_U$ only for subrepresentations $U$ in an indexing set that is cofinal in the directed system of subrepresentations of $\Ucal$.

\begin{example}[Suspension Spectrum]\label{E:SuspensionSpectra}
    For any $G$-space $A$, define the suspension spectrum $\Sigma A$ by 
    \begin{align*}
        (\Sigma A)_U = S_U \wedge A.
    \end{align*}
    For $W = V \oplus U$ orthogonally, the structure map $\sigma_{U, W} : S_V \wedge (S_U \wedge A) \to S_W \wedge A$ is just the identity. Further, a map $f : A \to B$ induces a map $\Sigma f : \Sigma A \to \Sigma B$ of spectra by taking smash products with the identity. Thus $\Sigma$ embeds pointed topological spaces as a full subcategory inside the category of spectra. We write $\Sspec^n_G$ to denote the suspension spectrum of $S^n$. 
    
    More generally, for any finite dimensional subrepresentation $V \subset \Ucal$ define the suspension $\Sigma^V \Acal$ of a $G$-spectrum $\Acal$ by 
    \begin{align*}
        (\Sigma^V \Acal)_U &= S_V \wedge \Acal_U.
    \end{align*}
    The associated structure maps are the obvious ones induced by smash products with the identity.
\end{example}
\begin{example}[Desuspension]\label{E:Desuspension}
    Fix a finite dimensional subrepresentation $V \subset \Ucal$. For any subrepresentation $W$ containing $V$, write $W = V \oplus U$ orthogonally and define the desuspension $\Sigma^{-V}\Acal$ by 
    \begin{align*}
        (\Sigma^{-V}\Acal)_W &= \Acal_U.
    \end{align*}
    This defines $\Sigma^{-V}A$ up to isomorphism since the set of subrepresentations containing $V$ is cofinal in the directed system of subrepresentations of $\Ucal$. The set of morphisms between $\Sigma^{-V} \Acal$ and another $G$-spectrum $\Bcal$ is given by 
    \begin{align*}
        \Hom_\Ucal(\Sigma^{-V} \Acal, \Bcal) &= \Hom_\Ucal(\Acal, \Sigma^V \Bcal).
    \end{align*}
    That is, $\Sigma^{-V}$ is the left adjoint of $\Sigma^V$. 
\end{example}
\begin{example}[Smash product of spectra]\label{E:SmashProductSpectra}
    Let $\Acal$ be a $G_1$-spectrum indexed by $\Ucal$ and $\Bcal$ be a $G_2$-spectrum indexed by $\Vcal$. The smash product $\Acal \wedge \Bcal$ is a $G_1 \times G_2$-spectrum indexed by the universe $\Ucal \oplus \Vcal$ and, for subrepresentations $U \subset \Ucal$ and $V \subset \Vcal$,
    \begin{align*}
        (\Acal \wedge \Bcal)_{U \oplus V} &= \Acal_U \wedge \Bcal_V.
    \end{align*}
    Let $W_U = U' \oplus U$ and $W_V = V' \oplus V$ orthogonally. The structure map $\sigma_{U \oplus V, W_U \oplus W_V}$ is defined by the following diagram.
    \begin{equation}
        \begin{tikzcd}
            S_{U' \oplus V'} \wedge (\Acal \wedge \Bcal)_{U \oplus V} \arrow{rr}{\sigma_{U \oplus V, W_U \oplus W_V}} \arrow{d}{=} && (\Acal \wedge \Bcal)_{W_U \oplus W_V} \arrow{d}{=}\\
            (S_{U'} \wedge \Acal_U) \wedge (S_{V'} \wedge \Bcal_V) \arrow{rr}{\sigma_{U, W_U} \wedge \sigma_{V, W_V}} && \Acal_{W_U} \wedge \Bcal_{W_V}
        \end{tikzcd}
    \end{equation}
\end{example}
The motivating principle behind defining these objects is that spectra represent equivariant stable cohomology theories. In this case, let $B$ be a compact topological space and fix a universe $\Ucal$. Let $\lambda$ be an equivariant K-theory element $\lambda \in RO(B)$. Write $\lambda = E - F$ where $E$ and $F$ are honest finite dimensional vector bundles over $B$. Assume without loss generality that $F = B \times V$ is trivial with $V \subset \Ucal$ a subrepresentation. Let $TE$ be the Thom space of $E$ and define the Thom spectrum of $\lambda$ by 
\begin{align*}
    T\lambda = \Sigma^{-V} TE.
\end{align*}
\begin{definition}\label{D:EquivStableCohomotopyGroups}
    The $n$-th equivariant stable cohomotopy group of $B$ with coefficients in $\lambda$ is 
    \begin{align}
        \pi^n_{G, \Ucal}(B ; \lambda) &= \Hom_{G, \Ucal}(T\lambda, \Sspec^n) \nonumber\\
        &= \Colim_{U \perp V} [S_U \wedge TE, S_U \wedge S_V \wedge S^n]^G.
    \end{align}
\end{definition}

\subsection{Bauer-Furuta Approximation}\label{S:EquivApprox}

Fix a $G$-universe $\Ucal$. For simplicity, we will assume that $\Hom_G(M, \Ucal)$ is only non-zero for finitely many isomorphism classes of irreducible $G$-modules $M$. Now the isotypical decomposition of $\Ucal$ guarantees that any finite dimensional subspace $V \subset \Ucal$ is contained in a $G$-invariant subspace. Let $B$ be a finite CW complex, which implies that $B$ is compact and Hausdorff. We let $G$ act on $B$ trivially.

Let $H', H \to B$ be $G$-Hilbert bundles, by which we mean locally trivially fibre bundles over $B$ with standard fibre $\Ucal$ and fibre preserving, fibrewise orthogonal $G$-action. Fix an equivariant bundle map $l : H' \to H$ that is fibrewise linear Fredholm.
\begin{definition}\label{D:GBoundedFredholm}
    An equivariant bundle map $f : H' \to H$ is Fredholm (relative to $l$) if $c = f - l$ is continuous and compact. That is, $c$ maps disk bundles to precompact sets.
\end{definition}
A disk bundle $D \subset H$ is a subbundle where each fibre is a closed disk of constant finite radius. We say that a Fredholm map $f$ is bounded if the preimage of any disk bundle is contained in a disk bundle. For any subbundle $V \subset H$ we write $S(V)$ to denote the unit sphere of $V$ and set $S_V = S(\R \oplus V)$. The fibre $(S_V)_b$ over $b \in B$ is a sphere with natural choice of basepoint $\infty_b = (1,0) \in (S_V)_b$. Let $B_\infty \subset S_H$ denote the image of the section at infinity. We identify $H = S_H \setminus B_\infty$ through fibrewise stereographic projection. The boundedness condition for $f$ is equivalent to $f$ admitting a continuous, basepoint preserving extension $f : S_{H'} \to S_H$. Note that this extension is equivariant since $G$ acts orthogonally.

Kupier's theorem \cite{Kuiper} applied to the isotypical decomposition of $H$ implies that there is an equivariant trivialisation $H \to \Ucal \times B$ and all such trivialisations are homotopic. Fix a trivialisation and let $p : H \to \Ucal$ be projection onto the first factor. For $f : H' \to H$ bounded Fredholm, we will often abuse notation by writing $f : H' \to \Ucal$ to also denote $f$ composed with this projection. With this notation in mind, the extension $f : S_{H'} \to S_\Ucal$ factors through the Thom space $TH' = S_{H'}/B_\infty$.

Let $V \subset \Ucal$ be a closed subrepresentation with $p_V : \Ucal \to V$ the orthogonal projection. The orthogonal decomposition $\Ucal = V \oplus V^\perp$ identifies $S_{V} = S(\R \oplus V \oplus 0) \subset S_\Ucal$ and $S_{V^\perp} = S(\R \oplus 0 \oplus V^\perp) \subset S_{\Ucal}$. The spheres $S(V^\perp)$ and $S_{V}$ are disjoint subsets of $S_\Ucal$ and there is a deformation retraction $\rho_V : S_\Ucal \setminus S(V^\perp) \to S_{V}$ defined by
\begin{align}
    &\rho_V(t, v, v') = \frac{1}{\sqrt{t^2 + |v|^2}}(t, v, 0).\label{E:OrthCompDefRet}
\end{align}
This deformation retraction has the property that if $h \in \Ucal \setminus V^\perp$, then $\rho_V(h) = \lambda(h)p(h)$ for some positive and continuous function $\lambda : \Ucal \setminus V^\perp \to \R$.

For any finite dimensional subrepresentation $V \subset \Ucal$, set $V' = l\inv(V)$ and ~$\Vund = V \times B$. Let $p_V, p_{V'}$ be orthogonal projections onto $V$ and $V'$ respectively. We say that $V$ surjects onto $\coker l$ if for each $b \in B$, the projection $\pi : \Ucal \to \Ucal/ \im l_b$ is still surjective when restricted to $V_b$. In this case, $V + (\im l_b)^\perp$ spans $\Ucal$ for all $b \in B$ and $V' \to B$ is a vector bundle of rank $\dim V' = \dim V + \Ind l$. In particular, $V' - \Vund$ represents the virtual index bundle $\ind l$. 

Assume for the moment that the image of $\fhat|_{S_{V'}}$ is disjoint from the sphere $S(V^\perp) \subset S_\Ucal$. Composing with the above deformation retraction, we obtain a map $\rho_{V} \fhat|_{S_{V'}} : S_{V'} \to S_{V}$ which factors through the Thom space.
\begin{definition}
    The map $\vphi_f = \rho_{V} \fhat|_{S_{V'}} : TV' \to S_{V}$ is called the (Bauer-Furuta) finite dimensional approximation of $f$.
\end{definition}
Notice that this definition of finite dimensional approximation depends on the choice of subspace $V$ such that $\fhat|_{S_{V'}}$ is valued in $S_\Ucal \setminus S(V^\perp)$ and the choice of decomposition $f = l + c$.  We will show that such subspaces exist and that the stable homotopy class of $\vphi_f$ is independent of $V, l$ and $c$. 

For any finite dimensional subspace $W \supset V$, write $W$ as an orthogonal sum $W = U \oplus V$ with $U$ the orthogonal complement of $V$ inside $W$. Assuming that $V$ surjects onto $\coker l$, let $W' = \lhat\inv(W)$ with $W' = \tU \oplus V'$ where $\tU$ is the fibrewise orthogonal complement of $V'$ in $W'$. Notice that $l|_{\tU} : \tU \to \Uund$ is an isomorphism of vector bundles, hence $\tU$ is trivial and $T(\tU \oplus V') = S_U \wedge TV'$.
\begin{definition}\label{D:admissibleSubspace}
    A finite dimensional subrepresentation $V \subset \Ucal$ is admissible (with respect to $f$) if it satisfies the following three conditions:
    \begin{enumerate}
        \item $V$ surjects onto $\coker l$.
        \item For any finite dimensional subspace $W \supset V$, the image of $\fhat|_{S_{W'}} : S_{W'} \to S_\Ucal$ is disjoint from the unit sphere $S(W^\perp)$ in $W^\perp$. Consequently the deformation retract $\rho_W : S_\Ucal \setminus S(W^\perp) \to S_{W}$ defines a map 
        \begin{align*}
            \rho_W \fhat|_{S_{W'}} : TW' \to S_{W}.
        \end{align*}
        \item The maps $\rho_W \fhat|_{S_{W'}}$ and $\id \wedge \rho_V \fhat|_{S_{V'}}$ are homotopic under the identifications $TW' = S_{U} \wedge TV'$ and $S_W = S_U \wedge S_V$. 
        \begin{equation}
            \begin{tikzcd}
                TW' \arrow{rr}{\rho_W \fhat|_{S_{W'}}} \arrow{d}[swap]{=} && S_{W} \arrow{d}{=}\\
                S_{U} \wedge TV' \arrow{rr}{\id \wedge \rho_V \fhat|_{S_{V'}}} && S_{U} \wedge S_{V} 
            \end{tikzcd}  
        \end{equation}
    \end{enumerate}
\end{definition}
\begin{proposition}[\cite{BF1} Lemma 2.3]\label{P:admissibleSubspace}
    For any bounded Fredholm map $f = l + c : H' \to H$, there exists an admissible subrepresentation $V \subset \Ucal$.
\end{proposition}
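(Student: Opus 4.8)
The plan is to build $V$ by imposing the three conditions of Definition~\ref{D:admissibleSubspace} in turn, each time enlarging a finite-dimensional subspace of $\Ucal$ and then, using the standing assumption that only finitely many isomorphism classes of irreducibles occur in $\Ucal$, enlarging it further to a $G$-invariant subspace. Condition (1) is preserved under enlargement, so it suffices to find one subrepresentation $V_0$ satisfying it; conditions (2) and (3) are then arranged by growing $V_0$, and since both are quantified over all finite-dimensional $W \supset V$ the only real issue is uniformity in $W$. Equivariance requires no extra work beyond the enlargement step, since $G$ acts orthogonally on every space involved.

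\emph{Condition (1).} Since $l$ is fibrewise Fredholm over the compact base $B$, each $\coker l_b$ is finite dimensional and the dimension is upper semicontinuous, so near any $b_0$ a set of vectors of $\Ucal$ spanning $\coker l_{b_0}$ also spans $\coker l_b$ for $b$ near $b_0$. Covering $B$ by finitely many such neighbourhoods, the span of the resulting finite collection of vectors is a finite-dimensional subspace surjecting onto $\coker l$, and we enlarge it to a subrepresentation $V_0$. Every $W \supset V_0$ then surjects onto $\coker l$, and $W' = l\inv(W)$ is a vector bundle of rank $\dim W + \Ind l$.

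\emph{Condition (2).} For $W \supset V_0$ and a finite point $x \in W'$, the $W$-component of $f(x) = l(x) + c(x)$ is $l(x) + p_W c(x)$ and the $W^\perp$-component is $p_{W^\perp}c(x)$, so $\fhat(x) \in S(W^\perp)$ forces $l(x) + p_W c(x) = 0$ together with $|p_{W^\perp}c(x)| = 1$; in particular $f(x) = p_{W^\perp}c(x)$ has norm $1$. By boundedness of $f$ there is an $R$, \emph{independent of $W$}, with $f\inv(D_1) \subset D_R$, so every such $x$ lies in the disk bundle $D_R \subset H'$. Now $c(D_R)$ is precompact, hence totally bounded, so we may enlarge $V_0$ to a subrepresentation $V$ whose fibre contains a finite $\tfrac{1}{2}$-net of $c(D_R) \subset \Ucal$; then $\sup_{D_R}|p_{V^\perp}c| \le \tfrac{1}{2}$, and since $W^\perp \subset V^\perp$ for $W \supset V$ we get $|p_{W^\perp}c(x)| \le \tfrac{1}{2}$ on $D_R$, contradicting $|p_{W^\perp}c(x)| = 1$. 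Hence $\fhat(S_{W'})$ misses $S(W^\perp)$ (the point at infinity maps to the basepoint, which is not in $S(W^\perp)$), so $\rho_W \fhat|_{S_{W'}}$ is defined for all $W \supset V$.

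\emph{Condition (3), and the main obstacle.} Write $W = U \oplus V$ and $W' = \tU \oplus V'$ with $l|_{\tU} : \tU \to \Uund$ an isomorphism, and compare $\rho_W \fhat|_{S_{W'}}$ with $\id_{S_U} \wedge \rho_V \fhat|_{S_{V'}}$ under $TW' = S_U \wedge TV'$ and $S_W = S_U \wedge S_V$. I would use a two-stage homotopy: first the linear homotopy $c_t = p_V c + (1-t)\,p_{V^\perp}c$ replacing $c$ by its finite-rank part $p_V c$, whose $W^\perp$-component stays bounded by $\sup_{D_R}|p_{V^\perp}c| < 1$ exactly as in step (2); then, writing $x = \tilde u + v$ with $\tilde u \in \tU$ and $v \in V'$, the homotopy $(\tilde u, v) \mapsto l(\tilde u + v) + p_V c((1-t)\tilde u + v)$, which kills the dependence of the compact term on the $\tU$-coordinate and lands at the block map that becomes $\id_{S_U} \wedge \rho_V \fhat|_{S_{V'}}$. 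Throughout the second stage the image lies in $W$, so it never meets $S(W^\perp)$. I expect this step to be the main obstacle: one must check that both homotopy families consist of bounded Fredholm maps and stay off $S(W^\perp)$ \emph{uniformly in $W$}, so that a single $V$ works for every $W$, and that the terminal map really agrees with $\id_{S_U} \wedge \rho_V \fhat|_{S_{V'}}$ under $T(\tU \oplus V') = S_U \wedge TV'$ — all of which reduces to rerunning the boundedness-plus-compactness estimates of step (2) along the two homotopy families, possibly after one further enlargement of $V$. This is the parameterised counterpart of \cite[Lemma 2.3]{BF1}.
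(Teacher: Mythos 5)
Your treatment of conditions (1) and (2) matches the paper's: find a finite-dimensional subspace surjecting onto $\coker l$ over all of $B$ (the paper quotes \cite[Prop.~A5]{AtiyahFredholm} for this), pick a finite $\eps$-net of $\overline{c(D'_R)}$ where $D'_R \supset \fhat\inv(D)$, enlarge to a $G$-invariant subspace $V$, and conclude that $|p_{W^\perp}c| < \eps < 1$ on $D'_R$ for every $W \supset V$. The substantive divergence is in condition~(3), where you have correctly identified the hard step but mis-diagnosed what it requires.

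You propose the two-stage homotopy and then say the remaining work ``reduces to rerunning the boundedness-plus-compactness estimates of step (2) along the two homotopy families, possibly after one further enlargement of $V$.'' That does not close the argument. Your $\eps$-net controls $p_{V^\perp}c$ only on the fixed disk bundle $D'_R$. The first-stage interpolant $f_t = l + p_V c + (1-t)p_{V^\perp}c$ is a compact perturbation of $l$, but there is no reason for it to be a \emph{bounded} Fredholm map: $f_t\inv(D)$ may escape every disk bundle, because outside $D'_R$ you have no bound whatsoever on $|p_{V^\perp}c|$, and enlarging $V$ only shrinks $|p_{V^\perp}c|$ on a fixed bounded set, never globally. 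So you cannot rule out that $\fhat_t(S_{W'})$ hits $S(W^\perp)$ somewhere far from $D'_R$, and the composition $\rho_W\fhat_t|_{S_{W'}}$ may simply not be defined. The move you are missing --- and it is the crux of \cite[Lemma~2.3]{BF1} --- is to stop trying to control the homotopy globally. Define $h_t$ only on the compact piece $D'_R\cap W'$, where your estimates \emph{do} hold and the image misses $S(W^\perp)$. On the boundary sphere $S'\cap W'$ one has $|f(h)| \geq 1$ while $|p_{W^\perp}c(h)|<\eps$, so $h_t|_{S'}$ lands in $S_\Ucal\setminus(D\cap W^\perp)$, a contractible subset of $S_\Ucal\setminus S(W^\perp)$. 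Contractibility lets you extend the partial homotopy over the complementary disk $S_{W'}\setminus(D'_R\cap W')$ purely topologically, with no further analytic input, and composing with $\rho_W$ then gives the homotopy required by condition~(3). Your second stage, which is valued in $W$ and hence automatically disjoint from $S(W^\perp)$, is fine; the topological extension is indispensable only for the first stage, but without it the argument does not go through.
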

\begin{proof}[Proof sketch.]
    To construct one such $V$, let $D \subset \Ucal$ be the closed unit disk in $\Ucal$. By the boundedness condition, $\fhat\inv(D)$ is contained in a closed disk bundle $D'_R \subset H'$ of radius $R$. Consequently, if $|h'| > R$, then $|f(h)| > 1$. Set $C$ to be the closure of $c(D'_R)$, which is compact. Let $0 < \eps \leq \frac{1}{4}$ and choose a finite covering of $C$ by balls of radius $\eps$ with centers $v_i$ for $i = 1,...,N$. By \cite[Proposition A5]{AtiyahFredholm} there is a finite dimensional subspace $V_0 \subset \Ucal$ with $(\im \lhat_b)^\perp \subset V_0$ for all $b \in B$. Let $V$ be a finite dimensional $G$-invariant subspace containing both $V_0$ and $\spann\{v_1, ..., v_n\}$, which can be obtained using isotypical decomposition. 
    
    By construction $V$ satisfies (1). Further, $V$ has the property that for any subspace $W \supset V$ and $h \in D_R'$, 
    \begin{align*}
        |(1 - p_W)c(h)| < \eps.
    \end{align*}
    Property (2) follows from this bound and the fact that $|f(h)| = 1$ implies $h \in D_R'$. Let $S'$ be the bounding sphere bundle of $D'_R$. Property (3) follows by defining a homotopy $h_t : D'_R \cap W' \to S_\Ucal \setminus S(W^\perp)$ between $\fhat|_{S_{W'}}$ and $\id \wedge \rho_V \fhat|_{S_{V'}}$ on the restricted domain $D'_R \cap W'$. This homotopy is constructed so that the image of $h_t|_{S'}$ does not intersect $W^\perp$ for any $t$. Thus $h_t|_{S'}$ is valued in $S_\Ucal \setminus (D \cap W^\perp)$, which is a contractible subset of $S_\Ucal \setminus S(W^\perp)$. Hence $h_t$ extends over the complementary disk $S_{W'} \setminus (D'_R \cap W')$ and composing with $\rho_W$ gives a homotopy between $\rho_W \fhat|_{S_{W'}}$ and $\id \wedge \rho_V \fhat|_{S_{V'}}$ on $S_{W'}$.
\end{proof}
\begin{definition}[\cite{BF1} Theorem 2.6]\label{T:EquivCohomotopyClass}
   Let $f = l + c : H' \to H$ be an equivariant, bounded Fredholm map and fix an equivariant trivialisation $H \cong \Ucal \times B$. The Bauer-Furuta class of $f$ is the stable homotopy class
    \begin{align*}
        [\vphi_{f}] \in \pi^0_{G,\Ucal}(B ; \Ind l)
    \end{align*}
    where $\vphi_{f} = \rho_V \fhat|_{S_{V'}} : TV' \to S_V$ for any choice of admissible subrepresentation $V \subset \Ucal$. This cohomotopy class is independent of $V$ and the presentation $f = l + c$.
\end{definition}
\begin{proof}
    Fix an admissible subrepresentation $V \subset \Ucal$ and recall that $\ind l = V' - \Vund$, hence the Thom spectrum $T(\ind l)$ is given by $T(\ind l) = \Sigma^{-V} TV'$. It follows that
    \begin{align*}
        \pi^0_{G, \Ucal}(B ; \ind l) &= \Hom(T(\ind l), \Sspec^0)\\
        &= \Colim_{U \subset V^\perp} [S_{U} \wedge TV', S_{U} \wedge S_{V}].
    \end{align*}
    Here $U \subset \Ucal$ is orthogonal to $V$ and the connecting morphisms are given by smash products with the identity. For any other admissible subrepresentation $W$, there is an admissible subrepresentation containing both $V$ and $W$. Hence property (3) implies that the Bauer-Furuta classes corresponding to $V$ and $W$ are stably homotopic, therefore $[\vphi_f] \in \pi^0_{G, \Ucal}(B ; \ind l)$ is well defined. 
    
    To see that $[\vphi_{f}]$ does not depend on the choice of decomposition $f = l + c$, let $f = l_i + c_i$ be two Fredholm decompositions for $i = 0,1$. Let $F_t = l_t + c_t$ for $l_t = (1-t)l_0 + tl_1$ and $c_t = (1-t)c_0 + tc_1$, noting that $F_t = f$ for all $t$. The maps $l_t$ are linear Fredholm and the maps $c_t$ are compact. Now $F$ is a Fredholm map over $B \times [0,1]$ which is certainly bounded. Applying finite dimensional approximation to $F$ gives a homotopy between finite dimensional approximations of $f$ using the two different presentations $f = l_0 + c_0$ and $f = l_1 + c_1$.
\end{proof}

\subsection{Schwarz approximation}\label{S:SchwarzApprox}

In \cite{Schwarz}, Schwarz details an alternative approach to finite dimensional approximation. Let $D' \subset H'$ be a closed disk bundle with boundary sphere bundle $S'$. Fix a trivialisation $H \cong \Ucal \times B$ and let $\Ccal_l(D', H)$ denote the set of continuous maps $f : D' \to \Ucal$ such that $c = f - l|_{D'}$ is compact and $f|_{S'}$ is non-vanishing. 
\begin{definition}\label{D:CompactHomotopy}
    Two Fredholm maps $f_0, f_1 : D' \to H$ are compactly homotopic (relative to $l$) if there is a homotopy $f_t = l + c_t$ with $c_t$ compact and $(f_t)|_{S'}$ non-vanishing for all $t \in [0,1]$. More generally, we say that two bounded Fredholm maps $f_0, f_1 : H' \to H$ are compactly homotopic if there exists a disk $D' \subset H'$ containing $f_0\inv(0) \cup f_1\inv(0)$ on which the restrictions $f_0|_{D'}$ and $f_1|_{D'}$ are compactly homotopic.
\end{definition}
Give $\Ccal_l(D', H)$ the uniform convergence topology so that $\pi_0 (\Ccal_l(D', H))$ is the set of compact homotopy classes relative to $l$. The homotopy class of a Fredholm map $f : H' \to H$ is dull since it is classified by $\ind l$ \cite{SpectralFlow}, but restricting to homotopies through $\Ccal_l(D', H)$ uncovers more interesting behaviour.

Let $f = l + c \in \Ccal_l(D', H)$. Suppose for now that $c(D')$ is contained in a finite dimensional subrepresentation $V \subset \Ucal$. Without loss of generality, we can assume that $(\im l_b)^\perp \subset V$ for all $b \in B$. Let $V' = l\inv(V)$, which is a vector bundle of rank $\dim V' = \dim V + \ind l$. Denote the restriction $f|_{D' \cap V'}$ by
\begin{align*}
    \psi_{f, V} &= f|_{D' \cap V'} : (D' \cap V', S' \cap V') \to (V, V \setminus \{0\})
\end{align*}
Let $W \supset V$ be a finite dimensional subrepresentation containing $V$ with $W = U \oplus V$ orthogonally. Let $W' = l\inv(W)$ so that $W' = \tU \oplus V$ orthogonally with $l|_{\Ut} : \Ut \to \Uund$ an isomorphism. For any map $g : (D' \cap V', S' \cap V') \to (V, V\setminus \{0\})$, define a suspension map
\begin{align*}
    &\Sigma^{\Ut}g : (D' \cap W', S' \cap W') \to (W, W\setminus \{0\}) \\
    &\Sigma^{\Ut}g(u + v) = l(u) + g(v).
\end{align*}
Note that for $w = u + v$, if $\Sigma^{\Ut}g(w) = 0$ then $g(v) = 0$ and $u = 0$, which implies that $w \notin S' \cap W'$. Let $[(A, B) ; (C,D)]$ denote the set of homotopy classes of maps from $(A, B)$ to $(C, D)$ where the homotopies are through maps of pairs. Then $\Sigma^{\Ut}$ descends to a map of homotopy classes
\begin{align}\label{E:SigmaMaps}
    \Sigma^{\Ut} : [(D' \cap V', S' \cap V') ; (V, V\setminus \{0\})] &\to [(D' \cap W', S' \cap W') ; (W, W \setminus \{0\})].
\end{align}
Define 
\begin{align}\label{E:PiDef}
    \Pi_l(D', H) = \Colim_{V \subset \Ucal} [(D' \cap V', S' \cap V') ; (V, V\setminus \{0\})]
\end{align}
where the colimit is taken over the maps given by (\ref{E:SigmaMaps}). Any map $g : (D' \cap V', S' \cap V') \to (V, V\setminus \{0\})$ defines a class $[g] \in \Pi_l(D', H)$ by suspension. The map $\psi_{f, V}$ depends on the choice of subrepresentation $V$, however the class of $[\psi_{f, V}] \in \Pi_l(D', H)$ does not. 
\begin{lemma}\label{L:PhiWellDefined1}
    For $f = l + c \in \Ccal_l(D', H)$, suppose that $V$ and $W$ are finite dimensional subrepresentations which both contain $c(D')$ and surject onto $\coker l$. Then $[\psi_{f, V}]$ and $[\psi_{f,W}]$ are equal classes of $\Pi_l(D', H)$.
\end{lemma}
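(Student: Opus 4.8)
The plan is to use that $\Pi_l(D',H)$ is a filtered colimit, so I would first reduce to the case $V \subseteq W$ and then recover the general statement by comparing both $[\psi_{f,V}]$ and $[\psi_{f,W}]$ with $[\psi_{f,Z}]$, where $Z = V + W$ is again a finite dimensional subrepresentation containing $c(D')$ and surjecting onto $\coker l$ (it contains $V$). So suppose $V \subseteq W$, write $W = U \oplus V$ orthogonally and $W' = l\inv(W) = \tU \oplus V'$ orthogonally, with $l|_{\tU} : \tU \to \Uund$ an isomorphism. Under the connecting map (\ref{E:SigmaMaps}) the class $[\psi_{f,V}]$ goes to $[\Sigma^{\tU}\psi_{f,V}]$, so the goal is to produce a homotopy of pairs $(D'\cap W', S'\cap W') \to (W, W\setminus\{0\})$ from $\Sigma^{\tU}\psi_{f,V}$ to $\psi_{f,W} = f|_{D'\cap W'}$.

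For this I would use the linear track in the $\tU$-direction: for $w = u + v \in D'\cap W'$ with $u \in \tU$ and $v \in V'$, set
\begin{align*}
    g_t(u+v) = l(u) + l(v) + c(tu+v), \qquad t \in [0,1].
\end{align*}
Because $\tU$ and $V'$ are orthogonal, $|tu+v| \le |u+v|$, so $tu+v$ stays in $D'$ (whose fibres are closed balls about $0$) and in particular $v \in D'\cap V'$; since $c(D') \subseteq V$ the map $g_t$ is valued in $U \oplus V = W$, and $(w,t)\mapsto g_t(w)$ is continuous (with $g_t - l$ compact). At $t=1$ one has $g_1 = \psi_{f,W}$, and at $t=0$ one has $g_0(u+v) = l(u) + l(v) + c(v) = l(u) + f(v) = \Sigma^{\tU}\psi_{f,V}(u+v)$.

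The one step needing care — and the main obstacle — is checking that $g_t$ is non-vanishing on $S'\cap W'$. If $g_t(u+v) = 0$, then its $U$-component $l(u)$ vanishes, so $u = 0$ as $l|_{\tU}$ is injective, whence $g_t(v) = l(v)+c(v) = f(v) = 0$; but if $u = 0$ and $u+v \in S'$ then $v \in S'\cap V'$, contradicting that $f|_{S'}$ is non-vanishing for $f \in \Ccal_l(D',H)$. So the linear homotopy works precisely because orthogonality of $\tU$ and $V'$ keeps the track inside $D'$ and forces its potential zero locus onto $S'\cap V'$, where $f$ is already controlled; with the non-vanishing established, $g_t$ is the desired homotopy of pairs and $[\psi_{f,V}] = [\psi_{f,W}]$ in $\Pi_l(D',H)$.
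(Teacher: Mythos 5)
Your proof is correct and takes essentially the same approach as the paper: reduce to $V \subseteq W$, decompose $W' = \tU \oplus V'$, and run a linear homotopy from $\Sigma^{\tU}\psi_{f,V}$ to $\psi_{f,W}$ whose non-vanishing on $S'\cap W'$ is forced by $c(D') \subseteq V$ (so a zero kills the $l(u)$ component, pushing the point into $S'\cap V'$ where $f$ is already non-zero). The only cosmetic difference is that you interpolate the argument of $c$ via $c(tu+v)$, while the paper interpolates the value via $(1-t)c(v)+tc(v+u)$; both give the same endpoints and the same non-vanishing argument.
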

\begin{proof}
    Assume without loss of generality that $V \subset W$. As before write $W = U \oplus V$ and $W' = \tU \oplus V'$ orthogonally with $l|_\Ut : \Ut \to \Uund$ an isomorphism. For any element $u + v \in W'$ with $u \in \Ut$ and $v \in V'$, we have 
    \begin{align*}
        f|_{S' \cap W'}(u+v) = l(u) + l(v) + c(u + v).
    \end{align*}
    Define a homotopy 
    \begin{align*}
        F_t(u + v) = l(u) + l(v) + (1-t)c(v) + tc(v + u).
    \end{align*} 
    This is a homotopy from $F_0 = \Sigma^{\Ut}\psi_{f, V}$ to $F_1 = \psi_{f, W}$. Additionally, $F_t$ is non-zero on $S' \cap W'$ for all $t \in [0,1]$. To see this, recall that $c(D') \subset V$, hence $F_t(u+v) = 0$ implies that $l(u) = 0$. It follows that $u = 0$ and $|v| = 1$. But $f|_{S' \cap W'}(v) = f|_{S' \cap V'}(v)$, which does not vanish. Thus the classes $[\psi_{f, V}]$ and $[\psi_{f,W}]$ are equal.
\end{proof}
\begin{lemma}\label{L:PhiWellDefined2}
    Suppose $f_t = l + c_t : [0,1] \to \Ccal_l(D', H)$ is a compact homotopy with $c_0(D') \cup c_1(D') \subset V$ for some finite dimensional subrepresentation $V \subset \Ucal$ that surjects onto $\coker l$. Then $\psi_{f_0, V}$ and $\psi_{f_1, V}$ are homotopic as maps of pairs.
\end{lemma}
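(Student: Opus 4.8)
The plan is to deal with the main difficulty — that the intermediate maps $c_t$ need not take values in $V$ — by enlarging $V$ to a subrepresentation $W$ that \emph{approximately} contains the image of the entire homotopy, running the homotopy in $W$ after projecting onto it, and then identifying its two endpoints with suspensions of $\psi_{f_0,V}$ and $\psi_{f_1,V}$ by means of the explicit homotopy built in the proof of Lemma~\ref{L:PhiWellDefined1}.

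First I would record two uniform estimates coming from compactness. Since $B$ is a finite CW complex, the disk bundle $D'$ and its sphere bundle $S'$ are compact, so $[0,1]\times S'$ is compact; as $(t,h)\mapsto f_t(h)$ is continuous and nowhere zero on it, there is a $\delta>0$ with $|f_t(h)|\ge \delta$ for all $t\in[0,1]$ and $h\in S'$. Secondly, since $t\mapsto c_t$ is continuous into $\Ccal_l(D',H)$ with its uniform topology and $[0,1]$ is compact, a standard compactness argument — using that each $c_t$ is a compact map together with the uniform convergence $c_t\to c_{t_0}$ — shows that $\{ c_t(h): t\in[0,1],\ h\in D'\}$ has compact closure $C$. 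Cover $C$ by finitely many balls of radius $\delta/2$ and let $W\supseteq V$ be a finite dimensional $G$-invariant subspace containing their centres (available via the isotypical decomposition of $\Ucal$); then $|(1-p_W)c_t(h)|<\delta/2$ for all $t$ and $h$, and this bound persists under any further enlargement of $W$.

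Next, write $W=U\oplus V$ and $W'=l\inv(W)=\tU\oplus V'$ orthogonally, so $l|_{\tU}:\tU\to\Uund$ is an isomorphism, and set $g_t = p_W\circ f_t|_{D'\cap W'} : (D'\cap W', S'\cap W')\to (W, W\setminus\{0\})$. Since $l(W')\subseteq W$ we have $(1-p_W)f_t=(1-p_W)c_t$ on $D'\cap W'$, so on $S'\cap W'$, $|g_t|\ge |f_t|-|(1-p_W)c_t| > \delta/2 > 0$; hence $g_t$ is a genuine homotopy of maps of pairs. At the endpoints, $c_0(D')\cup c_1(D')\subseteq V\subseteq W$ forces $p_Wc_i=c_i$, so $g_i=f_i|_{D'\cap W'}$, and the homotopy $F_s(u+v)=l(u)+l(v)+(1-s)c_i(v)+s\,c_i(v+u)$ from the proof of Lemma~\ref{L:PhiWellDefined1} identifies $g_i$ with $\Sigma^{\tU}\psi_{f_i,V}$ up to homotopy of pairs, for $i=0,1$. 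Concatenating these homotopies shows $\Sigma^{\tU}\psi_{f_0,V}$ and $\Sigma^{\tU}\psi_{f_1,V}$ are homotopic as maps of pairs, i.e.\ $[\psi_{f_0,V}]=[\psi_{f_1,V}]$ in $\Pi_l(D',H)$ — which is the statement needed for the later construction, and which upgrades to a homotopy of maps of pairs between $\psi_{f_0,V}$ and $\psi_{f_1,V}$ themselves once $V$ is large enough that suspension by the trivial bundle $\tU$ is injective on the relevant homotopy set.

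I expect the main obstacle to be precisely this intermediate behaviour. For $0<t<1$ the map $c_t$ genuinely escapes every finite dimensional subspace, so one cannot simply restrict the homotopy to $D'\cap V'$; the fix above exchanges this for two points that must be handled carefully: that a \emph{single} $W$ can be chosen to work uniformly in $t$, which is exactly where compactness of the homotopy as a whole — rather than of each $c_t$ separately — enters, and that after projecting to $W$ the homotopy $g_t$ remains non-vanishing on the sphere bundle $S'\cap W'$, which is what the uniform lower bound $\delta$ guarantees. Everything else is bookkeeping with the splittings $W=U\oplus V$, $W'=\tU\oplus V'$ and the suspension homotopy imported from Lemma~\ref{L:PhiWellDefined1}.
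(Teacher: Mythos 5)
Your proposal identifies and fixes a genuine gap in the paper's own proof, which is worth flagging. The paper simply asserts that $(f_t)|_{D'\cap V'}$ maps $(D'\cap V', S'\cap V')$ into $(V, V\setminus\{0\})$ for all $t$. But $f_t = l + c_t$ with only $l(V')\subseteq V$; the hypothesis places $c_0(D')$ and $c_1(D')$ in $V$, not $c_t(D')$ for $0<t<1$, so the intermediate restrictions do not land in $V$. (Even reading $\psi_{f_t,V}$ as $p_V f_t|_{D'\cap V'}$ does not rescue the paper's sentence, since non-vanishing of $f_t$ on $S'\cap V'$ does not give non-vanishing of $p_V f_t$ there.) Your route — choose a uniform $\delta>0$, enlarge to a single $W\supseteq V$ that $\delta/2$-approximately contains $\bigcup_t c_t(D')$, run $g_t = p_W f_t|_{D'\cap W'}$, and identify the endpoints with $\Sigma^{\Ut}\psi_{f_i,V}$ via the explicit homotopy from Lemma~\ref{L:PhiWellDefined1} — is the correct repair, and it mirrors the method of Corollary~\ref{C:fdRangeHomotopic}.

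Two points in your write-up need attention, though. First, $S'$ is \emph{not} compact: it is a sphere bundle inside the infinite-dimensional Hilbert bundle $H'$, so $[0,1]\times S'$ is not compact and the cited argument for the uniform $\delta$ fails as stated. The correct source of the uniform bound is the weak-compactness argument of Lemma~\ref{L:FredholmDelta}, applied to the bounded Fredholm family $F:D'\times[0,1]\to H$ viewed over the compact base $B\times[0,1]$ (this is exactly the maneuver used in the proof of Definition~\ref{T:EquivCohomotopyClass} to handle a homotopy of presentations). Likewise your precompactness claim for $\bigcup_t c_t(D')$ is fine, but the justification should be the totally-bounded argument (cover $[0,1]$ by finitely many $t_i$ with $\sup_{D'}|c_t - c_{t_i}|$ small and use that each $c_{t_i}(D')$ is precompact), not compactness of $D'$. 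Second, what you actually produce is the equality $[\psi_{f_0,V}]=[\psi_{f_1,V}]$ in $\Pi_l(D',H)$, not a homotopy of $\psi_{f_0,V}$ and $\psi_{f_1,V}$ themselves; the Freudenthal desuspension step is left as a remark. That weaker conclusion is in fact all that Theorem~\ref{T:SchwarzCompactHtpyClasses} uses, so your argument suffices for the paper's purposes, but it does not literally prove the lemma as stated without further work.
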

\begin{proof}
    This follows immediately from Definition \ref{D:CompactHomotopy} since $f_t|_{S'}$ is non-vanishing, hence the restriction 
    \begin{align*}
        (f_t)|_{D' \cap V'} : (D' \cap V', S' \cap V') \to (V, V \setminus \{0\})
    \end{align*}
    is a map of pairs for all $t$ with $f_0 = \psi_{f_0, V}$ to $f_1 = \psi_{f_1, V}$.
\end{proof}
Not all elements $f = l + c \in \Ccal_l(D', H)$ are nice enough to have $c(D')$ contained in a finite dimensional subrepresentation, however it is true that every compact homotopy class has such a representative.
\begin{lemma}\label{L:FredholmDelta}
    For any $f \in \Ccal_l(D', H)$, there exists $\delta > 0$ such that $|f(h)| > \delta$ for all $h \in S'$.
\end{lemma}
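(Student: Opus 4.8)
The plan is to argue by contradiction, exploiting that $f$ is a compact perturbation of a Fredholm family to compensate for the non-compactness of the infinite-dimensional sphere bundle $S'$. Suppose no such $\delta$ exists. Then there is a sequence $h_n \in S'$, lying over points $b_n \in B$ and with $|h_n| = R$, where $R$ is the constant radius of the fibres of $D'$, such that $|f(h_n)| = |l(h_n) + c(h_n)| \to 0$. Since $B$ is compact we may pass to a subsequence with $b_n \to b$, and since $c$ maps the disk bundle $D' \supseteq S'$ to a precompact set we may pass to a further subsequence with $c(h_n) \to y$ in $H$; hence $l(h_n) \to -y$.

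The crux is to upgrade this to convergence of the $h_n$ themselves, which is exactly where the Fredholm hypothesis is needed: a bounded sequence whose image under a Fredholm family converges must subconverge. I would make this precise as follows. Work in a trivialisation of $H'$ and $H$ near $b$, so that $l$ becomes a continuous family $\{l_{b'}\}$ of Fredholm operators on $\Ucal$; by Atiyah's lemma --- precisely as invoked in the proof of Proposition \ref{P:admissibleSubspace} --- choose a finite-dimensional subrepresentation $V_0$ with $(\im l_{b'})^\perp \subseteq V_0$ for all $b'$, so that the augmented maps $\hat l_{b'}(x,v) = l_{b'}(x) + v$ form a continuous family of surjective Fredholm operators of constant index. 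Then $\ker \hat l_{b'}$ is a finite-rank subbundle of $\Ucal \oplus V_0$ (surjectivity makes $\hat l_{b'}\hat l_{b'}^*$ invertible, so the kernel projections vary continuously), and $\hat l_{b'}$ restricts to an isomorphism from $(\ker \hat l_{b'})^\perp$ onto $\Ucal$ whose inverse is bounded uniformly in $b'$ by compactness of $B$. Decomposing $(x_n, 0)$ along $\ker \hat l_{b_n} \oplus (\ker \hat l_{b_n})^\perp$ and using $\hat l_{b_n}(x_n, 0) = l_{b_n}(x_n) \to -y$, the uniform bound forces the component in the orthogonal complement to be Cauchy, while the component in $\ker \hat l_{b_n}$ is bounded and lies in a convergent family of finite-dimensional spaces, hence subconverges. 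Thus $h_n$ subconverges to some $h \in S'$.

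Finally, continuity of $f$ gives $f(h) = \lim_n f(h_n) = 0$, contradicting the assumption $f|_{S'} \neq 0$; therefore $\inf_{S'} |f| > 0$. I expect the middle step to be the only real obstacle --- extracting ``properness on bounded sets'' from the Fredholm-plus-compact structure of $f$ --- and the single point there requiring attention is that the family $b' \mapsto l_{b'}$ is continuous in a strong enough sense for the kernel subbundle and the relevant inverses to vary continuously, which is part of the standing hypotheses on the Fredholm bundle map $l$.
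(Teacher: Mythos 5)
Your proof is correct, and it takes a noticeably different route from the paper's in how it handles the dependence on the base $B$. The paper fixes $b \in B$ and argues entirely within the single fibre: it passes to a weakly convergent subsequence $h_n \rightharpoonup h$ in $H'_b$ (using weak compactness of the bounded closed set $S'_b$), decomposes $h_n$ along $\ker l_b \oplus (\ker l_b)^\perp$, and upgrades to strong convergence using that $l_b$ is an isomorphism on $(\ker l_b)^\perp$ with closed image and that $\ker l_b$ is finite-dimensional. It then closes with the one-line assertion that compactness of $B$ makes the resulting $\delta_b$ uniform. You instead take the sequence $h_n$ to lie over varying base points $b_n \to b$ from the start, which makes the fiberwise decomposition unavailable (the spaces $\ker l_{b_n}$ move), and compensate by introducing the Atiyah-stabilized surjective family $\hat l_{b'}$ on $\Ucal \oplus V_0$, whose kernel is a genuine finite-rank subbundle with continuously varying orthogonal complement and uniformly bounded inverse. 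This buys you a direct argument for the uniformity over $B$ — exactly the step the paper leaves terse — at the cost of invoking the stabilization and the continuity of the kernel subbundle. You also bypass weak compactness altogether: the finite-dimensional kernel component subconverges because it is a bounded sequence in a continuously varying finite-rank bundle, while the complementary component converges outright via the uniform inverse bound. Both proofs ultimately rest on the same mechanism (Fredholm-plus-compact forces properness on bounded sets), so the endgame — strong subconvergence $h_n \to h \in S'$, then $f(h)=0$ contradicts $f|_{S'} \neq 0$ — is identical. One small point to tighten: where you say the uniform bound ``forces the component in the orthogonal complement to be Cauchy,'' you should spell out that the inverses $\bigl(\hat l_{b_n}|_{(\ker \hat l_{b_n})^\perp}\bigr)^{-1}$ converge to the corresponding inverse at $b$ (since the operators and the kernel projections both converge), so that $p_n$ converges because it is the image of the convergent sequence $l_{b_n}(x_n)$ under a convergent family of operators; ``Cauchy from a uniform bound'' alone would not follow because the operators $\hat l_{b_n}$ differ for different $n$.
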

\begin{proof}
    Fix $b \in B$ and suppose that there is a sequence $h_n \in S_b'$ with $|f(h_n)| \to 0$. By the weak compactness of $S_b'$, after passing to a subsequence it can be assumed that $h_n \to h$ weakly for some $h \in H_b'$. By the compactness of $c$, after passing to a further subsequence it can be assumed that $c(h_n) \to a$ strongly for some $a \in \Ucal$. Now $l(h_n) = f(h_n) - c(h_n) \to -a$ strongly. Since $l_b$ is Fredholm, its image is closed and $a = l(v)$ for some $v \in (\ker l_b)^\perp$. Write $h_n = x_n + y_n$ for $x_n \in \ker l_b$ and $y_n \in (\ker l_b)^\perp$. Now $l(h_n) = l(y_n) \to -l(v)$. Since $l_b$ is an isomorphism from $(\ker l_b)^\perp$ onto its image, it follows that $y_n \to -v$. Further $x_n = h_n - y_n \to h + v$ weakly, but $\ker l_b$ is finite dimensional so $x_n \to h + v$ strongly as well. Thus $h_n \to h$ strongly and $h \in S_b'$ since $S_b'$ is closed. However $f(h_n) \to 0$ implies that $f(h) = 0$, contradicting the assumption that $f|_{S_b'} \neq 0$. Since $B$ is compact, such a delta can be chosen simultaneously over all fibres.
\end{proof}
\begin{remark}\label{R:FredholmDelta}
    In fact, suppose that $f : H' \to H$ is a bounded Fredholm map with $f\inv(0) \cap S' = \emptyset$. The above argument can be extended to show that there is a $\delta > 0$ with $|f(h)| > \delta$ for every $h \in \overline{H' - D'}$. First choose a closed disk $E'$ such that $|f(h)| \geq 1$ for all $h \notin E'$, which we can assume contains $D'$. Now the argument in the lemma easily extends to the closed, bounded set $\overline{E' - D'}$.
\end{remark}
\begin{corollary}\label{C:fdRangeHomotopic}
    Every element $f = l + c_0 \in \Ccal_l(D', H)$ is compactly homotopic to a map $g = l + c_1 \in \Ccal_l(D', H)$ with $c_1(D')$ contained in a finite dimensional subrepresentation.
\end{corollary}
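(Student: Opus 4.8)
The goal is to approximate an arbitrary $f = l + c_0 \in \Ccal_l(D', H)$ by one whose compact part has finite-dimensional image, without moving the zero set off $D'$ in a way that destroys the homotopy. The natural device is to post-compose the compact part with a finite-dimensional linear projection: given a finite-dimensional subrepresentation $V \subset \Ucal$, set $c_1 = p_V \circ c_0$ and $g = l + c_1$. Since $c_0$ is compact, $c_0(D')$ is precompact, so for any $\eps > 0$ we can choose $V$ large enough (using isotypical decomposition to keep it $G$-invariant, and enlarging to contain all the $(\im l_b)^\perp$ via \cite[Proposition A5]{AtiyahFredholm}) that $|c_0(h) - p_V c_0(h)| < \eps$ for all $h \in D'$. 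The map $c_1$ is still compact (it factors through a finite-dimensional space), and $c_1(D') \subset V$.

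The key point is choosing $\eps$ correctly so that the straight-line homotopy $c_t = (1-t) c_0 + t\, p_V c_0$ stays admissible, i.e. $f_t = l + c_t$ is non-vanishing on $S'$ for all $t$. By Lemma \ref{L:FredholmDelta} there is $\delta > 0$ with $|f(h)| > \delta$ for all $h \in S'$. Taking $\eps < \delta$ we get, for $h \in S'$,
\begin{align*}
    |f_t(h)| \geq |f(h)| - t|c_0(h) - p_V c_0(h)| > \delta - \eps > 0,
\end{align*}
so $f_t \in \Ccal_l(D', H)$ for every $t \in [0,1]$, and $c_t$ is a path of compact maps. Hence $f = f_0$ and $g = f_1$ are compactly homotopic in the sense of Definition \ref{D:CompactHomotopy}, with $c_1(D') = p_V c_0(D') \subset V$ finite dimensional.

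**Main obstacle.** There is essentially no hard analysis here; the only thing requiring care is the simultaneous choice of $V$ over all of $B$. Precompactness of $c_0(D')$ holds fibrewise, but since $D' \to B$ is a disk bundle over the compact base $B$, the total space $D'$ is compact and $c_0(D')$ is genuinely precompact in $\Ucal$ (after trivialising $H \cong \Ucal \times B$), so a single finite $\eps$-net, and hence a single finite-dimensional $V$, works for all fibres at once — exactly as in the proof sketch of Proposition \ref{P:admissibleSubspace}. One should also note that enlarging $V$ to be $G$-invariant and to contain the cokernel directions does not disturb the estimate, since $p_{V'} \geq p_V$ only improves the approximation; this is the only bookkeeping step, and it is routine.
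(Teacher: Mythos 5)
Your proof is correct and follows essentially the same route as the paper: choose $\delta$ from Lemma~\ref{L:FredholmDelta}, cover the precompact set $c_0(D')$ by finitely many $\eps$-balls with $\eps < \delta$ to produce a finite-dimensional $G$-invariant $V$, set $g = l + p_V c_0$, and verify that the straight-line homotopy $F_t = l + (1-t)c_0 + t\,p_V c_0$ stays non-vanishing on $S'$ by the triangle inequality. The paper fixes $\eps = \delta/2$ for definiteness, but this is only a cosmetic difference from your $\eps < \delta$, and your extra remarks on uniformity over the compact base $B$ and on $p_V c_0$ being compact are both consistent with, and implicit in, the paper's argument.
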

\begin{proof}
    From Lemma \ref{L:FredholmDelta}, choose $\delta > 0$ such that $|f(h)| > \delta$ for all $h \in S'$. Let $\eps = \frac{\delta}{2}$. Since $D'$ is bounded and $c$ is compact, the closure of $c(D')$ can be covered by finitely many balls of radius $\eps$ with centers $v_1,..., v_n$. Let $V$ be a finite dimensional subrepresentation that contains $\spann\{v_i\}$ and surjects onto $\coker l$. Set $V' = l\inv(V)$ and let $g = l + p_V c$. By construction, $|(1 - p_V)c(h)| < \eps$ for all $h \in D'$. Define a homotopy for $t \in [0,1]$ by 
    \begin{align*}
        F_t = l + (1-t)c + tp_V c.
    \end{align*}
    Notice that for $h \in S'$,
    \begin{align*}
        |F_t(h)| &= |l(h) + c(h) - t(1-p_V)c(h)|\\
        &\geq |f(h)| - t|(1-p_V)c(h)|\\
        &> \frac{\delta}{2}.
    \end{align*}
    Thus $F_t$ is a compact homotopy from $F_0 = f$ to $F_1 = g$.
\end{proof}
For any $f \in \Ccal_l(D', H)$, define $\psi_f = \psi_{g,V}$ for some choice of $g$ compactly homotopic to $f$ with $V$ a finite dimensional subrepresentation that contains $c(D')$ and surjects onto $\coker l$. The map $f \mapsto [\psi_f]$ identifies $\pi_0(\Ccal_l(D', H))$ with a subset of $\Pi_l(D', H)$, which is a result originally due to Schwarz \cite{Schwarz}.
\begin{theorem}[\cite{Berger} Theorem 5.3.20]\label{T:SchwarzCompactHtpyClasses}
    Let $l : H' \to H$ be a linear Fredholm operator and fix a closed disk bundle $D' \subset H'$ with bounding sphere bundle $S'$. The map 
    \begin{align}
        \Psi_{D'} : \pi_0(\Ccal_l(D', H)) &\to \Pi_l(D', H) \nonumber\\
        [f] &\mapsto [\psi_f]
    \end{align}
    is well-defined and injective.
\end{theorem}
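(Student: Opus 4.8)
The plan is to read off well-definedness directly from Lemmas~\ref{L:PhiWellDefined1} and~\ref{L:PhiWellDefined2} together with Corollary~\ref{C:fdRangeHomotopic}, and to prove injectivity by ``desuspending'' a homotopy that a priori lives only on a finite dimensional slice $D'\cap W'$ into an honest compact homotopy on all of $D'$. All the auxiliary maps and homotopies built below inherit the $G$-action from $l$, from the compact perturbations, and from the orthogonal projections used.

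\emph{Well-definedness.} Given $f\in\Ccal_l(D',H)$, Corollary~\ref{C:fdRangeHomotopic} produces $g=l+c_g$ compactly homotopic to $f$ with $c_g(D')$ in a finite dimensional subrepresentation; enlarging via isotypical decomposition, choose a finite dimensional $G$-invariant $V\subset\Ucal$ with $c_g(D')\subset V$ and $(\im l_b)^\perp\subset V$ for all $b$ (so $V$ surjects onto $\coker l$), so that $\psi_{g,V}$ is defined, and set $\Psi_{D'}([f])=[\psi_{g,V}]$. For two such $V_1,V_2$ the sum $V_1+V_2$ is again $G$-invariant, contains $c_g(D')$ and surjects onto $\coker l$, so Lemma~\ref{L:PhiWellDefined1} gives $[\psi_{g,V_1}]=[\psi_{g,V_1+V_2}]=[\psi_{g,V_2}]$; this gives independence of $V$. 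For two valid choices $g_0,g_1$, both are compactly homotopic to $f$ hence to each other, and choosing $V$ with $c_{g_0}(D')\cup c_{g_1}(D')\subset V$ surjecting onto $\coker l$, Lemma~\ref{L:PhiWellDefined2} makes $\psi_{g_0,V}$ and $\psi_{g_1,V}$ homotopic as maps of pairs; this gives independence of $g$. Finally, if $f\simeq f'$ in $\Ccal_l(D',H)$ then any $g$ compactly homotopic to $f$ is also compactly homotopic to $f'$, so $\Psi_{D'}$ descends to $\pi_0(\Ccal_l(D',H))$.

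\emph{Injectivity: reduction to the slice.} Suppose $\Psi_{D'}([f_0])=\Psi_{D'}([f_1])$. By Corollary~\ref{C:fdRangeHomotopic} we may replace each $f_i$ by a compactly homotopic $g_i=l+c_i$ with $c_i(D')$ finite dimensional, so it suffices to prove $g_0\simeq g_1$. Choose a finite dimensional $G$-invariant $V$ with $c_0(D')\cup c_1(D')\subset V$ and $(\im l_b)^\perp\subset V$; by well-definedness $[\psi_{g_0,V}]=[\psi_{g_1,V}]$ in $\Pi_l(D',H)$. Unwinding the colimit $(\ref{E:PiDef})$ yields a finite dimensional subrepresentation $W\supset V$ with $W=U\oplus V$ and $W'=\Ut\oplus V'$ orthogonally and $l|_{\Ut}:\Ut\to\Uund$ an isomorphism, such that $\Sigma^{\Ut}\psi_{g_0,V}$ and $\Sigma^{\Ut}\psi_{g_1,V}$ are homotopic through maps of pairs $(D'\cap W',S'\cap W')\to(W,W\setminus\{0\})$. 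The homotopy $F_t$ from the proof of Lemma~\ref{L:PhiWellDefined1} identifies $\Sigma^{\Ut}\psi_{g_i,V}$ with $\psi_{g_i,W}=g_i|_{D'\cap W'}$ as maps of pairs, so concatenating gives a homotopy of pairs $G_t:(D'\cap W',S'\cap W')\to(W,W\setminus\{0\})$ from $g_0|_{D'\cap W'}$ to $g_1|_{D'\cap W'}$.

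\emph{Injectivity: the main step.} It remains to upgrade $G_t$ to a compact homotopy $\Gamma_t=l+\gamma_t:D'\to\Ucal$ from $g_0$ to $g_1$ which is non-vanishing on $S'$, and this is where the work is. Let $p_{W'}:H'\to W'$ be the fibrewise orthogonal projection, so that $p_{W'}(D')\subseteq D'\cap W'$ because $D'$ has fibres of constant radius. First replace each $g_i$ by $\tilde g_i=l+c_i\circ p_{W'}$: one checks $\tilde g_i\in\Ccal_l(D',H)$ and that $l+(1-s)c_i+s\,c_i\circ p_{W'}$ is a compact homotopy from $g_i$ to $\tilde g_i$, non-vanishing on $S'$ because it agrees with $g_i$ on $S'\cap W'$ and has nonzero $W^\perp$-component $p_{W^\perp}l(h)$ whenever $h\notin W'$ (since $h\notin W'\iff l(h)\notin W$). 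As $\tilde g_i(h)=g_i(p_{W'}h)+l\big((1-p_{W'})h\big)$ and $\tilde g_i$ agrees with $g_i$ on $D'\cap W'$, the slice homotopy $G_t$ also runs between $\tilde g_0|_{D'\cap W'}$ and $\tilde g_1|_{D'\cap W'}$. Now set
\begin{equation*}
    \Gamma_t(h)=G_t(p_{W'}h)+l\big((1-p_{W'})h\big),
\end{equation*}
so that $\Gamma_0=\tilde g_0$, $\Gamma_1=\tilde g_1$, and $\gamma_t(h)=G_t(p_{W'}h)-l(p_{W'}h)$ is compact, factoring through the finite dimensional $W$. Non-vanishing on $S'$: if $h\notin W'$ the $W^\perp$-component of $\Gamma_t(h)$ is $p_{W^\perp}l\big((1-p_{W'})h\big)\neq 0$ since $l$ is injective on $(W')^\perp$ and $(W')^\perp\cap W'=0$, while for $h\in S'\cap W'$ one has $\Gamma_t(h)=G_t(h)\neq 0$ as $G_t$ is a map of pairs. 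Thus $\tilde g_0\simeq\tilde g_1$, hence $g_0\simeq g_1$ and $[f_0]=[f_1]$. I expect this last step --- pinning down the correct extension $\Gamma_t$ and checking the non-vanishing on $S'$, which is precisely what forces the preliminary linearisation $g_i\mapsto\tilde g_i$ in the $(W')^\perp$-direction --- to be the only genuinely delicate point, everything else being bookkeeping with the three preceding lemmas.
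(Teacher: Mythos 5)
Your well-definedness argument mirrors the paper's (both read it off Lemmas~\ref{L:PhiWellDefined1} and~\ref{L:PhiWellDefined2} together with Corollary~\ref{C:fdRangeHomotopic}), but your injectivity argument takes a genuinely different route. The paper extends the slice homotopy $F_t:(D'\cap V')\times[0,1]\to V$ to all of $D'\times[0,1]$ by writing $F_t(v)=l(v)+\sum_i c_t^i(v)\,v_i$ and invoking the Tietze extension theorem on the scalar component functions $c_t^i$, then checks non-vanishing on $S'$ via the observation that $H_t(h)=0$ forces $l(h)\in V$ hence $h\in S'\cap V'$. You instead construct the extension explicitly, $\Gamma_t(h)=G_t(p_{W'}h)+l\bigl((1-p_{W'})h\bigr)$, after first replacing each $g_i$ by $\tilde g_i=l+c_i\circ p_{W'}$ so that the endpoints of $\Gamma$ land where they should. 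Both arguments are correct and hinge on the same two facts — that the perturbation is valued in the finite-dimensional $W$ (so boundedness gives compactness) and that for $h\in S'$ one either has $h\in W'$ (where the slice map is already non-vanishing) or $p_{W^\perp}l(h)\neq0$. Your construction has the virtue of making the compactness and boundedness of the extended homotopy manifest, whereas the Tietze route relies on the bounded version of the extension theorem (implicitly, in the paper) to get a compact perturbation; the price is the extra preparatory step $g_i\leadsto\tilde g_i$, which you correctly identify and handle. A minor point worth tightening: where you assert $p_{W^\perp}l\bigl((1-p_{W'})h\bigr)\neq0$ for $h\notin W'$, the cleanest justification is that $x\in(W')^\perp$ with $l(x)\in W$ forces $x\in W'\cap(W')^\perp=0$ (you gesture at this with ``$l$ is injective on $(W')^\perp$,'' but injectivity of $l$ alone is not the point — you also need $l\bigl((W')^\perp\bigr)\cap W=0$, which follows because $W'=l^{-1}(W)$).
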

\begin{proof}
    Lemma \ref{L:PhiWellDefined1} and \ref{L:PhiWellDefined2} show that $\Psi_{D'} : \pi_0(\Ccal_l(D', H)) \to \Pi_l(D', H)$ is well defined. To prove injectivity, suppose $f = l + c_0$ and $g = l + c_1$ are elements of $\Ccal_l(D', H)$ with $[\psi_f] = [\psi_g]$. After applying $\Sigma$ if necessary, we can assume that there is a compact homotopy $F : (D' \cap V') \times [0,1] \to V$ with $F_0 = f$ and $F_1 = g$ for $V \subset \Ucal$ a finite dimensional subrepresentation that contains $c_0(D') \cup c_1(D')$ and surjects onto $\coker l$. To show that $f$ and $g$ are compactly homotopy, we must extend $F$ to $D' \times [0,1]$.

    Let $v_1,...,v_n$ be an orthonormal basis for $V$ and write $F_t(v) = l(v) + \sum_{i=1}^n c_t^i(v) v_i$ with $c_t^i(v) = \<c_t(v), v_i\>$. Since $(D' \cap V') \times [0,1]$ is a closed subset of $D' \times [0,1]$, the Tietze extension theorem guarantees the existence of a continuous extension $c^i_t : D' \times [0,1] \to \R$ for all $t \in [0,1]$. Define 
    \begin{align*}
        H_t &: D' \times [0,1] \to \Ucal\\
        H_t(v) &= l(v) + \sum_{i=1}^n c^i_t(v) v_i 
    \end{align*}
    It remains to show that $H_t$ is non-vanishing on $S'$ for all $t$. If $H_t(h) = 0$ for $h \in S'$, then $l(h) \in V$. Thus $h \in l\inv(V) = V'$ and $h \in S' \cap V'$. Therefore $H_t(h) = F_t(h) \neq 0$. Thus $H_t$ is a compact homotopy from $f$ to $g$.
\end{proof}

\subsection{Equivalence}\label{S:fdApproxEquivalence}

Let $f = l + c : H' \to H$ be a bounded Fredholm map and $V \subset \Ucal$ an admissible subrepresentation with $V' = l\inv(V)$. Recall that the Bauer-Furuta finite dimensional approximation $\vphi_f$ is given by 
\begin{align*}
    \vphi_f = \rho_V f|_{S_{V'}} : (S_{V'}, B_\infty) \to (S_{V}, \infty)
\end{align*}
This maps factors through the Thom space $TV'$. Let $\Pcal_l(H', H)$ denote the set of equivariant bounded Fredholm maps $f : H' \to H$ relative to $l$. Equip $\Pcal_l(H', H)$ with the topology induced by the uniform metric on $S_H$. Bauer-Furuta approximation defines a map 
\begin{align*}
    \Phi : \Pcal_l(H', H) &\to \pi_{G, \Ucal}^0(B ; \ind l) \\
    f &\mapsto [\vphi_f].
\end{align*}
Alternatively, let $D' \subset H'$ be a closed disk bundle with bounding sphere bundle $S'$ such that $f\inv(0) \subset D'$ and $f\inv(0) \cap S' = \emptyset$, which is guaranteed to exist since $f$ is bounded. Recall that $p_V : H \to V$ is the orthogonal projection and assume for now that $p_V f$ does not vanish on $S' \cap V'$. Then the Schwarz approximation of $f$ is given by
\begin{align*}
    \psi_f &= p_V f|_{D' \cap V'}: (D' \cap V', S' \cap V') \to (S_{V}, S_{V} \setminus \{0\}).
\end{align*}
Schwarz approximation defines another map 
\begin{align}
    \Psi_{D'} : \pi_0(\Ccal_l(D', H)) &\to \Pi_l(D', H) \nonumber\\
    [f] &\mapsto [\psi_f]
\end{align}
We will leverage the properties of Schwarz approximation to prove that $\Phi$ descends to a well defined map from $\pi_0(\Pcal_l(H', H))$ to $\pi_{G, \Ucal}^0(B ; \ind l)$ and that this map is a bijection. At a surface level, it looks as if Schwarz approximation depends on the appropriately chosen disk bundle $D' \subset H'$. However, enlarging $D'$ does not change the Schwarz approximation of $f$ by the following lemma.
\begin{lemma}\label{L:ExtendCptHtpy}
    Two elements $f_0, f_1 \in \Pcal_l(H', H)$ are homotopic through bounded Fredholm maps if and only if they are compactly homotopic on some disk bundle $D' \subset H'$ that contains $f_0\inv(0) \cup f_1\inv(0)$. 
\end{lemma}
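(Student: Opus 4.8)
The statement is a biconditional, and I would treat the two implications separately; the forward one is essentially bookkeeping. A homotopy through bounded Fredholm maps from $f_0$ to $f_1$ is the same thing as a single bounded Fredholm map $F$ over the compact base $B \times [0,1]$, relative to $l$ pulled back trivially. By boundedness, $F\inv$ of the closed unit disk bundle of $H$ lies in a disk bundle of $H' \times [0,1]$, necessarily of the form $D_0' \times [0,1]$, so $|F| > 1$ off $D_0' \times [0,1]$. Let $D'$ be any disk bundle over $B$ containing $D_0'$ together with $f_0\inv(0) \cup f_1\inv(0)$. Then for each $t$ the perturbation $F_t - l$ is compact (a slice of the compact map $F - l$) and $F_t$ is non-vanishing on $S'$ (as $|F_t| > 1$ off $D'$), so $t \mapsto F_t$ is a compact homotopy on $D'$ joining $f_0$ and $f_1$, and $D'$ contains the required zero sets.

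For the converse I am handed a compact homotopy $f_t = l + c_t$ on a disk bundle $D'$ of radius $R$, with $f_t|_{S'}$ non-vanishing for all $t$ and $f_0\inv(0) \cup f_1\inv(0) \subset D'$, and I must promote it to a homotopy of bounded Fredholm maps on all of $H'$. First I would rerun the argument of Lemma~\ref{L:FredholmDelta} with the parameter $t$ adjoined — the total perturbation $D' \times [0,1] \to \Ucal$ is still compact and $B \times [0,1]$ is still compact — to get a uniform $\delta > 0$ with $|f_t(h)| > \delta$ for all $h \in S'$, $t \in [0,1]$, and note that $f_0, f_1$ are automatically non-vanishing on $\overline{H' - D'}$ since their zero sets lie in $D'$ (with a uniform lower bound there by Remark~\ref{R:FredholmDelta}). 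The plan is then to extend the homotopy over the collar $\overline{H' - D'} \cong S' \times [R, \infty)$. Over this collar $f_0$ and $f_1$ are non-vanishing and, by boundedness, tend to infinity, so their fibrewise stereographic extensions carry the closure of $S_{H'} \setminus D'$ — again a disk bundle, centred now on the section at infinity $B_\infty$ — into the complement of the zero section in $S_H$, which deformation-retracts onto $B_\infty$. Using this, together with the contractibility of the fibres $S(\Ucal)$ of the sphere bundle $S' \to B$, I would homotope $f_0$ and $f_1$ — through bounded Fredholm maps that stay non-vanishing on $S'$ — until they agree on $\overline{H' - D'}$ and the induced compact homotopy between them on $D'$ can be chosen rel $S'$; the obstruction to the latter lives in $\pi_1$ of a mapping space out of $S'$, and is annihilated by the freedom in the boundary values of the collar homotopy, since $\overline{H' - D'}$ deformation-retracts onto $S'$. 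Once $f_0$ and $f_1$ coincide outside $D'$ and differ on $D'$ by a compact homotopy rel $S'$, the required global homotopy is just that compact homotopy on $D'$ extended by the common value outside, and one verifies directly that it is a bounded Fredholm map over $B \times [0,1]$.

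The whole difficulty lies in the converse, and precisely in keeping every intermediate map bounded Fredholm relative to $l$ while reorganising the maps over the collar. The naive straight-line homotopy $(1-t) f_0 + t f_1$ is useless: it can leave $\Pcal_l(H', H)$, for instance when $f_0 = l + P$ and $f_1 = l - P$ for $P$ built from a projection onto a nonzero $\ker l$, in which case the midpoint is $l$ itself, which is not bounded. The interpolation over the collar must instead transport the discrepancy $c_1 - c_0$ out towards infinity, using the infinite-dimensionality of $\Ucal$ to stay inside $\Pcal_l(H', H)$; the topology required — nullhomotopies on the complementary disk bundle and the realisation of a prescribed homotopy on $S'$ as a boundary — is supplied by the contractibility of infinite-dimensional spheres and of Thom spaces of infinite-dimensional Hilbert bundles, while boundedness survives because the relevant deformation retractions, such as $\rho_V$ from \eqref{E:OrthCompDefRet} (which after rescaling is norm-nondecreasing), never pull values towards zero. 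Verifying compactness of the perturbations manufactured this way, and uniform boundedness of the collar homotopy, is the technical heart of the argument.
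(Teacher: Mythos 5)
Your forward direction is fine and matches the paper: pass from the path $t \mapsto f_t$ to a single bounded Fredholm map over $B \times [0,1]$ (this does require a short argument — continuity in the uniform-convergence topology on $S_H$ together with compactness of $[0,1]$ gives both the compactness of the total perturbation and a single disk bundle containing every $f_t\inv(0)$ — but it goes through, and your observation that the straight-line interpolation can exit $\Pcal_l(H',H)$ when $\ker l \neq 0$ is also correct).

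The converse, however, is a plan rather than a proof, and the plan has a genuine gap that you acknowledge yourself. You propose to (a) deform $f_0$ and $f_1$, through bounded Fredholm maps, until they agree on $\overline{H' - D'}$, and then (b) arrange the given compact homotopy on $D'$ to be rel $S'$, citing an obstruction in $\pi_1$ of a mapping space that is supposedly killed by the freedom in step (a). Neither (a) nor the vanishing of the obstruction in (b) is carried out, and the hard part — that every intermediate map stays a bounded compact perturbation of $l$ — is precisely what you defer to "the technical heart of the argument." That heart is the whole content of the lemma, so this does not close the converse.

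The missing idea is much more elementary than the obstruction-theoretic route you sketch. The paper extends the given compact homotopy $F_t : D' \to H$ to all of $H'$ by \emph{radial scaling}: for $|x| > R'$ (the radius of $D'$) set $F_t(x) = \tfrac{|x|}{R'} F_t\bigl(\tfrac{R'}{|x|}x\bigr)$. Linearity of $l$ gives $F_t(x) - l(x) = \tfrac{|x|}{R'}c_t\bigl(\tfrac{R'}{|x|}x\bigr)$, a bounded scalar times values of $c_t$ on the compact set $S'$, hence compact; and boundedness follows from Lemma \ref{L:FredholmDelta} since $F_t$ is bounded away from zero on $S'$. This already gives $[F_0] = [F_1]$ in $\pi_0(\Pcal_l(H',H))$. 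The remaining point — that $f_0$ is homotopic through bounded Fredholm maps to $F_0$, the radialization of its own restriction $f_0|_{D'}$ — is handled by an explicit concentration homotopy $h_t$ fixing $f_0$ on $D'$ and, off $D'$, given by $h_t(x) = (|x|/R')^{t} f_0\bigl((|x|/R')^{-t}x\bigr)$; one checks directly that $h_t - l = \alpha(x)\,c_0(\alpha(x)^{-1}x)$ with $\alpha(x) = (|x|/R')^t$ is compact and that $h_t$ stays bounded because $f_0$ is. There is no need to first align $f_0$ and $f_1$ outside $D'$, no rel-$S'$ modification of the compact homotopy, and no $\pi_1$ obstruction to analyse.
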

\begin{proof}
    Suppose $f_t : [0,1] \to \Pcal_l(H', H)$ is a homotopy so that $f_t$ is a bounded Fredholm map for each $t \in [0,1]$. Compactness of the unit interval and continuity of the homotopy guarantees the existence of a disk $D' \subset H'$ such that $f_t\inv(0) \subset D'$ and $f_t\inv(0) \cap S' = \emptyset$ for all $t \in [0,1]$. Thus $f_0|_{D'}$ and $f_1|_{D'}$ are compactly homotopic. 

    Suppose instead that there is a disk bundle $D' \subset H'$ of radius $R'$ on which $f_0|_{D'}$ and $f_1|_{D'}$ are compactly homotopic. Let $F_t : D' \to H$ be such a homotopy with $F_t\inv(0) \cap S' = \emptyset$ for all $t \in [0,1]$. For any $x \in \overline{H' - D'}$, let $s = \frac{R'}{|x|}x \in S'$ and extend $F_t$ on $\overline{H' - D'}$ by 
    \begin{align*}
        F_t(x) &= \frac{|x|}{R'}F_t(s).
    \end{align*}
    Now for each $t \in [0,1]$, $F_t : H' \to H$ is Fredholm and since $0 \notin F_t(S')$, Lemma \ref{L:FredholmDelta} guarantees that $F_t$ is bounded. Hence $[F_0] = [F_1]$ as elements of $\pi_0(\Pcal_l(H', H))$. We claim that $f_0$ is homotopic to $F_0$ through bounded Fredholm maps. Such a homotopy $h_t : H' \to H$ is given by $h_t|_{D'} = f_0|_{D'}$ and, for $x \in \overline{H' - D'}$,
    \begin{align*}
        h_t(x) &= \left(\frac{|x|}{R'}\right)^t f_0\left(\left(\frac{|x|}{R'}\right)^{-t} x\right).
    \end{align*}
    Similarly, $[F_1] = [f_1]$ in $\pi_0(\Pcal_l(H', H))$ and the result follows.
\end{proof}
To simplify notation, set 
\begin{align*}
    D'_- &= D' \cap V' \\
    D'_+ &= \overline{S_{V'} - D'_-} \\
    S_0' &= S' \cap V'.
\end{align*}
That is, $D'_\pm$ are the two hemispheres of $S_{V'}$ with $S'_0$ the equator. Define an intermediary map 
\begin{align*}
    \phi_f &= \rho_V f|_{S_{V'}} : (S_{V'}, D'_+) \to (S_{V}, S_{V} \setminus \{0\}).
\end{align*}
This definition of $\phi_f$ assumes that $\rho_V f$ does not vanish on $D'_+$. The following lemma shows that the finite dimensional subrepresentation $V$ can be chosen to simultaneously make $\vphi_f, \psi_f$ and $\phi_f$ maps of pairs.
\begin{lemma}\label{L:ThreePhis}
    Let $f = l + c : H' \to H$ be a bounded Fredholm map and fix a disk bundle $D' \subset H'$ such that $f\inv(0) \subset D'$ and $f\inv(0) \cap S' = \emptyset$. There exists a finite dimensional subrepresentation $V \subset \Ucal$ such that: 
    \begin{enumerate}
        \item $V$ is an admissible subrepresentation as in Definition \ref{D:admissibleSubspace},
        \item $p_V f$ is non-vanishing on $S'_0$,
        \item $\rho_V f|_{S_{V'}}$ is non-vanishing on $D'_+$.
    \end{enumerate}
    These properties translate to any finite dimensional subrepresentation $W \supset V$.
\end{lemma}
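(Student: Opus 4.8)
The plan is to build $V$ in stages, first securing admissibility and then enlarging once more to kill the two non-vanishing conditions, exploiting the fact that all three conditions are \emph{open} conditions on the finite-dimensional approximation and that they only need to be checked on a compact region. First I would invoke Proposition \ref{P:admissibleSubspace} to produce an admissible subrepresentation $V_0 \subset \Ucal$, which already gives property (1). The point of the last sentence of the statement — that the properties persist for any $W \supset V$ — is essentially built into the definition of admissibility (condition (2) and (3) of Definition \ref{D:admissibleSubspace} quantify over all $W \supset V$), so the real content is to arrange (2) and (3) while not losing (1); monotonicity of (2) and (3) under enlarging $W$ will then be checked directly.

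Next I would address (2). By hypothesis $f\inv(0) \cap S' = \emptyset$, and $S'$ is a compact sphere bundle over the compact base $B$, so $|f|$ is bounded below by some $\delta > 0$ on $S'$ (this is exactly Lemma \ref{L:FredholmDelta} applied to $f = l + c$ on the disk $D'$; note $f\inv(0) \subset D'$ guarantees $f|_{S'} \neq 0$). Writing $f = l + c$ with $c$ compact, the closure of $c(S')$ is compact, so I can cover it by finitely many $\tfrac{\delta}{2}$-balls, enlarge $V_0$ to a $G$-invariant $V_1 \supset V_0$ containing the centres (using isotypical decomposition to stay $G$-invariant), and conclude $|(1 - p_{V_1}) c(h)| < \tfrac{\delta}{2}$ for all $h \in S'$. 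Then for $h \in S'_0 = S' \cap V_1'$ we have $l(h) \in V_1$, hence $p_{V_1} f(h) = p_{V_1}(l(h) + c(h)) = l(h) + p_{V_1} c(h)$, and $|p_{V_1} f(h)| \geq |f(h)| - |(1-p_{V_1})c(h)| > \delta - \tfrac{\delta}{2} = \tfrac{\delta}{2} > 0$, giving (2). The same estimate survives replacing $V_1$ by any $W \supset V_1$, since the bound $|(1-p_W)c(h)| \le |(1-p_{V_1})c(h)| < \tfrac\delta2$ only improves.

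For (3), I would first extend the lower bound on $|f|$ from $S'$ to the whole closed region $\overline{H' - D'}$: by boundedness of $f$ there is a larger disk $E' \supset D'$ with $|f| \ge 1$ outside $E'$, and Remark \ref{R:FredholmDelta} then yields a uniform $\delta' > 0$ with $|f(h)| > \delta'$ for all $h \in \overline{E' - D'}$, hence (shrinking $\delta'$) for all $h \in \overline{H' - D'}$. Now on $D'_+ = \overline{S_{V'} - D'_-}$, points lie in $\overline{H' - D'}$ (after stereographic identification $H' = S_{H'} \setminus B_\infty$, points of $D'_+$ near $B_\infty$ are handled by the basepoint-preserving extension), so $|f| > \delta'$ there; enlarging $V_1$ once more to $V_2$ so that $|(1-p_{V_2})c| < \delta'/2$ on all of $E'$, the image $f(D'_+)$ lies outside the ball of radius $\delta'/2$ around $0$ in $V_2^\perp$-directions, hence $\rho_{V_2} f$ — which is $\lambda \cdot p_{V_2} f$ for a positive function $\lambda$ by the property of $\rho_V$ noted after \eqref{E:OrthCompDefRet} — is non-vanishing on $D'_+$, giving (3). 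Taking $V = V_2$ finishes the existence statement, and since every bound used is of the form $|(1-p_W)c| < (\text{const})$ with the constant independent of $W$, all three properties translate verbatim to any $W \supset V$.

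The main obstacle I anticipate is the bookkeeping at the boundary at infinity in part (3): one must be careful that $D'_+$, as a subset of the sphere $S_{V'}$, really does map into the region where $f$ (or rather its continuous extension $\fhat : S_{H'} \to S_\Ucal$) is bounded away from $0$, including near the section at infinity $B_\infty$ where $\fhat$ takes the basepoint value $\infty$. Handling this cleanly requires using the extension $\fhat$ and the fact that $\rho_V$ is a deformation retract of $S_\Ucal \setminus S(V^\perp)$ rather than working naively with the unbounded map $f$; once that is set up correctly, the estimates are the same compactness-plus-$\varepsilon$-net argument used throughout Section \ref{Ch:FDA}.
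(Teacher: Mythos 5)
Your construction follows essentially the same strategy as the paper's: bound $|f|$ from below on $\overline{H' - D'}$ via Remark \ref{R:FredholmDelta}, cover a compact piece of the image of $c$ by small balls, take the span of the centres and enlarge to a subrepresentation, then use the triangle inequality to get a lower bound on $|p_V f|$. Properties (1) and (2), and the persistence under enlargement, are handled correctly. The paper differs mainly in economy: it takes a single $\eps = \min\{\tfrac14, \tfrac{\delta}{2}\}$ and a single cover of $c(D')$, so that admissibility, (2), and (3) all fall out of one estimate $|(1-p_V)f(h)| < \eps$ for $h \in D' \cap V'$.

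There is, however, a genuine gap in your argument for (3), and it sits exactly where you flag your ``main obstacle.'' You arrange $|(1-p_{V_2})c| < \delta'/2$ only on the bounded region $E'$, but $D'_+$ contains all of $V_2' \setminus \mathrm{int}(D')$, including points arbitrarily far outside $E'$, where you have no control on $(1-p_{V_2})c(h)$. Your claim that ``the image $f(D'_+)$ lies outside the ball of radius $\delta'/2$ around $0$ in $V_2^\perp$-directions'' is therefore unjustified for $h$ outside $E'$, and your appeal to $\rho_V = \lambda p_V$ does not save you there either, since that identity only holds on $\Ucal \setminus V^\perp$. The paper closes this off by arguing contrapositively: assume $\rho_V \fhat(h) = 0$; the explicit formula \eqref{E:OrthCompDefRet} then forces $p_V f(h) = 0$ \emph{and} $|f(h)| < 1$, so $h \in f\inv(D) \subset D'$, and then the bound $|f(h)| = |(1-p_V)f(h)| < \eps < \delta$ together with $|f| > \delta$ on $\overline{H'-D'}$ places $h$ in the interior of $D' \cap V' = D'_-$, i.e.\ off $D'_+$. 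The same observation ($|f(h)| \geq 1$ already rules out $\rho_V\fhat(h) = 0$ regardless of $p_V f(h)$) is what you would need to handle your unaddressed region outside $E'$, and it also handles $B_\infty$ in one stroke. So the fix is available, but you need to actually make it rather than gesture at it; as written your step (3) does not close.
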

\begin{proof}
    Since $f$ is bounded, we can assume that $f\inv(D) \subset D'$ where $D \subset \Ucal$ is the closed unit disk. As explained in Remark \ref{R:FredholmDelta}, choose a $\delta > 0$ such that $|f(h)| > \delta$ for $h \in \overline{H' - D'}$. Let $\eps = \min\{\frac{1}{4}, \frac{\delta}{2}\}$. Cover the closure of $c(D')$ by finitely many $\eps$-balls with centres $v_1, ..., v_n$ and set $V = \spann\{v_i\}$. As seen before, we can enlarge $V$ to be a subrepresentation that surjects onto $\coker l$. Now $V$ has the property that $|(1 - p_V)f(h)| < \eps$ for all $h \in D' \cap V'$ and is an admissible subrepresentation by Proposition \ref{P:admissibleSubspace}.  Since $|f(h)| > \delta$ for $h \in S'$, it follows that $|p_V f(h)| > \frac{\delta}{2}$ for $h \in S' \cap V'$.

    Suppose that $\rho_V f(h) = 0$ for some $h \in S_{V'}$. Notice from the definition of $\rho_V$ in (\ref{E:OrthCompDefRet}) that this implies that $f(h)$ is finite with $p_Vf(h) = 0$ and $|(1-p_V)f(h)| < 1$. This means that $|f(h)| < 1$ and $h \in D' \cap V'$. Therefore $|(1-p_V)f(h)| < \eps < \delta$ and $h \notin D'_+$ since $|f(h)| < \delta$. That is, $\rho_V f(h)$ is non-vanishing on $D'_+$. For any finite dimensional $W \supset V$, it is still the case that $|(1-p_W)f(h)| < \eps$ for $h \in D' \cap W'$ and the argument can be repeated.
\end{proof}
Consider the following diagram where $a, b$ and $c$ are the obvious inclusions:
\[
\begin{tikzcd}
    (S_{V'}, B_\infty) \arrow{r}{a} \arrow{d}{\vphi_f} & (S_{V'}, D'_+) \arrow{dr}{\phi_f} & (D'_-, S'_0) \arrow[swap]{l}{b} \arrow[dashed]{d}{\psi_f}\\
    (S_{V}, \infty) \arrow{rr}{c} && (S_{V}, S_{V} \setminus \{0\})
\end{tikzcd}
\]
The dashed arrow $\psi_f$ does not make the diagram commute, but we will show that it does commute up to homotopy. These inclusions induce functions between homotopy classes of maps of pairs:
\[
\begin{tikzcd} \left[(S_{V'}, B_\infty) \, ; (S_{V}, \infty)\right]  \arrow{r}{c_*} & \left[(S_{V'}, B_\infty) \, ; (S_{V}, S_{V} \setminus \{0\})\right]\\
    \left[(D'_-, S'_0) \, ; (S_{V}, S_{V} \setminus \{0\})\right]  &  \arrow{l}[swap]{b^*} \arrow[swap]{u}{a^*}
    \left[(S_{V'}, D'_+) \, ; (S_{V}, S_{V} \setminus \{0\})\right]
\end{tikzcd}
\]
\begin{proposition}\label{P:BF-SEquiv}
    The maps $a^*, b^*$ and $c_*$ induced by inclusions are bijections. The composition $\xi = b^* (a^*)\inv c_*$ defines a bijection 
    \begin{align*}
        \xi : \left[(S_{V'}, B_\infty) \, ; (S_{V}, \infty)\right] \to \left[(D'_-, S'_0) \, ; (S_{V}, S_{V} \setminus \{0\})\right]
    \end{align*}
    which identifies $[\vphi_f]$ with $[\psi_f]$.
\end{proposition}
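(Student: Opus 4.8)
The plan is to first establish that each of $a^*$, $b^*$, $c_*$ is a bijection, and then to chase the two triangles of the diagram using the explicit formula \eqref{E:OrthCompDefRet} for $\rho_V$ together with the choices made in Lemma~\ref{L:ThreePhis}. For the bijectivity claims the common mechanism is that each inclusion $a$, $b$, $c$ is an inclusion of pairs in which the relevant sub-inclusion is a closed cofibration that is also a (fibrewise, hence — since $B$ is compact — genuine) deformation retract, so that the homotopy extension property lets one transport maps of pairs freely. For $c$: the basepoint section $\{\infty\} \hookrightarrow S_V \setminus \{0\}$ is a cofibration and deformation retract, because $S_V \setminus \{0\}$ is fibrewise an affine space; given $g : (S_{V'}, B_\infty) \to (S_V, S_V \setminus \{0\})$ one composes $g|_{B_\infty}$ with such a retraction and applies the HEP along $B_\infty \hookrightarrow S_{V'}$ to produce a map of pairs into $(S_V, \infty)$ representing the same class, with the relative version giving injectivity. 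For $a$: the section $B_\infty \hookrightarrow D'_+$ is a cofibration and deformation retract, since fibrewise $D'_+$ is the spherical cap centred at $\infty$; extending this retraction over $S_{V'}$ by the HEP and precomposing turns a map of pairs on $(S_{V'}, B_\infty)$ into one on $(S_{V'}, D'_+)$ restricting to it up to homotopy of pairs. For $b$: write $S_{V'} = D'_- \cup_{S'_0} D'_+$; given a map of pairs $g$ on $(D'_-, S'_0)$, its restriction $g|_{S'_0}$ is nullhomotopic in the contractible space $S_V \setminus \{0\}$ and hence extends over the cofibration $S'_0 \hookrightarrow D'_+$ to a map valued in $S_V \setminus \{0\}$; gluing gives a map of pairs on $(S_{V'}, D'_+)$ restricting to $g$, and the analogous extension argument over $S_{V'} \times I$ yields injectivity.

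With the bijections in hand, put $\xi = b^*(a^*)^{-1}c_*$. The left triangle commutes on the nose: $c \circ \vphi_f$ and $\phi_f \circ a$ have the same underlying map $\rho_V f|_{S_{V'}}$, legitimately regarded as a map of pairs $(S_{V'}, B_\infty) \to (S_V, S_V \setminus \{0\})$ since $B_\infty \subset D'_+$ and $\rho_V f$ is non-vanishing on $D'_+$ by Lemma~\ref{L:ThreePhis}(3). Hence $c_*[\vphi_f] = a^*[\phi_f]$, so $(a^*)^{-1}c_*[\vphi_f] = [\phi_f]$, and therefore
\begin{align*}
    \xi[\vphi_f] \;=\; b^*[\phi_f] \;=\; [\phi_f \circ b] \;=\; \bigl[\rho_V f|_{D'_-}\bigr] \;\in\; \left[(D'_-, S'_0)\,;(S_V, S_V \setminus \{0\})\right].
\end{align*}

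It remains to identify $[\rho_V f|_{D'_-}]$ with $[\psi_f] = [p_V f|_{D'_-}]$. By the choice of $V$ in Lemma~\ref{L:ThreePhis}, for $h \in D'_-$ the component $(1-p_V)f(h)$ has norm $< \eps \le \tfrac14 < 1$, so the straight-line path
\begin{align*}
    w_s(h) \;=\; (1-s)\,f(h) + s\,p_V f(h) \;=\; p_V f(h) + (1-s)(1-p_V)f(h), \qquad s \in [0,1],
\end{align*}
has $V^\perp$-component of norm $< 1$, hence never lies on $S(V^\perp)$, hence lies in the domain $S_\Ucal \setminus S(V^\perp)$ of $\rho_V$. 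Then $g_s = \rho_V \circ w_s|_{D'_-}$ is a homotopy from $g_0 = \rho_V f|_{D'_-}$ to $g_1 = \rho_V(p_V f)|_{D'_-} = p_V f|_{D'_-} = \psi_f$, the last equality holding because $\rho_V$ restricts to the identity on $S_V \supset V$. Moreover $g_s$ is a homotopy \emph{of pairs}: for $h \in S'_0$ the $V$-component of $w_s(h)$ equals $p_V f(h)$, which is non-zero by Lemma~\ref{L:ThreePhis}(2), so the property of $\rho_V$ recorded just after \eqref{E:OrthCompDefRet} gives $g_s(h) = \lambda(w_s(h))\,p_V f(h) \neq 0$ for all $s$. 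Hence $\xi[\vphi_f] = [\psi_f]$.

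I expect the first paragraph to be the main obstacle: establishing the bijectivity of $a^*$, $b^*$, $c_*$ requires care in checking the cofibration hypotheses for the various disk- and sphere-bundle pairs and in globalising the fibrewise deformation retractions over the compact base $B$ (and, for the equivariant refinement, over $G$, which causes no trouble since $G$ acts orthogonally). Given these, the diagram chase and the $\rho_V$-rescaling homotopy are essentially routine, using only Lemma~\ref{L:ThreePhis} and the explicit form \eqref{E:OrthCompDefRet} of $\rho_V$.
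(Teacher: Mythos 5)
Your proof is correct and follows essentially the same strategy as the paper: establish the three bijections by retraction/cofibration arguments, chase the diagram to reduce to comparing $\rho_V f|_{D'_-}$ with $p_V f|_{D'_-}$, and finish with a straight-line homotopy that is kept nonvanishing on $S'_0$ by the estimates of Lemma~\ref{L:ThreePhis}. The one step where you take a genuinely different route is injectivity of $b^*$: the paper invokes the Puppe cofibre sequence of $(D'_-, S'_0) \to (S_{V'}, D'_+)$ and the triviality of $[C_{b''}, S_V \setminus \{0\}]$, whereas you extend the given homotopy of pairs directly over $D'_+ \times I$ using the HEP for $(D'_+ \times I,\, D'_+ \times \{0,1\} \cup S'_0 \times I)$ together with contractibility of $S_V \setminus \{0\}$. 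Your argument is more elementary and avoids introducing the mapping cones; both ultimately lean on the same contractibility. Your final homotopy, $\rho_V \circ w_s$ with $w_s$ the straight line in $\Ucal$ from $f$ to $p_V f$, is also a small variant of the paper's (which runs the straight line in $V$ between $\rho_V f$ and $p_V f$ directly); either works once you verify, as you did, that $w_s$ avoids $S(V^\perp)$ on $D'_-$ and that $p_V f$ is nonvanishing on $S'_0$. One small correction in your preamble: for $V \subset \Ucal$ a subspace (as here), $S_V$ is a fixed sphere and $S_V \setminus \{0\}$ an ordinary punctured sphere, not a bundle over $B$, so the ``fibrewise, hence genuine over compact $B$'' hedging for the target of $c$ and $b$ is unnecessary (it is only the source objects $S_{V'}$, $D'_\pm$, $S'_0$ that live over $B$). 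This does not affect the substance of the argument.
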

\begin{proof}
    Contracting $D'_+$ radially to $\infty$ fibrewise defines a homotopy $F_t : S_{V'} \to S_{V'}$ with $F_0 = \id$ and $F_1(D'_+) = B_\infty$. The compositions $aF_1 : (S_{V'}, D'_+) \to (S_{V'}, D'_+)$ and $F_1a : (S_{V'}, B_\infty) \to (S_{V'}, B_\infty)$ are both homotopy equivalent to the identity through maps of pairs, hence $a$ is a homotopy equivalence of pairs and $a^*$ is bijection.

    Since $S_V \setminus \{0\}$ is contractible, any $f : S_{V'} \to S_{V}$ with $f(B_\infty) \subset S_V \setminus \{0\}$ can be composed with a homotopy that contracts $f(B_\infty)$ to $\infty$. Hence $c_*$ is surjective. For injectivity let $g_0, g_1 : S_{V'} \to S_{V}$ be maps with $g_i(B_\infty) = \infty$ and suppose that there is a homotopy $g_t$ from $g_0$ to $g_1$ with $g_t(B_\infty) \subset S_V \setminus \{0\}$. Since $S_{V'} \times I$ is compact, there is an open neighbourhood $U \subset S_{V}$ of $0$ such that $g_t(B_\infty) \subset S_{V} \setminus U$ for all $t$. Thus $[g_0] = [g_1]$ as elements of $[(S_{V'}, B_\infty) ; (S_{V}, S_{V} \setminus U)]$. By the same reasoning as above, the inclusion $(S_{V}, \infty) \to (S_{V}, S_{V} \setminus U)$ is a homotopy equivalence of pairs. Hence $[g_0] = [g_1]$ as elements of $[(S_{V'}, B_\infty), (S_{V}, \infty)]$.

    To see that $b^*$ is surjective, suppose $f : D'_- \to S_{V}$  is a map with $f|_{S'_0}$ valued in $S_{V} \setminus \{0\}$. Locally, $S_{V'}$ is obtained from $D'_-$ by attaching $D'_+$ over $S'_0$. Since $S_{V} \setminus \{0\}$ is contractible, $f|_{S'_0}$ can be extended to $D'_+$ by a null homotopy while remaining valued in $S_{V} \setminus \{0\}$. This construction can be globalised using a partition of unity, thus $f$ extends to $S_{V'}$ with $f(D'_+) \subset S_{V} \setminus \{0\}$.

    For injectivity, let $b' : D'_- \to S_{V'}$ and $b'' : S'_0 \to D'_+$ be inclusions with mapping cones $C_{b'}$ and $C_{b''}$. Recall that the cofibersequence $(D'_-, S'_0) \to (S_{V'}, D'_+) \to (C_{b'}, C_{b''})$ induces an exact sequence \cite[III Prop 3.9]{Adams}
    \begin{align*}
        [(C_{b'}, C_{b''}) ; (S_{V}, S_{V} \setminus \{0\})] \to [(S_{V'}, D'_+) ; (S_{V}, S_{V} \setminus \{0\})] \overset{b^*}{\to} [(D'_-, S'_0) ; (S_{V}, S_{V} \setminus \{0\})].
    \end{align*}
    The cone $C_{b'}$ deformation retracts onto $C_{b''}$, hence 
    \begin{align*} 
        [(C_{b'}, C_{b''}) ; (S_{V}, S_{V} \setminus \{0\})] &\cong [(C_{b''}, C_{b''}) ; (S_{V}, S_{V} \setminus \{0\})]\\
        &= [C_{b''}, S_{V} \setminus \{0\}].
    \end{align*}
    However $[C_{b''}, S_{V} \setminus \{0\}]$ is trivial since $S_{V} \setminus \{0\}$ is contractible. Thus $b^*$ is injective by the exactness of the cofibersequence.

    It remains to show that $[\psi_f] = [b^*(a^*)\inv c_* (\vphi_f)]$. We have that $c_* \vphi_f = a^* \phi_f$, thus it is enough to show that $[\psi_f] = [b^*\phi_f]$. Note that both $\psi_f|_{S'_0}$ and $b^* \phi_f|_{S'_0}$ are valued in $V \setminus \{0\} \subset H \setminus V^\perp$. Recall that $\rho_V f|_{S'_0} = \lambda p_Vf|_{S'_0}$ for some positive continuous function $\lambda : H \setminus V^\perp \to \R$, hence the straight line homotopy from $b^* \phi_f$ to $\psi_f$ never vanishes.
\end{proof}
\begin{corollary}[\cite{RefinedBF} Theorem 2.1]\label{C:PhiBijection}
    Given a choice of trivialisation $H \cong \Ucal \times B$, the map $\Phi$ descends to a bijection 
    \begin{align}
        \Phi : \pi_0(\Pcal_l(H', H)) \to \pi_{G, \Ucal}^0(B ; \ind l).
    \end{align}
\end{corollary}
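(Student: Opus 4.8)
The plan is to identify $\Phi$ with a composite of bijections supplied by the results already assembled. Proposition~\ref{P:BF-SEquiv} translates the Bauer--Furuta approximation $\vphi_f$ into the Schwarz approximation $\psi_f$; Theorem~\ref{T:SchwarzCompactHtpyClasses} controls $\psi_f$ up to compact homotopy on a fixed disk bundle; and Lemma~\ref{L:ExtendCptHtpy} passes between compact homotopies on a disk bundle and genuine homotopies of bounded Fredholm maps. In particular essentially no new construction is needed for well-definedness or injectivity; the only substantial point is surjectivity.

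I would treat well-definedness and injectivity together. Given $f_0, f_1 \in \Pcal_l(H', H)$, first fix a disk bundle $D' \subset H'$ with $f_0\inv(0) \cup f_1\inv(0) \subset D'$ and $f_i\inv(0) \cap S' = \emptyset$, and then, using Lemma~\ref{L:ThreePhis} (whose conclusions persist for any larger subrepresentation), choose one finite dimensional subrepresentation $V$ that is admissible for both $f_0$ and $f_1$ in the sense of Definition~\ref{D:admissibleSubspace} and simultaneously makes each of $\vphi_{f_i}, \phi_{f_i}, \psi_{f_i}$ a map of pairs, so that Proposition~\ref{P:BF-SEquiv} gives a bijection $\xi$ with $\xi([\vphi_{f_i}]) = [\psi_{f_i}]$. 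If $f_0 \simeq f_1$ through bounded Fredholm maps, Lemma~\ref{L:ExtendCptHtpy} makes $f_0|_{D'}$ and $f_1|_{D'}$ compactly homotopic, hence $[\psi_{f_0}] = [\psi_{f_1}]$ by the well-definedness half of Theorem~\ref{T:SchwarzCompactHtpyClasses}, and $\xi\inv$ yields $[\vphi_{f_0}] = [\vphi_{f_1}]$, which descends to the same class in $\pi^0_{G,\Ucal}(B;\ind l)$; thus $\Phi$ is well defined. Conversely, if $[\vphi_{f_0}] = [\vphi_{f_1}]$ in $\pi^0_{G,\Ucal}(B;\ind l) = \Colim_{U \perp V}\,[S_U \wedge TV', S_U \wedge S_V]$, the equality already holds at some finite stage $W = V \oplus U$; by the compatibility in Definition~\ref{D:admissibleSubspace} the approximation at stage $W$ is $\id_{S_U} \wedge \vphi_{f_i}$, so $\vphi_{f_0}$ and $\vphi_{f_1}$ become homotopic as maps of pairs there, and applying the bijection $\xi$ at stage $W$ gives $[\psi_{f_0}] = [\psi_{f_1}]$ in $\Pi_l(D', H)$. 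The injectivity half of Theorem~\ref{T:SchwarzCompactHtpyClasses} then makes $f_0|_{D'}$ and $f_1|_{D'}$ compactly homotopic, and Lemma~\ref{L:ExtendCptHtpy} upgrades this to $f_0 \simeq f_1$ in $\Pcal_l(H', H)$. So $\Phi$ is injective.

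For surjectivity, a class in $\pi^0_{G,\Ucal}(B;\ind l)$ is represented by an equivariant map $g : TV' \to S_V$ for some finite dimensional subrepresentation $V$ surjecting onto $\coker l$, and via the identifications used in the proof of Proposition~\ref{P:BF-SEquiv} it may be regarded as a map of pairs $g : (D' \cap V', S' \cap V') \to (V, V \setminus \{0\})$, for a disk bundle $D' \subset H'$. I would then extend $g$ over the finite dimensional bundle $V'$ to a continuous equivariant $\tilde g : V' \to V$ by the Tietze theorem and set $c = \tilde g \circ p_{V'} - l \circ p_{V'}$, which is compact because $V'$ is finite dimensional, so that $f = l + c$ is equivariant Fredholm on $D'$ with $f|_{V'} = \tilde g$ and hence $\psi_f = g$. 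A short computation using $l\inv(V) = V'$ shows $f$ is non-vanishing on $S'$, so $f$ extends radially to a bounded Fredholm map on all of $H'$ as in the proof of Lemma~\ref{L:ExtendCptHtpy} without changing $\psi_f$; Proposition~\ref{P:BF-SEquiv} then gives $\Phi([f]) = \xi\inv([\psi_f]) = \xi\inv([g])$, the class we started from.

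The main obstacle I expect is this surjectivity step: producing a genuinely bounded equivariant Fredholm map with a prescribed Schwarz approximation forces some care about the behaviour of $f$ near infinity, and one must also verify that the identifications relating $[(D' \cap V', S' \cap V') ; (V, V \setminus \{0\})]$, $[TV', S_V]$ and the colimit defining $\pi^0_{G,\Ucal}(B;\ind l)$ are compatible with the suspension maps, so that ``the class represented by $g$'' is unambiguous and Proposition~\ref{P:BF-SEquiv} genuinely identifies $\Phi$ with a composite of bijections. Once those bookkeeping matters are settled the statement follows formally from Proposition~\ref{P:BF-SEquiv}, Theorem~\ref{T:SchwarzCompactHtpyClasses} and Lemma~\ref{L:ExtendCptHtpy}.
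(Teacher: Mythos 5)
Your treatment of well-definedness and injectivity coincides with the paper's: both run the equivalence
$\pi_0(\Pcal_l(H',H)) \leftrightarrow \pi_0(\Ccal_l(D',H)) \leftrightarrow \Pi_l(D',H) \leftrightarrow \pi^0_{G,\Ucal}(B;\ind l)$
through Lemma~\ref{L:ExtendCptHtpy}, Theorem~\ref{T:SchwarzCompactHtpyClasses}, and the bijection $\xi$ of Proposition~\ref{P:BF-SEquiv}, and the two directions of the chain give well-definedness and injectivity respectively. The only real divergence is the surjectivity step. The paper stays on the Bauer--Furuta side: it takes a pointed representative $f : TV' \to S_V$, claims that after a suitable suspension one may assume $f^{-1}(\infty) = [B_\infty]$ so that $f\circ\pi : V' \to V$ is proper, and then extends by $l$ on $(V')^\perp$ to produce a bounded Fredholm preimage directly. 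You instead pass through $\xi$ to a map of pairs $g : (D'\cap V', S'\cap V') \to (V, V\setminus\{0\})$, Tietze-extend over the finite rank bundle $V'$, set $f = l + (\tilde g - l)\circ p_{V'}$, and then radially extend off $D'$ as in the proof of Lemma~\ref{L:ExtendCptHtpy} to enforce boundedness. Both constructions produce the same kind of compact perturbation of $l$ valued in $V$, and the nonvanishing of your $f$ on $S'$ indeed follows from $l^{-1}(V) = V'$ as you indicate. The practical benefit of your route is that it avoids the paper's ``after possible suspension, $f^{-1}(\infty) = [B_\infty]$'' reduction, which is asserted without elaboration; the cost is the extra bookkeeping you already flag, namely verifying that the bijections of Proposition~\ref{P:BF-SEquiv} are compatible with the suspension maps defining the colimit so that ``the class represented by $g$'' is unambiguous. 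Either way, your proposal is a valid implementation of the same overall argument.
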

\begin{proof}
    First suppose that $f_0, f_1 \in \Pcal_l(H', H)$ are homotopic through bounded Fredholm maps. Then by Lemma \ref{L:ExtendCptHtpy}, $f_0$ and $f_1$ are compactly homotopic on an appropriately chosen $D'$ and $[f_0] = [f_1]$ in $\pi_0(\Ccal_l(D', H))$. Thus $\Psi|_{D'} f_0 = \Psi_{D'} f_1$ and applying $\xi\inv$ gives $\Phi f_0 = \Phi f_1$, hence $\Phi$ is well defined. This also proves injectivity since if $\Phi f_0 = \Phi f_1$, then $\Psi|_{D'} f_0 = \Psi_{D'} f_1$ by applying $\xi$. Hence $f_0$ and $f_1$ are compactly homotopic on $D'$ by Theorem \ref{T:SchwarzCompactHtpyClasses}, and $[f_0] = [f_1]$ in $\Pcal_l(H', H)$ by Lemma \ref{L:ExtendCptHtpy}.
    
    For surjectivity, a class $[f] \in \pi_{G,\Ucal}^0(B ; \ind l)$ is represented by a pointed map $f : TV' \to S_{V}$ for $V \subset \Ucal$ an admissible subspace with $V' = l\inv(V)$. After possible suspension we can assume that $f\inv(\infty) = [B_\infty] \in TV'$. For $\pi : S_{V'} \to TV'$ the projection, this means that $(f \circ \pi)\inv(\infty) = B_\infty$ and the restriction $f \circ \pi : V' \to V$ is proper. Since $V$ is admissible, $l$ defines an isomorphism from $(V')^\perp$ to $V^\perp$. Hence $f \circ \pi$ can be extended to a bounded Fredholm map $\tilde{f} : H' \to H$ such that $\Phi \tilde{f} = [f]$.
\end{proof}

    \section{The families Bauer-Furuta invariant}\label{Ch:FamiliesBF}

Let $B$ be a compact, connected smooth manifold. A 4-manifold family is a smooth, locally trivial, oriented fibre bundle $\pi : E \to B$ with each fibre diffeomorphic to a closed, oriented 4-manifold $X$. In particular, $E \to B$ has transition functions valued in $\Diff^+(X)$. For $b \in B$, denote the fibres of $E$ as $X_b = \pi\inv(b)$. 

Let $T(E/B) \to E$ be the vertical tangent bundle $T(E/B) = \ker \pi_*$, which is a 4-dimensional real vector bundle over $E$. Let $g$ be a metric on $T(E/B)$ with $\nabla$ the associated Levi-Civita connection. One can think of $g$ and $\nabla$ as smoothly varying families of metrics $\{g_b\}_{b \in B}$ and connections $\{\nabla_b\}_{b \in B}$ on the fibres $X_b$. Let $\sfrak_E$ be a \spinc structure on $T(E/B)$ with associated spinor bundles $W^\pm \to E$. This induces a smoothly varying family of \spinc structures $\{\sfrak_b\}_{b\in B}$ on the fibres of $E$. Let $\Lcal = \det(W^+)$ be the determinant line bundle of $W^+$, which is a family of $U(1)$-bundles over $B$. A $U(1)$-connection $2A$ on $\Lcal$ defines a family of \spinc connections $\nabla^A$ on $W^+$. 

Let $\Lambda^iT^*(E/B) \to E$ denote the $i$-th exterior power of $T^*(E/B)$. A section of $\Lambda^iT^*(E/B)$ is a family of $i$-forms on the fibres $X_b$. Write $\Omega_B^i(E) = \Cinf(E, \Lambda^iT^*(E/B))$ to denote the set of families of smooth $i$-forms, which has the structure of a vector bundle $\Omega_B^i(E) \to B$. Similarly, $\Cinf(E, W^+) \to B$ denotes the bundle of families of smooth spinors over $B$. We write $\Lambda^2_+T^*(E/B)$ to denote the bundle of self-dual 2-forms determined by the Hodge star.

\subsection{Families with separating necks}

Let $V_0 \to B$ be a rank 4 oriented Riemannian vector bundle equipped with a \spinc structure $\sfrak_{V_0}$. Denote by $S(V_0) \subset V_0$ the unit sphere sub-bundle of $V_0$. When performing a families connected sum, $S(V_0)$ will be obtained as the normal bundle of a section of the vertical tangent bundle of one of the summands. For any $L > 0$, Let $N_B(L)$ denote the family of cylinders
\begin{align*}
    N_B(L) &= S(V_0) \times [-L, L].
\end{align*}
We write $N_b(L)$ to denote the fibre of $N_B(L) \to B$ over $b \in B$. Denote the families of positive and negative fiberwise boundary components by
\begin{align*}
    \p N_B(L)^+ &= S(V_0) \times \{L\}\\
    \p N_B(L)^- &= S(V_0) \times \{-L\}.
\end{align*}
Since the transition maps of $V_0$ are valued in $SO(4)$, the vertical tangent bundle $T(S(V_0)/B)$ can be equipped with a metric $g_{S(V_0)}$ that restricts to the standard round metric on each fibre. Equip the vertical tangent bundle of $N_B(L)$ with the metric $g_{N_B(L)} = g_{S(V_0)} + dt^2$ which on each fibre is the product of the standard round metric on $S^3$ and the standard interval metric on $[-L, L]$. The \spinc structure $\sfrak_{V_0}$ determines a 3-dimensional \spinc structure on the vertical tangent space of $S(V_0)$. Pulling this back to $N_B(L)$ defines a \spinc structure $\sfrak_{N_B(L)}$ on $T(N_B(L)/B)$.
\begin{definition}
    Let $E \to B$ be a family of 4-manifolds with connected fibre $X$ and fix $L > 1$. A separating neck of length $2L$ on a $E \to B$ is an embedding $\iota : N_B(L) \to E$ covering the identity. It is required that the neck complement $M = \overline{E - \iota(N_B(L-1))}$ has fibres $M_b$ which decompose as 
    \begin{align*}
        M_b &= M_b^- \coprod M_b^+
    \end{align*}
    where $\p M_b^- = \iota(\p N_b(L-1)^-)$ and $\p M_b^+ = \iota(\p N_b(L-1)^+)$, both with reversed orientation. It is assumed that $E$ is given a metric and \spinc structure that extends $g_{N_B(L)}$ and $\sfrak_{N_B(L)}$.
\end{definition}
Given a 4-manifold family $E \to B$ with a separating neck of length $2L$, we identify $N_B(L)$ with its image $\iota(N_B(L))$. If $X$ has $n$ connected components, then a separating neck on $E$ is just a separating neck on each component. In this case, the neck is a disjoint union 
\begin{align*}
    N_B(L) = \coprod_{i=1}^n N_B(L)_i.
\end{align*}
Assume for convenience that $L > 2$. For each $1 \leq i \leq n$, define collar subbundles $C_i^\pm \subset N_B(L)_i$ by
\begin{align*}
    C_i^- &= S(V_0) \times [-L, -L+1]\\
    C_i^+ &= S^3(V_0) \times [L-1, L].
\end{align*}
Let $C = \coprod_i (C_i^- \cup C_i^+)$. Each fibre $C_b$ is a collar neighbourhood of the boundary of $N_b(L)$. Removing $N_B(L-1)$ from $E$ gives a family of manifolds $M_b$ with fibres $\overline{X_b - N_b(L-1)}$ and a natural inclusion $\iota : C \to M$. For any other neck length $L' > 2$, there is a natural isometric inclusion $C \to N(L')$ identifying $C$ has a collar neighbourhood of $\p N(L')$. Let $E(L') = M \cup_C N_B(L')$. That is, $E(L')$ is defined by the following pushout
\begin{equation*}
    \begin{tikzcd}
        C \arrow[hook]{r} \arrow[swap]{d}{\iota} & N_B(L') \arrow[dashed]{d} \\
        M \arrow[dashed]{r} & E(L').
    \end{tikzcd}
\end{equation*}
Let $\tau \in S_n$ be an even permutation on $n$ objects. Define a permuted inclusion map $\iota_\tau : C \to M$ such that $\iota_\tau|_{C_i^-} = \iota|_{C_i^-}$ and $\iota_\tau|_{C_i^+} = \iota|_{C_{\tau(i)}^+}$. That is, $C_i^-$ is mapped to $\iota(C_i^-)$ but $C_i^+$ is mapped to $\iota(C_{\tau(i)}^+)$. Define the permuted family $E^\tau$ by the following pushout
\begin{equation*}
    \begin{tikzcd}
        C \arrow[hook]{r} \arrow[swap]{d}{\iota_\tau} & N_B(L) \arrow[dashed]{d} \\
        M \arrow[dashed]{r} & E^\tau.
    \end{tikzcd}
\end{equation*}
Fiberwise, each boundary component of the form $\iota(C_i^-)_b \subset M_b$ has been connected by a cylinder $S^3 \times [-L, L]$ to $\iota(C_{\tau(i)}^+)_b$. We write $X^\tau$ to denote the standard fibre of $E^\tau$.

\subsection{The families Seiberg-Witten monopole map}\label{S:FSWMonoMap}

Fix a reference \spinc connection $A_0$ on $E$. Any other connection $A$ can be written as $A = A_0 + ia$ for some family of one-forms $a \in C^\infty(E, T^*(E/B))$. Let $n$ be the number of connected components of $X$ and fix an integer $k \geq 4$. The metric and orientation of $E$ determines an $L^2$-inner product of spinors and forms through integration. We write $L^2_k(E, -)$ to denote the $L^2_k$-Sobolev space of $k$-times weakly differentiable sections, with weak derivatives in $L^2$.

To define the families monopole map, we follow the construction in \cite[Example 2.1 and 2.4]{BaragliaKonnoBFandSW}. For now assume that $b_1(X) = 0$.  Define Hilbert space bundles $\Acal$ and $\Ccal$ over $B$ by 
\begin{align}\label{E:AcalCcalFamB1=0}
    \Acal &= L^2_k(E, W^+ \oplus T^*(E/B)) \oplus \R^n \nonumber \\
    \Ccal &= L^2_{k-1}(E, W^- \oplus \Lambda^2_+ T^*(E/B) \oplus \R).
\end{align}
The $\R^n$ term in $\Acal$ is identified with the space of locally constant functions $H^0(X ; \R)$ on $X$. Denote by $\bT^n = (S^1)^{\times n}$ the group of locally constant gauge transformations. Let $\bT^n$ act on $\Acal$ and $\Ccal$ in the usual manner, on spinors by multiplication and on forms trivially. This action is fibre-preserving and orthogonal. The monopole map $\mu : \Acal \to \Ccal$ is the $\bT^n$-equivariant bundle map given by the formula
\begin{align*}
    \mu(\psi, a, f) &= (D_{A_0 + ia}\psi, -iF_{A_0 + ia}^+ + i\sigma(\psi), d^*a + f).
\end{align*}
The map $\sigma$ is defined by the equation $\sigma(\psi) = \psi \otimes \psi^* - \frac{1}{2}\text{Id}$ where the traceless, Hermitian endomorphism $\sigma(\psi)$ is identified as an imaginary valued self-dual 2-form. A solution $(\psi, a, f) \in \mu\inv(0)$ must have $f = 0$, hence we will suppress the third component. This solution corresponds to the Seiberg-Witten monopole $(\psi, A_0 + a)$. The gauge fixing condition $d^* a = 0$ determines the with gauge class of $(\psi, A_0 + a)$ up to a harmonic gauge transformation. Since $b_1(X) = 0$, the only harmonic gauge transformations are the locally constant ones.

There is a decomposition $\mu = l + c$ with
\begin{align}\label{E:MiniMuFredholmFamilies}
    l(\psi, a, f) &= (D_{A_0}\psi, d^+a, d^* a + f) \nonumber \\
    c(\psi, a, f) &= (ia \cdot \psi, -iF_{A_0}^+ + i\sigma(\psi), 0).
\end{align}
The map $l$ is linear Fredholm and $c$ is compact, hence $\mu$ is a Fredholm map. There is a somewhat standard argument (e.g \cite[Proposition 3.1]{BF1}) in ordinary Seiberg-Witten theory that shows that $\mu$ is a bounded Fredholm map when $B$ is a point. Assuming that $B$ is compact means that this argument can be extended fibrewise.

In the case that $b_1(X) > 0$, it will be necessary to assume that a smooth section $x : B \to E$ exists. In general the families Bauer-Furuta invariant will depend on the homotopy class of $x$. Let $\Hcal^1(\R) \subset \Cinf(E, T^*(E/B))$ denote the subbundle of real harmonic forms. That is, $\Hcal^1(\R) \to B$ is a vector bundle with fibre $\Hcal^1(X_b ; \R)$ over $b \in B$. Now pull back the bundles defined in (\ref{E:AcalCcalFamB1=0}) to bundles over $\Hcal^1(\R)$:
\begin{align*}
    \tAcal &= L^2_k(E, W^+ \oplus T^*(E/B)) \oplus \R^n \to \Hcal^1(\R) \\
    \tCcal &= L^2_{k-1}(E, W^- \oplus \Lambda^2_+ T^*(E/B) \oplus \R) \oplus \Hcal^1(\R) \to \Hcal^1(\R).
\end{align*}
The tilde notation is used because we are yet to quotient out by harmonic gauge transformations. Let $A_\theta = A_0 + i\theta$ denote the connection associated to $\theta \in \Hcal(\R)$. Note that since $\theta$ is harmonic, $F_{A_\theta} = F_{A_0}$. Define $\tmu : \tAcal \to \tCcal$ by
\begin{align}\label{E:MuDefFamilies}
    \tmu_\theta(\psi, a, f) &= (D_{A_\theta + ia}\psi, -iF_{A_0 + ia} + i\sigma(\psi), d^*a + f, \pr(a)).
\end{align}
This is the monopole map with gauge fixing, before dividing out by the harmonic gauge transformations. The bundle map $\pr : L^2_k(E, T^*(E/B)) \to \Hcal^1(\R)$ is defined as follows. Let $\{U_\beta\} \subset B$ be a trivialising open cover of $B$ with $E|_{U_\beta} \cong U_\beta \times X$. Choose cycles $\alpha^1, ..., \alpha^{b_1(X)}$ that restrict to a homology basis on each fibre of $E|_{U_\beta}$. Define a map $\pr_\beta : \Omega^1_B(E)|_{U_\beta} \to \Hcal^1(\R)|_{U_\beta}$ on each fibre above $b \in U_\beta$ by 
\begin{align}\label{E:ProjectionMap}
    (\pr(a)_b)(\alpha^i_b) &= \int_{\alpha^i_b} a_b.
\end{align}
Extend $\pr(a)_b$ linearly so that $\pr(a)_b \in \Hom(H_1(X_b), \R) = H^1(X_b ; \R)$. Now let $\{\rho_\beta\}$ be a partition of unity subordinate to $\{U_\beta\}$ and define $\pr : \Omega_B(E) \to \Hcal^1(\R)$ by $\pr = \sum_\beta \rho_\beta \pr_\beta$. This map has the property that if $a \in \Omega^1_B(E)$ is a family of closed one forms, then $\pr(a) \in \Hcal^1(\R)$ is the cohomology class of $a$ in each fibre. This extends continuously to a map $\pr : L^2_k(E, T^*(E/B)) \to \Hcal^1(\R)$.

To account for the harmonic gauge transformations, let $\Hcal(2\pi\Z) \to B$ be the bundle of groups over $B$ with fibre $H^1(X_b ; 2\pi\Z)$. For each $\omega \in \Hcal(2\pi\Z)$ and $b \in B$, define a map $g_{\omega, b} : X_b \to S^1$ by 
\begin{align*}
    g_{\omega, b}(y) = \exp\left(i \int_{x(b)}^y \omega\right).
\end{align*}
This map is well defined since the periods of $\omega$ are multiples of $2\pi$. Further, $g_{\omega, b}$ is the unique harmonic gauge transformation with the property that $g_{\omega, b}\inv dg_{\omega,b} = i\omega$ and $g_{\omega, b}(x(b)) = 1$. The gauge transformation $g_\omega$ acts on a connection $A$ by $g_\omega \cdot A = A + i\omega$.

Let the bundle of groups $\Hcal(2\pi\Z)$ act on $\Hcal(\R)$ fiberwise by $\omega \cdot \theta = \omega + \theta$. The quotient bundle $\Jcal = \Hcal(\R)/\Hcal(2\pi\Z)$ is the $b_1(X)$-dimensional Jacobian torus bundle over $B$. That is, each fibre $\Jcal_b$ is the Jacobian torus $\Jcal(X_b) = H(X_b; \R)/H(X_b ; 2\pi \Z)$ of $X_b$. Define an action of $\Hcal(2\pi\Z)$ on elements $(\psi, a, f) \in \tAcal_\theta$ and $(\phi, \eta, g, \alpha) \in \tCcal_\theta$ by
\begin{align*}
    \omega \cdot (\theta, (\psi, a, f)) &= (\theta + \omega, (g_\omega\inv \psi, a, f)) \\
    \omega \cdot (\theta, (\phi, \eta, g, \alpha)) &= (\theta + \omega, (g_\omega\inv\phi, \eta, g, \alpha)). 
\end{align*}
This is the free action of the based harmonic gauge transformations $g_\omega$. Under this action, $\tmu$ is equivariant. The fiberwise quotients $\Acal = \tAcal/\Hcal(2\pi\Z)$ and $\Ccal = \tCcal/\Hcal(2\pi\Z)$ are Hilbert bundles over $\Jcal$ with a residual $\bT^n$-action of the constant gauge transformations. The map $\tmu$ descends to a $\bT^n$-equivariant Fredholm map $\mu : \Acal \to \Ccal$ over $\Jcal$. This is the families monopole map in the setting $b_1(X) > 0$. In a similar fashion to (\ref{E:MiniMuFredholmFamilies}), $\mu = l + c$ is a bounded Fredholm map with 
\begin{align}\label{E:MuFredholmFamilies}
    l_\theta(\psi, a, f) &= (D_{A_\theta}\psi, d^+a, d^* a + f, \pr(a)) \nonumber \\
    c_\theta(\psi, a, f) &= (ia \cdot \psi, -iF_{A_0}^+ + i\sigma(\psi), 0, 0).
\end{align}
Define a $\bT^n$ universe $\Ucal$ by 
\begin{align}\label{E:FamUcalDef}
    \Ucal = L^2_{k-1}(X, W|_{X}^- \oplus \Lambda^2_+(T^*X) \oplus \R) \oplus H^1(X ; \R).
\end{align}
This universe can be identified with each fibre of $\Ccal$. The map $l$ defines a family of linear Fredholm maps over $\Jcal$, so let $\ind_\Jcal l$ denote the corresponding virtual index bundle. Let $H^+ \to \Jcal$ denote the rank $b^+(X)$ trivial bundle with fibre $H^2_+(X; \R)$ so that the relation $\ind_\Jcal l = \ind_\Jcal D - H^+$ holds.
\begin{definition}
    The families Bauer-Furuta invariant of a 4-manifold family $E \to B$ is the cohomotopy class 
    \begin{align}
        [\mu] &\in \pi^0_{\bT^n, \Ucal}(\Jcal, \ind_\Jcal l) \nonumber\\
        &= \pi^{b^+}_{\bT^n, \Ucal}(\Jcal, \ind_\Jcal D). \label{E:FamiliesBFInvariant}
    \end{align}
\end{definition}
Now suppose that $E$ is a family of 4-manifolds $X(L)$ with necks of length $2L$. To construct an appropriate reference connection, let $\{\rho_\beta\}$ be a partition of unity subordinate to a trivialising open cover $\{U_\beta\}$ of $B$. Let $A_0^\beta$ be a flat connection on $N_{U_\beta}(L)$ that is identical on each neck component $N_{U_\beta}(L)_i = U_\beta \times (S^3 \times [-L,L])$. Such a connection exists since $H^2(S^3 \times [-L,L] ; \R) = 0$. Extend $A_0^\beta$ to $E|_{U_\beta}$ and set $A_0 = \sum_\beta \rho_\beta A^\beta_0$. Then $A_0$ defines a connection on both $X$ and $X^\tau$ which is flat on the neck.

Moreover, let $\Gcal_{N(1)} \to B$ be the bundle of Gauge groups with fibre maps $(\Gcal_{N(1)})_b \subset \Cinf(X_b, S^1)$ that fix the short neck $N(1)_b$. Let $\ker d_{N(1)} \subset \Omega^1_B(E)$ be the subset of families of forms $a \in \ker d$ that vanish on $N(1)$. The inclusion $(A_0 + i\ker d_{N(1)})/\Gcal_{N(1)} \to \Jcal_E$ is a smooth bundle map over $B$ that restricts to a diffeomorphism on each fibre, hence we can identify $(A_0 + i\ker d_{N(1)})/\Gcal_{N(1)} = \Jcal_E$. Now for any even permutation $\tau$, $\Jcal_E = \Jcal_{E^\tau}$ which means that $\mu_E$ and $\mu_{E^\tau}$ can be treated as bundle maps over the same space $\Jcal = \Jcal_E = \Jcal_{E^\tau}$.

Denote by $\widehat{W}^+ \to S(V_0) \times [-L, L]$ the restriction of $W^+ \to N_B(L)$ to one of the connected components of $N_B(L)$. Define $F = \oplus_{i=1}^n \widehat{W}^+$ to be the direct sum of $n$-copies of $\widehat{W}^+$ over $S(V_0) \times [-L, L]$. Since $N_B(L)$ has $n$ connected components, a section $\psi : N_B(L) \to W^+$ can be identified with a vector of sections 
\begin{align}\label{E:vecpsi}
    \vecpsi : S(V_0) \times [-L, L] \to F.
\end{align}
That is, the restriction $\psi_i$ to the $i$th component of $N_B(L)$ is identified with the $i$th component of $\vecpsi$. Let $T : S(V_0) \times [-L, L] \to SO(n)$ denote a matrix valued function. For a section $\psi : N_B(L) \to W$ along $N_B(L)$, define an action by $T \cdot \psi = T\vecpsi$ where $T$ acts pointwise on $\vecpsi$ and $T\vecpsi$ is identified with a section of $W^+ \to N_B(L)$. The same process defines an action on forms along the neck $a : N_B(L) \to \Lambda^i(T^*(N_B(L)/B))$.

Let $\gamma : [0,1] \to SO(n)$ be a smooth path from the identity to $\tau$, which exists under the assumption that $\tau$ is even. Let $\vphi : [-L, L] \to [0,1]$ be a smooth map that vanishes on $[-L, 1]$ and is identically equal to 1 on $[1, L]$. Define a matrix valued function $V : S(V_0) \times [-L, L] \to SO(n)$ by
\begin{align}\label{E:VDefinitionFamilies}
    V(x,t) &= \gamma(\vphi(t)).
\end{align}
Note that $V$ is constant along the $S(V_0)$ factor. Let $(\psi, a) : N_B(L) \to W^+ \oplus T^*(N_B(L)/B)$ be a spinor-form pair along $N_B(L)$ and define $(\psi, a)^\tau = (V\cdot \psi, V \cdot a)$ by the action described above. The pair $(\psi, a)^\tau$ has the property that $(\psi, a)^\tau_i = (\psi, a)_i$ on $C^-$ and $(\psi, a)^\tau_i = (\psi, a)_{\tau(i)}$ on $C^+$. Now given a section $(\psi, a) : E \to W^+ \oplus T^*(E/B)$ defined on all of $E$, this permutation process defines a section $(\psi, a)^\tau$ on $E^\tau$ with the property that $(\psi, a)$ and $(\psi, a)^\tau$ agree outside of $N_B(1)$. 

This construction defines an isomorphism $V_\Acal : \Acal_E \to \Acal_{E^\tau}$ of Hilbert bundles over $\Jcal$. Similarly for $\Ccal$, the action of $V$ defines a map $V_\Ccal : \Ccal_E \to \Ccal_{E^\tau}$ that on the $\Hcal^1(X ; \R)$ factor is just the identity. Thus $V_\Acal$ and $V_\Ccal$ identify $\pi_0(\Pcal_l(\Acal, \Ccal)^{\bT^n})$ and $\pi_0(\Pcal_l(\Acal^\tau, \Ccal^\tau)^{\bT^n})$ by the map $[f] \mapsto [V_\Ccal f V_\Acal\inv]$. Moving forward we will suppress the subscripts. Since all the permutation occurs in $N_B(1)$, there is a constant $C_V$ independent of $L$ such that 
\begin{align}\label{E:CVDef}
    \|V(\psi, a)\|_{L^2_k} \leq C_V \|(\psi, a)\|_{L^2_k}.
\end{align}
\begin{theorem}[Families Permutation Theorem]\label{T:FamiliesGluingTheorem}
    Let $E \to B$ be a family of closed 4-manifolds that admits an $n$-component separating neck. Let $\tau \in S_n$ be an even permutation with $E^\tau$ the corresponding permuted family. Then 
    \begin{align}
        [\mu_E] = [\mu_{E^\tau}]
    \end{align}
    as elements of $\pi^{b^+}_{\bT^n, \Ucal}(\Jcal, \ind D)$.
\end{theorem}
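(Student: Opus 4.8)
The strategy is to pull $\mu_{E^\tau}$ back to a bounded Fredholm map on $\Acal_E$ via the isometries $V_\Acal,V_\Ccal$ and then join it to $\mu_E$ through a homotopy of bounded Fredholm maps. Put $\tilde\mu=V_\Ccal\inv\circ\mu_{E^\tau}\circ V_\Acal:\Acal_E\to\Ccal_E$. Since $V_\Acal,V_\Ccal$ identify $\pi_0(\Pcal_l(\Acal,\Ccal)^{\bT^n})$ with $\pi_0(\Pcal_l(\Acal^\tau,\Ccal^\tau)^{\bT^n})$, it suffices to prove $[\mu_E]=[\tilde\mu]$, and by Corollary~\ref{C:PhiBijection} this amounts to producing a homotopy from $\mu_E$ to $\tilde\mu$ through $\bT^n$-equivariant bounded Fredholm maps. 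A preliminary remark is that one may stretch the neck freely: $E(L')$ is diffeomorphic to $E$ by a diffeomorphism matching the necks, under which the fibrewise metrics are joined by a path of metrics with necks; running finite dimensional approximation over $B\times[0,1]$ (with the standard Seiberg-Witten a priori bounds, uniform because $[0,1]$ is compact) then gives $[\mu_E]=[\mu_{E(L')}]$, and likewise $[\mu_{E^\tau}]=[\mu_{E(L')^\tau}]$ since $E(L')^\tau=E^\tau(L')$. Moreover on the stretched neck we are free to take the interpolating function $\vphi$ in $V=\gamma(\vphi(\cdot))$ to ramp from $0$ to $1$ over the entire interval $[-L'+1,L'-1]$, so that $\|\nabla V\|_{C^0}=O(1/L')$.

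Second, I would describe $\tilde\mu$ explicitly. A direct check from the definitions of $V_\Acal,V_\Ccal$ shows that $\tilde\mu$ and $\mu_E$ agree as operators on the neck complement $M$ and on the collars $C$, since there $V$ is locally constant and the monopole map is $SO(n)$-equivariant along the identical neck components. On $N_B(L')$ the difference $\tilde\mu-\mu_E$ is a sum of explicit $\bT^n$-equivariant terms built linearly from $\nabla V$ --- Clifford multiplication $c(\nabla V)$ on the spinor slot, the exterior product $\nabla V\wedge(\cdot)$ on the form slot --- with compact support and pointwise operator norm $\leq C\|\nabla V\|_{C^0}=O(1/L')$. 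In particular $\tilde\mu-\mu_E$ carries $L^2_k$-sections to compactly supported $L^2_k$-sections, hence is compact into $L^2_{k-1}$; since a short computation shows the $\nabla V$-corrections to the linear part of the monopole map are themselves compact, we get $\tilde\mu=l+\tilde c$ with the \emph{same} linear Fredholm part $l$ as $\mu_E=l+c$ from~(\ref{E:MuFredholmFamilies}) and $\tilde c$ compact. The straight line $\mu_r=l+\bigl((1-r)c+r\tilde c\bigr)$, $r\in[0,1]$, is then a path of $\bT^n$-equivariant Fredholm maps relative to $l$ from $\mu_E$ to $\tilde\mu$.

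The crux is to show that for $L'$ sufficiently large the $\mu_r$ are bounded \emph{uniformly in} $r$: there is $R(\rho)$, independent of $r$ (and of $L'$), with $\mu_r\inv(D_\rho)\subset D'_{R(\rho)}$ for every disk bundle $D_\rho\subset\Ccal_E$. This I would obtain by re-deriving the Seiberg-Witten a priori estimates for the equation $\mu_r(\psi,a,f)=\eta$: the sole departure from the genuine case is the term $r\cdot(\nabla V\text{-operator})$, which is linear in $(\psi,a)$, supported on the neck where the scalar curvature is positive, and carries the small coefficient $O(1/L')$. In the Weitzenb\"ock and maximum-principle bound for $\|\psi\|_{C^0}$ and the subsequent elliptic bootstrap for $\|a\|_{L^2_k}$ and $\|\psi\|_{L^2_k}$, each occurrence of this term is absorbed into the principal part of the estimate once $L'$ is large, so the resulting bound depends only on $\|\eta\|$, the fixed geometry of $M$, and suprema over the compact base $B$, not on $r$ or $L'$. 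Granting this, $\mu_r$ is a homotopy through $\bT^n$-equivariant bounded Fredholm maps, whence $[\mu_E]=[\tilde\mu]=[\mu_{E^\tau}]$ in $\pi^{b^+}_{\bT^n,\Ucal}(\Jcal,\ind D)$ by Corollary~\ref{C:PhiBijection}.

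The step I expect to be the main obstacle is exactly this uniform a priori estimate. The difficulty is that for $0<r<1$ the map $\mu_r$ is not the monopole map of any 4-manifold family, so the estimates cannot be quoted but must be re-established for the perturbed equation; and they must hold uniformly at once in the homotopy parameter $r$, in the neck length $L'$ (which must be free in order to drive $\|\nabla V\|\to0$), and over the base $B$ --- compactness of $B$ being precisely what prevents any of the constants from degenerating. This is the families analogue of the analytic core of Bauer's argument in \cite{BF2}; everything else is a formal consequence of the finite dimensional approximation framework of Section~\ref{Ch:FDA}.
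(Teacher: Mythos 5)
Your proposal attempts a single straight-line homotopy $\mu_r = l + (1-r)c + r\tilde c$ from $\mu_E$ to $\tilde\mu = V\inv\mu_{E^\tau}V$, and the step you yourself flag as the main obstacle --- the uniform a priori estimate for $\mu_r$ --- is indeed where the argument fails, but for a reason not visible from your description of $\tilde\mu - \mu_E$. You assert that this difference is ``built linearly from $\nabla V$'' with pointwise norm $O(1/L')$. That is correct for the linear part of the monopole map: conjugating $D_{A_\theta}$, $d^+$, $d^*$, $\pr$ by $V$ produces only a Leibniz correction of order $\nabla V$, which is exactly the operator $Q_s$ in the paper's proof. It is \emph{false} for the quadratic terms $ia\cdot\psi$ and $i\sigma(\psi)$. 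These are applied componentwise along the $n$ neck components and are therefore not equivariant under the $SO(n)$ action: for the $l$-th component,
\begin{align*}
\bigl(V\inv\bigl((Va)\cdot(V\psi)\bigr)\bigr)_l \;=\; \sum_{i,j,k} V_{il}\,V_{ij}\,V_{ik}\,\bigl(a_j\cdot\psi_k\bigr),
\end{align*}
which equals $a_l\cdot\psi_l$ only when $V$ is a permutation matrix, and likewise $V\inv\sigma(V\psi)\neq\sigma(\psi)$. Wherever $V$ is a genuine rotation --- the whole interpolation region, which in your version is all of $N(L'-1)$ --- these discrepancies are quadratic in the fields with $O(1)$ coefficients and persist even at points where $\nabla V = 0$; shrinking $\nabla V$ does nothing to them. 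So for $0<r<1$ the equation $\mu_r(\psi,a,f)=\eta$ on the neck is neither a Seiberg--Witten equation for any connection nor the $V$-conjugate of one, and the Weitzenb\"ock / maximum-principle estimate you plan to re-derive does not go through: the cross-terms are not absorbed by the positive scalar curvature.

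The paper's proof is built precisely to avoid this. It proceeds in three stages: first it uses cut-offs $\rho_L^s$ and $\rho_r^s$ to homotope $\sigma(\psi)$ and $a\cdot\psi$ to zero on the neck (the homotopy $F_s$ and the first leg of $G_s$), leaving there the honestly linear operator $P$; only then does it conjugate by $V_s$, where the identity $V_s\inv P V_s = P + Q_s$ is exact and $Q_s$ is supported in the fixed region $N(1)$ (second leg of $G_s$); finally it switches the quadratic terms back on, already conjugated by $V$ (third leg of $G_s$ and the homotopy $H_s$), where boundedness is free because $V$ is a bounded isomorphism of $\Acal$ and $\Ccal$ with constant $C_V$ independent of $L$. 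The analytic control of the middle stage comes not from making $\nabla V$ small but from exponential decay of linear solutions towards the middle of the stretched neck (Proposition~\ref{P:ExponentialDecayMiddleNeck} and Corollaries~\ref{C:ExponentialDecayMiddleNeck}, \ref{C:ExponentialDecayMiddleNeckSpinor}), so $Q_s$, of fixed $O(1)$ size, acts only on exponentially small fields. Your preliminary observations --- neck-stretching invariance, reduction via Corollary~\ref{C:PhiBijection} to producing a compact homotopy, compactness of the $\nabla V$-corrections --- are sound, but they must be inserted into a staged homotopy of this shape rather than a straight line.
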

\begin{remark}
    In the construction of $E^\tau$ it is assumed that $\tau$ is an even permutation, however Remark \ref{R:tauOdd} explains how this assumption is unnecessary.
\end{remark}

In \cite{BF2}, Bauer gave a proof of Theorem \ref{T:FamiliesGluingTheorem} in the unparameterised case where $B$ is a single point. While the ideas used in the proof of his formula were sound, we were not able to reproduce some of his arguments and have deemed the proof to be incomplete. Instead, we revisit his ideas to formulate a new proof that extends to the families setting.

\section{Monopoles on the neck}\label{S:MonopolesOnTheNeck}

To prove the permutation theorem it is enough to show that $\mu_E$ is homotopic to $V\inv \mu_{E^\tau} V$ through compact perturbations of $l$ (see Corollary \ref{C:PhiBijection}). Such a homotopy is constructed in three stages, and at each stage it is important to check that the boundedness conditions outlined in Definition \ref{D:CompactHomotopy} are satisfied. This is accomplished using techniques from the theory of Sobolev spaces, elliptic operators and monopoles on a cylinder with a varying neck length. 

\subsection{Sobolev estimates}

Two fundamental theorems in the theory of Sobolev spaces are the Sobolev embedding theorem \cite{SobolevSpaces} and the Sobolev multiplication theorem \cite{SobMult}. These theorems give estimates that relate different Sobolev norms on a spinor-form pair $(\psi, a)$ on $X$. For two neck lengths $L_1$ and $L_2$, we require estimates that apply to spinor-form pairs on both $X(L_1)$ and $X(L_2)$. The following results achieve this goal in the situations necessary for Theorem \ref{T:FamiliesGluingTheorem}. 

\begin{lemma}[\cite{BF2} Proposition 3.1]\label{L:SENeckIndependence}
    Let $k$ and $p$ be non-negative integers such that $k - \frac{4}{p} > 0$. There is a constant $C_S$ such that, for any $L \geq 2$,
    \begin{align*}
        |(\psi, a)|_{C^0} \leq C_S \|(\psi, a)\|_{L^p_k}
    \end{align*}
    for any $L^p_k$-pair $(\psi, a)$ on $X(L)$.
\end{lemma}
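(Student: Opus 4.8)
The substance of the lemma is the $L$-independence of $C_S$: for each fixed $L$ the estimate is just the Sobolev embedding $L^p_k \hookrightarrow C^0$ in dimension $4$, valid since $k - \frac{4}{p} > 0$, but a priori with a constant depending on the Riemannian geometry of $X(L)$. The plan is to exploit that $X(L)$ is assembled from the fixed neck complement $M = \overline{X - N(L-1)}$ together with $n$ product cylinders $S^3 \times [-L,L]$, whose local geometry, bundles and (flat) reference connection do not vary with $L$; stretching the neck only inserts more isometric copies of a fixed cylindrical slab. Concretely I would cover $X(L)$ by a family of charts, each isometric --- together with the restricted bundle and connection --- to one of finitely many model pieces, with overlap multiplicity bounded independently of $L$, and then patch the Sobolev estimates on the model pieces.

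In detail: assume $L \ge 2$, so that $M$ and its unit collars are defined. Let $U_0$ be $M$ together with a collar of length one pushed into each neck component; since $A_0$ is flat and identical on each component, $U_0$ is isometric, with bundle and connection, to a fixed open $4$-manifold independent of $L$. For each neck component and each integer $j$ with $|j| \le L$, let $U_j = S^3 \times (j-1, j+1) \subset S^3 \times [-L,L]$, which is isometric to the fixed model $S^3 \times (-1,1)$; parallel transport along the interval factor identifies the bundle over each $U_j$ with a fixed bundle, compatibly for all $j$. These sets cover $X(L)$, every point lying in at most some absolute constant $N$ of them (one may take $N = 3$ after a slight shrinking), and one can choose a subordinate partition of unity $\{\chi_\alpha\}$ whose members are, under the identifications above, translates of finitely many fixed functions, so that $\|\chi_\alpha\|_{C^k} \le C_0$ uniformly in $\alpha$ and $L$.

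The ordinary Sobolev embedding on the finitely many model pieces gives a constant $C_1 = C_1(k,p)$ with $|s|_{C^0} \le C_1 \|s\|_{L^p_k}$ for every section $s$ compactly supported in a model piece. Given an $L^p_k$-pair $(\psi, a)$ on $X(L)$ and a point $x$, choose $\alpha$ with $\chi_\alpha(x) \ge 1/N$, which is possible since $\sum_\alpha \chi_\alpha \equiv 1$. Viewing $\chi_\alpha \cdot (\psi,a)$ as a compactly supported section of the model piece containing $\mathrm{supp}\,\chi_\alpha$, one gets
\begin{align*}
    |(\psi,a)(x)| \;\le\; N\,|\chi_\alpha(\psi,a)(x)| \;\le\; N\,|\chi_\alpha(\psi,a)|_{C^0} \;\le\; N C_1 \|\chi_\alpha(\psi,a)\|_{L^p_k} \;\le\; N C_1 C_2 \|(\psi,a)\|_{L^p_k(X(L))},
\end{align*}
where the last inequality uses the Leibniz rule, $\|\chi_\alpha\|_{C^k} \le C_0$, and $\|(\psi,a)\|_{L^p_k(U_\alpha)} \le \|(\psi,a)\|_{L^p_k(X(L))}$, with $C_2$ depending only on $C_0, k$ and $p$. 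Taking the supremum over $x \in X(L)$ yields the lemma with $C_S = N C_1 C_2$, independent of $L$.

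I expect the only real difficulty to be bookkeeping: carefully checking that each constant --- the Sobolev constant of the models, the $C^k$-norms of the cutoffs, the overlap multiplicity, and the local trivialisations of $W^{\pm}$ and $\Lambda^i T^*(E/B)$ along the cylinder --- is genuinely independent of $L$, which all reduces to the single observation that neck-stretching produces only translates of fixed geometric data. I would also note that the same covering argument applies verbatim to each bundle entering the definition of a pair, so no separate treatment of spinors and forms is required.
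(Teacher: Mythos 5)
Your proof is correct and follows essentially the same strategy as the paper: use cut-off functions with $L$-independent $C^k$ norms to transfer the estimate to a fixed model manifold (the paper uses $X_0 = X(2)$, you use a cylindrical slab and the fixed neck complement), apply the Sobolev embedding there, and absorb the cut-off contribution via the Leibniz rule. The only cosmetic difference is that the paper uses a single bump function $\delta_x$ at each point $x$ and takes a supremum over $x$, whereas you use a partition of unity with bounded overlap and pick a cut-off that is at least $1/N$ at the given point; both choices work and lead to the same constants.
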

\begin{proof}
    Fix a neck length $L \geq 2$. For each $x \in X$ let $\delta_x : X \to [0,1]$ be a smooth bump function in a small neighbourhood of $x$. Let $X_0 = X(2)$ and use the Sobolev embedding $L^p_k(X_0, W^+ \oplus T^*X_0) \subset C^0(X_0, W^+ \oplus T^*X_0)$ \cite[Theorem B.2]{SobMult} to choose a constant $C_1$ with 
    \begin{align*}
        |(\psi', a')|_{C^0(X_0)} \leq C_1 \|(\psi', a')\|_{L^p_k(X_0)}
    \end{align*}
    for any $L^p_k$-pair $(\psi', a')$ on $X_0$. Note that such a constant exists since $k - \frac{4}{p} > 0$. For any spinor $\psi$ on $X$, $\delta_x\psi$ can be identified as a spinor on $X_0$. The same is true for $\delta_x a$ for a one-form $a$ on $X$. Now for each $x \in X$,
    \begin{align*}
        |(\delta_x\psi, \delta_x a)|_{C^0(X)} \leq C_1\|(\delta_x \psi, \delta_x a)\|_{L^p_k(X)}.
    \end{align*} 
    Since $\delta_x$ is smooth and defined locally, it has bounded $C_k$ norm which is independent of $L$. Thus there exists a constant $C_2$ such that for all $x \in X$, 
    \begin{align*}
        \|(\delta_x \psi, \delta_x a)\|_{L^p_k(X)} \leq C_2\|(\psi, a)\|_{L^p_k(X)}.
    \end{align*}
    It follows that 
    \begin{align*}
        |(\psi, a)|_{C^0(X)} &= \sup_{x \in X}|(\delta_x \psi, \delta_x a)|_{C^0(X)}\\
        &\leq C_1 \sup_{x \in X} \|(\delta_x \psi, \delta_x a)\|_{L^p_k(X)}\\
        &\leq C_1 C_2 \|(\psi, a)\|_{L^p_k(X)}.
    \end{align*}
    Setting $C_S = C_1 C_2$ gives the result.
\end{proof}
The next lemma demonstrates that Sobolev multiplication bounds only depend linearly on the length of the neck.
\begin{lemma}\label{L:SMNeckIndependence}
    Let $k \geq 0$ and $p \geq 1$ be integers. There is a constant $C_{SM}$ such that, for any neck length $L \geq 2$,
    \begin{align*}
        \|a \cdot \psi\|_{L^p_k} \leq C_{SM}L\|a\|_{L^{2p}_k}\|\psi\|_{L^{2p}_k}
    \end{align*}
    for any $L^{2p}_k$-pair $(\psi, a)$ on $X(L)$.
\end{lemma}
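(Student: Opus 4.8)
The plan is to mimic the proof of Lemma~\ref{L:SENeckIndependence} by reducing to a fixed model neck length and then using the ordinary (neck-length independent) Sobolev multiplication theorem on finitely many uniformly bounded charts, while carefully tracking how the number of charts grows with $L$. First I would fix $L \geq 2$ and cover $X(L)$ by coordinate patches: away from the neck, finitely many fixed charts suffice (they are identical for all $L$, coming from the neck complement $M$ together with the two fixed collars $C$), while the long cylindrical part $S(V_0)\times[-L,L]$ is covered by roughly $2L$ translates of a unit-length block $S(V_0)\times[j,j+1]$. The key point is that all of these patches are isometric to one of finitely many fixed Riemannian model pieces, and each is equipped with a cutoff function $\chi_j$ whose $C^k$-norm is bounded independently of $L$ (one may take the $\chi_j$ to be translates of finitely many fixed bump functions, with $\sum_j \chi_j^2 \equiv 1$ or at least $\sum_j \chi_j \geq 1$ and $\sum_j \chi_j \leq$ some fixed constant from finite overlap).

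Next I would apply the ordinary Sobolev multiplication theorem $L^{2p}_k \cdot L^{2p}_k \to L^p_k$ on each model piece $Y$ (valid because this is a statement on a \emph{fixed} compact manifold-with-boundary; cf.\ \cite{SobMult}): there is a single constant $C_0$, depending only on the models, with $\|a\cdot\psi\|_{L^p_k(Y)} \leq C_0 \|a\|_{L^{2p}_k(Y)}\|\psi\|_{L^{2p}_k(Y)}$. Applying this to $(\chi_j a, \chi_j \psi)$ and using the uniform $C^k$-bound on $\chi_j$ gives a fixed constant $C_1$ with
\begin{align*}
    \|a\cdot\psi\|_{L^p_k(\text{patch } j)} \leq C_1 \|a\|_{L^{2p}_k(\text{patch }j)}\|\psi\|_{L^{2p}_k(\text{patch }j)}.
\end{align*}
Then I would sum over the $O(L)$ patches. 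Writing $a_j = a|_{\text{patch }j}$ and $\psi_j = \psi|_{\text{patch }j}$, and using that $\|a\cdot\psi\|_{L^p_k(X(L))}^p \leq \text{(finite overlap const)}\sum_j \|a\cdot\psi\|_{L^p_k(\text{patch }j)}^p$, one gets
\begin{align*}
    \|a\cdot\psi\|_{L^p_k(X(L))}^p &\leq C_1' \sum_j \|a_j\|_{L^{2p}_k}^p \|\psi_j\|_{L^{2p}_k}^p \leq C_1' \Big(\sum_j \|a_j\|_{L^{2p}_k}^p\Big)\Big(\max_j \|\psi_j\|_{L^{2p}_k}^p\Big),
\end{align*}
which is too lossy; instead bound each factor by the global norm, $\|a_j\|_{L^{2p}_k} \leq \|a\|_{L^{2p}_k(X(L))}$ and likewise for $\psi$, to obtain $\|a\cdot\psi\|_{L^p_k}^p \leq C_1' \cdot (\#\text{patches}) \cdot \|a\|_{L^{2p}_k}^p\|\psi\|_{L^{2p}_k}^p$. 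Since the number of patches is $\leq C_2 L$ for a fixed $C_2$, taking $p$-th roots yields $\|a\cdot\psi\|_{L^p_k} \leq (C_1' C_2)^{1/p} L^{1/p}\|a\|_{L^{2p}_k}\|\psi\|_{L^{2p}_k}$, and since $L^{1/p}\leq L$ for $L\geq 1$ we may absorb this into a constant $C_{SM}$, giving the claimed linear-in-$L$ bound.

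The main obstacle I anticipate is bookkeeping the patch-counting cleanly: one needs the covering, the subordinate cutoffs, and the finite-overlap constant to all be genuinely uniform in $L$, which is exactly what the cylindrical product structure $g_{N_B(L)} = g_{S(V_0)} + dt^2$ delivers, but it must be set up so that no hidden $L$-dependence creeps in through, say, $C^k$-norms of transition data or the spinor bundle trivialization along the neck (here one uses that $W^+$ restricted to a neck component is pulled back from $S(V_0)$, so it trivializes compatibly on all the translated blocks). A secondary subtlety is that $L^{1/p}$ rather than $L$ is what naturally appears; stating the bound with $L$ is harmless and convenient, and the step $L^{1/p} \leq L$ for $p\geq 1$, $L\geq 1$ disposes of it. Everything else is the routine localization argument already used for Lemma~\ref{L:SENeckIndependence}.
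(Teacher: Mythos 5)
Your proposal follows essentially the same strategy as the paper: tile the neck by $O(L)$ translates of a unit-length model block (plus a fixed piece for the neck complement), apply the ordinary Sobolev multiplication theorem with a uniform constant on each model, and sum. The only notable difference is the bookkeeping in the summation step — the paper writes $a\cdot\psi = \sum_i (\vphi_i a)\cdot(\vphi_i\psi)$ for a partition of unity $\sum \vphi_i^2 = 1$ and uses the triangle inequality to get the factor of $L$ directly, whereas your $L^p$-localization gives the slightly sharper $L^{1/p}$ which you then coarsen to $L$; both are valid and the paper's phrasing of the intermediate bound is perhaps cleaner (it avoids the loosely stated $\|a\cdot\psi\|_{L^p_k(\text{patch }j)}$ claim, bounding $\|(\chi_j a)\cdot(\chi_j\psi)\|$ instead).
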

\begin{proof}
    For notational simplicity, assume that $X$ is connected. Recall that $M^\pm$ denotes the two halves of $M = \overline{X - N(L-1)}$ with tubular ends of the form 
    \begin{align*}
        N(L)^- \cap M &= S^3 \times [-L, -L + 1]\\
        N(L)^+ \cap M &= S^3 \times [L - 1, L].
    \end{align*}
    We will cut $N(L)$ into pieces that can be identified on $X_0 = X(2)$, then use Sobolev multiplication on $X_0$. Let $\phi : X \to [0,1]$ be a smooth function such that $\phi \equiv 1$ on $X - N(L-\frac{5}{4})$ and $\phi \equiv 0$ on $N(L-2)$. Define a function $\chi : \R \to [0,1]$ such that $\chi \equiv 1$ on $[0,1]$ and $\chi \equiv 0$ outside $[-\frac{1}{4}, \frac{5}{4}]$. Let $\chi_i$ be $\chi$ shifted by $i$ so that $\chi_i \equiv 1$ on $[i,i+1]$ and $\chi_i \equiv 0$ outside $[i - \frac{1}{4},i + \frac{5}{4}]$. Let $m = \floor{L- \frac{5}{4}}$. For $i$ an integer with $-(m+1) \leq i \leq m$, extend $\chi_i$ to $N(L) = S^3 \times [-L, L]$ by projection onto the interval factor. Let 
    \begin{align*}
        \vphi &= \sqrt{\phi^2 + \sum_{i = -(m + 1)}^m \chi_i^2}.
    \end{align*}
    Notice that $\vphi$ is positive on $X$. Let $\vphi_i = \frac{\chi_i}{\vphi}$ for $-(m+1) \leq i \leq m$ with $\vphi_{m+1} = \frac{\phi}{\vphi}$. By construction,
    \begin{align*}
        \sum_{i=-(m+1)}^{m+1} \vphi_i^2 = 1.
    \end{align*}
    For each $-(m+1) \leq i \leq m+1$, set $\psi_i = \vphi_i\psi$ and $a_i = \vphi_i a$. Both $\psi_i$ and $a_i$ can be identified as sections on $X_0$. For $-(m+1) \leq i \leq m$, this is accomplished by shifting the interval $[i-\frac{1}{4}, i + \frac{5}{4}]$ to $[-\frac{1}{4}, \frac{5}{4}]$. We can assume that the $C^k$ norm of $\vphi_i$ is bounded, which implies that there exists a constant $C_1$, independent of $L$, such that
    \begin{align}\label{E:SMLemma1}
        \|\psi_i\|_{L^{2p}_k(X_0)} &\leq C_1\|\psi\|_{L^{2p}_k(X)} \nonumber \\
        \|a_i\|_{L^{2p}_k(X_0)} &\leq C_1\|a\|_{L^{2p}_k(X)}.
    \end{align} 
    For the purposes of elliptic bootstrapping, the $L^p_k$-Sobolev norm on $X_0$ is defined as 
    \begin{align*}
        \|(\psi_i, a_i)\|_{L^p_k(X_0)} &= \sum_{j=0}^k \|(\Dcal^j \psi_i, (d^* + d^+)^j a_i)\|_{L^p(X_0)}.
    \end{align*}
    Equivalently, the $L^p_k$-norm on $X_0$ can instead be defined by differentiating spinors with the \spinc connection $\nabla_{A_0}$ and forms with the Levi-Civita connection $\nabla$. Thus there are constants $0 < c \leq C$ such that 
    \begin{align*}
        c\|(\psi_i, a_i)\|_{L^p_k(X_0)} \leq \sum_{j=0}^k \|(\nabla_{A_0}^j \psi_i, \nabla^j a_i)\|_{L^p(X_0)} \leq C\|(\psi_i, a_i)\|_{L^p_k(X_0)}.
    \end{align*}
    Calculating with repeated applications of the Leibniz rule gives 
    \begin{align*}
        \|a_i \cdot \psi_i\|_{L^p_k(X_0)} &\leq \frac{1}{c}\sum_{j=0}^k \|\nabla_A^j(a_i \cdot \psi_i)\|_{L^p(X_0)}\\
        &\leq \frac{1}{c}\sum_{j=0}^k \sum_{l=0}^j K_{j,l}\|\Gamma(\nabla^l a_i)\cdot (\nabla_A^{j-l} \psi_i)\|_{L^p(X_0)}
    \end{align*}
    for some non-negative constants $K_{j,l}$. Here $\Gamma(\nabla^l a_i) \in \End(W)$ is the matrix corresponding to spinor multiplication by the $(l+1)$-form $\nabla^l a_i$. The operator norm of $\Gamma(\nabla^l a_i)$ is equal to $|\nabla^l a_i|$, hence applying Sobolev multiplication \cite[Lemma B.3]{SobMult} it follows that  
    \begin{align}\label{E:SMLemma2}
        \|a_i \cdot \psi_i\|_{L^p_k(X_0)} &\leq C_2\|a_i\|_{L^{2p}_k(X_0)} \|\psi_i\|_{L^{2p}_k(X_0)}
    \end{align}
    for some constant $C_2$. This constant depends on $c$, $K_{j,l}$ and Sobolev multiplication on $X_0$, hence is independent of $L$. Combining (\ref{E:SMLemma1}) and (\ref{E:SMLemma2}) produces the result.
    \begin{align*}
        \|a \cdot \psi\|_{L^p_k(X)} &\leq \sum_{i=-m-1}^{m+1} \|a_i \cdot \psi_i\|_{L^p_k(X_0)}\\
        &\leq C_2\sum_{i=-m-1}^{m+1} \|a_i\|_{L^{2p}_k(X_0)} \|\psi_i\|_{L^{2p}_k(X_0)}\\
        &\leq C_2C_1^2\sum_{i=-m-1}^{m+1} \|a\|_{L^{2p}_k(X)} \|\psi\|_{L^{2p}_k(X)}\\
        &\leq C_{SM}L\|a\|_{L^{2p}_k(X)}\|\psi\|_{L^{2p}_k(X)}.
    \end{align*}
\end{proof}
The same argument applied to $\sigma(\psi)$ instead gives the following result.
\begin{lemma}\label{L:SigmaPsiSobolevBound}
        Let $k \geq 0$ and $p \geq 1$ be integers. There is a constant $C_{\sigma}$ such that, for any neck length $L \geq 2$,
    \begin{align*}
        \|\sigma(\psi)\|_{L^p_{k}} \leq C_\sigma L\|\psi\|_{L^{2p}_{k}}^2
    \end{align*}
    for and $\psi \in L^{2p}_k(X(L), W^+)$.
\end{lemma}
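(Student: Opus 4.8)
The plan is to reproduce the argument for Lemma \ref{L:SMNeckIndependence} essentially verbatim, the only new ingredient being that $\sigma$ is a fibrewise quadratic function of the spinor, so that $\sigma(\vphi_i\psi) = \vphi_i^2\,\sigma(\psi)$ whenever $\vphi_i$ is a real nonnegative scalar function. First I would take exactly the partition of unity $\{\vphi_i\}$ constructed in the proof of Lemma \ref{L:SMNeckIndependence}: it is indexed over $O(L)$ values of $i$, satisfies $\sum_i \vphi_i^2 = 1$, has $C^k$-norms bounded independently of $L$, and is arranged so that each $\vphi_i$ is supported in a region of $X(L)$ identified isometrically with a fixed region of $X_0 = X(2)$. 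Setting $\psi_i = \vphi_i\psi$, the quadratic scaling of $\sigma$ together with $\sum_i \vphi_i^2 = 1$ gives
\[
\sigma(\psi) \;=\; \Bigl(\sum_i \vphi_i^2\Bigr)\sigma(\psi) \;=\; \sum_i \vphi_i^2\,\sigma(\psi) \;=\; \sum_i \sigma(\psi_i).
\]

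Next, since each $\sigma(\psi_i)$ is supported in a region identified isometrically with a subset of $X_0$, the triangle inequality yields $\|\sigma(\psi)\|_{L^p_k(X)} \le \sum_i \|\sigma(\psi_i)\|_{L^p_k(X_0)}$. I would then bound each summand on the fixed compact manifold $X_0$ exactly as in Lemma \ref{L:SMNeckIndependence}: using the elliptic-bootstrapping description of the $L^p_k$-norm on $X_0$ and its equivalence with the norm built from $\nabla_{A_0}$, the Leibniz rule expands $\nabla_{A_0}^j\sigma(\psi_i)$ into a sum of terms of the form $(\nabla_{A_0}^l\psi_i)\otimes(\nabla_{A_0}^{j-l}\psi_i)^*$ together with the analogous terms arising from the trace part of $\sigma$. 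Applying the Sobolev multiplication theorem \cite[Lemma B.3]{SobMult} on $X_0$ (with $k$ and $p$ fixed) produces a constant $C_2$, independent of $L$, with $\|\sigma(\psi_i)\|_{L^p_k(X_0)} \le C_2\,\|\psi_i\|_{L^{2p}_k(X_0)}^2$.

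Finally, the estimate $\|\psi_i\|_{L^{2p}_k(X_0)} \le C_1\|\psi\|_{L^{2p}_k(X)}$ from (\ref{E:SMLemma1}) (valid because the $\vphi_i$ have $L$-independent $C^k$-norms) feeds into the previous two inequalities to give
\[
\|\sigma(\psi)\|_{L^p_k(X)} \;\le\; C_1^2 C_2 \sum_i \|\psi\|_{L^{2p}_k(X)}^2 \;\le\; C_\sigma\,L\,\|\psi\|_{L^{2p}_k(X)}^2,
\]
since the number of pieces in the partition is bounded by a constant times $L$ for $L \ge 2$. The routine parts — the cutoff bookkeeping and the Leibniz expansion — are identical to Lemma \ref{L:SMNeckIndependence}; the only point needing a sentence of justification is the scaling identity $\sigma(\vphi_i\psi) = \vphi_i^2\,\sigma(\psi)$. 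I do not anticipate any genuine obstacle, because all of the analytic content (Sobolev multiplication and the norm equivalence) takes place on the fixed compact manifold $X_0$, and the neck length enters only through the $O(L)$ count of pieces in the partition of unity.
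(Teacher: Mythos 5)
Your proof is correct and matches the paper's intent exactly: the paper's "proof" of Lemma \ref{L:SigmaPsiSobolevBound} is the one-line remark that "the same argument" as Lemma \ref{L:SMNeckIndependence} applies, and you have unpacked that remark faithfully, correctly isolating the one genuinely new point — the quadratic scaling $\sigma(\vphi_i\psi)=\vphi_i^2\sigma(\psi)$, which together with $\sum_i\vphi_i^2=1$ replaces the bilinear identity $a\cdot\psi=\sum_i a_i\cdot\psi_i$ used there.
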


\subsection{Elliptic inequality}

To analyse the properties of monopoles on a neck of varying length, it is useful to apply Yang Mills theory on cylinders as in Chapter 2 of \cite{FloerYM}. Fix a neck length $L$ with $X = X(L)$. For notational simplicity, assume that $X$ only has one connected component. Recall that $M^+$ and $M^-$ are the two halves of $M = \overline{X - N(L-1)}$. Attach infinite tubes to $M^+$ and $M^-$ to get manifolds with tubular ends $Y^\pm$ of the form 
\begin{align*}
    Y^- &= M^- \cup S^3 \times [-L + 1, \infty)\\
    Y^+ &= S^3 \times (-\infty, L-1] \cup M^+.
\end{align*}
One-forms on the tubular component of $Y^\pm$ can be analysed by studying forms on the product $S^3 \times \R$. Let $\pi : S^3 \times \R \to S^3$ be projection onto the $S^3$ factor. All elements of $\Omega^1(S^3 \times \R)$ are of the form $\omega_t + fdt$ for $\omega_t \in \Omega^1(S^3)$ a smooth family of one-forms on $S^3$ and $f : S^3 \times \R \to \R$ a smooth function. That is, we can identify
\begin{align*}
    \Omega^1(S^3 \times \R) &= \Cinf(S^3 \times \R, \R \oplus \pi^* T^* S^3).
\end{align*}
Similarly, self-dual 2-forms $\Omega_+^2(S^3 \times \R)$ can be identified with time-dependent 1-forms $\xi \in C^\infty(S^3 \times \R, \pi^* T^* S^3)$ by the isomorphism
\begin{align*}
    \xi \mapsto \xi \wedge dt + *_3 \xi.
\end{align*}
Here $*_3$ is the hodge star operator on $S^3$. Thus we can interpret the elliptic operator $d^* + d^+ : \Omega^1(S^3 \times \R) \to \Omega^0(S^3 \times \R) \oplus \Omega^2_+(S^3 \times \R)$ as 
\begin{align}\label{E:FormIdentification}
    d^* + d^+ : C^\infty(S^3 \times \R, \R \oplus \pi^* T^* S^3) \to C^\infty(S^3 \times \R, \R \oplus \pi^* T^* S^3).
\end{align}
Consider the operator $\Lcal : \Omega^0(S^3) \oplus \Omega^1(S^3) \to \Omega^0(S^3) \oplus \Omega^1(S^3)$ defined by 
\begin{align}\label{E:Lcal}
    \Lcal &= \begin{pmatrix}
        0 & d^*\\
        d & *d
    \end{pmatrix}.
\end{align}
This is a self-adjoint elliptic operator that squares to the Laplacian $\Lcal^2 = dd^* + d^*d$ on $\Omega^0(S^3) \oplus \Omega^1(S^3)$. It can be shown by direct calculation that under the identification (\ref{E:FormIdentification}), 
\begin{align*}
    d^* + d^+ = \frac{\p }{\p t} + \Lcal
\end{align*}
where $\frac{\p}{\p t}$ is the derivative in the $\R$ direction. 

Since the tubular ends of $Y$ are not compact, solutions to the operator $\frac{\p}{\p t} + \Lcal$ will be studied in weighted Sobolev spaces. Weighted Sobolev spaces consist of functions that have a controlled exponential increase towards the tubular ends. To define them, fix a parameter $\alpha < 0$ and let $f^-_\alpha$ be a smooth function on $S^3 \times [-L, \infty)$ that is zero on $S^3 \times [-L, -L + 2]$ and decreases with slope $\alpha$ on $S^3 \times [-L + 3, \infty)$. Similarly, define $f^+_\alpha$ on $S^3 \times (-\infty, L]$ to be zero on $S^3 \times [L-2, L]$ and decrease with slope $\alpha$ on $S^3 \times (-\infty, L-3]$. Since $\alpha < 0$, both functions $f^\pm_\alpha$ are non-positive. Define the weighted Sobolev space $L^{p,\alpha}_k(Y^\pm)$ to be the completion of $L^p(Y^\pm)$ with respect to the norm 
\begin{align*}
    \|g\|_{L^{p,\alpha}_k} &= \|\exp(f^\pm_\alpha) g\|_{L^p_k}.
\end{align*}
Note that $\exp(f^\pm_\alpha)$ is decreasing exponentially towards the infinite end of $Y^\pm$. Moreover, the spaces $L^{p,\alpha}_k(Y^\pm)$ are independent of the original neck length $L$.

It is shown in \cite{FloerYM} that $d^* + d^+ = \frac{\p}{\p t} + \Lcal$ is a linear Fredholm operator on $L^{p,\alpha}_1$-forms if $\alpha$ is not in the spectrum of $L$. Since $L$ is self-adjoint and elliptic it has discrete spectrum away from infinity, so choose $\alpha < 0$ to be greater than the maximal negative eigenvalue of $L$. As in (\ref{E:ProjectionMap}), define a harmonic projection map $\pr^\pm : \Omega^1(Y^\pm) \to \Omega^1(Y^\pm)$ by integrating a homology basis of curves away from the neck. The image of $\pr^\pm$ is $H^1(X ; \R)$, identified as the space of harmonic forms $\Hcal^1(Y^\pm) \subset \Omega^1(Y^\pm)$. Fix $p > 4$ so that $L^{p}_1(Y^\pm, T^*Y) \subset C^0(Y^\pm, T^*Y)$ by Sobolev embedding and extend $\pr^\pm$ continuously to a map on $L^{p, \alpha}_1$ forms. The operator 
\begin{align*}
    d^* + d^+ : L^{p,\alpha}_1(Y^\pm, T^*Y^\pm) \to L^{p,\alpha}(Y^\pm, \R \oplus \Lambda^2_+ T^* Y^\pm)
\end{align*}
is Fredholm with kernel $\Hcal^1(Y^\pm)$ and cokernel $H^0(Y^\pm ; \R) \oplus H^2_+(Y^\pm ; \R)$. Let $H^\pm = \ker \pr^\pm$, which is a complement of $\ker(d^* + d^+)$. Thus the restriction of $d^* + d^+$ to $H^\pm$ is a linear bijection onto the closed $L^{p,\alpha}$-image of $d^* + d^+$. The bounded inverse theorem guarantees that there are constants $C^\pm > 0$ such that for $b \in L^{p,\alpha}_1(Y^\pm, T^* Y^\pm)$,
\begin{align}\label{E:EllipticIneqAlpha}
    \|b\|_{L^{p,\alpha}_1} \leq C^\pm\left(\|(d^* + d^+)b\|_{L^{p,\alpha}} + \|\pr^\pm(b)\|\right).
\end{align}
Importantly, the constants $C^\pm$ are independent of the neck length $L$. That is, for another choice of neck length $L'$ and manifolds with tubular ends $(Y')^\pm$, there is an isometry from $L^{p,\alpha}_k(Y^\pm, T^* Y^\pm)$ to $L^{p,\alpha}_k((Y')^\pm, T^*(Y')^\pm)$ defined by shifting the interval component by $L' - L$.

To analyse the behaviour of forms away from the middle of the neck, define smooth cut-off functions $\beta^\pm : X \to [0,1]$ which vanish on $X^\mp \cup N(2)$ and are equal to $1$ on $M^\pm$. To ensure such $\beta$ exist, we will assume that $L \geq 3$. 
\begin{lemma}[\cite{BF2} Proposition 3.1]\label{L:PoUBound}
    Let $\beta^\pm$ be cutaway functions as described above and fix $p > 4$. There exists a constant $C$ such that, for any neck-length $L > 3$,
    \begin{align*}
        |a|_{C^0(M)} \leq C\left(\|(d^* + d^+)\beta^+ a\|_{L^p(X)} + \|(d^* + d^+)\beta^- a\|_{L^p(X)} + \|\pr(a)\|\right)
    \end{align*}
    for any $L^p_1$-form $a$ on $X(L)$. 
\end{lemma}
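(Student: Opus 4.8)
The plan is to pull $\beta^{\pm}a$ onto the manifolds with tubular ends $Y^{\pm}$, where the neck-independent elliptic inequality (\ref{E:EllipticIneqAlpha}) and a fixed Sobolev embedding are both available, and then translate the resulting bound back to $X(L)$. As in the previous lemmas, I would first reduce to the case that $X$ is connected with a single neck component, the general case running component by component. Since $\beta^{+}$ vanishes on $X^{-}\cup N(2)$ and equals $1$ on $M^{+}$, the form $\beta^{+}a$, extended by zero, is an $L^{p}_{1}$-form on $Y^{+}$ whose support lies in $M^{+}$ together with a bounded portion of the tube; in particular it belongs to $L^{p,\alpha}_{1}(Y^{+},T^{*}Y^{+})$, and recall that $\exp(f^{+}_{\alpha})\le 1$ everywhere on $Y^{+}$ with $\exp(f^{+}_{\alpha})\equiv 1$ on $M^{+}$. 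Applying (\ref{E:EllipticIneqAlpha}) to $b=\beta^{+}a$ gives
\[
\|\beta^{+}a\|_{L^{p,\alpha}_{1}(Y^{+})}\le C^{+}\!\left(\|(d^{*}+d^{+})(\beta^{+}a)\|_{L^{p,\alpha}(Y^{+})}+\|\pr^{+}(\beta^{+}a)\|\right),
\]
with $C^{+}$ independent of $L$.

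Next I would convert the two terms on the right into data on $X(L)$. By the Leibniz rule $(d^{*}+d^{+})(\beta^{+}a)$ is supported where $\beta^{+}$ or $d\beta^{+}$ is nonzero, a region on which $\beta^{+}a$ agrees with the corresponding form on $X(L)$ and on which $\exp(f^{+}_{\alpha})\le 1$; hence $\|(d^{*}+d^{+})(\beta^{+}a)\|_{L^{p,\alpha}(Y^{+})}\le\|(d^{*}+d^{+})(\beta^{+}a)\|_{L^{p}(X)}$. For the harmonic term, I would choose the homology basis defining $\pr$ on $X(L)$ to be represented by loops lying in $M^{+}\coprod M^{-}$, which is possible because Mayer--Vietoris across the neck $S^{3}\times[-L,L]$ gives $H_{1}(X)\cong H_{1}(M^{+})\oplus H_{1}(M^{-})$, and take the loops defining $\pr^{+}$ on $Y^{+}$ among the $M^{+}$-loops, where $\beta^{+}\equiv 1$. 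Then $\pr^{+}(\beta^{+}a)=\pr^{+}(a)$ is the $M^{+}$-component of $\pr(a)$, so $\|\pr^{+}(\beta^{+}a)\|\le C\|\pr(a)\|$ with $C$ independent of $L$. Finally, restricting to $M^{+}$, where $\beta^{+}\equiv 1$ and $\exp(f^{+}_{\alpha})\equiv 1$, yields $\|a\|_{L^{p}_{1}(M^{+})}\le\|\beta^{+}a\|_{L^{p,\alpha}_{1}(Y^{+})}$; and since $M^{+}$ is a fixed compact $4$-manifold with boundary, independent of $L$, and $p>4$, the Sobolev embedding $L^{p}_{1}(M^{+})\hookrightarrow C^{0}(M^{+})$ gives $|a|_{C^{0}(M^{+})}=|\beta^{+}a|_{C^{0}(M^{+})}\le C_{\mathrm{Sob}}\|a\|_{L^{p}_{1}(M^{+})}$ with $C_{\mathrm{Sob}}$ independent of $L$. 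Chaining these inequalities bounds $|a|_{C^{0}(M^{+})}$ by a constant times $\|(d^{*}+d^{+})(\beta^{+}a)\|_{L^{p}(X)}+\|\pr(a)\|$; running the identical argument on $Y^{-}$ handles $M^{-}$, and since $M=M^{+}\coprod M^{-}$ the claimed estimate follows by taking the larger of the two bounds.

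The main obstacle, and the whole reason for passing to weighted Sobolev spaces on the $Y^{\pm}$, is keeping every constant independent of the neck length $L$. This rests on three points already available: the constants $C^{\pm}$ in (\ref{E:EllipticIneqAlpha}) are $L$-independent via the shift isometry relating the $Y^{\pm}$ for different $L$; the weight $\exp(f^{\pm}_{\alpha})$ is non-positive, hence $\le 1$ everywhere, and is $\equiv 1$ on $M^{\pm}$, so converting between weighted and unweighted norms never produces a factor growing with $L$ in either direction; and $M^{\pm}$ is a fixed compact manifold, so its Sobolev embedding constant does not degenerate as $L\to\infty$. The only additional care needed is in aligning the projections $\pr^{\pm}$ on $Y^{\pm}$ with the global projection $\pr$ on $X$, but this involves only the fixed pieces $M^{\pm}$ and so contributes no $L$-dependence.
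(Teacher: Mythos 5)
Your proposal is correct and follows the same core strategy as the paper: cut off by $\beta^{\pm}$, view the result on the tubular-end manifolds $Y^{\pm}$, invoke the $L$-independent weighted elliptic inequality (\ref{E:EllipticIneqAlpha}), and translate back to $X(L)$ using that the weight satisfies $\exp(f^{\pm}_{\alpha})\le 1$ and equals $1$ on $M^{\pm}$. The only real variation is in where the Sobolev embedding is applied: the paper applies it to the compactly supported weighted form on all of $Y^{\pm}$, which requires the $L$-independence statement of Lemma~\ref{L:SENeckIndependence}, while you restrict first to the fixed compact piece $M^{\pm}$ and use the ordinary Sobolev embedding there, which sidesteps that lemma entirely and is slightly more elementary. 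Your treatment of the harmonic-projection term via the Mayer--Vietoris splitting $H_1(X)\cong H_1(M^+)\oplus H_1(M^-)$ is also a bit more explicit than the paper's (which simply observes $\pr(b_+)+\pr(b_-)=\pr(a)$ and combines the norms), but both rest on the same observation that the homology basis sits in $M$ where the cutoffs are identically $1$. No gaps.
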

\begin{proof}
    The Sobolev embedding $L^p_1(X, T^*X) \subset C^0(X, T^*X)$ guarantees the existence of a constant $C_S$ such that
\begin{align}\label{E:SobBound}
    |a|_{C^0(X)} \leq C_S \|a\|_{L^p_1(X)}.
\end{align}
for all $a \in L^p_1(X, T^*X)$. Lemma \ref{L:SENeckIndependence} ensures that $C_S$ can be chosen independently of $L$. To apply the elliptic bound, let $b_\pm = \beta^\pm a$ and notice that $e^{f^\pm_\alpha}b_\pm = a$ on $M^\pm$.
\begin{align}\label{E:PoUHalfBound}
    |a|_{C^0(M^\pm)} &= |e^{f^\pm_\alpha}b_\pm|_{C^0(M^\pm)} \nonumber \\
    &\leq |e^{f^\pm_\alpha}b_\pm|_{C^0(Y^\pm)} \nonumber \\
    &\leq C_S\|e^{f^\pm_\alpha}b_\pm\|_{L^{p}_1(Y^\pm)} \nonumber \\
    &= C_S\|b_\pm\|_{L^{p,\alpha}_1(Y^\pm)}
\end{align}
Note that $b_\pm$ is compactly supported in $M^\pm \cup N(L-1) \subset Y^\pm$, so the Sobolev bound (\ref{E:SobBound}) applies to $|e^{f^\pm_\alpha}b_\pm|_{C^0(Y^\pm)}$. Now (\ref{E:EllipticIneqAlpha}) gives
\begin{align*}
    |a|_{C^0(M^\pm)} &\leq C_SC^\pm\left(\|(d^* + d^+)b_\pm\|_{L^{p,\alpha}(Y^\pm)} + \|\pr(b_\pm)\|\right) \\
    &\leq C_SC^\pm\left(\|(d^* + d^+)b_\pm\|_{L^{p}(X)} + \|\pr(b_\pm)\|\right).
\end{align*}
This inequality follows since $f_\alpha^\pm \leq 0$ and $b^\pm$ is compactly supported on $M^\pm \cup N(L-1) \subset Y^\pm$. Putting this together with $C = \max\{C^sC^+, C^sC^-\}$ yields
\begin{align*}
    |a|_{C^0(M)} &\leq |a|_{C^0(M^+)} + |a|_{C^0(M^-)} \\
    &\leq C\left(\|(d^* + d^+)b_+\|_{L^{p}(X)} + \|(d^* + d^+)b_-\|_{L^{p}(X)} + (\|\pr(b_+)\| + \|\pr(b_-)\|)\right).
\end{align*}
Recall that $\pr(b_\pm)$ is defined by integration over an orthonormal basis of curves contained in $M$. Since $b_\pm$ vanishes on $M^\mp$ we have
\begin{align*}
    \|\pr(b_+)\| + \|\pr(b_-)\| &= \|\pr(b_+ + b_-)\| \\
    &= \|\pr(a)\|.
\end{align*}
It follows that 
\begin{align*}
    |a|_{C^0(M)} &\leq C\left(\|(d^* + d^+)b_+\|_{L^{p}(X)} + \|(d^* + d^+)b_-\|_{L^{p}(X)} + \|\pr(a)\|\right).
\end{align*}
\end{proof}
\begin{proposition}[Adapted from \cite{BF2} Lemma 3.3]\label{P:HarmonicNeckBound}
    Fix $p > 4$. There exists a neck length $L_0$ and a constant $C_E$ such that the following holds: For any $L \geq L_0$, let $a \in L^p_1(X, T^* X)$ be an $L^p_1$-form on $X(L)$ such that $\pr(a) = 0$. If $(d^* + d^+)a$ vanishes on $N(L-1)$, then
    \begin{align*}
        |a|_{C^0(M)} \leq C_E|(d^* + d^+)a|_{C^0(M)}.
    \end{align*}
\end{proposition}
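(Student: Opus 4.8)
The plan is to reduce the estimate, using Lemma~\ref{L:PoUBound} and the hypothesis $\pr(a)=0$, to a bound on $a$ over a \emph{fixed} portion of the long neck, and then to exploit the exponential decay of solutions of $(d^*+d^+)a=0$ towards the middle of the neck. Applying Lemma~\ref{L:PoUBound} and dropping the (now vanishing) harmonic term gives
\begin{align*}
    |a|_{C^0(M)}\le C\bigl(\|(d^*+d^+)\beta^+a\|_{L^p(X)}+\|(d^*+d^+)\beta^-a\|_{L^p(X)}\bigr).
\end{align*}
By the Leibniz rule, $(d^*+d^+)(\beta^\pm a)$ equals $\beta^\pm(d^*+d^+)a$ plus a term that involves $a$ only algebraically, with coefficients given pointwise by $d\beta^\pm$. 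I would choose the cut-offs $\beta^\pm$ with $C^k$-norms bounded independently of $L$, so that $d\beta^\pm$ is supported in a fixed compact cylinder $Z^\pm\subset N(L-1)$ independent of $L$; this is possible because the neck is simply a long product. Since $(d^*+d^+)a$ vanishes on $N(L-1)$, the term $\beta^\pm(d^*+d^+)a$ is supported in $M^\pm$ and is bounded by $\mathrm{vol}(M)^{1/p}|(d^*+d^+)a|_{C^0(M)}$, while the algebraic term has $L^p(X)$-norm at most $C'\|a\|_{L^p(Z^\pm)}$. Thus the whole estimate reduces to controlling $\|a\|_{L^p(Z^\pm)}$, the size of $a$ on a fixed region lying a distance of order $L$ from both ends of the neck.

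For this I would use that on $N(L-1)=S^3\times[-(L-1),L-1]$ the equation $(d^*+d^+)a=0$ is the ODE $\partial_t a+\Lcal a=0$, so $a$ is governed by the spectral decomposition of the self-adjoint elliptic operator $\Lcal$ of~(\ref{E:Lcal}) on $\Omega^0(S^3)\oplus\Omega^1(S^3)$. In an orthonormal eigenbasis $a(t)=\sum_n c_n e^{-\lambda_n t}\phi_n$, the modes with $\lambda_n>0$ are largest at the left end and decay to the right, those with $\lambda_n<0$ are largest at the right end and decay to the left, and since the eigenvalues of $\Lcal$ obey Weyl's law one can bound the nonzero-eigenvalue part of $a$ at a cross-section by its values at $t=\pm(L-1)$ times $e^{-\mu(L-c_0)}$, where $\mu>0$ is the least nonzero $|\lambda_n|$ and $c_0$ a fixed constant. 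Since $\partial N(L-1)\subset M$, this gives, for $|t|$ bounded,
\begin{align*}
    \|(1-P_0)a(t)\|_{L^2(S^3)}\le C_1 e^{-\mu(L-c_0)}|a|_{C^0(M)},
\end{align*}
where $P_0$ is orthogonal projection onto $\ker\Lcal$. Because $H^1(S^3)=0$, the kernel $\ker\Lcal$ is one dimensional --- the constants in the $\Omega^0(S^3)$-slot --- so the only mode not killed by the decay is the $t$-independent one, $P_0a=c\,dt$.

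To control this surviving mode, I would observe that $d^*a=0$ on the neck makes the $3$-form $*a$ (Hodge dual on $X$) closed there, so the flux $\Phi(t)=\int_{S^3\times\{t\}}*a$ is independent of $t$ along $N(L-1)$ and equals $c\cdot\mathrm{vol}(S^3)$; evaluating $\Phi$ at $t=-(L-1)$, where $S^3\times\{t\}=\partial M^-$, and applying Stokes on $M^-$,
\begin{align*}
    |c|\,\mathrm{vol}(S^3)=\Bigl|\int_{M^-}d(*a)\Bigr|=\Bigl|\int_{M^-}(d^*a)\,\mathrm{dvol}\Bigr|\le\mathrm{vol}(M)\,|(d^*+d^+)a|_{C^0(M)},
\end{align*}
since $d^*a$ is one component of $(d^*+d^+)a$. (The point is that $S^3\times\{t\}$ bounds in $X$, so the flux would vanish outright were $a$ coclosed on all of $X$; its failure to vanish is measured by $d^*a$ on $M$.) Feeding this and the decay estimate into interior elliptic regularity on a slightly larger cylinder (still inside $N(L-1)$, where $(d^*+d^+)a$ still vanishes) and passing from $L^2$ to $C^0$ to $L^p$ gives $\|a\|_{L^p(Z^\pm)}\le C_2\bigl(e^{-\mu(L-c_0)}|a|_{C^0(M)}+|(d^*+d^+)a|_{C^0(M)}\bigr)$ with $C_2$ independent of $L$. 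Substituting back into the first display, $|a|_{C^0(M)}\le\widetilde C e^{-\mu(L-c_0)}|a|_{C^0(M)}+\widetilde C|(d^*+d^+)a|_{C^0(M)}$; choosing $L_0$ so large that $\widetilde C e^{-\mu(L_0-c_0)}\le\tfrac12$ lets the first term be absorbed, giving $|a|_{C^0(M)}\le C_E|(d^*+d^+)a|_{C^0(M)}$ for all $L\ge L_0$ with $C_E:=2\widetilde C$ independent of $L$.

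The hard part is the second and third steps: one must verify that the spectral and elliptic bounds hold with constants independent of the neck length $L$ --- which is exactly where the $L$-independent Sobolev and elliptic estimates of Lemmas~\ref{L:SENeckIndependence}--\ref{L:PoUBound} and the $L$-independent inequality~(\ref{E:EllipticIneqAlpha}) do the work --- and one must correctly isolate and bound the single non-decaying harmonic mode, for which the topological fact that the neck cross-section is null-homologous in $X$ is essential. As an alternative to the absorbing argument at the end, a compactness-and-contradiction proof works too: for a hypothetical bad sequence with $L\to\infty$ normalise $|a|_{C^0(M)}=1$, extract a $C^\infty_{\mathrm{loc}}$-limit on $S^3\times\R$, note it is a bounded solution of $\partial_t a+\Lcal a=0$ hence $t$-independent, and conclude it vanishes by the flux bound --- contradicting the lower bound on $\|a\|_{L^p(Z^\pm)}$ supplied by Lemma~\ref{L:PoUBound}.
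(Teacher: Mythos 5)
Your proof is correct but takes a genuinely different route from the paper's. Both arguments start the same way: apply Lemma~\ref{L:PoUBound}, drop the $\pr(a)$ term, and Leibniz-split $(d^*+d^+)(\beta^\pm a)$ into $\beta^\pm(d^*+d^+)a$ (supported in $M^\pm$) plus the commutator term governed by $d\beta^\pm$. The paper then makes the \emph{opposite} cut-off choice to yours: it spreads the transition of $\beta^\pm$ over the whole neck so that $|d\beta^\pm|_{C^0}<2/L$, making $\|d\beta^\pm\|_{L^p(N(L-1))}\sim L^{1/p-1}$; it controls $\sup_{N(2,L-1)}|a|$ not by exponential decay but by the maximum principle ($|a|^2$ is subharmonic on the neck by the Weitzenb\"ock formula and non-negative Ricci curvature, hence $\sup_{N(L-1)}|a|=\sup_{\partial N(L-1)}|a|\le|a|_{C^0(M)}$); and it absorbs the resulting $C_3L^{1/p-1}|a|_{C^0(M)}$ term since $p>4$ makes the exponent negative. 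You instead concentrate $d\beta^\pm$ in a fixed band near the middle, invoke the eigenmode expansion of $\partial_t+\Lcal$ (the same machinery as Proposition~\ref{P:ExponentialDecayMiddleNeck}) to get exponential decay of the $(1-P_0)$ part, and --- which is the novel ingredient relative to the paper's proof of this proposition --- bound the non-decaying zero mode $c\,dt$ by a flux/Stokes argument on $M^-$ using the coclosedness $d^*a=0$ on the neck and the fact that $S^3\times\{t\}$ is null-homologous in $X$. Your route costs more (it needs the spectral decay and the flux observation to handle $\ker\Lcal$, whereas the paper's maximum principle handles all modes at once) but pays back an exponentially small absorbing term rather than a polynomially small one, and the flux argument makes explicit the topological reason the constant mode is controlled. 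One small caution: to make the absorbing display precise you do need to land in $|a|_{C^0(M)}$ on the right, which works because Proposition~\ref{P:ExponentialDecayMiddleNeck} bounds the interior by data on the collars $C^\pm\subset M$ rather than by $\sup_{N(L)}$; and the passage from the cross-sectional $L^2$ bound to the $L^p$ bound on $Z^\pm$ via interior elliptic estimates on a fixed-size cylinder is fine since its size (and hence the constant) is $L$-independent.
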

\begin{proof}
    Let $\beta^\pm$ be cut-off functions as described in Lemma \ref{L:PoUBound}. Assume without loss of generality that $|d\beta^\pm|_{C^0(X)} < \frac{2}{L}$, which is possible when $L > 6$. Lemma \ref{L:PoUBound} gives a constant $C_1$, independent of $L$, such that
    \begin{align}\label{E:PoUBound2}
        |a|_{C^0(M)} \leq C_1(\|(d^* + d^+)\beta^+ a\|_{L^p(X)} + \|(d^* + d^+)\beta^- a\|_{L^p(X)}).
    \end{align}
    Calculating with the Leibniz rule yields
    \begin{align*}
        \|(d^* + d^+)\beta^\pm a\|_{L^p(X)} &\leq \|\beta^\pm (d^* + d^+) a\|_{L^p(X)} + \|d\beta^\pm \wedge a\|_{L^p(X)}.
    \end{align*}
    The product $\beta^\pm (d^* + d^+) a$ is supported inside $M^\pm$, thus
    \begin{align*}
        \|\beta^\pm (d^* + d^+) a\|_{L^p(X)} &= \|(d^* + d^+) a\|_{L^p(M^\pm)}.
    \end{align*}
    Since $N(L-1)$ has non-negative Ricci curvature, the \Weitzenbock formula \cite[Ex 2.31]{Salamon} implies that $|a|$ is a harmonic function when restricted to $N(L-1)$. Thus the maximum principle holds and $\sup_{N(L-1)}|a| = \sup_{\p N(L-1)}|a|$. Let $N(2, L-1)$ denote $\overline{N(L-1) - N(2)}$. Then $d\beta^\pm$ is supported inside $X^\pm \cap N(2, L-1)$ and
    \begin{align}\label{E:dbetaWedgea}
        \|d\beta^+ \wedge a\|_{L^p(X)} + \|d\beta^- \wedge a\|_{L^p(X)} &\leq \|d\beta^+ + d\beta^-\|_{L^p(N( L-1))} \sup_{N(2, L-1)}|a| \nonumber\\
        &\leq 4 L^{\frac{1}{p}-1}\vol(S^3)^\frac{1}{p} \sup_{\p N(L-1)}|a|.
    \end{align}
    Combining this with (\ref{E:PoUBound2}) gives
    \begin{align*}
        |a|_{C^0(M)} &\leq C_1\|(d^* + d^+)a\|_{L^p(M)} + 4C_1 L^{\frac{1}{p}-1}\vol(S^3)^\frac{1}{p} |a|_{C^0(\p N(L))} \\
        &\leq C_1\vol(M)^\frac{1}{p}|(d^* + d^+)a|_{C^0(M)} + 4C_1 L^{\frac{1}{p}-1}\vol(S^3)^\frac{1}{p} |a|_{C^0(M)}.
    \end{align*}
    Set $C_2 = C_1 \vol(M)^{\frac{1}{p}}$ and $C_3 = 4 C_1 \vol(S^3)^{\frac{1}{p}}$ to obtain
    \begin{align*}
        |a|_{C^0(M)}(1 - C_3L^{\frac{1}{p}-1}) \leq C_2|(d^* + d^+)a|_{C^0(M)}.
    \end{align*}
    Since $p > 4$, $\frac{1}{p}-1 < 0$ and $L \geq L_0$ implies $L^{\frac{1}{p}-1} \leq L_0^{\frac{1}{p} - 1}$. Set $L_0 = \left(2C_3\right)^{-\frac{p}{1-p}}$, which we can assume is larger than 6, so that $L \geq L_0$ implies 
    \begin{align*}
        (1 - C_3 L^{\frac{1}{p}-1}) &\geq (1 - C_3 L_0^{\frac{1 - p}{p}}) = \frac{1}{2}. 
    \end{align*}
    When $L \geq L_0$ it follows that
    \begin{align*}
        |a|_{C^0(M)} &\leq C_2|(d^* + d^+)a|_{C^0(M)}(1 - C_3L^{\frac{1}{p}-1})\inv\\
        &\leq 2C_2|(d^* + d^+)a|_{C^0(M)}.
    \end{align*}
    Let $C_E = 2C_2$, which is independent of $L$. 
\end{proof}
\begin{remark}\label{R:HarmonicNeckBound}
    Suppose instead that $(d^* + d^+)a$ only vanishes on $N(2, L-1)$. Then the maximum of $(d^* + d^+)a$ could be obtained on $\p N(2)$ instead of $\p N(L - 1)$. To overcome this, assume that there is a constant $C$, independent of $L$, such that
    \begin{align*}
        \sup_{N(2, L-1)} |a| \leq C \sup_{\p N(L-1)} |a|.
    \end{align*}
    Since $(d^* + d^+)a = 0$ on $N(2, L-1)$ and $\beta^\pm$ is supported in $X^\pm - N(2)$, the product $\beta^\pm (d^* + d^+) a$ is supported in $M^\pm$. We can still execute the above argument with (\ref{E:dbetaWedgea}) becoming
    \begin{align*}
        \|d\beta^+ \wedge a\|_{L^p(X)} + \|d\beta^- \wedge a\|_{L^p(X)} &\leq \|d\beta^+ + d\beta^-\|_{L^p(N( L-1))} \sup_{N(2, L-1)}|a| \nonumber\\
        &\leq 4 C L^{\frac{1}{p}-1}\vol(S^3)^\frac{1}{p}\sup_{\p N(L-1)}|a|.
    \end{align*}
    Setting $C_3 = 4C_1C\vol(S^3)^{\frac{1}{p}}$, there still exists constants $C_E$ and $L_0$ such that, when $L \geq L_0$, 
    \begin{align*}
        |a|_{C^0(X)} \leq C_E|(d^* + d^+)a|_{C^0(M)}.
    \end{align*}
\end{remark}

\subsection{Elliptic bootstrapping}

For a fixed connection $A \in \Jcal(X)$ on $X(L)$, an elliptic bootstrapping argument can be used to produce a polynomial $L^2_k$-bound on a monopole $(\psi, a)$ of the form
\begin{align*}
    \|(\psi, a)\|_{L^2_k} \leq C_B(1 + |(\psi, a)|)^d_\Co.
\end{align*}
The constant $C_B$ depends on the curvature of $A$ and the length of the neck $L$. To cooperate with neck stretching, we show that $C_B$ only increases polynomially in $L$.

\begin{lemma}\label{L:EllipticBootstrappingNeck}
    Let $A \in \Jcal_X$ be a connection on $X(L)$ and fix an integer $k \geq 2$. There are positive constants $C_B$ and $d$ such that, for any $L \geq 2$,  if $(\psi, a)$ is an $L^2_k$-pair with   
    \begin{align}\label{E:EBS-SWeqs}
        D_A\psi &= - ia \cdot \psi  \nonumber\\
        d^+ a &= iF^+_A - i\sigma(\psi),
    \end{align}
    then 
    \begin{align*}
        \|(\psi, a)\|_{L^2_k} \leq C_B L^d(1 + |(\psi, a)|_{C^0})^d.
    \end{align*}
\end{lemma}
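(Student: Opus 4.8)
The plan is to run the standard elliptic bootstrap for the Seiberg--Witten equations while tracking the dependence of every constant on the neck length $L$. Throughout we also use the gauge-fixing condition $d^*a=0$ satisfied by a monopole (equivalently, one works with the full elliptic operator $l = D_A \oplus (d^*+d^+) \oplus \pr$ of (\ref{E:MuFredholmFamilies})). Fix once and for all an exponent $q>4$, so that on the fixed $4$-manifold $X_0 = X(2)$ one has $L^q_1 \hookrightarrow C^0$ and each $L^q_j$ ($j\ge 1$) is a Banach algebra. Two elementary facts drive the $L$-bookkeeping. First, $\vol X(L) = \vol M + cL$, so for any continuous $g$ and any $r\ge 1$ one has $\|g\|_{L^r(X(L))}\le (CL)^{1/r}|g|_{C^0}$ once $L\ge 2$. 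Second, harmonic gauge leaves curvature unchanged, $F_A = F_{A_0}$, and $A_0$ is flat on the neck, so $F_A$ is supported in the fixed region $M\cup N(1)$ and $\|F_A\|_{L^q_j}$ is bounded independently of $L$ for all $j$; likewise $\|\pr(a)\|\le C|a|_{C^0}$ with $C$ independent of $L$, since the cycles defining $\pr$ lie in $M$ and have $L$-independent length.

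The technical core is an elliptic estimate for $l$ whose constant grows at most polynomially in $L$: for each $j\ge 0$ there are constants $C_j, a_j$, independent of $L\ge 2$, so that every $L^q_{j+1}$-pair $(\psi,a)$ on $X(L)$ satisfies
\begin{equation*}
\|(\psi,a)\|_{L^q_{j+1}} \le C_j L^{a_j}\bigl(\|D_A\psi\|_{L^q_j} + \|d^+a\|_{L^q_j} + \|d^*a\|_{L^q_j} + \|\pr(a)\| + \|(\psi,a)\|_{L^q_j}\bigr).
\end{equation*}
This is proved by the localisation technique used for Lemma~\ref{L:SMNeckIndependence}: cover $X(L)$ by $O(L)$ overlapping chunks, each isometric to a fixed subset of $X_0$, with subordinate cut-offs $\vphi_i$ whose $C^{k+1}$-norms are bounded uniformly in $L$. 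Since $A_0$ is flat and translation invariant along the neck, $l$ restricted to a neck chunk agrees with a fixed operator on $S^3\times[0,2]$, and only finitely many chunks meet $M$, so the local elliptic constants take finitely many $L$-independent values. Applying the standard interior elliptic estimate to $\vphi_i(\psi,a)$ on each chunk, absorbing the commutator terms $[l,\vphi_i](\psi,a)$ (zeroth order in $(\psi,a)$ with coefficients bounded uniformly in $L$) into the $\|(\psi,a)\|_{L^q_j}$ term, and reassembling with H\"older over the $O(L)$-indexed sum, gives the estimate (in fact with $a_j=1$).

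With these ingredients the bootstrap is routine. For the base step, finiteness of $|(\psi,a)|_{C^0}$ makes $a\cdot\psi$ and $\sigma(\psi)$ continuous, so by the volume bound $\|a\cdot\psi\|_{L^q},\ \|\sigma(\psi)\|_{L^q}\le (CL)^{1/q}(1+|(\psi,a)|_{C^0})^2$; feeding (\ref{E:EBS-SWeqs}) into the elliptic estimate with $j=0$ (using $d^*a=0$, $\|F_A^+\|_{L^q}=O(1)$ and the bound on $\|\pr(a)\|$) gives $\|(\psi,a)\|_{L^q_1}\le CL^{c_1}(1+|(\psi,a)|_{C^0})^2$. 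For the inductive step, assume $\|(\psi,a)\|_{L^q_j}\le P_j(L)(1+|(\psi,a)|_{C^0})^{e_j}$ with $P_j$ a polynomial. Using the algebra property of $L^q_j$ ($q>4$), transported to $X(L)$ by the localisation argument behind Lemma~\ref{L:SMNeckIndependence} and Lemma~\ref{L:SigmaPsiSobolevBound}, we get $\|a\cdot\psi\|_{L^q_j}\le CL\,\|a\|_{L^q_j}\|\psi\|_{L^q_j}$ and $\|\sigma(\psi)\|_{L^q_j}\le CL\,\|\psi\|_{L^q_j}^2$, hence $\|D_A\psi\|_{L^q_j}\le CL\,\|(\psi,a)\|_{L^q_j}^2$ and $\|d^+a\|_{L^q_j}\le \|F_A\|_{L^q_j} + CL\,\|(\psi,a)\|_{L^q_j}^2$, while $\|d^*a\|_{L^q_j}=0$; plugging these and the inductive bound into the elliptic estimate yields $\|(\psi,a)\|_{L^q_{j+1}}\le P_{j+1}(L)(1+|(\psi,a)|_{C^0})^{2e_j}$. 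Thus the $C^0$-exponent doubles and the degree in $L$ grows by a bounded amount at each step, both finite for $j\le k-1$. After $k-1$ steps we reach $\|(\psi,a)\|_{L^q_k}\le P_k(L)(1+|(\psi,a)|_{C^0})^{e_k}$, and since $q\ge 2$ and $\vol X(L)=O(L)$, H\"older gives $\|(\psi,a)\|_{L^2_k}\le (CL)^{1/2-1/q}\|(\psi,a)\|_{L^q_k}$; taking $d$ to be the maximum of $e_k$ and the degree of the resulting polynomial in $L$ completes the proof.

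The main obstacle is this polynomially-bounded elliptic estimate: one must arrange the localisation so that only $O(L)$ chunks appear, with cut-offs of $L$-uniform $C^{k+1}$-norm and with local model operators independent of $L$ --- here the flatness and translation invariance of $A_0$ along the neck is essential, since otherwise the local operators, and hence the local elliptic constants, could degenerate as $L\to\infty$. A secondary point requiring care is that the quadratic nonlinearity doubles the exponent of $(1+|(\psi,a)|_{C^0})$ at each of the finitely many bootstrapping steps, so one must verify that the final exponent $d$, together with the accumulated power of $L$, stays finite --- which it does.
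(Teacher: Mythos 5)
Your proof is essentially correct, but it takes a noticeably different route from the paper's.  The paper does \emph{not} prove a polynomially-bounded elliptic estimate in the explicit, localized way you do.  Instead, it sidesteps the elliptic step entirely by \emph{defining} the $L^p_i$-norm recursively through the operators $D_A$ and $d^+$, via $\|(\psi,a)\|^p_{L^p_i}-\|(\psi,a)\|^p_{L^p}=\|(D_A\psi,d^+a)\|^p_{L^p_{i-1}}$, so that the elliptic inequality becomes a tautology and the question of its $L$-dependence never arises.  Once the norm is set up this way, the paper plugs the Seiberg--Witten equations into it and invokes Lemmas~\ref{L:SMNeckIndependence} and \ref{L:SigmaPsiSobolevBound} to control the nonlinear terms, iterating with $i$ decreasing from $k$ to $0$ while the exponent $p$ \emph{doubles} from $2$ up to $2^{k+1}$, and closing the argument with $\|(\psi,a)\|_{L^p}\le\vol(X(L))^{1/p}|(\psi,a)|_{C^0}=O(L)|(\psi,a)|_{C^0}$.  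You instead keep the standard Sobolev norm, prove an auxiliary localized elliptic inequality on $X(L)$ with polynomially-bounded constant (using the same cut-off-and-chunk technique as Lemma~\ref{L:SMNeckIndependence}), then bootstrap \emph{upward} in the Sobolev index at a fixed exponent $q>4$ using the Banach algebra property, and convert to $L^2_k$ at the end by H\"older.  What your version buys is transparency: the elliptic estimate and its $L$-dependence are out in the open, which makes the argument easier to audit and adapt.  What the paper's version buys is brevity: by burying the ellipticity in the norm definition it never needs to restate or re-prove a localized elliptic inequality (all the chunking work having already been done once in Lemma~\ref{L:SMNeckIndependence}), and the doubling-exponent bookkeeping dovetails cleanly with the quadratic nonlinearity.

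One point worth flagging explicitly: like the paper's proof, your argument tacitly relies on the gauge-fixing condition $d^*a=0$ (you invoke it in your opening sentence, and it is needed to make $d^+$ effectively elliptic on $a$; the paper needs it for the same reason when it builds the norm from $D_A\oplus d^+$).  The statement of the lemma as written does not include $d^*a=0$ among the hypotheses in~(\ref{E:EBS-SWeqs}), so strictly speaking both proofs use slightly more than the lemma states.  In every application in the paper the pair $(\psi,a)$ does satisfy $d^*a=0$, so this is an imprecision in the statement rather than a gap in either argument, but it would be cleaner to add $d^*a=0$ to the hypotheses.
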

\begin{proof}
    Use the first order differential operators $D_A$ and $d^+$ to define the $L^p_k$-norm so that 
    \begin{align*}
        \|(\psi, a)\|^p_{L^p_i} - \|(\psi, a)\|^p_{L^p} &= \|(D_A\psi, d^+a)\|^p_{L^p_{i-1}}.
    \end{align*}
    For any $0 \leq i \leq k$ and $2 \leq p \leq 2^{k+1}$, (\ref{E:EBS-SWeqs}) ensures that
    \begin{align*}
        \|(D_A\psi, d^+a)\|^p_{L^p_{i-1}} &\leq \|a\cdot \psi\|^p_{L^p_{i-1}} + (\|\sigma(\psi)\|_{L^p_{i-1}} + \|F^+_A\|_{L^p_{i-1}})^p
    \end{align*}
    By Lemma \ref{L:SMNeckIndependence} and \ref{L:SigmaPsiSobolevBound}, there are constants $C_{SM}$ and $C_\sigma$ independent of $L$ such that 
    \begin{align*}
        \|(\psi, a)\|_{L^p_i} &\leq C_{SM}L\|a\|_{L^{2p}_{i-1}}\|\psi\|_{L^{2p}_{i-1}} + C_\sigma L\|\psi\|^2_{L^{2p}_{i-1}} + \|F^+_A\|_{L^p_{i-1}} + \|(\psi, a)\|_{L^p}.
    \end{align*}
    Since $A$ is flat on the neck, $\|F^+_A\|_{L^p_{i-1}}$ is a constant independent of $L$. Thus there is a constant $C_1$ such that
    \begin{align*}
        \|(\psi, a)\|_{L^p_i} &\leq C_1L(\|(\psi, a)\|^2_{L^{2p}_{i-1}} + \|(\psi, a)\|_{L^p})
    \end{align*}
    for all $0 \leq i \leq k$ and $2 \leq p \leq 2^{k+1}$. Starting with $i = k$ and $p = 2$, inductively applying this inequality gives a bound 
    \begin{align*}
        \|(\psi, a)\|_{L^2_k} &\leq L^{d_1} f(\|(\psi, a)\|_{L^2},...,\|(\psi, a)\|_{L^{2^{k+1}}})
    \end{align*}
    for some natural number $d_1$ and polynomial $f$, both independent of $L$. Letting $d_2$ be the degree of $f$, there is a constant $C_2$ such that 
    \begin{align*}
        |f(x_1,...,x_k)| \leq C_2(1 + |x_1| + ... + |x_k|)^{d_2}.
    \end{align*}
    Since $\vol(X(L))$ increases linearly with $L$, there is a bound 
    \begin{align*}
        \|(\psi, a)\|_{L^p} &\leq \vol(X(L))^{\frac{1}{p}} |(\psi, a)|_{C^0}\\
        &\leq C_3L|(\psi, a)|_{C^0}.
    \end{align*}
    Here $C_3$ is a constant independent of $L$ and $p$. Letting $d = d_1 + d_2$, it follows that 
    \begin{align*}
        \|(\psi, a)\|_{L^2_k} &\leq C_2L^{d_1}(1 + \|(\psi, a)\|_{L^2} + ... + \|(\psi, a)\|_{L^{2^{k+1}}})^{d_2}\\
        &\leq C_2L^{d_1}L^{d_2}(1 + C_3|(\psi, a)|_{C^0} + ... + C_3|(\psi, a)|_{C^0})^{d_2}\\
        &\leq C_BL^d(1 + |(\psi, a)|_{C^0})^d
    \end{align*}
    for some constant $C_B$ independent of $L$.
\end{proof}
\begin{remark}\label{R:EllipticBootstrappingNeck}
    Assume that there is a smooth function $\rho : X \to \R$ and constant $C$ such that the pair $(\psi, a)$ instead satisfies
    \begin{align*}
        D_A\psi &= - i\rho a \cdot \psi  \\
        \|d^+ a\|_{L^p_i} &\leq C(\|\sigma(\psi)\|_{L^p_i} + \|F^+_A\|_{L^p_i})  \\
        \|\rho a\|_{L^p_i} &\leq C \|a\|_{L^p_i}
    \end{align*}
    for all $0 \leq i \leq k$, $2 \leq p \leq 2^{k+1}$. The same argument can be repeated, the only difference being that the constant $C_1$ now depends on $C$. Thus there still exists positive constants $C_B$ and $d$ such that 
    \begin{align*}
        \|(\psi, a)\|_{L^2_k} \leq C_B L^d(1 + |(\psi, a)|_{C^0})^d.
    \end{align*} 
    These constants depend on $C$, but are independent of $L$ so long as $C$ is.
\end{remark}

\subsection{Exponential decay}

Since $X(L)$ is compact, there are $L^p$-bounds on spinors and one-forms of the form 
\begin{align}\label{E:CpBound}
    \|(\psi, a)\|_{L^p} \leq C_p|(\psi, a)|_{C^0}
\end{align}
with $C_p = \vol(X(L))^\frac{1}{p}$. This constant $C_p$ grows linearly with the length of the neck. However, we will demonstrate that monopoles decay exponentially towards the middle of the neck, which will counteract this and other polynomial growth. The following work is adapted from Chapter 3 of \cite{FloerYM}.

Let $E \to S^3$ be a vector bundle over $S^3$, equipped with a metric $g_E$ and compatible connection $\nabla_E$. For notational simplicity, we will assume that $N(L) = S^3 \times [-L, L]$ has one connected component. Let $\pi : N(L) \to S^3$ be projection onto the $S^3$ component. Fix $k > 2$ and let $A : \Cinf(S^3, E) \to \Cinf(S^3, E)$ be a first order, self-adjoint, elliptic pseudo-differential operator on $E$. By spectral theory of elliptic operators, there is an orthonormal basis of eigenvectors $\{\phi_n\}_{n=-N}^\infty \subset L^2(S^3, E)$ for $A$ with discrete real eigenvalues $\{\lambda_n\}$. Label the eigenvalues so that the non-zero eigenvalues have a positive index and the zero eigenvalues (of multiplicity $N+1$) have a non-positive index. Thus there is a $\delta > 0$ such that $|\lambda_n| > \delta$ for all $n \geq 1$. Also ensure that the labeling is chosen so that $|\lambda_n| \geq |\lambda_m|$ when $n \geq m$.

Let $f_0 \in C^\infty(S^3, E)$ be a smooth section with eigen decomposition $f_0 = \sum_n f^n_0 \phi_n$ convergent in $L^2$ for $f^n_0 \in \R$. Then $Af_0$ is also smooth and its eigen decomposition is $Af_0 = \sum_n \lambda_n f^n_0 \phi_n$ since $A$ is self-adjoint. A smooth section $f$ of $\pi^* E \to N(L)$ also has a decomposition $f_t = \sum_n f^n(t) \phi_n$ for some functions $f^n : [-L, L] \to \R$. The smoothness of $f_t$ implies the smoothness of the component functions $f^n$ by the Leibniz integral rule.

Define a pseudo-differential operator by 
\begin{align}
    D : C^\infty(N(L), \pi^* E) &\to C^\infty(N(L), \pi^* E) \nonumber\\
    D = \frac{\p}{\p t} &+ A.
\end{align}
Assume that $D$ is elliptic and extend $D$ to an operator on $L^2$ sections. Recall that $C^+ = S^3 \times [L-1, L]$ and $C^- = S^3 \times [-L, -L + 1]$ denote collar neighbourhoods of the boundary of $N(L)$.
\begin{proposition}[Adapted from \cite{FloerYM} Lemma 3.2]\label{P:ExponentialDecayMiddleNeck}
    Fix constants $r \geq 1$ and $L \geq 2r $. Suppose $f \in L^2(N(L), \pi^* E)$ such that $f_t$ is orthogonal to $\ker A$ for all $t \in [-L, L]$. If $Df = 0$ then
    \begin{align}\label{E:ExpDecayIntegralInequality}
        \int_{N(2r)} |f|^2 &\leq \left(\frac{e^{-2\delta(L-2r )}}{1-e^{-2\delta}}\right) \left(\int_{C^-} |f|^2 + \int_{C^+} |f|^2\right)
    \end{align}
    and
    \begin{align}
        \sup_{N(r)}|f| \leq C_\delta e^{-\delta (L - 2r )}\sup_{N(L)} |f|.
    \end{align}
    where $\delta$ and $C_\delta$ are positive constants independent of $L$ and $r$.
\end{proposition}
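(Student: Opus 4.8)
The plan is to diagonalise the equation $Df=0$ using the spectral decomposition of $A$, solve the resulting decoupled scalar ODEs explicitly, and then exploit the spectral gap $|\lambda_n|>\delta$ for $n\ge 1$ together with the hypothesis $f_t\perp\ker A$ to prove the $L^2$ estimate; the pointwise estimate will then follow by interior elliptic regularity for $D$.

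Write $f_t=\sum_n f^n(t)\phi_n$ with $f^n(t)=\langle f_t,\phi_n\rangle_{L^2(S^3)}$. Since $f\in L^2(N(L))$ solves $Df=0$, elliptic regularity for the elliptic operator $D$ makes $f$ smooth, so each $f^n$ is smooth and, differentiating under the integral sign, using $\partial_t f_t=-Af_t$ and the self-adjointness of $A$, one gets $\dot f^n(t)=-\lambda_n f^n(t)$, hence $f^n(t)=f^n(0)e^{-\lambda_n t}$. The orthogonality $f_t\perp\ker A$ forces $f^n\equiv 0$ whenever $\lambda_n=0$, so only the modes with $|\lambda_n|>\delta$ contribute. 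By Tonelli's theorem, $\int_{S^3\times I}|f|^2=\sum_n\int_I|f^n|^2\,dt$ for any interval $I\subset[-L,L]$. For a mode with $\lambda_n>0$ one has $|f^n(t)|^2=|f^n(0)|^2 e^{-2\lambda_n t}$, which concentrates near the $t=-L$ end, and a direct computation of the integrals $\int_{-2r}^{2r}e^{-2\lambda_n t}\,dt$ and $\int_{C^-}e^{-2\lambda_n t}\,dt$ gives
\begin{align*}
\int_{-2r}^{2r}|f^n|^2\,dt\ \le\ \frac{e^{-2\lambda_n(L-2r)}}{1-e^{-2\lambda_n}}\int_{C^-}|f^n|^2\ \le\ \frac{e^{-2\delta(L-2r)}}{1-e^{-2\delta}}\int_{C^-}|f^n|^2,
\end{align*}
the last step using $L-2r\ge 0$ and $\lambda_n>\delta$. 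Modes with $\lambda_n<0$ are treated identically after the reflection $t\mapsto-t$, with $C^+$ replacing $C^-$. Summing over $n$ and bounding each $\int_{C^\mp}|f^n|^2$ by $\int_{C^-}|f|^2+\int_{C^+}|f|^2$ yields the first displayed inequality.

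For the pointwise bound, fix $t_0$ with $|t_0|\le r$. Because $D=\partial_t+A$ is elliptic with $t$-independent coefficients, the interior elliptic estimate applied to $Df=0$ gives, for $k\ge 3$, a constant $C$ independent of $t_0$ and $L$ with $\|f\|_{L^2_k(S^3\times[t_0-\frac12,t_0+\frac12])}\le C\|f\|_{L^2(S^3\times[t_0-1,t_0+1])}$. Combined with the Sobolev embedding $L^2_k\hookrightarrow C^0$ on the $4$-dimensional cylinder (whose constant is again $L$-independent) and the inclusion $S^3\times[t_0-1,t_0+1]\subset N(r+1)\subset N(2r)$, valid for $r\ge 1$, this gives $|f(x,t_0)|\le C'\|f\|_{L^2(N(2r))}$ for all $(x,t_0)\in N(r)$, with $C'$ independent of $L$ and $r$. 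Applying the first inequality and the crude bound $\|f\|^2_{L^2(C^\pm)}\le\vol(S^3)\sup_{N(L)}|f|^2$ (each collar has length one) then gives
\begin{align*}
\sup_{N(r)}|f|^2\ \le\ (C')^2\,\frac{2\vol(S^3)}{1-e^{-2\delta}}\,e^{-2\delta(L-2r)}\,\sup_{N(L)}|f|^2,
\end{align*}
which is the second inequality with $C_\delta=C'\sqrt{2\vol(S^3)/(1-e^{-2\delta})}$.

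The spectral reduction and the two estimates are otherwise routine; the one point that genuinely needs care is that every constant be independent of $L$ and $r$. For the $L^2$ inequality this is transparent from the explicit formula, and for the pointwise inequality it rests on the translation invariance of $D$ in the $t$-direction, which lets all the elliptic and Sobolev estimates be carried out on sub-cylinders of a fixed size.
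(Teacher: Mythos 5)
Your $L^2$ estimate follows essentially the same route as the paper: spectral decomposition of $f$, explicit solution of the decoupled ODEs $\dot f^n = -\lambda_n f^n$, and the elementary comparison of $\int_{-2r}^{2r} e^{\mp 2\lambda_n t}\,dt$ with the corresponding collar integral using $|\lambda_n| > \delta$ and $L - 2r \geq 0$. This part matches the paper closely.

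Your pointwise estimate takes a genuinely different and arguably cleaner route. The paper expands $f_{t_0}(x_0)$ pointwise in the eigenbasis, bounds $|\phi_n|_{C^0} \leq C_E C_S(\lambda_n^2+1)$ via the $A^2$ elliptic estimate and $L^2_2 \hookrightarrow C^0$ on the three-manifold $S^3$, and then must invoke Weyl's law (Lemma \ref{L:WeylsLemma}) with a Cauchy--Schwarz argument to control the resulting infinite sum $\sum_n (\lambda_n^2+1)e^{r|\lambda_n|}|f^n(0)|$. You instead apply interior elliptic regularity for the four-dimensional operator $D$ directly on fixed-size sub-cylinders $S^3 \times [t_0-1,t_0+1]$ (using the $t$-translation invariance of $D$ to keep the constants independent of $t_0$ and $L$), followed by the Sobolev embedding $L^2_k \hookrightarrow C^0$ on the four-dimensional cylinder, and then feed this through the $L^2$ bound already established. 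This avoids Weyl's law entirely and does not require controlling the growth of the eigenvalue sequence at all, which is a real simplification. One small caveat worth noting explicitly: $A$ is allowed to be pseudo-differential, not just differential, so ``interior elliptic estimate'' in the $t$-direction should be justified via a cut-off argument (choose $\chi = \chi(t)$ supported in $[t_0-1,t_0+1]$; then $D(\chi f) = \chi' f$ since $A$ commutes with multiplication by $\chi(t)$, and one bootstraps with nested cut-offs). This is routine and the constants remain $L$- and $r$-independent by translation invariance, but since $A$ need not be local, it is worth saying. In the actual applications in the paper ($A = \Lcal$ or $A = D^{S^3}$), $A$ is an honest differential operator, so this wrinkle is harmless.

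Both arguments are correct; yours is shorter and self-contained (it does not rely on the separate Lemma \ref{L:WeylsLemma}), while the paper's has the virtue of giving an explicit eigenfunction-level bound that, in principle, isolates where the constant $C_\delta$ comes from.
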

\begin{proof}
    Note that since $D$ is assumed to be elliptic, $Df = 0$ implies that $f$ is smooth by elliptic regularity. 
    Write $Af_t = \sum_{n} \lambda_n f^n(t)\phi_n$ so that
    \begin{align*}
        \p_t f + \sum_n \lambda_n f^n\phi_n = 0.
    \end{align*}
    Taking the $L^2-$inner product with $\phi_n$ yields  
    \begin{align*}
        \p_t f^n(t) + \lambda_n f^n(t) &= 0.
    \end{align*}
    Since $f_t$ is orthogonal to $\ker A$ it can be assumed that $n \geq 1$ and $\lambda_n \neq 0$ so that 
    \begin{align*}
        f^n(t) &= e^{-\lambda_n t}f^n(0).
    \end{align*}    
    Notice that if $\lambda_n > 0$ then $f^n$ decays exponentially as $t$ increases and if $\lambda_n < 0$ then $f^n$ decays exponentially as $t$ decreases. To capture this behaviour, split $f^n = f^n_- + f^n_+$ defined by 
    \begin{align*}
        f^n_- = \begin{cases}
            0 & \text{if } \lambda_n > 0\\
            f^n & \text{if } \lambda_n < 0.
        \end{cases} \qquad
        f^n_+ &= 
        \begin{cases}
            f^n & \text{if } \lambda_n > 0\\
            0 & \text{if } \lambda_n < 0
        \end{cases}
    \end{align*}
    Also let $f_\pm = \sum_{n=1}^\infty f^n_\pm \phi_n$ so that $f = f_- + f_+$. Each half of $|f|^2$ is integrated separately.
    \begin{align}
        \int_{N(2r)} |f_+|^2 &= \int_{-2r }^{2r } \|f_+(t)\|^2_{L^2} dt  \nonumber\\
        &= \int_{-2r }^{2r } \sum_{n=1}^\infty e^{-2\lambda_n t}|f^n_+(0)|^2 dt  \nonumber\\
        &= \sum_{n=1}^\infty \frac{\sinh(4r \lambda_n)}{\lambda_n} |f_+^n(0)|^2 \label{E:sinhLemma1}
    \end{align}
    Here the monotone convergence theorem has been used to swap the sum and the integral. Integrating instead over the band $C^-$ gives 
    \begin{align}
        \int_{C^-} |f_+|^2 &= \sum_{n=1}^\infty \left(\frac{e^{2\lambda_n L} - e^{2\lambda_n (L-1)}}{2\lambda_n}\right) |f_+^n(0)|^2 \label{E:sinhLemma2}
    \end{align}
    Choose a $\delta > 0$ such that $|\lambda_n| > \delta$ for $n \geq 1$. When $\lambda_n > 0$, notice that
    \begin{align}
        \frac{\sinh(4r \lambda_n)}{\lambda_n} &\leq \frac{e^{4r \lambda_n}}{2\lambda_n} \nonumber\\
        &= \frac{e^{2\lambda_n L}}{2\lambda_n} \left(\frac{e^{-2\lambda_n(L-2r )}}{1-e^{-2\lambda_n}}\right)(1-e^{-2\lambda_n}) \nonumber\\
        &= \left(\frac{e^{-2\lambda_n(L-2r )}}{1-e^{-2\lambda_n}}\right)\left(\frac{e^{2\lambda_n L} - e^{2\lambda_n (L-1)}}{2\lambda_n}\right) \nonumber \\
        &\leq \left(\frac{e^{-2\delta(L-2r )}}{1-e^{-2\delta}}\right) \left(\frac{e^{2\lambda_n L} - e^{2\lambda_n (L-1)}}{2\lambda_n}\right) \label{E:sinhLemmaInequality}
    \end{align}
    The last line follows since $\lambda_n > \delta > 0$ and $L - 2r \geq 0$. Combining (\ref{E:sinhLemma1}), (\ref{E:sinhLemma2}) and (\ref{E:sinhLemmaInequality}) gives 
    \begin{align*}
        \int_{N(2r)} |f_+|^2 &\leq \sum_{n=1}^\infty \left(\frac{e^{-2\delta(L-2r )}}{1-e^{-2\delta}}\right) \left(\frac{e^{2\lambda_n L} - e^{2\lambda_n (L-1)}}{2\lambda_n}\right) |f_+^n(0)|^2\\
        &= \left(\frac{e^{-2\delta(L-2r )}}{1-e^{-2\delta}}\right) \int_{C^-} |f_+|^2  \\
        & \leq \left(\frac{e^{-2\delta(L-2r )}}{1-e^{-2\delta}}\right) \int_{C^-} |f|^2.
    \end{align*}
    Similarly when $\lambda_n < 0$,
    \begin{align*}
        \int_{C^+} |f_-|^2 &= \sum_{n=1}^\infty \left(\frac{e^{-2\lambda_n (L-1)} - e^{-2\lambda_n L}}{2\lambda_n}\right) |f_-^n|^2\\
        &= \sum_{n=1}^\infty \left(\frac{e^{2|\lambda_n| L} - e^{2|\lambda_n| (L-1)}}{2|\lambda_n|}\right) |f_-^n|^2.
    \end{align*}
    Now (\ref{E:sinhLemmaInequality}) can be applied to get 
    \begin{align*}
        \int_{N(2r)} |f_-|^2 &= \sum_{n=1}^\infty \frac{\sinh(4r |\lambda_n|)}{|\lambda_n|} |f_-^n(0)|^2\\
        &\leq \left(\frac{e^{-2\delta(L-2r )}}{1-e^{-2\delta}}\right) \int_{C^+} |f_-|^2\\ 
        &\leq \left(\frac{e^{-2\delta(L-2r )}}{1-e^{-2\delta}}\right) \int_{C^+} |f|^2.
    \end{align*}
    It follows that 
    \begin{align}\label{E:IntegralBandBound}
        \int_{N(2r)} |f|^2 &\leq \left(\frac{e^{-2\delta(L-2r )}}{1-e^{-2\delta}}\right) \left(\int_{C^-} |f|^2 + \int_{C^+} |f|^2\right).
    \end{align}
    This proves the first inequality (\ref{E:ExpDecayIntegralInequality}).

    The supremum and essential supremum of $|f|$ agree because $f$ is continuous. Since the sequence $(\sum_{i=1}^N f^n(t)\phi_n)_{N=1}^\infty$ converges to $f_t$ in $L^2$ as $N \to \infty$, there is a subsequence that converges to $f_t$ pointwise almost everywhere. Let $(x_0,t_0) \in S^3 \times [-r, r]$ be any point such that 
    \begin{align*}
        f_{t_0}(x_0) &= \sum_{n=1}^\infty e^{-\lambda_n t_0}f^n(0) \phi_n(x_0).
    \end{align*}
    Since $t_0 \in [-r, r]$ it follows that 
    \begin{align*}
        |f_{t_0}(x_0)| &\leq \sum_{n=1}^\infty e^{r|\lambda_n|} |f^n(0)||\phi_n(x_0)|.
    \end{align*}
    The Sobolev embedding $L^2_2(S^3, E) \to C^0(S^3, E)$ gives a constant $C_S$ such that $|\phi_n|_{C^0} \leq C_S \|\phi_n\|_{L^2_2}$ for all $n$. Further the second order elliptic operator $A^2 : L^2_2(S^3, E) \to C^0(S^3, E)$ provides an elliptic inequality 
    \begin{align*}
        \|\phi_n\|_{L^2_2} &\leq C_E(\|A^2\phi_n\|_{L^2} + \|\phi_n\|_{L^2}) \\
        &= C_E(\lambda_n^2 + 1)\|\phi_n\|_{L^2}.
    \end{align*}
    Note that $C_S$ and $C_E$ are independent of $L$. Since $\|\phi_n\|_{L^2} = 1$, we have $|\phi_n|_{C_0} \leq C_EC_S(\lambda_n^2 + 1)$ and 
    \begin{align*}
        |f_{t_0}(x_0)| &\leq \sum_{n=1}^\infty C_E C_S (\lambda_n^2 + 1)e^{r|\lambda_n|}|f^n(0)|.
    \end{align*}
    Lemma \ref{L:WeylsLemma} provides a bound 
    \begin{align*}
        \left(\sum_{n=1}^\infty (\lambda_n^2 + 1)e^{r|\lambda_n|}|f^n(0)|\right)^2 &\leq C'\sum_{n=1}^\infty \frac{\sinh(4r |\lambda_n|)}{|\lambda_n|}|f^n(0)|^2
    \end{align*}
    for some constant $C'$ which depends only on $\{\lambda_n\}$. Combining this with (\ref{E:sinhLemma1}) produces 
    \begin{align*}
        |f_{t_0}(x_0)|^2 &\leq C \sum_{n=1}^\infty \frac{\sinh(4r |\lambda_n|)}{|\lambda_n|}|f^n(0)|^2 \\
        &= C\int_{S^3 \times [-2r , 2r ]} |f|^2
    \end{align*}
    where $C = C' C_S^2 C_E^2$. Applying (\ref{E:IntegralBandBound}) and taking the essential supremum over $N(r)$ yields
    \begin{align*}
        \sup_{N(r)} |f|^2 &\leq C\left(\frac{e^{-2\delta(L-2r )}}{1-e^{-4\delta}}\right) \left(\int_{C^-} |f|^2 + \int_{C^+} |f|^2\right)\\
        &\leq \left(\frac{2C\vol(S^3)}{1 - e^{-4\delta}}\right)e^{-2\delta (L - 2r )} \sup_{N(L)} |f|^2.
    \end{align*}
    Let $C_\delta = \sqrt{\frac{2C\vol(S^3)}{1 - e^{-4\delta}}}$ so that
    \begin{align*}
        \sup_{N(r)} |f| &\leq C_\delta e^{-\delta (L - 2r )}\sup_{N(L)} |f|.
    \end{align*}
\end{proof}
\begin{corollary}\label{C:ExponentialDecayMiddleNeck}
    Suppose that $a \in L^2(N(L-1), T^* N(L-1))$ is a 1-form such that $(d^* + d^+)a = 0$. Then for any $r \geq 1$ and $L \geq 2r + 1$,
    \begin{align}\label{E:CExpDecayMiddleNeck}
        \sup_{N(r)} |a \wedge dt| \leq C_\delta e^{-\delta (L - 2r )} \sup_{N(L-1)} |a|
    \end{align}
    for some positive constants $\delta$ and $C_\delta$ independent of $L$ and $r$.
\end{corollary}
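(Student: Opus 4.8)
The plan is to deduce this from Proposition~\ref{P:ExponentialDecayMiddleNeck} applied to the operator $D = \p_t + \Lcal$ on the cylinder, where $\Lcal$ is the self-adjoint elliptic operator of~(\ref{E:Lcal}). Since $(d^* + d^+)a = 0$ and $d^* + d^+$ is elliptic, elliptic regularity makes $a$ smooth. Writing $a = \omega_t + f\,dt$ under the identification $\Omega^1(S^3 \times \R) = \Cinf(S^3 \times \R, \R \oplus \pi^* T^* S^3)$ from Section~\ref{S:MonopolesOnTheNeck}, the equation $(d^* + d^+)a = 0$ becomes $Dg = 0$ for $g = (f, \omega)$, and $D$ is elliptic because it equals $d^* + d^+$. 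The observation underpinning the whole argument is that $a \wedge dt = \omega_t \wedge dt$, so in the product metric $|a \wedge dt| = |\omega|$ pointwise; it therefore suffices to control the $\pi^* T^* S^3$-component $\omega$ of $g$.

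The one obstacle is that Proposition~\ref{P:ExponentialDecayMiddleNeck} requires $g_t$ to be orthogonal to $\ker \Lcal$ for every $t$, which $g$ need not satisfy. Because $H^1(S^3) = 0$, the kernel $\ker \Lcal = \Hcal^0(S^3)$ consists of the locally constant functions sitting in the $\Omega^0(S^3)$ summand, so the potential obstruction is exactly the fibrewise average $\bar f(t)$ of $f_t$ over $S^3$. I would extract this by noting that the $\Omega^0$-component of $Dg = 0$ reads $\p_t f + d^* \omega = 0$; integrating over $S^3$ and using $\int_{S^3} d^* \omega = 0$ gives $\p_t \bar f = 0$, so $\bar f$ is a genuine constant. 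Hence $(\bar f, 0)$ is a constant section with $D(\bar f, 0) = 0$, and $g^\perp := g - (\bar f, 0) = (f - \bar f, \omega)$ still satisfies $Dg^\perp = 0$, lies in $L^2(N(L-1))$, and has $g^\perp_t$ orthogonal to $\ker \Lcal$ for all $t$, while retaining $\omega$ as its $\pi^* T^* S^3$-component.

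Applying Proposition~\ref{P:ExponentialDecayMiddleNeck} to $g^\perp$ on the cylinder $N(L-1)$---whose hypothesis $L - 1 \geq 2r$ is precisely $L \geq 2r + 1$---gives $\sup_{N(r)} |g^\perp| \leq C_\delta\, e^{-\delta((L-1) - 2r)} \sup_{N(L-1)} |g^\perp|$ with $\delta, C_\delta > 0$ independent of $L$ and $r$. Then $\sup_{N(r)} |a \wedge dt| = \sup_{N(r)} |\omega| \leq \sup_{N(r)} |g^\perp|$, and since $|\bar f(t)| \leq \sup_{S^3} |f_t| \leq \sup_{N(L-1)} |a|$ and $|\omega| \leq |a|$ we get $\sup_{N(L-1)} |g^\perp| \leq 3 \sup_{N(L-1)} |a|$. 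Absorbing this factor of $3$ and the factor $e^{\delta}$ into $C_\delta$ yields~(\ref{E:CExpDecayMiddleNeck}). The only subtle point is the recognition that the $\ker \Lcal$-component is simultaneously invisible to $a \wedge dt$ and forced to be $t$-independent, so it can be removed without disturbing either the equation or the quantity being estimated; the rest is routine bookkeeping with the product metric.
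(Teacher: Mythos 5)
Your proof is correct and follows essentially the same strategy as the paper: identify $d^* + d^+$ with $\p_t + \Lcal$, observe that the $\ker\Lcal$-component of the solution is $t$-independent (so it can be subtracted off while leaving $a \wedge dt$ unchanged), and apply Proposition~\ref{P:ExponentialDecayMiddleNeck} to the orthogonal remainder. The only cosmetic difference is that the paper bounds $\sup |a_0\phi_0\,dt|$ via an $L^2$-orthogonality and Cauchy--Schwarz computation (with the $(L-1)$ volume factors cancelling), whereas your pointwise bound $|\bar f| \le \sup_{N(L-1)} |a|$ gets there slightly more directly.
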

\begin{proof}
    It is shown in (\ref{E:FormIdentification}) that $d^* + d^+$ can be identified as an operator on $\Cinf(N(L-1), \R \oplus \pi^* T^* S^3)$ and that $d^* + d^+ = \frac{\p}{\p t} + \Lcal$. Here $\Lcal$ is a self-adjoint, elliptic operator on $\Omega^0(S^3) \oplus \Omega^1(S^3)$ with $\Lcal^2 = dd^* + d^*d$. Note that $d^* + d^+$ is also self-adjoint and elliptic. Since $b_1(S^3) = 0$, the kernel of $\Lcal$ is one dimensional consisting of only constant functions. Thus there is an eigenbasis $\{\phi_n\}_{n=0}^\infty$ of $\Lcal$ with eigenvalues $\{\lambda_n\}_{n=0}^\infty$ such that $\phi_0$ is a non-zero constant function on $S^3$, $\lambda_0 = 0$ and $\lambda_n \neq 0$ for $n \geq 1$. Write
    \begin{align*}
        a_t = a_0(t)\phi_0dt + \sum_{n=1}^\infty a_n(t)\phi_n
    \end{align*}
    for some smooth functions $a_n : [-L+1,L-1] \to \R$. As in Proposition \ref{P:ExponentialDecayMiddleNeck}, $\p_t a_0 + \lambda_0 a_0 = 0$ and therefore $a_0$ is a constant function. Now $a' = a - a_0\phi_0dt$ is $L^2$-orthogonal to $\ker L$ for all $t$. Since $(d^* + d^+)(a_0\phi_0dt) = 0$ we have $(d^* + d^+)a' = 0$. Proposition \ref{P:ExponentialDecayMiddleNeck} gives constants $C_1$ and $\delta$, independent of $L$ and $r$, such that
    \begin{align*}
        \sup_{N(r)} |a'| &\leq C_1 e^{-\delta (L-2r -1)}\sup_{N(L-1)}|a'| \\
        &\leq C_1' e^{-\delta (L-2r )}\left(\sup_{N(L-1)}|a| + \sup_{N(L-1)}|a_0\phi_0dt|\right).
    \end{align*}
    Since $a_0$ and $\phi_0$ are constants, we can calculate 
    \begin{align*}
        \|a_0\phi_0 dt\|^2_{L^2} &= \int_{N(L-1)} |a_0 \phi_0 dt|^2  \\
        &= 2\vol(S^3)(L-1) |a_0|^2 |\phi_0|^2.
    \end{align*}
    The decomposition $a = a' + a_0\phi_0dt$ is $L^2$-orthogonal, hence $\|a_0\phi_0dt\|_{L^2}^2 = \|a\|^2_{L^2} - \|a'\|^2_{L^2}$. It follows that 
    \begin{align*}
        2\vol(S^3)(L-1) |a_0|^2 |\phi_0|^2 &= \|a_0\phi_0 dt\|^2_{L^2} \\
        &\leq \|a\|_{L^2}^2 \\
        &\leq 2(L-1)\vol(S^3)\sup_{N(L-1)} |a|^2.
    \end{align*}
    Thus $|a_0| \leq \frac{1}{|\phi_0|}\sup_{N(L-1)} |a|$ and there is a constant $C_\delta$ with 
    \begin{align*}
        \sup_{N(r)} |a'| \leq C_\delta e^{-\delta (L-2r )}\sup_{N(L-1)}|a|.
    \end{align*}
    Finally, $|a \wedge dt| = |a' \wedge dt| \leq |a'|$ and (\ref{E:CExpDecayMiddleNeck}) follows.
\end{proof}
\begin{corollary}\label{C:ExponentialDecayMiddleNeckSpinor}
    Let $A_0$ be a flat reference connection on $N(L)$. Suppose $\psi \in L^2(N(L), W^+)$ is a spinor such that $D_{A_0} \psi = 0$. Then for any $r \geq 1$ and $L \geq 2r $,
    \begin{align}
        \sup_{S^3 \times [-r,r]} |\psi| \leq C_{\delta'} e^{-\delta' (L - 2r )} \sup_{S^3 \times [-L,L]} |\psi|
    \end{align}
    for some positive constants $\delta'$ and $C_{\delta'}$ independent of $L$ and $r$.
\end{corollary}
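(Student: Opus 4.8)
The plan is to reduce this to a direct application of Proposition~\ref{P:ExponentialDecayMiddleNeck}, in the same spirit as Corollary~\ref{C:ExponentialDecayMiddleNeck}, with the three-dimensional Dirac operator on $S^3$ playing the role of the self-adjoint elliptic operator $A$.

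First I would normalise the connection. On the component $S^3 \times [-L, L]$ the spinor bundles and the determinant line bundle are trivial and $H^1(S^3 \times [-L,L] ; \R) = 0$, so the flat connection $A_0$ is gauge equivalent to the product connection pulled back from $S^3$. Since a gauge transformation changes neither the pointwise norm $|\psi|$ nor the equation $D_{A_0}\psi = 0$, I may assume $A_0$ is this product connection. With respect to the product metric on $N(L) = S^3 \times [-L,L]$, Clifford multiplication by $dt$ is a bundle isometry $c(dt) : W^+|_{N(L)} \to W^-|_{N(L)}$ identifying both with $\pi^* S$, where $S \to S^3$ is the spinor bundle and $\pi : N(L) \to S^3$ is the projection; under this identification $D_{A_0}$ takes the form $c(dt)\bigl(\frac{\p}{\p t} + \Dcal_3\bigr)$, where $\Dcal_3$ is the three-dimensional \spinc Dirac operator on the round $S^3$ with the flat connection. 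This $\Dcal_3$ is a first order, self-adjoint, elliptic operator and $\frac{\p}{\p t} + \Dcal_3$ is elliptic, so the structural hypotheses of Proposition~\ref{P:ExponentialDecayMiddleNeck} hold.

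The key observation is that $\ker \Dcal_3 = 0$: the round metric on $S^3$ has positive scalar curvature $s$ and the connection is flat, so the \Weitzenbock formula $\Dcal_3^2 = \nabla^*\nabla + \frac{s}{4}$ forces every harmonic spinor to vanish. Hence, for $\psi$ with $D_{A_0}\psi = 0$ --- equivalently, since $c(dt)$ is an isomorphism, $(\frac{\p}{\p t} + \Dcal_3)\psi = 0$ --- each slice $\psi_t$ is automatically orthogonal to $\ker \Dcal_3 = \{0\}$. Applying Proposition~\ref{P:ExponentialDecayMiddleNeck} with $E = S$, $A = \Dcal_3$ and $f = \psi$ then produces positive constants $\delta'$ and $C_{\delta'}$, independent of $L$ and $r$, with
\[
\sup_{S^3 \times [-r,r]} |\psi| \;\leq\; C_{\delta'}\, e^{-\delta'(L - 2r)} \sup_{S^3 \times [-L,L]} |\psi| ,
\]
which is the assertion; if $N(L)$ has several components one applies this on each and keeps the worst pair of constants.

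Since Proposition~\ref{P:ExponentialDecayMiddleNeck} does essentially all of the work, there is no genuine obstacle here. The only points needing care are the reduction to the product connection and the resulting normal form $D_{A_0} = c(dt)\bigl(\frac{\p}{\p t} + \Dcal_3\bigr)$, together with the standard vanishing $\ker \Dcal_3 = 0$ --- it is precisely this vanishing that makes the orthogonality hypothesis of Proposition~\ref{P:ExponentialDecayMiddleNeck} vacuous in this setting.
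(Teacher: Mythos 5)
Your argument follows the same route as the paper's: identify $D_{A_0}$ on the cylinder with $\Gamma(\partial_t)\bigl(\partial_t \pm D^{S^3}\bigr)$, observe that $\ker D^{S^3} = 0$ by the \Weitzenbock formula since the round $S^3$ has positive scalar curvature and the connection is flat, and then apply Proposition~\ref{P:ExponentialDecayMiddleNeck} with the orthogonality hypothesis vacuous. Your explicit gauge normalisation to the product connection and the remark about multiple neck components are small clarifications the paper leaves implicit, but the substance is the same.
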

\begin{proof}
    The \spinc structure on $X$ is defined so that, on the neck, Clifford multiplication $\Gamma : TN(L) \to \End(W)$ is induced by the Clifford multiplication $\gamma : TS^3 \to \End(W_{S^3})$ on $S^3$. 
    \begin{align}
        \Gamma(\p_{x_i}) &= 
        \begin{pmatrix}
            0 & \gamma(\p_{x_i})\\
            -\gamma(\p_{x_i})^* & 0
        \end{pmatrix}, \qquad 
        \Gamma(\p_t) = \begin{pmatrix}
            0 & \id\\
            -\id & 0
        \end{pmatrix}. \label{E:SpincStructureS3}
    \end{align}
    Here $\{\p_t, \p_{x_1}, \p_{x_2}, \p_{x_3}\}$ is a basis for $TN(L)$ corresponding to local coordinates $(x_1, x_2, x_3, t)$ of $N(L)$. The \spinc connection $\nabla_{A_0}$ for the reference connection $A_0$ is given by the formula
    \begin{align}\label{E:flatRefConnection}
        \nabla_{A_0} = dt \otimes \frac{\p}{\p t} + \nabla^{S^3}.
    \end{align}
    Here $\nabla^{S^3}$ is a \spinc connection on $W_{S^3} \to S^3$. Since $b_2(S^3) = 0$, it can be assumed that $\nabla^{S^3}$ is flat. This equation is understood by treating a spinor $\psi \in C^\infty(N(L), W^+)$ as a time-dependent family of spinors $\{\psi_t\}$ on $S^3$. Over the neck $N(L)$, the Dirac operator $D_{A_0} : C^\infty(X, W^+) \to C^\infty(X, W^-)$ takes the form
    \begin{align}\label{E:DiracIdentification}
        D_{A_0} &= \Gamma(\p_t) \cdot \frac{\p}{\p t} + \sum_{i=1}^3 \Gamma(\p_{x_i}) \cdot \nabla^{S^3}_{x_i} \nonumber \\ 
            &= \Gamma(\p_t) \cdot \frac{\p}{\p t} - \sum_{i=1}^3 \Gamma(\p_t) \cdot \gamma(\p_{x_i})\nabla^{S^3}_{x_i} \nonumber\\
            &= \Gamma(\p_t) \left(\frac{\p}{\p t} - D^{S^3}\right).
    \end{align}
    Here $D^{S^3}$ is the self-adjoint Dirac operator associated to $\nabla^{S^3}$. Note that both $D_{A_0}$ and $D^{S^3}$ are elliptic. Since $A_0$ is flat and $S^3$ has positive scalar curvature, the \Weitzenbock formula implies that $\ker D^{S^3} = 0$. Therefore $\psi$ is automatically orthogonal to $\ker D^{S^3}$ and the result follows from Proposition \ref{P:ExponentialDecayMiddleNeck}.
\end{proof}
To complete the proof of Proposition \ref{P:ExponentialDecayMiddleNeck}, it remains to prove the following lemma.
\begin{lemma}\label{L:WeylsLemma}
    Let $A : C^\infty(S^3, E) \to C^\infty(S^3, E)$ be an elliptic, self-adjoint, pseudo-differential operator of positive order. Let $0 < |\lambda_1| \leq |\lambda_2| \leq ...$ denote the non-zero eigenvalues of $A$, ordered by magnitude. There exists a constant $C$ such that, for any $r \geq 1$,
    \begin{align}
        \left(\sum_{n=1}^\infty (\lambda_n^2 + 1)e^{r|\lambda_n|}|a_n|\right)^2 \leq C\sum_{n=1}^\infty \frac{\sinh(4r |\lambda_n|)}{|\lambda_n|}|a_n|^2
    \end{align}
    for any real number sequence $\{a_n\}$.
\end{lemma}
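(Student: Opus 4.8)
The plan is to reduce the claimed inequality to the Cauchy--Schwarz inequality followed by the Weyl asymptotics for the eigenvalues of $A$. First I would split each summand on the left as a product
\[
(\lambda_n^2+1)e^{r|\lambda_n|}|a_n| = \left[(\lambda_n^2+1)e^{r|\lambda_n|}\sqrt{\tfrac{|\lambda_n|}{\sinh(4r|\lambda_n|)}}\,\right]\cdot\left[\sqrt{\tfrac{\sinh(4r|\lambda_n|)}{|\lambda_n|}}\,|a_n|\right]
\]
and apply Cauchy--Schwarz. The square of the second factors reassembles exactly into the right-hand side, so it suffices to produce a constant $C$, \emph{independent of} $r$, with
\[
\sum_{n=1}^\infty (\lambda_n^2+1)^2\, e^{2r|\lambda_n|}\,\frac{|\lambda_n|}{\sinh(4r|\lambda_n|)} \le C \qquad \text{for all } r\ge 1 .
\]

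Next I would bound the ratio $e^{2r|\lambda_n|}/\sinh(4r|\lambda_n|)$ uniformly in $r$. Since $A$ is elliptic and self-adjoint on the closed manifold $S^3$, its spectrum is discrete with finite multiplicities, so there is a $\delta>0$ with $|\lambda_n|\ge\delta$ for all $n\ge 1$. Using $\sinh x \ge \tfrac12 e^x(1-e^{-2x})$ together with $r\ge 1$ and $|\lambda_n|\ge\delta$ (so that $e^{-8r|\lambda_n|}\le e^{-8\delta}$),
\[
\frac{e^{2r|\lambda_n|}}{\sinh(4r|\lambda_n|)} \le \frac{2\,e^{2r|\lambda_n|}}{e^{4r|\lambda_n|}(1-e^{-8\delta})} = \frac{2}{1-e^{-8\delta}}\,e^{-2r|\lambda_n|} \le \frac{2}{1-e^{-8\delta}}\,e^{-2|\lambda_n|} .
\]
Hence the sum in question is dominated by $\dfrac{2}{1-e^{-8\delta}}\displaystyle\sum_{n=1}^\infty (\lambda_n^2+1)^2|\lambda_n|\,e^{-2|\lambda_n|}$, which no longer depends on $r$.

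Finally I would invoke Weyl's law: for a self-adjoint elliptic pseudo-differential operator of positive order $m$ on the $3$-manifold $S^3$, the counting function $N(\Lambda)=\#\{n:|\lambda_n|\le\Lambda\}$ grows at most polynomially in $\Lambda$ (of order $\Lambda^{3/m}$). Grouping the eigenvalues according to the unit bands $|\lambda_n|\in[k,k+1)$ then gives
\[
\sum_{n=1}^\infty (\lambda_n^2+1)^2|\lambda_n|\,e^{-2|\lambda_n|} \le \sum_{k=0}^\infty \big(N(k+1)-N(k)\big)\,\big((k+1)^2+1\big)^2(k+1)\,e^{-2k},
\]
and since $N(k+1)-N(k)$ grows at most polynomially in $k$ while the remaining factor is also polynomial in $k$, the exponential decay $e^{-2k}$ forces the series to converge. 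Setting $C$ to be $\dfrac{2}{1-e^{-8\delta}}$ times the (finite) value of this sum completes the proof. I do not expect a serious obstacle: the only points needing care are the uniformity in $r$ in the middle step, which is precisely where the hypothesis $r\ge 1$ enters, and the appeal to Weyl's law, which is the reason for the lemma's name.
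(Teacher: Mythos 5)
Your proposal is correct and follows essentially the same route as the paper's proof: Cauchy--Schwarz with the same weighting, a uniform-in-$r$ bound on $e^{2r|\lambda_n|}/\sinh(4r|\lambda_n|)$ using $r\ge 1$ and the spectral gap $|\lambda_n|\ge\delta$, and then Weyl's law to guarantee convergence of the remaining $r$-independent series. The only cosmetic difference is in the last step: the paper first absorbs the polynomial prefactor via $x(x^2+1)^2e^{-x}\le C_2$ and then checks $\sum e^{-|\lambda_n|}<\infty$ from the lower bound $|\lambda_n|\ge C_3 n^\alpha$ via the integral test, whereas you keep the polynomial factor and argue convergence by unit-band grouping of the counting function $N(\Lambda)$ — the two are equivalent formulations of the same Weyl-law input.
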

\begin{proof}
    First, apply the Cauchy-Schwarz inequality to obtain
    \begin{align*}
        \left(\sum_{n=1}^\infty (\lambda_n^2 + 1)e^{r|\lambda_n|}|a_n|\right)^2 &= \left(\sum_{n=1}^\infty \left(\frac{(\lambda_n^2 + 1)\sqrt{|\lambda_n|}e^{r|\lambda_n|}}{\sqrt{\sinh(4r |\lambda_n|)}}\right)\left(\frac{\sqrt{\sinh(4r |\lambda_n|)}}{\sqrt{|\lambda_n|}}|a_n|\right)\right)^2 \\
        &\leq \left(\sum_{n=1}^\infty \frac{(\lambda_n^2 + 1)^2|\lambda_n|e^{2r |\lambda_n|}}{\sinh(4r |\lambda_n|)}\right)\left(\sum_{n=1}^\infty\frac{\sinh(4r |\lambda_n|)}{|\lambda_n|}|a_n|^2\right)
    \end{align*}
    It suffices to bound $\sum_{n=1}^\infty \frac{(\lambda_n^2 + 1)^2|\lambda_n|e^{2r |\lambda_n|}}{\sinh(4r |\lambda_n|)}$. Fix $0 < \delta < |\lambda_1|$. The function $\frac{e^{4x}}{\sinh(4x)}$ is bounded on $[\delta, \infty)$, therefore there is a constant $C_1$ such that, for all $x \geq \delta$, 
    \begin{align*}
        \frac{e^{2x}}{\sinh(4x)} \leq C_1 e^{-2x}
    \end{align*}
    Apply this to $r|\lambda_n|$ to produce 
    \begin{align*}
        \sum_{n=1}^\infty \frac{e^{2r |\lambda_n|}(\lambda_n^2 + 1)^2|\lambda_n|}{\sinh(4r |\lambda_n|)} &\leq \sum_{n=1}^\infty C_1(\lambda_n^2 + 1)^2|\lambda_n|e^{-2r |\lambda_n|} \\
        &\leq \sum_{n=1}^\infty C_1(\lambda_n^2 + 1)^2|\lambda_n|e^{-2|\lambda_n|}.
    \end{align*}
    Similarly, there exists a constant $C_2$ such that $x(x^2+1)^2 e^{-x} \leq C_2$ for all $x \geq 0$. It follows that
    \begin{align}\label{E:WeylSum1}
        \sum_{n=1}^\infty C_1(\lambda_n^2 + 1)^2|\lambda_n|e^{-2|\lambda_n|} &\leq \sum_{n=1}^\infty C_1C_2 e^{-|\lambda_n|}.
    \end{align}
    Since $A$ is elliptic and self-adjoint, Weyl's law \cite[Lemma 1.6.3]{Gilkey} implies that there exists a constant $C_3$ and an exponent $\alpha > 0$ such that $|\lambda_n| \geq C_3 n^\alpha$ for large enough $n$. Thus to show that (\ref{E:WeylSum1}) is finite, it is enough to show that 
    \begin{align*}
        \sum_{n=1}^\infty e^{-n^a} < \infty.
    \end{align*}
    This follows from the integral test. Let $u = x^\alpha$ so that
    \begin{align*}
        \int_1^\infty e^{-x^\alpha}dx &= \frac{1}{\alpha}\int_1^\infty u^{\frac{1 - \alpha}{\alpha}}e^{-u}du\\
        &\leq \frac{1}{\alpha}\Gamma\left(\frac{1}{\alpha}\right)\\
        &< \infty.
    \end{align*}
    Therefore $C = \sum_{n=1}^\infty C_1C_2 e^{-|\lambda_n|}$ is a suitable constant.
\end{proof}

    \section{Proof of the Families Permutation Theorem}

Now we construct a homotopy from $\mu_X$ to $V\inv \mu_{X^\tau} V$ that, after restricting to a suitably chosen disk bundle, is a homotopy through compact perturbations of $l$. Such a homotopy proves Theorem \ref{T:FamiliesGluingTheorem} because of Corollary \ref{C:PhiBijection}. The final homotopy is a concatenation of three compact homotopies, each dealing with problematic quadratic terms of $\mu_X$ separately. The idea to use these particular homotopies comes from Bauer's proof in \cite{BF2}, however great care is taken to ensure that these homotopies satisfy the necessary boundedness conditions and that these conditions are compatible with stretching the neck length. 

Fix $L > 2$ and let $E = E(L) \to B$ be a family of closed 4-manifolds $X$ with a separating neck of length $2L$. Fix a reference connection $A_0$, which can be assumed to be flat on the neck $N_B(L)$. Recall that for $\theta \in \Jcal$, $A_\theta$ denotes the associated connection $A_0 + i\theta$. Note that $A_\theta$ is also flat on the neck. For a given $R \leq L$, let $\rho_R : E \to [0,1]$ be a smooth function that vanishes on $N_B(R-1)$ and is identically $1$ on $E - N_B(R)$. Along $N_B(R) - N_B(R-1)$, we require that $\rho_R$ only depends on the interval coordinate. For $s \in [0,1]$, let $\rho_R^s$ be a linear homotopy ending at $\rho_R$ of the form 
\begin{align*}
    \rho_R^s &= (1-s) + s\rho_R.
\end{align*}
Since $\rho_R^s$ is constant outside of $N_B(R) - N_B(R-1)$, the $C^k$-norm of $\rho_R^s$ is independent of $L$ for all $R$ and $s$.

\subsection{The first homotopy}

To define the first homotopy $F : \Acal \to \Ccal$ fiberwise, let $\theta \in H^1(X_b ; \R)$ for some $b \in B$ and set
\begin{align*}
    F^\theta_s(\psi, a) &= (D_{A_\theta}\psi + ia \cdot \psi, d^+ a - i F^+_{A_\theta} + i\rho_L^s\sigma(\psi), d^*a, \pr(a)).
\end{align*}
Notice that $F_0 = \mu_X$ and that the quadratic term in the second factor of $F_1$ vanishes on $N(L-1)$. The proof that $(F_s)\inv(0)$ is $L^2_k$-bounded uses variations on techniques that show compactness of the moduli space in ordinary Seiberg-Witten theory.
\begin{proposition}\label{P:FirstHomotopy}
    Fix a connection $A_\theta$ for $X_b(L)$ with $\theta \in \Jcal_b$ for some $b \in B$. For $s \in [0,1]$, the preimage $(F^\theta_s)\inv(0)$ is uniformly $L^2_k$-bounded.
\end{proposition}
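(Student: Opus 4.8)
The plan is to reduce the statement to the standard compactness argument for the Seiberg--Witten moduli space, adapted to the perturbation $\rho^s_L\sigma(\psi)$, while tracking uniformity in $s$. Since both the connection $A_\theta$ and the neck length $L$ are fixed here, the $L$-dependence of any constants is irrelevant and only uniformity over $s \in [0,1]$ must be checked. First I would record that $F^\theta_s(\psi,a) = 0$ is equivalent to the $\rho^s_L$-perturbed monopole system $D_{A_\theta + ia}\psi = 0$ and $F^+_{A_\theta + ia} = \rho^s_L\sigma(\psi)$ (up to the usual constant), together with the gauge-fixing conditions $d^*a = 0$ and $\pr(a) = 0$. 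The decisive structural feature is that $\rho^s_L = (1-s) + s\rho_L$ is non-negative everywhere, so the ``good sign'' in the Seiberg--Witten \Weitzenbock{} estimate is preserved.

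The heart of the argument is a $C^0$ bound on $\psi$ that is uniform in $s$. Applying the \Weitzenbock{} formula to $D_{A_\theta + ia}\psi = 0$, substituting the curvature equation, pairing with $\psi$, and using the pointwise identity for Clifford multiplication by $\sigma(\psi)$, one obtains $\Delta|\psi|^2 \leq -\tfrac{1}{2}s_X|\psi|^2 - c\,\rho^s_L|\psi|^4$, where $s_X$ is the scalar curvature of $X(L)$, $c > 0$, and $\Delta = d^*d$ is the nonnegative Laplacian on functions. On the neck $N_B(L)$ the metric is a Riemannian product with the round $S^3$, so $s_X$ is a positive constant there; together with $\rho^s_L \geq 0$ this gives $\Delta|\psi|^2 \leq 0$ on $N_B(L)$, so $|\psi|^2$ is subharmonic there and its maximum over $X$ is attained on the fixed core region $\overline{X - N_B(L)}$ (or $\psi \equiv 0$). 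On this core $\rho^s_L \equiv 1$, so at a maximum point the inequality reads $c|\psi|^2 \leq -\tfrac{1}{2}s_X$, exactly as in ordinary Seiberg--Witten theory, and hence $\sup_X|\psi|^2 \leq \tfrac{1}{2c}\max\{0,\,-\inf_X s_X\}$, a bound independent of $s$ and of $L$. I expect this step to be the main obstacle: one must set up the cutoff $\rho_L$ and the region of positive scalar curvature carefully so that the degeneration of the perturbation $\rho^s_L\sigma(\psi)$ as $s \to 1$ --- where it vanishes on the whole of $N_B(L-1)$ --- does not destroy the a priori estimate.

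Granted the bound on $\psi$, the form $a$ is handled next. The curvature equation gives $(d^* + d^+)a = iF^+_{A_\theta} - i\rho^s_L\sigma(\psi)$, which is bounded in $C^0$, hence in every $L^p$, uniformly in $s$; since $d^*a = 0$ and $\pr(a) = 0$, the form $a$ is $L^2$-orthogonal to $\ker(d^* + d^+)$ on the closed manifold $X(L)$, so an elliptic estimate for $d^* + d^+$ followed by the Sobolev embedding $L^5_1 \subset C^0$ bounds $|a|_{C^0}$ uniformly in $s$. Finally, with $|(\psi,a)|_{C^0}$ uniformly bounded, I would invoke Remark \ref{R:EllipticBootstrappingNeck}: its hypotheses hold with $\rho \equiv 1$ in the Dirac equation and with $\|d^+a\|_{L^p_i} \leq \|F^+_{A_\theta}\|_{L^p_i} + C\|\sigma(\psi)\|_{L^p_i}$, the constant $C$ being controlled by the $C^k$-norm of $\rho^s_L$, which is bounded uniformly in $s$. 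This yields $\|(\psi,a)\|_{L^2_k} \leq C_B L^d(1 + |(\psi,a)|_{C^0})^d$ with $C_B$ and $d$ independent of $s$; as $L$ is fixed, the right-hand side is a constant, which is the required uniform $L^2_k$ bound.
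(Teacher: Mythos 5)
Your proposal is correct and takes essentially the same route as the paper's proof: a Weitzenböck/maximum-principle estimate locating the supremum of $|\psi|^2$ outside the neck where $\rho^s_L \equiv 1$, then an elliptic estimate for $d^*+d^+$ combined with Sobolev embedding to bound $|a|_{C^0}$, and finally bootstrapping via Remark~\ref{R:EllipticBootstrappingNeck} with $\rho \equiv 1$. Your observation that the maximum of $|\psi|^2$ is forced into $\overline{X - N_B(L)}$ (where $\rho_L \equiv 1$) is in fact slightly sharper than the paper's statement that it lies in $M$, and correctly justifies dropping the factor $\rho^s_L$ at the maximum.
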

\begin{proof}
    Let $(\psi, a) \in (F^\theta_s)\inv(0)$ so that $D_{A_\theta+ia}\psi = 0$ and $F^+_{A_\theta+ia} = \rho_L^s\sigma(\psi)$. The \Weitzenbock formula \cite[Theorem 6.19]{Salamon} applied to the connection $A_\theta + ia$ gives a pointwise bound
    \begin{align*}
        \Delta_g|\psi|^2 + \frac{s_X}{2}|\psi|^2 + \<F^+_{A_\theta+ia}\psi, \psi\> &\leq 2\<D^*_{A_\theta+ia}D_{A_\theta+ia}\psi, \psi\>\\
        \Delta_g|\psi|^2 + \frac{s_X}{2}|\psi|^2 + \frac{1}{2}\rho_L^s|\psi|^4 &\leq 0.
    \end{align*}
    Here $s_X$ is the scalar curvature of $X = X_b(L)$ and $\Delta_g$ is the positive definite Laplace-beltrami operator, which is non-negative at a maximum. Let $S = \sup_{X}\{0, -\sX\}$ and note that $\sX$ is positive along the neck. Thus $\Delta_g |\psi|^2 \leq 0$ on $N(L)$ and $|\psi|^2$ achieves a maximum on $M = \overline{X - N(L-1)}$. At such a maximum $x \in M$, we have 
    \begin{align*}
        |\psi(x)|^2(s_X(x) + |\psi(x)|^2) &\leq 0.
    \end{align*}
    It follows that $|\psi|_\Co^2 \leq S$. To bound $|a|_{C_0}$, notice that $d^+a = -i\rho_L^s\sigma(\psi) + iF^+_{A_\theta}$ and
    \begin{align*}
        |d^+ a| \leq |\sigma(\psi)| + |F^+_{A_\theta}|.
    \end{align*}
    Fix some $p \geq 4$ so that the Sobolev embedding $L^p_1(X, T^* X) \subset C^0(X, T^*X)$ gives a constant $C_S$ with $|a|_{C^0} \leq C_S\|a\|_{L^p_1}$. Since $d^* + d^+$ is a self-adjoint elliptic operator, \cite[Theorem 4.12]{WellsGarcia} guarantees the existence of a constant $C_e$ such that
    \begin{align*}
        |a|_\Co \leq C_S C_E(\|\sigma(\psi)\|_\Lp + \|F^+_{A_\theta}\|_\Lp).
    \end{align*}
    This shows that $|a|_\Co$ is bounded by a constant since $|\psi|_\Co$ is. For bootstrapping, $D_{A_\theta} \psi = -ia \cdot \psi$ and $\|d^+a\|_{L^p_i} = \|-\rho_L^s\sigma(\psi) + F^+_{A_\theta}\|_{L^p_i}$. The $C^k$-norm of $\rho_L^s$ determines a constant $C$ such that, for any $0 \leq i \leq k$ and $2 \leq p \leq 2^{k+1}$
    \begin{align*}
        \|d^+a\|_{L^p_i} \leq C\|\sigma(\psi)\|_{L^p_i} + \|F^+_{A_\theta}\|_{L^p_i}.
    \end{align*}
    From Proposition \ref{L:EllipticBootstrappingNeck} and Remark \ref{R:EllipticBootstrappingNeck} there a constant $C_B$ and integer $d \geq 1$ such that 
    \begin{align*}
        \|(\psi, a)\|_{L^2_k} &\leq C_B L^d (1 + |(\psi, a)|_\Co)^d.
    \end{align*}
    The norm $|(\psi,a)|_\Co$ is bounded by a constant, hence so is $\|(\psi, a)\|_{L^2_k}$. This bound is independent of $s$, but depends on the connection ${A_\theta}$ and neck length $L$.
\end{proof}

\begin{proposition}\label{P:FDiskBundleFamilies}
    The map $F_s : \Acal \to \Ccal$ is a homotopy through compact perturbations of $l$.
\end{proposition}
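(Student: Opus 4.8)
The plan is to split $F_s = l + c_s$ with $l$ the linear Fredholm family of (\ref{E:MuFredholmFamilies}), verify that $c_s$ is a continuous, uniformly compact family of perturbations, and then use the uniform zero-set bound of Proposition \ref{P:FirstHomotopy} to produce a single disk bundle $D' \subset \Acal$ on which $F_s$ restricts to a compact homotopy relative to $l$ in the sense of Definition \ref{D:CompactHomotopy}. Concretely, for $\theta \in \Jcal$ one sets
\begin{align*}
    c_s^\theta(\psi, a, f) &= \left(ia\cdot\psi,\ -iF^+_{A_0} + i\rho_L^s\,\sigma(\psi),\ 0,\ 0\right),
\end{align*}
so that $F_s = l + c_s$; since $\rho_L^0 \equiv 1$ this gives $F_0 = \mu_E$, and because $\theta$ is harmonic (so $F_{A_\theta} = F_{A_0}$) the family $c_s$ is literally the compact perturbation $c$ of (\ref{E:MuFredholmFamilies}) with the cutoff $\rho_L^s$ inserted in front of the quadratic spinor term. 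Equivariance of $c_s$ under $\bT^n$ is inherited from that of $c$: Clifford multiplication and $\sigma$ are $\bT^n$-equivariant, $\rho_L^s$ is a function, and $F^+_{A_0}$ is $\bT^n$-invariant.

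Next I would check that $c : \Acal \times [0,1] \to \Ccal$ is a continuous bundle map taking $D' \times [0,1]$ into a precompact subset of $\Ccal$ for every disk bundle $D' \subset \Acal$. The term $-iF^+_{A_0}$ does not depend on $(\psi,a,f)$ or on $s$ and is (up to the bundle structure) a fixed section over the compact base $\Jcal$, so it contributes a compact set. For the quadratic terms, the Sobolev multiplication theorem \cite{SobMult} (applicable since $k \geq 4$) shows that $(\psi,a)\mapsto ia\cdot\psi$ and $\psi \mapsto \sigma(\psi)$ are bounded quadratic maps into $L^2_k$ fibrewise, and multiplication by $\rho_L^s$ only introduces a factor of its $C^k$-norm, which by construction is bounded independently of $s$ and $L$. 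Hence $c_s$ carries any disk bundle into a fixed bounded subset of $L^2_k(E, W^- \oplus \Lambda^2_+ T^*(E/B))$, uniformly in $s \in [0,1]$, and the compact Sobolev embedding $L^2_k \hookrightarrow L^2_{k-1}$ \cite{SobolevSpaces} makes the image precompact in $\Ccal$; passing to finitely many local trivialisations over the compact $\Jcal$ globalises this. Continuity in $s$ is clear since $c_s - c_{s'} = (s-s')\,i(\rho_L - 1)\sigma(\psi)$ is $O(|s-s'|)$ in $L^2_{k-1}$ on a disk bundle. In particular each $F_s = l + c_s$ is an equivariant Fredholm map relative to $l$ (Definition \ref{D:GBoundedFredholm}) depending continuously on $s$.

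Finally I would assemble the homotopy. The bound of Proposition \ref{P:FirstHomotopy} is uniform in $s$, and inspecting its proof the scalar curvature of $X$, the curvature $F^+_{A_0}$, and the elliptic and bootstrapping constants appearing there do not depend on $\theta$, so the bound is in fact uniform over $\theta \in \Jcal$. Thus there is a radius $R > 1$ with $(F_s^\theta)^{-1}(0) \subset D'_{R-1}$ for all $s \in [0,1]$ and all $\theta$, where $D'_R \subset \Acal$ is the disk bundle of radius $R$. Then $\bigcup_s (F_s)^{-1}(0)$ lies in the interior of $D'_R$, so $F_s$ is non-vanishing on the boundary sphere bundle $S'_R$ for every $s$; combined with the compactness of the $c_s$ this is precisely the statement that $F_s|_{D'_R}$ is a compact homotopy relative to $l$, which is what the proposition asserts.

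The only real difficulty is bookkeeping the uniformity in the parameter $s$ and the base point $\theta$: the compactness estimate requires the $C^k$-norm of $\rho_L^s$ to be controlled independently of $s$ (which holds because $\rho_L^s$ is constant outside the short band $N_B(L)\setminus N_B(L-1)$), and the non-vanishing of $F_s$ on $S'_R$ requires the zero-set bound of Proposition \ref{P:FirstHomotopy} to be genuinely independent of $s$, which it is. Everything else is a routine application of Sobolev multiplication and the compact Sobolev embedding, carried out fibrewise over the compact base $\Jcal$.
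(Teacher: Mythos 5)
Your proposal takes essentially the same route as the paper: decompose $F_s = l + c_s$, invoke Proposition~\ref{P:FirstHomotopy} for a fibrewise zero-set bound independent of $s$, and use compactness of the base to assemble a single disk bundle whose bounding sphere avoids all $(F_s)^{-1}(0)$. The extra detail you supply — checking compactness of $c_s$ via Sobolev multiplication and the Rellich embedding, and continuity in $s$ — is the content behind the paper's phrase ``it is clear that $F_s = l + c_s$ with $c_s$ compact,'' so nothing genuinely new there. The one place you deviate slightly is in the uniformity over $\theta$: you assert that ``inspecting its proof'' shows the constants in Proposition~\ref{P:FirstHomotopy} ``do not depend on $\theta$,'' but this is not quite right — the constants $S = \sup_{X_b}\{0,-s_{X_b}\}$, $|F^+_{A_0}|_{C^0}$ and the elliptic/bootstrapping constants all depend on the fibre $X_b$, and hence on the base point $b$ underlying $[\theta] \in \Jcal$. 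The correct (and the paper's) argument is to note that the fibrewise radius $R^\theta$ is a finite-valued function on the compact space $\Jcal$ and take its supremum. Your claim happens to lead to the same conclusion because $\Jcal$ is compact, but the mechanism you cite (literal $\theta$-independence of constants) is not what makes the argument work.
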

\begin{proof}
    For each $s \in [0,1]$, it is clear that $F_s = l + c_s$ with $c_s$ compact. Proposition \ref{P:FirstHomotopy} gives for each $[\theta] \in \Jcal$ a radius $R^\theta > 0$ such that, for any $(\psi, a) \in (F_s^\theta)\inv(0)$,
    \begin{align*}
        \|(\psi, a)\|_{L^2_k} \leq R^\theta.
    \end{align*}
    This bound does not depend on $s \in [0,1]$. Let $R$ be the supremum of $R^\theta$ over $\Jcal$, which exists since $\Jcal$ is compact. Let $D \subset \Acal$ be a disk bundle over $\Jcal$ with $L^2_k$-radius $2R$. This shows in fact that each preimage $F\inv_s(0)$ is contained in a bounded disk bundle, a stronger result than required.
\end{proof}

\subsection{The second homotopy}

The second homotopy $G_s$ for $s \in [0,3]$ is constructed in three stages. For $s \in [0,1]$ define
\begin{align*}
    G_s^\theta(\psi, a) &= (D_{A_\theta}\psi+ i\rho_r^s a \cdot \psi, d^+ a -iF^+_{A_\theta} + i\rho_L\sigma(\psi), d^* a,\pr(a))
\end{align*}
This homotopy eliminates the other quadratic term $ia \cdot \psi$ from $N_B(r-1)$. The constant $r \geq 3$ will be defined later. It is assumed without loss of generality that $L \geq 2r + 1$.

To define the second stage of $G$, let $P = G_1$. This stage will transform $P$ to $P^\tau = V\inv P V$ where the action of $V$ was defined in equation \ref{E:VDefinitionFamilies}. Restricting to $N_B(r-1)$, $P$ is a first order linear differential operator given by the formula
\begin{align*}
    P^\theta(\psi, a) &= (D_{A_\theta}\psi, d^+a, d^* a,\pr(a)).
\end{align*}
Note that $F_{A_\theta}^+ = 0$ since $A_\theta$ is flat on the neck. For $s \in [0,1]$, let
\begin{align*}
    V_s(x, t) = \gamma((s - 1) \cdot \vphi(t)) : S(V_0) \times [-L, L] \to SO(n).
\end{align*}
Define $Q_s : \Acal \to \Ccal$ by
\begin{align*}
    Q^\theta_s(\psi, a) &= V_s\inv \p_t V_s( dt \cdot \psi, (dt \wedge a)^+, *(*\veca \wedge dt), 0).
\end{align*}
Here $V\inv \p_t V$ is a matrix functions which acts on each vector $dt \cdot \vecpsi$, $(dt \wedge \veca)^+$ and $*(*\veca \wedge dt)$. Notice that $Q$ vanishes outside of $N(1)$ since $\p_t V = 0$ away from the short neck. Applying the Leibniz rule, it follows that
\begin{align*}
    V_s\inv P V_s(\psi, a) &= P(\psi, a) + Q_s(\psi, a).
\end{align*}
For $s \in [1,2]$, define $G_s$ by 
\begin{align}\label{E:GHomotopyMiddleFamilies}
    G_{s} = P + Q_{s}.
\end{align}
Each $Q_s$ has the property that $Q_s = 0$ outside of $N(1)$, hence this formula is well defined globally. Restricted to the neck $N(L-1)$, equation (\ref{E:GHomotopyMiddleFamilies}) is equivalent to $G_s = V_{s}\inv P V_{s}$. For the final stage $s \in [2,3]$, let $G_s = V\inv G_{3-s} V$. Now $G$ is a homotopy from $G_0 = F_1$ to $G_3 = V\inv F_1 V$.

Since $G$ alters the $D_{A+ia}\psi = 0$ equation, the previous argument fails to bound $G_s\inv(0)$. However to show that $G$ is a compact homotopy, it is only necessary to find an $L^2_k$-disk bundle containing $G_0\inv(0)$ and $G_3\inv(0)$ for which its bounding sphere bundle does not intersect $G_s\inv(0)$ for any $s \in [0,3]$. The following results help accomplish this by proving similar results for the $C^0$-norm of zeroes of $G_s$. For any $[\theta] \in \Jcal$ with $\theta \in H^1(X_b ; \R)$, we set $X = X_b(L)$.
\begin{lemma}\label{L:WeitzenbockArgument}
    Let $(\psi, a) \in (G^\theta_s)\inv(0)$ for some $s \in [0,3]$ and $[\theta] \in \Jcal$. If $\sup_X |\psi|$ is achieved at some $x \in M$, then $|\psi|_{\Co(X)}^2 \leq S$ for $S = \sup_{X}\{0, -s_X\}$.
\end{lemma}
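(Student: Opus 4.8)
The plan is to reuse, almost verbatim, the \Weitzenbock and maximum principle argument from the proof of Proposition~\ref{P:FirstHomotopy}; the only new ingredient needed is the observation that over the neck complement $M$ a zero of $G^\theta_s$ still solves the Seiberg--Witten equations for \emph{every} $s\in[0,3]$.

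First I would record the following reduction: for all $s\in[0,3]$, the restriction of $G^\theta_s$ to an open neighbourhood of $M$ agrees with the monopole map whose self-duality equation is rescaled by $\rho_L$; concretely, $(\psi,a)\in(G^\theta_s)\inv(0)$ satisfies $D_{A_\theta+ia}\psi=0$ and $F^+_{A_\theta+ia}=\rho_L\sigma(\psi)$ on $M$. This uses three facts: (i) because $L\ge 2r+1$, the subbundle $N_B(r)$ is disjoint from $M$, so $\rho_r^{s}$ (respectively $\rho_r^{3-s}$) is identically $1$ on a neighbourhood of $M$ and the Dirac equation carries no twist there; (ii) the correction term $Q_s$ occurring for $s\in[1,2]$ is supported inside $N(1)$, which is disjoint from $M$; (iii) for $s\in[2,3]$ one has $G_s=V\inv G_{3-s}V$, and near $M$ the transformation $V$ is a locally constant, component-permuting element of $SO(n)$ (the interpolation between $\id$ and $\tau$ happens inside $N(1)$), so it commutes with the fibrewise Clifford multiplication, with $\sigma$, $d^*+d^+$ and $\pr$, and fixes $F^+_{A_\theta}$; hence the conjugation by $V$ acts trivially on $G_{3-s}$ over $M$.

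Granting this reduction, I would finish by transcribing the argument of Proposition~\ref{P:FirstHomotopy}. Applying the \Weitzenbock formula \cite[Theorem~6.19]{Salamon} to the connection $A_\theta+ia$ and substituting the two equations above gives the pointwise inequality
\[ \Delta_g|\psi|^2+\frac{s_X}{2}|\psi|^2+\frac12\rho_L|\psi|^4\le 0 \]
at every point of $M$. Since $X$ is closed, the hypothesis that $\sup_X|\psi|$ is attained at $x\in M$ makes $x$ an interior maximum of $|\psi|^2$, so $\Delta_g|\psi|^2(x)\ge 0$ and therefore $|\psi(x)|^2\bigl(s_X(x)+\rho_L(x)|\psi(x)|^2\bigr)\le 0$. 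Hence either $\psi\equiv 0$, or $s_X(x)+\rho_L(x)|\psi(x)|^2\le 0$; in the latter case $\rho_L\ge 0$ forces $s_X(x)\le 0$, so $x$ lies off the neck $N_B(L)$ (on which $s_X>0$), where $\rho_L\equiv 1$, and thus $|\psi(x)|^2\le -s_X(x)\le S$. Either way $|\psi|_{\Co(X)}^2=|\psi(x)|^2\le S$.

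I expect the only genuine work to be item (iii): unwinding precisely how $V$ acts on sections near $\partial M$ and verifying that the component permutation $\tau$ is a symmetry of all the relevant data restricted to $M$. This is exactly where one uses that the reference connection $A_0$ was chosen identical on each neck component and that $\tau$ acts by permutations rather than by arbitrary orthogonal transformations; once this is settled, everything else is bookkeeping layered on the already-proved Proposition~\ref{P:FirstHomotopy}.
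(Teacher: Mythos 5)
Your proposal is correct and takes essentially the same approach as the paper: restrict a zero of $G^\theta_s$ to the neck complement $M$, observe that the Seiberg--Witten equations (with the $\rho_L$ factor) hold there for every $s\in[0,3]$, and run the Weitzenb\"ock/maximum-principle argument from Proposition~\ref{P:FirstHomotopy}. The paper's own proof is terser---it asserts the restriction satisfies the Seiberg--Witten equations without the case-by-case verification you supply, and it silently drops $\rho_L$ (harmless, since $\rho_L<1$ only on the positively curved neck collar)---so your more explicit tracking of the cutoff and of the three stages of $G$, including the observation that $V(\psi,a)$ and $(\psi,a)$ induce the same section over $M$, is a sound fleshing-out of the same argument.
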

\begin{proof}
    Restricted to $M = \overline{X - N(L-1)}$, the pair $(\psi, a)$ satisfies $D_{A+ia}\psi = 0$ and $F^+_{A+ia} = \sigma(\psi)$. As in Proposition \ref{P:FirstHomotopy}, the \Weitzenbock formula on $M$ gives 
        \begin{align*}
            \Delta_g|\psi|^2 + \frac{s_X}{2}|\psi|^2 + \frac{1}{2}|\psi|^4 &\leq 0
        \end{align*}
        Since $X$ is a closed 4-manifold, $\Delta_g |\psi|^2 \geq 0$ at $x$. Since $x \in M$, it follows that 
        \begin{align*}
            |\psi(x)|^2(s_X(x) + |\psi(x)|^2) \leq 0.
        \end{align*}
        Therefore $|\psi|^2 \leq S$ since $|\psi(x)| = |\psi|_{\Co(X)}$.
\end{proof}

\begin{lemma}\label{L:VsNormChange}
    Let $(\psi, a)$ be a spinor-from pair along the $n$-component neck $N(L)$. For any $0 \leq R \leq L$, we have 
    \begin{align*}
        \sup_{N(R)}|\psi| &\leq n\sup_{N(R)} |V_s \psi| \leq n^2\sup_{N(R)}|\psi| \nonumber\\
        \sup_{N(R)}|a| &\leq n\sup_{N(R)} |V_s a| \leq n^2\sup_{N(R)}|a|.
    \end{align*}
\end{lemma}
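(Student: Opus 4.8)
The plan is to reduce everything to the single fact that an orthogonal matrix has all entries of absolute value at most $1$, while keeping careful track of the two different meanings of $|\cdot|$ at play: the pointwise norm $|\psi|$ on $N(L)$, which at a point of the $i$-th neck component is the norm of the $i$-th component $\psi_i$, versus the Euclidean norm of the assembled vector $\vecpsi = (\psi_1,\dots,\psi_n)$ on $S(V_0)\times[-L,L]$.

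First I would record the componentwise description of the action: $V_s\psi$ is the spinor whose $i$-th component at $(x,t)$ is $(V_s\vecpsi)_i(x,t)=\sum_{j=1}^n (V_s)_{ij}(x,t)\,\psi_j(x,t)$, and likewise for $\veca$. Then, to prove the estimate $\sup_{N(R)}|V_s\psi|\le n\sup_{N(R)}|\psi|$, I would fix $t\in[-R,R]$, note that $|(V_s)_{ij}(x,t)|\le 1$ because $V_s(x,t)\in SO(n)$, and estimate
\begin{align*}
    |(V_s\vecpsi)_i(x,t)|\le\sum_{j=1}^n|(V_s)_{ij}(x,t)|\,|\psi_j(x,t)|\le\sum_{j=1}^n|\psi_j(x,t)|\le n\sup_{N(R)}|\psi|,
\end{align*}
since each $\psi_j(x,t)$ is the value of $\psi$ at a point of $N(R)$; taking the supremum over all such $(x,t)$ and over $i$ gives the bound. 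The same computation applied to $\veca$ gives the corresponding bound for one-forms.

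For the remaining inequality $\sup_{N(R)}|\psi|\le n\sup_{N(R)}|V_s\psi|$ I would run the identical argument with $V_s$ replaced by its pointwise inverse $V_s^{-1}$, whose value at each $(x,t)$ is again an element of $SO(n)$ and hence again has entries bounded by $1$, using the identity $\psi = V_s^{-1}(V_s\psi)$. Concatenating the two estimates yields the displayed chain for spinors, and the one-form case is proved verbatim, since the action on $\veca$ and the pointwise norm $|a|$ have exactly the same structure. I do not expect a genuine obstacle here: the only points requiring care are not conflating $|\psi|$ with $|\vecpsi|$ and observing that all constants are independent of $s$ and of $L$, which is automatic because $V_s$ takes values in the compact group $SO(n)$; in fact the same bounds hold with $n$ replaced by $\sqrt{n}$ by Cauchy-Schwarz, but the cruder constant suffices for our purposes.
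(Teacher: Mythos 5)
Your proof is correct and follows essentially the same route as the paper's: describe the action of $V_s$ componentwise via $\vecpsi$, apply the triangle inequality and the fact that entries of an $SO(n)$-matrix have absolute value at most $1$ to get $\sup_{N(R)}|V_s\psi|\le n\sup_{N(R)}|\psi|$, then obtain the other direction by running the same estimate on $V_s^{-1}$ applied to $V_s\psi$. The aside that Cauchy--Schwarz improves the constant to $\sqrt{n}$ is a nice observation the paper does not make, but the argument is otherwise the same.
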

\begin{proof}
We prove only the spinor case. Let $\vecpsi$ be the vectorised version of $\psi$ as in (\ref{E:vecpsi}). That is, $\vecpsi : S^3 \times [-L, L] \to \oplus_{i=1}^n W^+$ with the $i$-th component $\vecpsi_i$ corresponding to the restriction of $\psi$ to the $i$th connected component of $N(L)$. The restriction of $V_s\psi$ to the $i$th connected component of $N(L)$ is given by the $i$th component of $V_s \vecpsi$. Inside $N(R)$, we have
\begin{align*}
    |(V_s \vecpsi)_i| &= \left|\sum_j  (V_s)_{ij} \vecpsi_j\right| \nonumber\\
    &\leq \sum_j  |(V_s)_{ij}| |\vecpsi_j| \nonumber\\
    &\leq \left(\sum_j  |(V_s)_{ij}|\right) \sup_{N(R)} |\psi| \nonumber\\
    &= n\sup_{N(R)} |\psi|.
\end{align*}
The last line follows since $V_s$ is valued in $SO(n)$, hence the absolute value of each of its entries is less than 1. Therefore $\sup_{N(R)} |V_s \psi| \leq n\sup_{N(R)} |\psi|$. The same calculation shows that $\sup_{N(R)} |\psi| = \sup_{N(R)} |V_s\inv V_s \psi| \leq \sup_{N(R)} n|V_s \psi|$.
\end{proof}
\begin{remark}
    For any $R \leq L$, the same calculation can be used to show that 
    \begin{align*}
        \sup_{\p N(R)}|\psi| &\leq n\sup_{\p N(R)} |V_s \psi| \leq n^2\sup_{\p N(R)}|\psi| \nonumber\\
        \sup_{\p N(R)}|a| &\leq n\sup_{\p N(R)} |V_s a| \leq n^2\sup_{\p N(R)}|a|.
    \end{align*}
\end{remark}

\begin{lemma}\label{L:GHomotopyHelper}
    There exists positive constants $L_0, C_E, \delta$ and $C_\delta$ such that the following holds. For any $s \in [0,3]$, let $(\psi, a) \in (G_s^\theta)\inv(0)$ be a spinor-form pair on $X_b(L)$. If $L \geq L_0$, then
    \begin{align}\label{E:TwoBounds}
        |a|_{\Co(X)} &\leq C_E|(d^* + d^+)a|_{\Co(M)} \nonumber\\
        \sup_{N(r)} |a \wedge dt| &\leq C_\delta e^{- \delta (L-2r)}\sup_{N(L-1)}|a|.
    \end{align}
\end{lemma}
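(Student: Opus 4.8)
The plan is to deduce both inequalities from the cylinder estimates already established: Corollary~\ref{C:ExponentialDecayMiddleNeck} for the decay bound, and Proposition~\ref{P:HarmonicNeckBound} together with Remark~\ref{R:HarmonicNeckBound} for the elliptic bound. The key preliminary step is to recognise that, for a zero $(\psi,a)\in(G^\theta_s)^{-1}(0)$, the form $a$ need not solve a homogeneous elliptic equation on the short neck $N(1)$ (where the permutation is being switched on), but a \emph{twisted} form does. Along $N(L-1)$ the reference connection is flat, so $F^+_{A_\theta}=0$, and $\rho_L\equiv 0$ there; hence the second and third components of $G_s$ are exactly $d^+a$ and $d^*a$, which are the form part of the operator $P$. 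Setting $\tilde a=a$ for $s\in[0,1]$, $\tilde a=V_sa$ for $s\in[1,2]$ and $\tilde a=Va$ for $s\in[2,3]$, the identities $G_s=V_s^{-1}PV_s$ (resp. $G_s=V^{-1}G_{3-s}V$) on $N(L-1)$ show that the vanishing of $G_s$ is equivalent there to $d^+\tilde a=d^*\tilde a=0$, i.e. $(d^*+d^+)\tilde a=0$ on \emph{all} of $N(L-1)$, while the fourth component forces $\pr(a)=0$ (and likewise $\pr(\tilde a)=0$, the two being related by a fibrewise orthogonal map locally constant along the cycles defining $\pr$). In the cases $s\in[1,3]$ write $\tilde a=Ua$ for an $SO(n)$-valued bundle map $U$ locally constant away from $N(1)$; there $U$ commutes with $d$, $d^*$, $d^+$, so $(d^*+d^+)a=0$ on $\overline{N(L-1)-N(1)}$, and by the proof of Lemma~\ref{L:VsNormChange}, $\sup_{N(R)}|\eta|\le n\sup_{N(R)}|U\eta|\le n^2\sup_{N(R)}|\eta|$ for every $R\le L$ and for $\eta$ equal to $a$ or to $a\wedge dt$ (using $(Ua)\wedge dt=U(a\wedge dt)$).

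\textbf{The decay bound.} Apply Corollary~\ref{C:ExponentialDecayMiddleNeck} to $\tilde a$ on $N(L-1)$: since $(d^*+d^+)\tilde a=0$ there and $L\ge 2r+1$ (the standing assumption of this section), this gives $\sup_{N(r)}|\tilde a\wedge dt|\le C_\delta e^{-\delta(L-2r)}\sup_{N(L-1)}|\tilde a|$ with $\delta$ and $C_\delta$ independent of $L$ and $r$. Feeding the factor-$n$ comparisons of the first paragraph into both sides yields $\sup_{N(r)}|a\wedge dt|\le n^2C_\delta e^{-\delta(L-2r)}\sup_{N(L-1)}|a|$, which is the asserted bound after renaming $C_\delta$.

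\textbf{The elliptic bound.} Apply Remark~\ref{R:HarmonicNeckBound} to $a$ itself, and verify its hypotheses using $\tilde a$. We have $\pr(a)=0$, and $(d^*+d^+)a=0$ on $\overline{N(L-1)-N(1)}\supset N(2,L-1)$. For the auxiliary estimate $\sup_{N(2,L-1)}|a|\le C\sup_{\p N(L-1)}|a|$ with $C$ independent of $L$: the band $N(L-1)$ is connected, and since $(d^*+d^+)\tilde a=0$ on it and it has non-negative Ricci curvature, the \Weitzenbock formula shows (exactly as in the proof of Proposition~\ref{P:HarmonicNeckBound}) that $|\tilde a|$ obeys the maximum principle, so $\sup_{N(L-1)}|\tilde a|\le\sup_{\p N(L-1)}|\tilde a|$; combining with the factor-$n$ comparisons gives $\sup_{N(2,L-1)}|a|\le\sup_{N(L-1)}|a|\le n^2\sup_{\p N(L-1)}|a|$, so $C=n^2$ works. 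Remark~\ref{R:HarmonicNeckBound} then furnishes $L_0$ and $C_E$, independent of $L$, with $|a|_{\Co(X)}\le C_E|(d^*+d^+)a|_{\Co(M)}$ for all $L\ge L_0$.

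The main obstacle is precisely the identification of $\tilde a$: one must see that the only failure of $a$ to be $(d^*+d^+)$-closed on the neck is concentrated on $N(1)$ and is exactly cancelled by the $V_s$-conjugation, because on $N(L-1)$ the equation $G_s=0$ is literally the conjugate of the linear equation for $P$. After that, the two estimates are formal transfers through Lemma~\ref{L:VsNormChange}; the only remaining care is to confirm that the comparison constants $n$ and $n^2$, and hence $L_0$, $C_E$, $\delta$ and $C_\delta$, are independent of $s$, of $\theta$ (which drops out because $A_\theta$ is flat on the neck), and of $b\in B$ (using compactness of $B$ to bound the $b$-dependent volume constants entering Proposition~\ref{P:HarmonicNeckBound}).
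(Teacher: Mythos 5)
Your proof is correct and follows essentially the same strategy as the paper's: twist out the $V_s$-conjugation so that the cylinder estimates (Remark~\ref{R:HarmonicNeckBound} and Corollary~\ref{C:ExponentialDecayMiddleNeck}) apply to the untwisted form $\tilde a$, invoke the maximum principle for $|\tilde a|$ on the neck, and then transfer back to $a$ via the factor-$n$ comparisons of Lemma~\ref{L:VsNormChange}. One small inaccuracy in a parenthetical aside: for $s\in(1,2)$ the section $\tilde a = V_s a$ is only defined along the neck (not on $M$, where the cycles defining $\pr$ live), so $\pr(\tilde a)$ is not well-posed --- but this is immaterial since you correctly apply Remark~\ref{R:HarmonicNeckBound} to $a$ and only use $\pr(a)=0$.
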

\begin{proof}
    For $s \in [0,1]$, we have
    \begin{align*}
        d^+a &= iF^+_{A_\theta} - i\rho_L\sigma(\psi) \nonumber \\
        d^*a &= 0 \nonumber \\
        \pr(a) &= 0.
    \end{align*}
    Along $N(L-1)$, $d^+ a = iF^+_{A_\theta}$ and therefore $d^+ a = 0$ since $A_\theta$ is flat on the neck. Thus $(d^* + d^+)a$ vanishes on $N(L-1)$. Hence Proposition \ref{P:HarmonicNeckBound} gives constants $C_E'$ and $L_1$ such that, if $L \geq L_1$ then
    \begin{align*}
        |a|_{C^0(X)} &\leq C_E'|(d^* + d^+)a|_{\Co(M)}.
    \end{align*}
    Further, Corollary \ref{C:ExponentialDecayMiddleNeck} applies to $a \wedge dt$ yielding, for some $\delta > 0$ and $C_\delta'$ independent of $L$,
    \begin{align*}
        \sup_{N(r)} |a \wedge dt| &\leq C_\delta'e^{- \delta (L-2r)}|a|_{\Co(N(L))}.
    \end{align*}
    If $s \in [1,2]$, the condition $\pr(a) = 0$ still holds. Restricting to $N(L-1)$ we have $V_s\inv P V_s(\psi, a) = 0$ and therefore $V_s(\psi, a)$ is a solution to $P$. Note that $V_s(\psi, a)$ is only defined on the neck when $s \in (0,1)$ and that $(d^* + d^+)V_s a = 0$ on $N(L-1)$. This means that $\sup_{N(L-1)}|V_s a| = \sup_{\p N(L-1)}|V_s a|$ by the maximum principle. Lemma \ref{L:VsNormChange} implies that 
    \begin{align}\label{E:aAlmostHarmonic}
        \sup_{N(L-1)}|a| &\leq n\sup_{N(L-1)}|V_s a| \nonumber\\
        &= n\sup_{\p N(L-1)}|V_s a| \nonumber\\
        &\leq n^2 \sup_{\p N(L-1)}|a|.
    \end{align}
    Thus $|a|_{\Co(X)} \leq n^2|a|_{\Co(M)}$. Restricting to $X - N(1)$ instead, we have $P(\psi, a) = 0$. This means that $(d^* + d)a = 0$ along $N(2, L)$. Now (\ref{E:aAlmostHarmonic}) with Remark \ref{R:HarmonicNeckBound} implies the existence of constants $L_2$ and $C_E''$ such that, if $L \geq L_2$,
    \begin{align}\label{E:GHomotopyHelper1}
        |a|_{\Co(X)} &\leq n^2|a|_{\Co(M)} \nonumber \\
        &\leq n^2C_E''|(d^* + d^+)a|_{\Co(M)}.
    \end{align}
    To obtain the exponential bound on $a \wedge dt$, note that $V_s(a \wedge dt) = (V_s a) \wedge dt$. We have $(d^* + d^+)V_s a = 0$ on $N(L-1)$ and Corollary \ref{C:ExponentialDecayMiddleNeck} applies to $V_s a \wedge dt$, yielding
    \begin{align*}
        \sup_{N(r)} |V_s a \wedge dt| &\leq C_\delta'e^{- \delta (L-2r)}\sup_{N(L-1)}|V_s a|.
    \end{align*}
    By Lemma \ref{L:VsNormChange}, it follows that 
    \begin{align}\label{E:GHomotopyHelper2}
        \sup_{N(r)} |a \wedge dt| &\leq n\sup_{N(r)} |V_s a \wedge dt| \nonumber\\
        &\leq n C_\delta'e^{- \delta (L-2r)}\sup_{N(L-1)}|V_s a| \nonumber\\
        &\leq n^2C_\delta'e^{- \delta (L-2r)}\sup_{N(L-1)}|a|.
    \end{align}
    For the third stage $s \in [2,3]$, we have $V\inv G_{3-s} V(\psi, a) = 0$. Thus $V(\psi, a)$, which is defined globally, is a solution of $G_{3-s}$. The argument for the second stage can be repeated to establish (\ref{E:GHomotopyHelper1}) and (\ref{E:GHomotopyHelper2}). Setting $C_E = \max\{C_E', n^2 C_E''\}$, $L_0 = \max\{L_1, L_2\}$ and $C_\delta = n^2 C_\delta'$ ensures that (\ref{E:TwoBounds}) is satisfied for any $s \in [0,3]$.
\end{proof}

\begin{proposition}\label{P:C0BoundReducer}
    Let $[\theta] \in \Jcal_b$ for some $b \in B$. There exists positive constants $U_0, L_0, C, \delta$ and $r$ such that the following holds. If $L \geq L_0$, then for any $s \in [0,3]$, there are no solutions $(\psi, a) \in (G^\theta_s)\inv(0)$ with $C^0$-norm in the interval $[U_0, U(L)]$, where
    \begin{align}\label{E:U(L)}
        U(L) &= C e^{\delta (L-2r)}.
    \end{align}
\end{proposition}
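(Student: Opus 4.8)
The plan is to prove a dichotomy: every $(\psi,a)\in(G^\theta_s)^{-1}(0)$ has $C^0$-norm either at most an $L$-independent constant $U_0$, or at least $U(L)$. The mechanism is the \Weitzenbock formula together with positivity of the scalar curvature of the round $S^3$: write $s_X\ge\kappa>0$ on the neck, with $\kappa$ independent of $L$. If $\sup_X|\psi|$ is attained on $M$, Lemma \ref{L:WeitzenbockArgument} bounds it by $\sqrt S$; if it is attained in the neck, the \Weitzenbock inequality forces it to be enormous unless the ambient curvature term is comparably enormous, and that term is controlled by the exponential decay of $a\wedge dt$ supplied by Lemma \ref{L:GHomotopyHelper}.

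First I would bound $a$ by $\psi|_M$. On $M$ the perturbations $Q_s$ and $V$ and the cut-offs $\rho_r^s$, $\rho_r$ are all trivial, so $(\psi,a)$ satisfies there the Seiberg--Witten form equations up to the coefficient $\rho_L\le1$ on $\sigma(\psi)$; hence $|(d^*+d^+)a|_{C^0(M)}\le|F^+_{A_\theta}|_{C^0(M)}+|\sigma(\psi)|_{C^0(M)}$, and since $A_\theta$ is flat on the neck the first summand is an $L$-independent constant $K_0$. Combining with the first estimate of Lemma \ref{L:GHomotopyHelper} (valid for $L\ge L_0$) produces $L$-independent constants $K_1,K_2$ with
\begin{align*}
    \sup_{N(L-1)}|a|\ \le\ |a|_{C^0(X)}\ \le\ K_1+K_2\,|\psi|_{C^0(X)}^2 .
\end{align*}

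Now set $N=|\psi|_{C^0(X)}$ and let $x^*\in X$ attain $\sup_X|\psi|$; if $N=0$ the displayed bound gives $|(\psi,a)|_{C^0}\le K_1$, so assume $N>0$. On the neck the spinor satisfies $D_{A_\theta+i\rho_r^s a}\psi=0$ for $s\in[0,1]$, and for $s\in[1,3]$, after replacing $(\psi,a)$ by $(V_s\psi,V_sa)$ or $(V\psi,Va)$ — which preserves pointwise norms and reduces to the case $s\in[0,2]$ via $G_s=V_s^{-1}PV_s$ and $G_s=V^{-1}G_{3-s}V$ — a Dirac equation $D_{\widetilde A}\psi=0$ in which $\widetilde A$ coincides with the flat $A_\theta$ off the annulus $\overline{N(r)\setminus N(r-1)}$. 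Since $d^+a=0$ on $N(L-1)$, the self-dual curvature of $\widetilde A$ there equals $i(d\rho\wedge a)^+$ for a cut-off $\rho$ of the interval coordinate only; as $d\rho$ is a multiple of $dt$, $d\rho\wedge a$ involves only the component $a\wedge dt$, so Lemma \ref{L:GHomotopyHelper} gives
\begin{align*}
    |F^+_{\widetilde A}|_{C^0(N(L-1))}\ \le\ C'\sup_{N(r)}|a\wedge dt|\ \le\ C'C_\delta\,e^{-\delta(L-2r)}\sup_{N(L-1)}|a| .
\end{align*}
The pointwise \Weitzenbock inequality here takes the form $\Delta_g|\psi|^2+\tfrac12 s_X|\psi|^2\le|F^+_{\widetilde A}|\,|\psi|^2$. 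Evaluate it at $x^*$, noting $\Delta_g|\psi|^2(x^*)\ge0$ since $X$ is closed. If $x^*\in M$, Lemma \ref{L:WeitzenbockArgument} gives $N^2\le S$. Otherwise $x^*$ lies in the interior of $N(L-1)$; if moreover $x^*\notin\overline{N(r)\setminus N(r-1)}$ then $F^+_{\widetilde A}(x^*)=0$ and the inequality reads $\tfrac12\kappa N^2\le0$, impossible; and if $x^*\in\overline{N(r)\setminus N(r-1)}$ then $\tfrac12\kappa\le|F^+_{\widetilde A}(x^*)|\le C''e^{-\delta(L-2r)}(K_1+K_2N^2)$, whence $N^2\ge K_2^{-1}(\tfrac{\kappa}{2C''}e^{\delta(L-2r)}-K_1)$.

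To conclude, fix $r\ge3$ (compatibly with the standing assumption $L\ge2r+1$), set $U_0=\max\{\sqrt S,\,K_1+K_2S\}+1$, and enlarge $L_0$ so that for $L\ge L_0$ the bound above exceeds $\tfrac{\kappa}{4C''K_2}e^{\delta(L-2r)}$; then $U(L):=C\,e^{(\delta/2)(L-2r)}$ with $C=\tfrac12\sqrt{\kappa/(4C''K_2)}$ works, since the case analysis gives $|(\psi,a)|_{C^0}<U_0$ or $|(\psi,a)|_{C^0}\ge N>U(L)$, and $[U_0,U(L)]$ is nonempty once $L$ is large. The step I expect to be the main obstacle is the curvature estimate of the third paragraph: for each of the three stages of $G$ one must identify the connection $\widetilde A$ for which the (possibly $V_s$-rotated) spinor solves a Dirac equation, confirm that its curvature on the neck is generated purely by differentiating cut-off functions — so that the non-decaying $dt$-component of $a$ is annihilated and only the exponentially small $a\wedge dt$ survives — and carry out the reduction of the three stages uniformly via conjugation by $V_s$ and Lemma \ref{L:VsNormChange}.
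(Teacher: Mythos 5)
Your overall strategy — control $|a|$ by $|\psi|_{C^0(M)}^2$ via Lemma \ref{L:GHomotopyHelper}, then run \Weitzenbock\ at the point where $|\psi|$ is maximal, using the exponential decay of $a\wedge dt$ to make the curvature term lose to $\tfrac12 s_N$ whenever $|(\psi,a)|_{C^0}\le U(L)$ — is exactly the paper's mechanism for the stage $s\in[0,1]$, and your dichotomy correctly reproduces the $U_0$ vs.\ $U(L)$ window. The aside about the connection ``coinciding with $A_\theta$ off the annulus'' is not literally right (off $N(r)$ the connection is $A_\theta+ia$, not $A_\theta$); what is right, and what you state later, is that $F^+_{\widetilde A}=i(d\rho_r\wedge a)^+$ on the neck because $d^+a=0$ there, so the curvature is supported in $\overline{N(r)\setminus N(r-1)}$.

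The genuine gap is in the stages $s\in[1,3]$, exactly where you flagged uncertainty. Your reduction ``replace $(\psi,a)$ by $(V_s\psi,V_sa)$, which preserves pointwise norms'' is false when $n\ge 2$: $V_s\in SO(n)$ mixes the $n$ neck components, so $|(V_s\psi)_i(x,t)|\ne|\psi_i(x,t)|$ pointwise; only the aggregate $\sum_i|\cdot|^2$ is preserved, and the $C^0$-supremum changes by a factor up to $n$ (Lemma \ref{L:VsNormChange}). More fundamentally, $V_s(\psi,a)$ is defined only on the neck, and the point $x^*$ realising $\sup_X|\psi|$ need not realise $\sup|V_s\psi|$, so evaluating \Weitzenbock\ ``at $x^*$'' is not legitimate: on $N(1)$ the unconjugated spinor satisfies $D_{A_\theta}\psi=-V_s^{-1}(\partial_t V_s)(dt\cdot\psi)$, a coupled $\mathfrak{so}(n)$-system rather than a $U(1)$-twisted Dirac equation, and $\Delta_g|\psi|^2+\tfrac12 s_X|\psi|^2\le\sqrt2|F^+|\,|\psi|^2$ simply does not hold for $\psi$ there. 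Your argument therefore cannot rule out $x^*\in N(1)$. What the paper does instead is show $\Delta_g|\psi|^2<0$ only on $N(1,L-1)$, deduce $\sup_X|\psi|=\max\{\sup_{N(1)}|\psi|,\sup_M|\psi|\}$, and then defeat the $N(1)$ case by a different decay estimate — Corollary \ref{C:ExponentialDecayMiddleNeckSpinor} applied to $V_s\psi$ (which does solve $D_{A_\theta}V_s\psi=0$ on $N(r-1)$) — giving $\sup_{N(1)}|V_s\psi|\le C_{\delta'}e^{-\delta'(r-2)}\sup_{N(r-1)}|V_s\psi|$, and then choosing $r$ large enough that $C_{\delta'}e^{-\delta'(r-2)}\le1/n^2$ so that the two factors of $n$ from Lemma \ref{L:VsNormChange} are absorbed and one obtains $\sup_{N(1)}|\psi|\le\sup_{\partial N(L-1)}|\psi|$. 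Without this spinor-decay step (and the choice \eqref{E:MDef} of $r$ that it dictates), the case $s\in[1,3]$ of the dichotomy is not established.
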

\begin{proof}
    Let $(\psi, a) \in (G^\theta_s)\inv(0)$ for some $s \in [0,3]$. Notice that for any stage of $G_s$, on $X - N(r)$ the pair $(\psi, a)$ satisfies 
    \begin{align*}
        D_{A_\theta+ia}\psi &= 0 \nonumber \\
        d^+a &= iF^+_{A} - i\rho_L\sigma(\psi) \nonumber \\
        d^*a &= 0 \nonumber \\
        \pr(a) &= 0.
    \end{align*}
    Lemma \ref{L:GHomotopyHelper} gives constants $C_E$ and $L_0$ such that, for $L \geq L_0$,
    \begin{align*}
        |a|_{\Co(X)} &\leq C_E|(d^* + d^+)a|_{\Co(M)}.
    \end{align*}
    Applying the Seiberg-Witten style equations above gives
    \begin{align*}
        |a|_{C^0(X)} &\leq C_E(|F^+_{A_\theta}|_{\Co} + |\sigma(\psi)|_{\Co(M)}) \nonumber \\
        &= C_E(|F^+_{A_\theta}|_{\Co} + \frac{1}{2}|\psi|^2_{\Co(M)}).
    \end{align*}
    Recall that $S = \sup_{X}\{-\sX, 0\}$ where $\sX$ is the scalar curvature of $X$. Let
    \begin{align*}
        U_0' = 1 + \sqrt{S} + C_E(|F^+_{A_\theta}|_\Co + \frac{1}{2}S).
    \end{align*}
    Note that $|F^+_{A_\theta}|_\Co$ and $S$ do not depend on $L$. To show that $|(\psi, a)|_{\Co(X)} < U_0'$ it is enough to show that $|\psi|_{\Co(X)}^2 \leq S$. By Lemma \ref{L:WeitzenbockArgument}, it suffices to show that $\sup_{X}|\psi| = \sup_{M}|\psi|$.

    For now assume that $s \in [0,1]$ so that $\psi$ satisfies $D_{A_\theta + i\rho_r^s a}\psi = 0$ and $d^+ a = iF_{A_\theta}^+ - i\rho_L \sigma(\psi)$. Inside $N(L-1)$, the \Weitzenbock formula applied to the connection $A' = A_\theta + i\rho_r^sa$ gives 
    \begin{align*}
        \Delta_g|\psi|^2 &\leq \<D_{A'}^* D_{A'} \psi - \frac{\sN}{2}\psi - F_{A'}^+\psi, \psi\>.
    \end{align*}
    Here $\sN$ is the scalar curvature of the neck, which is a positive constant. Since $A_\theta$ is flat on the neck, $F_{A'}^+ = d^+(i\rho_r^sa)$. But $d^+ a = 0$ on $N(L-1)$, so it follows that
    \begin{align}\label{E:NegativeLaplacianN(L)}
        \Delta_g |\psi|^2 &\leq -\frac{\sN}{2}|\psi|^2 + \|(d\rho_2^s \wedge a)^+\| |\psi|^2 \nonumber \\
        &= |\psi|^2\left(\sqrt{2}|(d\rho_r^s \wedge a)^+| - \frac{\sN}{2}\right).
    \end{align}
    Here $\|(d\rho_r^s \wedge a)^+\|$ is the operator norm of $d^+(\rho_r^s a) = (d\rho_r^s \wedge a)^+$ identified as an element of $\End_0(W^+)$ and $|(d\rho_r^s \wedge a)^+|$ is the norm of $(d\rho_r^s \wedge a)^+$ as a 2-form. The relation $\|(d\rho_r^s \wedge a)^+\| = \sqrt{2}|(d\rho_r^s \wedge a)^+|$ is shown in \cite[Lemma 7.4]{Salamon}. 
    
    Since $d\rho_r^s$ is supported in $N(r)$, (\ref{E:NegativeLaplacianN(L)}) guarantees that $\Delta_g |\psi|^2 < 0$ on $N(L-1) - N(r)$. It remains to show that $\Delta_g |\psi|^2 < 0$ on $N(r)$. Since $\rho_r^s$ is constant on spheres, $d\rho_r^s = \p_t \rho_r^s dt$. Define
    \begin{align*}
        R = \sqrt{2}\sup_{s \in [0,1]} |\p_t \rho_r^s|_{N(r)}.
    \end{align*} 
    If follows that 
    \begin{align}\label{E:NegativeLaplacian}
        \Delta_g |\psi|^2 &\leq |\psi|^2\left(R|a \wedge dt| - \frac{\sN}{2}\right).
    \end{align}
    Lemma \ref{L:GHomotopyHelper} provides constants $\delta, C_\delta$ such that if $L \geq L_0$, then
    \begin{align*}
        \sup_{N(r)} |a \wedge dt| &\leq C_\delta e^{- \delta (L-2r)}\sup_{N(L-1)}|a|.
    \end{align*}
    Define the constant $C > 0$ by 
    \begin{align}\label{E:CDef}
        C &= \frac{\sN}{4RC_\delta}.
    \end{align}
    This is positive since $\sN$, $R$ and $C_\delta$ are. Define $U'(L)$ by 
    \begin{align*}
        U'(L) &= C e^{\delta (L-2r)}.
    \end{align*}
    Note that the definition of $C$ is independent of $L$ and $A_\theta$. Further, it can be assumed that $L$ is large enough to ensure that $U'(L) > U'_0$. When $|(\psi,a)|_\Co \leq U'(L)$ and $L \geq L_0$, inside $N(r)$ we have
    \begin{align}\label{E:NeckStretchCalc}
        R|a \wedge dt| &\leq R C_\delta e^{- \delta (L-2r)} \sup_{N(L-1)}|a| \nonumber \\
        &\leq R C_\delta e^{- \delta (L-2r)} U'(L) \nonumber \\
        &\leq \frac{s_N}{4}.
    \end{align}
    From (\ref{E:NegativeLaplacian}) it follows that $\Delta_g|\psi|^2 < 0$ on all of $N(L-1)$. Therefore $\sup_{N(L-1)}|\psi| = \sup_{\p N(L-1)} |\psi|$ because $\Delta_g|\psi|^2$ is non-negative at an interior local maximum. Consequently $\sup_X|\psi| = \sup_{M}|\psi|$, thus $|\psi|_\Co \leq S$ and $|(\psi, a)|_\Co < U_0$. It remains to shows that $|\psi|_\Co \leq S$ for $s \in [1,3]$.

    Now suppose $(\psi, a) \in G_s\inv(0)$ for some $s \in [1,2]$ with $|(\psi, a)|_\Co \leq U'(L)$. Recall that $G_s = P + Q_s$ and $Q_s = 0$ outside of $N(1)$, hence $P(\psi, a) = 0$ on $X - N(1)$. Alternatively, $G_s = V_s\inv P V_s$ on the neck so $V_s(\psi , a)$ is a solution to $P$ on $N(L-1)$. Again we prove that $|\psi|_\Co^2 \leq S$ by showing that $\sup_X |\psi| = \sup_{M}|\psi|$.
    
    Restricting to $N(1, L-1) = \overline{N(L-1) - N(1)}$, the \Weitzenbock formula as before for the connection $A' = A_\theta + i\rho_r a$ gives 
    \begin{align*}
        \Delta_g |\psi|^2 &\leq |\psi|^2\left(R|a \wedge dt| - \frac{\sN}{2}\right).
    \end{align*}
    For $L \geq L_0$, Lemma \ref{L:GHomotopyHelper} still applies to $(\psi, a)$ yielding
    \begin{align}\label{E:NeckStretching2}
        \sup_{N(r)} |a \wedge t| &\leq C_\delta e^{- \delta (L-2r)}\sup_{N(L-1)}|a|.
    \end{align}
    Thus the calculation in (\ref{E:NeckStretchCalc}) guarantees $\Delta_g|\psi|^2 < 0$ on $N(1, L-1)$. This implies that 
    \begin{align}\label{E:MaxPsiNorms}
        \sup_X |\psi| &= \max\lbrace \sup_{N(1)}|\psi|, \sup_{M}|\psi|\rbrace.
    \end{align}
    Notice that $D_{A_\theta} V_s\psi = 0$ on $N(r-1)$. Thus Corollary \ref{C:ExponentialDecayMiddleNeckSpinor} implies the existence of constants $\delta', C_\delta' > 0$ such that 
    \begin{align}\label{E:supVs}
        \sup_{N(1)} |V_s \psi| \leq C_\delta'e^{-\delta' (r-2)} \sup_{N(r-1)} |V_s \psi|.
    \end{align}
    Fix a large enough $r$ to ensure that 
    \begin{align}\label{E:MDef}
        C_\delta'e^{-\delta' (r-2)} \leq \frac{1}{n^2}.
    \end{align}
    Note that this definition of $r$ is independent of $L$, and we can assume that $L_0 \geq 2r$. Since $V_s\psi$ is a solution to $P$ along $N(L-1)$, we have that 
    \begin{align*}
        \sup_{N(L-1)}|V_s \psi| = \sup_{\p N(L-1)}|V_s \psi|.
    \end{align*} 
    This follows from the the argument presented in the $s \in [0,1]$ case. It follows from Lemma \ref{L:VsNormChange}, (\ref{E:supVs}) and (\ref{E:MDef}) that 
    \begin{align*}
        \sup_{N(1)} |\psi| &\leq n\sup_{N(1)} |V_s \psi| \nonumber \\
        &\leq  nC_\delta'e^{-\delta' (r-2)} \sup_{N(r-1)}|V_s \psi| \nonumber \\
        &\leq  \frac{1}{n} \sup_{N(r-1)}|V_s \psi| \nonumber \\
        &\leq  \frac{1}{n} \sup_{\p N(L-1)}|V_s \psi| \nonumber \\
        &\leq  \sup_{\p N(L-1)}|\psi|.
    \end{align*} 
    That is, $\sup_{N(1)}|\psi| \leq \sup_{M}|\psi|$ and therefore $\sup_X|\psi| = \sup_{M}|\psi|$ by (\ref{E:MaxPsiNorms}). Thus Lemma \ref{L:WeitzenbockArgument} guarantees $|\psi|^2 \leq S$ and $|(\psi, a)| < U'_0$.

    For the third stage $s \in [2,3]$, we have $G_s(\psi, a) = V\inv G_{3-s} V(\psi, a) = 0$. Note that $V(\psi, a)$ is defined globally and thus $V(\psi, a)$ is a solution of $G_{3-s}$. Further, by the same calculation as Lemma \ref{L:VsNormChange}, $|V(\psi, a)|_\Co \leq n|(\psi, a)|_\Co \leq n^2 |V(\psi, a)|_\Co$. This implies that if $|(\psi, a)|_\Co \leq \frac{1}{n} U'(L)$, then $|V(\psi, a)|_\Co \leq U'(L)$ and $|(\psi, a)| \leq n U'_0$. The result follows by taking $U(L) = \frac{1}{n}U'(L)$ and $U_0 = n U'_0$, ensuring that $L_0$ is large enough so that $U(L) > U_0$ for $L \geq L_0$.
\end{proof}

The above lemma shows that given a neck length $L$ and a connection $A_\theta$, there are no elements of $(G^\theta_s)\inv(0)$ with $C^0$-norm in the interval $[U_0, U(L)]$. This will be used to find an $L^2_k$-disk in $\Acal_\theta$ with boundary that does not intersect $(G^\theta_s)\inv(0)$ for any $s \in [0,3]$. The $L^2_k$-norm of a pair $(\psi, a) \in (G^\theta_s)\inv(0)$ can be bounded by a polynomial in $|(\psi, a)|_\Co$ and $L$. The exponential increase of $U(L)$ counteracts this polynomial growth. First we show that the endpoints $(G_0^\theta)\inv(0)$ and $(G_3^\theta)\inv(0)$ are contained in an $L^2_k$-disk with radius that increases polynomially with $L$.

\begin{lemma}\label{L:G0DiskBound}
Let $[\theta] \in \Jcal_b$ for some $b \in B$. There exists positive constants $C, d$ and $L_0$ such that, for any $L \geq L_0$,
\begin{align*}
    \|(\psi, a)\|_{L^2_k} \leq CL^d
\end{align*}
for any solution $(\psi, a) \in (G^\theta_0)\inv(0) \cup (G^\theta_3)\inv(0)$ on $X_b(L)$. 
\end{lemma}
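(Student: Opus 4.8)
The plan is to treat the two endpoints $(G_0^\theta)^{-1}(0)$ and $(G_3^\theta)^{-1}(0)$ separately and to reduce the second to the first. Since $G_0 = F_1$, a zero of $G_0^\theta$ is exactly a pair $(\psi,a)$ with $D_{A_\theta}\psi = -ia\cdot\psi$, $d^+a = iF^+_{A_\theta} - i\rho_L\sigma(\psi)$, $d^*a = 0$ and $\pr(a) = 0$. The \Weitzenbock\ and elliptic bootstrapping parts of the proof of Proposition \ref{P:FirstHomotopy}, specialised to $s=1$, already supply an $L$-independent bound $|\psi|^2_{C^0(X)}\le S$ with $S=\sup_X\{0,-\sX\}$, and, since these equations are of the type covered by Remark \ref{R:EllipticBootstrappingNeck} with $\rho\equiv 1$ (here $\|\rho_L\sigma(\psi)\|_{L^p_i}\le C\|\sigma(\psi)\|_{L^p_i}$ with $C$ controlled only by the $L$-independent $C^k$-norm of $\rho_L$), they also supply constants $C_B$ and $d$ independent of $L$ with $\|(\psi,a)\|_{L^2_k}\le C_B L^d(1+|(\psi,a)|_{C^0})^d$. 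So the only thing left to produce is an $L$-independent $C^0$-bound on $a$ — precisely the step where the proof of Proposition \ref{P:FirstHomotopy} invoked an elliptic estimate on $X(L)$ whose constant is not manifestly $L$-uniform.

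To get that bound I would exploit that at $s=1$ the term $\rho_L\sigma(\psi)$ vanishes on $N(L-1)$; since also $F^+_{A_\theta}=0$ on the neck and $d^*a=0$, this makes $(d^*+d^+)a=0$ on $N(L-1)$, and as $\pr(a)=0$ Proposition \ref{P:HarmonicNeckBound} applies: for $L$ larger than some $L$-independent $L_0$ it gives an $L$-independent $C_E$ with $|a|_{C^0(M)}\le C_E|(d^*+d^+)a|_{C^0(M)}\le C_E(|F^+_{A_\theta}|_{C^0}+\frac{1}{2}S)$, the last inequality using $\rho_L\equiv1$ on $M$ and the bound on $\psi$. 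The maximum principle for $|a|$ on the non-negatively curved neck $N(L-1)$, exactly as in the proof of Proposition \ref{P:HarmonicNeckBound}, forces $\sup_{N(L-1)}|a|=\sup_{\partial N(L-1)}|a|\le|a|_{C^0(M)}$, hence $|a|_{C^0(X)}=|a|_{C^0(M)}$. Feeding this into the bootstrapping bound gives $\|(\psi,a)\|_{L^2_k}\le C_1L^d$ for every $(\psi,a)\in(G_0^\theta)^{-1}(0)$, with $C_1$, $d$ and $L_0$ independent of $L$.

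For the remaining endpoint, $G_3 = V^{-1}F_1V$, so $(\psi,a)\in(G_3^\theta)^{-1}(0)$ if and only if $V(\psi,a)\in(F_1^\theta)^{-1}(0)$ for the family $E^\tau$, which has the same neck complement $M$, the same neck-flat reference connection and the same scalar curvature along the neck; hence the bound just obtained applies to $V(\psi,a)$. Because $V=V_s$ is $SO(n)$-valued with all of its variation inside $N(1)$, the estimate (\ref{E:CVDef}) and the analogous estimate for $V^{-1}$ hold with a constant $C_V$ independent of $L$, so $\|(\psi,a)\|_{L^2_k}\le C_V\|V(\psi,a)\|_{L^2_k}\le C_VC_1L^d$; taking $C=\max\{C_1,C_VC_1\}$ and $L_0$ as above completes the argument. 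I expect the main obstacle to be purely the bookkeeping that keeps every constant independent of $L$: this is why it is essential to use the neck-uniform elliptic estimate of Proposition \ref{P:HarmonicNeckBound} rather than a naive estimate on $X(L)$ for the $C^0$-bound on $a$, and Remark \ref{R:EllipticBootstrappingNeck} for the merely polynomial growth of the bootstrapping constant; past those two points the argument is just the $s=1$ specialisation of the boundedness proof already carried out for Proposition \ref{P:FirstHomotopy}.
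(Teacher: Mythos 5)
Your proposal is correct and matches the paper's own proof essentially line for line: Weitzenbock for the $L$-uniform $C^0$-bound on $\psi$, the observation that $(d^*+d^+)a=0$ on $N(L-1)$ so that Proposition \ref{P:HarmonicNeckBound} supplies an $L$-uniform $C^0$-bound on $a$, Remark \ref{R:EllipticBootstrappingNeck} for the polynomial bootstrapping, and $V$-conjugation together with (\ref{E:CVDef}) to transfer the estimate from $G_0$ to $G_3$. Your explicit maximum-principle step extending $|a|_{C^0(M)}$ to $|a|_{C^0(X)}$ is the only place where you spell out something the paper leaves implicit.
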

\begin{proof}
For $(\psi, a) \in (G^\theta_0)\inv(0)$ we have
\begin{align*}
    D_{A_\theta+ia}\psi &= 0 \nonumber\\
    d^+ a &= iF^+_{A_\theta} - i\rho_L\sigma(\psi) \nonumber\\
    d^* a &= 0.
\end{align*}
As in Proposition \ref{P:FirstHomotopy}, the \Weitzenbock formula gives 
\begin{align*}
    |\psi|^2_{C^0} \leq S.
\end{align*}
Since $(d + d^*)a = 0$ on $N(L-1)$, Proposition \ref{P:HarmonicNeckBound} provides constants $L_0$ and $C'$ such that $L \geq L_0$ implies
\begin{align*}
    |a|_\Co &\leq C'|(d^* + d^+)a|_\Co\\
    &\leq C'(|F^+_A|_\Co + \frac{1}{2}S).
\end{align*}
Let $U = 1 + \sqrt{S} + C'(|F^+_A|_\Co + \frac{1}{2}S)$ so that $|(\psi, a)|_\Co < U$. Notice that $|\rho_L \sigma(\psi)| \leq |\sigma(\psi)|$ and that $d\rho_L$ is supported on $N(L) - N(L-1)$. Therefore the $C^k$-norm of $\rho$ can be used to obtain a constant $C_\rho$ such that $\|\rho_L \sigma(\psi)\|_{L^p_i} \leq C_\rho\|\sigma(\psi)\|_{L^p_i}$ with $C_\rho$ independent of $L$. Now applying elliptic bootstrapping as in Remark \ref{R:EllipticBootstrappingNeck}, there are constants $C_B$ and $d$ such that 
\begin{align*}
    \|(\psi, a)\|_{L^2_k} &\leq C_B L^d(1 + U)^d\\
    &\leq C_1 L^d.
\end{align*}
The constant $C_1$ is independent of $L$ since $C_B, d$ and $U$ are. 

The argument for $(\psi, a) \in G_3\inv(0)$ is similar. Recall $G_3 = V\inv G_0 V$ so that $V(\psi, a)$ is a solution to $G_0$ and therefore
\begin{align*}
    \|V(\psi, a)\|_{L^2_k} &\leq C_1L^d.
\end{align*}
Applying $V\inv$ gives 
\begin{align*}
    \|(\psi, a)\|_{L^2_k} &= \|V\inv V(\psi, a)\|_{L^2_k}\\
    &\leq C_{V\inv} \|V(\psi, a)\|_{L^2_k}\\
    &\leq C_1C_{V\inv}(1+L)^d.
\end{align*}
Here $C_{V\inv}$ is a constant from (\ref{E:CVDef}) that is independent of $L$. The result follows with $C = \max\{C_1, C_{V\inv} C_1\}$.
\end{proof}

It remains to find an $L^2_k$-disk bundle $D$ with bounding sphere bundle $S$ that does not intersect $G_s\inv(0)$ for any $s \in [0,3]$. This is done by combining Proposition \ref{P:C0BoundReducer} with the following elliptic bootstrapping result.

\begin{lemma}\label{L:EllipticBootstrappingG}
Let $\theta \in \Jcal_b$ for some $b \in B$. There are constants $C_B$ and $d$ such that, for any $L \geq 2$, if $(\psi, a) \in (G^\theta_s)\inv(0)$ for some $s \in [0,3]$ then 
\begin{align*}
    \|(\psi, a)\|_{L^2_k} \leq C_B L^d(1+|(\psi, a)|_\Co)^d.
\end{align*}
\end{lemma}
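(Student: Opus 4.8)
The plan is to reduce each of the three stages $s\in[0,1]$, $s\in[1,2]$, $s\in[2,3]$ of the homotopy $G_s$ to the elliptic bootstrapping estimate of Lemma \ref{L:EllipticBootstrappingNeck} together with its extension Remark \ref{R:EllipticBootstrappingNeck}, which already produce a bound of the desired shape $\|(\psi,a)\|_{L^2_k}\leq C_BL^d(1+|(\psi,a)|_{C^0})^d$, with $C_B,d$ independent of $L$, for any pair solving a Seiberg--Witten type system whose Dirac equation has the form $D_{A_\theta}\psi=-i\rho\,a\cdot\psi$ with $\|\rho a\|_{L^p_i}\leq C\|a\|_{L^p_i}$ and whose $d^+a$ is dominated in $L^p_i$ by $\|\sigma(\psi)\|_{L^p_i}+\|F^+_{A_\theta}\|_{L^p_i}$. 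The main work is then just to check that every term occurring in the three stages of $G_s$ beyond this basic structure is one of three harmless types: multiplication by a function of $C^k$-norm bounded independently of $L$ (the cut-offs $\rho_r^s$ and $\rho_L$, and the entries of the matrix $V_s\inv \p_t V_s$, all fixed data supported in a neck region of length one), the flat-connection curvature $F^+_{A_\theta}$ (independent of $L$ because $A_\theta$ is flat on the neck), or conjugation by the bundle automorphism $V$, whose effect on $L^2_k$-norms is bounded by the $L$-independent constant $C_V$ of (\ref{E:CVDef}).

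Concretely, for $s\in[0,1]$ I would observe that $(\psi,a)\in(G^\theta_s)\inv(0)$ satisfies $D_{A_\theta}\psi=-i\rho_r^s a\cdot\psi$, $d^+a=iF^+_{A_\theta}-i\rho_L\sigma(\psi)$, $d^*a=0$ and $\pr(a)=0$, and that $\rho_r^s,\rho_L$ have $C^k$-norms bounded uniformly in $L$ and $s$ (they differ from a constant only on a neck piece of fixed length), so $\|\rho_r^s a\|_{L^p_i}\leq C\|a\|_{L^p_i}$ and $\|d^+a\|_{L^p_i}\leq C_\rho\|\sigma(\psi)\|_{L^p_i}+\|F^+_{A_\theta}\|_{L^p_i}$; Remark \ref{R:EllipticBootstrappingNeck} with $\rho=\rho_r^s$ then applies directly. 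For $s\in[1,2]$, where $G_s=P+Q_s$ with $P=G_1$, a zero of $G^\theta_s$ solves the same system with $\rho_r^s$ replaced by $\rho_r$ but with the zeroth-order term $Q_s(\psi,a)$ moved to the right-hand side of each component; since $Q_s$ is supported in $N(1)$ and acts by multiplication (after purely algebraic operations on $\psi,a$ using only $dt$ and the Hodge star) by $V_s\inv \p_t V_s$, whose $C^k$-norm is bounded by a constant $C_Q$ independent of $L$ and of $s\in[1,2]$, these extra terms contribute only lower-order Sobolev norms of $(\psi,a)$ with $L$-independent coefficients and are absorbed into the same recursion used to prove Lemma \ref{L:EllipticBootstrappingNeck}, so the conclusion persists with constants now depending additionally on $C_Q$. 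For $s\in[2,3]$, where $G_s=V\inv G_{3-s}V$ with $3-s\in[0,1]$, I would pass to $V(\psi,a)\in(G^\theta_{3-s})\inv(0)$, apply the first stage, and transfer back using (\ref{E:CVDef}) applied to $V\inv$ together with the pointwise bound $|V(\psi,a)|_{C^0}\leq n|(\psi,a)|_{C^0}$ from the proof of Lemma \ref{L:VsNormChange}; taking the maximum of the constants $C_B$ and exponents $d$ obtained in the three stages then finishes the proof.

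The hard part is not any single estimate but the bookkeeping: making sure that across all three stages no step introduces worse than polynomial dependence on $L$. Sobolev multiplication contributes one linear factor of $L$ (Lemmas \ref{L:SMNeckIndependence} and \ref{L:SigmaPsiSobolevBound}), the inequality $\|\cdot\|_{L^p}\leq\vol(X(L))^{1/p}|\cdot|_{C^0}$ another from the linearly growing volume, while everything else must be shown to contribute $L$-independent constants --- the Sobolev embedding constant via Lemma \ref{L:SENeckIndependence} and the elliptic-regularity constant via the differential-operator definition of the $L^p_k$-norm used in Lemma \ref{L:EllipticBootstrappingNeck}. This is exactly why it is essential in the middle stage that $Q_s$ is supported on the fixed-length neck $N(1)$ with coefficients bounded independently of $L$, rather than on a region growing with $L$; verifying that is the one point that genuinely requires care.
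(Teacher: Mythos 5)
Your proof is correct and covers all three stages. Stages one and three follow the paper almost verbatim (first stage: check the cut-offs $\rho_r^s,\rho_L$ have $L$-independent $C^k$-norm and apply Remark \ref{R:EllipticBootstrappingNeck}; third stage: pass to $V(\psi,a)\in(G_{3-s}^\theta)^{-1}(0)$, apply the earlier estimate, and transfer back via $C_{V^{-1}}$ from (\ref{E:CVDef}) and Lemma \ref{L:VsNormChange}).

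For the middle stage $s\in[1,2]$, however, you take a genuinely different route. You view $Q_s$ as a zeroth-order perturbation with $L$-independent coefficients and absorb the resulting extra term $C_Q\|(\psi,a)\|_{L^p_{i-1}}$ into the recursion from Lemma \ref{L:EllipticBootstrappingNeck}. The paper instead splits the domain: since $Q_s=0$ off $N(1)$, the pair solves the unperturbed first-stage system on $X-N(1)$ and the bootstrapping applies there directly; on $N(1)$, $V_s(\psi,a)$ solves the \emph{linear} system $P=0$ (no quadratic terms at all on the neck), so with the operator-based definition of the Sobolev norm one gets $\|V_s(\psi,a)\|_{L^2_k(N(1))}=\|V_s(\psi,a)\|_{L^2(N(1))}$ immediately, and this is bounded by $|(\psi,a)|_{C^0}$ with $L$-independent constants via Lemma \ref{L:VsNormChange} and (\ref{E:CVDef}). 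Summing the two contributions recovers the claimed bound. Both approaches work; the paper's domain-split is cleaner because it avoids re-deriving the recursion, whereas your perturbative absorption requires one small step you did not spell out: the extra term $\|(\psi,a)\|_{L^p_{i-1}}$ is not of the shape $\|(\psi,a)\|^2_{L^{2p}_{i-1}}$ or $\|(\psi,a)\|_{L^p}$ that the original recursion feeds on, so one must first convert it, e.g.\ via H\"older $\|(\psi,a)\|_{L^p_{i-1}}\leq \vol(X(L))^{1/(2p)}\|(\psi,a)\|_{L^{2p}_{i-1}}\leq CL^{1/(2p)}(1+\|(\psi,a)\|^2_{L^{2p}_{i-1}})$, and only then does the induction close with at worst a slightly larger exponent $d$. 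With that line added your argument is complete.
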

\begin{proof}
First assume that $s \in [0,1]$ so that $(\psi, a) \in (G^A_s)\inv(0)$ implies
\begin{align*}
    D_A\psi &= -i\rho_r^s a\\
    d^+ a &= iF_A^+ - i\rho_{L} \sigma(\psi).
\end{align*}
For any $0 \leq i \leq k$ and $2 \leq p \leq 2^{k+1}$, there is a constant $C_1$ such that
\begin{align}\label{E:RhoC}
    \|\rho_r^s a\|_\Lpi &\leq C_1\|a\|_\Lpi.
\end{align}
This constant comes from the $C^k$-norm of $\rho_r^s$. Since $a$ and $\rho_r^s a$ only differ on $N(r) - N(r-1)$, $C_1$ is independent of $L$. Taking the supremum over $s \in [0,1]$, we can assume that (\ref{E:RhoC}) holds for any $s$. Similarly,
\begin{align*}
    \|d^+ a\|_\Lpi &\leq \|F_A^+\|_\Lpi + \|\rho_{L} \sigma(\psi)\|_\Lpi\\
    &\leq C_2(\|F_A^+\|_\Lpi + \|\sigma(\psi)\|_\Lpi).
\end{align*}
Once again $C_2$ can be chosen independent of $L$. Now apply bootstrapping as in Remark \ref{R:EllipticBootstrappingNeck} to obtain
\begin{align*}
    \|(\psi, a)\|_{L^2_k} \leq C_B' L^d(1+|(\psi, a)|_\Co)^d 
\end{align*}
for some constants $C_B' > 0$ and $d \geq 1$, both independent of $L$. This proves the result for $s \in [0,1]$.

If $s \in [1,2]$, we have $P(\psi, a) = 0$ on $X - N(1)$ and $PV_s(\psi, a) = 0$ on $N(1)$. On $N(1)$, the fact that $D_{A_\theta} V_s\psi = 0$ and $(d^+ + d^*) V_s a = 0$ implies that 
\begin{align*}
    \|V_s(\psi, a)\|^2_{L^2_k(N(1))} &= \|V_s(\psi, a)\|^2_{L^2(N(1))} \nonumber\\
    &\leq 2\vol(S^3) |V_s(\psi, a)|_{\Co(N(1))}^2.
\end{align*}
From Lemma \ref{L:VsNormChange} and (\ref{E:CVDef}) it follows that 
\begin{align}\label{E:EllipticBootstrappingG1}
    \|(\psi, a)\|_{L^2_k(N(1))}^2 &\leq C_{V_s\inv} \|V_s (\psi, a)\|^2_{L^2_k(N(1))} \nonumber\\
    &\leq 2C_{V_s\inv} \vol(N(1)) \cdot \sup_{N(1)} |V_s(\psi, a)|^2 \nonumber \\
    &\leq C_3 |(\psi, a)|_\Co^2.
\end{align}
The elliptic bootstrapping argument of Lemma \ref{L:EllipticBootstrappingNeck} can be applied to $(\psi, a)$ over $X - N(1)$ to obtain 
\begin{align*}
    \|(\psi, a)\|_\Lk^2 &= \|(\psi, a)\|_{L^2_k(X - N(1))}^2 + \|(\psi, a)\|_{L^2_k(N(1))}^2 \nonumber\\
    &\leq C_4 L^d (1+|(\psi, a)|_\Co)^d + C_3 |(\psi, a)|_\Co^2 \nonumber\\
    &\leq C_B''L^d (1+|(\psi, a)|_\Co)^d.
\end{align*}
Here we have assumed without loss of generality that $d \geq 2$. For $s \in [2,3]$, we have $G_s(\psi, a) = V\inv G_{3-s}V(\psi, a) = 0$. Thus $G_{3-s}V(\psi, a) = 0$ globally and Lemma \ref{L:EllipticBootstrappingNeck} applies to $V(\psi, a)$. Lemma \ref{L:VsNormChange} and (\ref{E:CVDef}) imply
\begin{align*}
    \|(\psi, a)\|_\Lk &\leq C_{V\inv} \|V (\psi, a)\|_\Lk \nonumber\\
    &\leq C_{V\inv} C_5 L^d(1 + |V(\psi, a)|_\Co)^d \nonumber\\
    &\leq C_B''' L^d(1 + |(\psi, a)|_\Co)^d.
\end{align*} 
Hence the result follows with $C_B = \max\{C_B', C_B'', C_B'''\}$.
\end{proof}

\begin{proposition}
    There are constants $r$ and $L_0$ such that, if $L \geq L_0$, then $G_s : \Acal \to \Ccal$ is a homotopy through compact perturbations of $l$.
\end{proposition}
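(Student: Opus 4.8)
The plan is to bolt the estimates of this section together: I want a $\bT^n$-invariant disk bundle $D \subset \Acal$ over $\Jcal$, with bounding sphere bundle $S$, such that $D$ contains $G_0\inv(0) \cup G_3\inv(0)$ while $G_s\inv(0) \cap S = \emptyset$ for every $s \in [0,3]$; by Definition \ref{D:CompactHomotopy} this exhibits $\{G_s\}$ as a compact homotopy relative to $l$. First I would dispose of the routine point that each $G_s$ is a compact perturbation of $l$. For $s \in [0,1]$ the defining formula exhibits $G_s = l + c_s$ with $c_s$ the sum of the quadratic and curvature terms, continuous in $s$ and compact. For $s \in [1,2]$ we have $G_s = P + Q_s$, where $P = G_1$ is already of the form $l + c_P$ and $Q_s$ is multiplication by the smooth matrix function $V_s\inv \p_t V_s$ followed by zeroth-order algebraic operations on $(\psi,a)$, hence $L^2_k \to L^2_{k-1}$ compact by the Rellich embedding on the compact fibre. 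For $s \in [2,3]$, conjugation $G_s = V\inv G_{3-s} V$ changes $l$ only by the commutator of its differential part with multiplication by the smooth $V$, again a zeroth-order (hence compact) term. Continuity across $s = 1, 2$ holds because $Q_1 = 0$ and $G_2 = V\inv P V$ from both sides, and everything is $\bT^n$-equivariant.

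The substantive input is the combination of Proposition \ref{P:C0BoundReducer} with the bootstrapping bounds. I would fix the parameter $r \ge 3$ so large that inequality (\ref{E:MDef}) in the proof of Proposition \ref{P:C0BoundReducer} holds; this $r$ depends only on the geometry of the $S^3$-neck, not on $L$ or $\theta$. With $r$ fixed, Proposition \ref{P:C0BoundReducer} supplies constants $U_0, C, \delta$ and a threshold $L_1$ such that for $L \ge L_1$ and every $s \in [0,3]$, no $(\psi,a) \in (G_s^\theta)\inv(0)$ has $C^0$-norm in $[U_0, U(L)]$, with $U(L) = Ce^{\delta(L-2r)}$. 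Because $F_{A_\theta} = F_{A_0}$ is independent of $\theta$ and $B$ is compact, these constants (and the thresholds below) can be chosen uniformly over $[\theta] \in \Jcal$. Then three further inequalities, all with constants independent of $L$ and uniform over $\Jcal$: Lemma \ref{L:EllipticBootstrappingG} gives $\|(\psi,a)\|_{L^2_k} \le C_B L^d (1+|(\psi,a)|_{C^0})^d$ for every zero of $G_s^\theta$; Lemma \ref{L:G0DiskBound} gives $\|(\psi,a)\|_{L^2_k} \le C' L^{d'}$ for zeros of $G_0^\theta$ and $G_3^\theta$ (valid for $L \ge L_2$, say); and Lemma \ref{L:SENeckIndependence} gives $|(\psi,a)|_{C^0} \le C_S \|(\psi,a)\|_{L^2_k}$ with $C_S$ independent of $L$.

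Now set $P(L) = \max\{C_B L^d(1+U_0)^d,\ C' L^{d'}\}$ and choose $L_0 \ge \max\{L_1, L_2, 2r+1\}$ large enough that $P(L) + 1 < U(L)/C_S$ for all $L \ge L_0$ — possible since the right side grows exponentially while $P(L)$ is polynomial. Let $D$ be the $\bT^n$-invariant disk bundle over $\Jcal$ of fibrewise $L^2_k$-radius $\rho := P(L) + 1$. A zero of $G_0$ or $G_3$ has $\|(\psi,a)\|_{L^2_k} \le C' L^{d'} < \rho$ (it cannot satisfy $|(\psi,a)|_{C^0} > U(L)$, which would force $\|(\psi,a)\|_{L^2_k} > U(L)/C_S > C' L^{d'}$), so $G_0\inv(0) \cup G_3\inv(0) \subset D$. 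Conversely, a zero of some $G_s$ lying on $S$ has $\|(\psi,a)\|_{L^2_k} = \rho > C_B L^d(1+U_0)^d$, so it cannot have $|(\psi,a)|_{C^0} \le U_0$; by the dichotomy $|(\psi,a)|_{C^0} > U(L)$, whence $\|(\psi,a)\|_{L^2_k} \ge |(\psi,a)|_{C^0}/C_S > U(L)/C_S > \rho$, a contradiction. Hence $G_s\inv(0) \cap S = \emptyset$ for all $s$, so $\{G_s|_D\}$ is a compact homotopy relative to $l$, and by Lemma \ref{L:ExtendCptHtpy} the endpoints $G_0 = F_1$ and $G_3 = V\inv F_1 V$ are compactly homotopic — the middle leg of the homotopy from $\mu_X$ to $V\inv \mu_{X^\tau} V$.

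The main obstacle is exactly this last balancing act: the disk radius \emph{must} grow (polynomially) with $L$ in order to swallow both the low-energy solutions and the endpoint zero sets, yet it must fit inside the $C^0$-gap of Proposition \ref{P:C0BoundReducer}, which fortunately widens exponentially. The remaining effort is bookkeeping to keep all constants independent of $L$ and uniform over the compact parameter space $\Jcal$; by comparison, verifying that each $G_s$ is a compact perturbation of $l$ is routine.
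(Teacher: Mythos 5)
Your argument is correct and follows the same route as the paper's: combine the $C^0$ dichotomy of Proposition \ref{P:C0BoundReducer}, the polynomial bootstrapping bound of Lemma \ref{L:EllipticBootstrappingG}, the endpoint bound of Lemma \ref{L:G0DiskBound}, and the $L$-independent Sobolev embedding, with all constants made uniform over the compact $\Jcal$. The only cosmetic difference is the choice of disk radius — you take a polynomially growing $\rho = P(L)+1$ while the paper takes $R(L) = U(L)/C_S$ (exponential), but both sit inside the same window and the contradiction argument is identical; your extra paragraph verifying that each $G_s$ is a compact perturbation of $l$ is a reasonable addition that the paper leaves implicit.
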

\begin{proof}
    For any $[\theta] \in \Jcal$, Lemma \ref{L:G0DiskBound} provides constants $C_1^\theta$ and $d$ such that, for large enough $L$,
    \begin{align*}
        \|(\psi, a)\|_{L^2_k} \leq C_1^\theta L^d
    \end{align*}
    for any $(\psi, a) \in (G^\theta_0)\inv(0) \cup (G^\theta_3)\inv(0)$. The constant $d$ from the bootstrapping argument only depends on $k$, hence the same $d$ can be used for each $\theta$. Let $C_1 = \sup_{\theta \in \Jcal} C_1^\theta$ so that 
    \begin{align}\label{E:GDiskEndpointsFams}
        \|(\psi, a)\|_{L^2_k} \leq C_1L^d
    \end{align}
    for $(\psi, a)$ in any fibre of $G_0\inv(0) \cup G_3\inv(0)$.

    Again for each $[\theta] \in \Jcal$, Proposition \ref{P:C0BoundReducer} provides constants $U_0^\theta, C, \delta$ and $r$ such that, for large enough $L$,
    \begin{align*}
        |(\psi, a)|_\Co \leq U(L) \Rightarrow |(\psi, a)|_{C^0} < U^\theta_0
    \end{align*}
    so long as $(\psi, a) \in (G_s^\theta)\inv(0)$ for some $s \in [0,3]$. Recall that $U(L) = Ce^{-\delta(L-2r)}$. The constant $\delta$ is chosen based on the eigenvalues of the first order elliptic operator $\Lcal$ on $S^3$ defined in (\ref{E:Lcal}). Thus the same $\delta$ can be used for any $\theta$ on any fibre $X_b(L)$ of $E$. Further, from (\ref{E:CDef}) we can see that $C$ only depends on $\delta$, the scalar curvature of $S^3 \times [-L,L]$, and the derivative of $\rho$. Hence $C$ is also independent of $\theta$ and $b$. By similar reasoning, $r$ can also be chosen independently from $\theta$ and $b$ by (\ref{E:MDef}).
    
    Letting $U_0 = \sup_{\theta \in \Jcal} U^\theta_0$, it follows that 
    \begin{align}\label{E:GBoundReducerFams}
        |(\psi, a)|_\Co \leq U(L) \Rightarrow |(\psi, a)|_{C^0} < U_0
    \end{align}
    so long as $(\psi, a)$ is an element of some fibre of $G_s\inv(0)$ for some $s \in [0,3]$. 

    By taking a supremum over fibrewise Sobolev embeddings, there is a constant $C_S = \sup_{b \in B} C^b_S$ such that, for any $\Lk$-pair $(\psi, a)$ on any fibre $X_b(L)$,
    \begin{align}\label{E:SobEmbFams}
        |(\psi, a)|_\Co \leq C_S \|(\psi, a)\|_\Lk.
    \end{align}
    Lemma \ref{L:SENeckIndependence} ensures that $C_S$ can be chosen independently from $L$. Finally, to facilitate bootstrapping, for each $[\theta] \in \Jcal$ Lemma \ref{L:EllipticBootstrappingG} gives a constant $C_B^\theta$ such that 
    \begin{align*}
        \|(\psi, a)\|_\Lk \leq C_B^\theta L^d(1 + |(\psi, a)|_\Co)^d
    \end{align*}
    This holds so long as $(\psi, a) \in (G_s^\theta)\inv(0)$ for some $s \in [0,3]$. Once again let $C_B = \sup_{\theta \in \Jcal} C_B^\theta$ so that 
    \begin{align}\label{E:GBootstrappingFams}
        \|(\psi, a)\|_\Lk \leq C_B L^d(1 + |(\psi, a)|_\Co)^d
    \end{align}
    so long as $(\psi, a)$ is an element of some fibre of $G_s\inv(0)$ for some $s \in [0,3]$.

    Set $R(L) = \frac{U(L)}{C_S}$ and let $D \subset \Acal$ be a disk bundle with $L^2_k$-radius $R(L)$. Let $S$ denote the bounding sphere bundle of $D$. Choose $L_0$ large enough so that $L \geq L_0$ implies 
    \begin{align*}
        R(L) \geq \max\{ C_1L^d, 2C_BL^d(1+U_0)^d\}.
    \end{align*}
    This is achievable since $R(L)$ increases exponentially. By (\ref{E:GDiskEndpointsFams}), $R(L)$ contains $G_0\inv(0) \cup G_3\inv(0)$. Further, suppose $(\psi, a) \in (G^\theta_s)\inv(0) \cap D$ for some $s \in [0,3]$ and $[\theta] \in \Jcal$. Then $\|(\psi, a)\|_\Lk \leq R(L)$ and by (\ref{E:SobEmbFams}), $|(\psi, a)|_\Co \leq U(L)$. Thus $|(\psi, a)|_\Co < U_0$ by (\ref{E:GBoundReducerFams}) and (\ref{E:GBootstrappingFams}) implies that 
    \begin{align*}
        \|(\psi, a)\|_\Lk &\leq C_BL^d(1+U_0)^d\\
        &\leq \frac{1}{2}R(L).
    \end{align*}
    That is, $(G^\theta_s)\inv(0)$ does not intersect $S$ for any $\theta \in \Jcal$ and $s \in [0,3]$.
\end{proof}

\subsection{The third homotopy}

The third homotopy $H_s$ for $s \in [0,1]$ is given by 
\begin{align*}
    H_s = V\inv F_{1-s} V.
\end{align*}
This homotopy starts at $H_0 = G_3 = V\inv F_1 V$ and ends at $H_1 = V\inv \mu_{E^\tau} V$.

\begin{proposition}\label{P:HDiskBundleFam}
    The homotopy $H_s$ is a homotopy through compact perturbations of $l$.
\end{proposition}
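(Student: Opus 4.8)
\emph{Proof proposal.} The plan is to recognise $H_s = V\inv F_{1-s} V$ as the first homotopy $F$, run in reverse and conjugated by the bundle isomorphism $V$ --- here $F$ is built for the family $E^\tau$, to which Proposition \ref{P:FirstHomotopy} and Proposition \ref{P:FDiskBundleFamilies} apply verbatim since $E$ and $E^\tau$ share the neck complement $M$ and $V$ equals the identity outside $N_B(1)$. Because $V$ is a bounded isomorphism of Hilbert bundles which is moreover constant along the $S(V_0)$-directions with values in $SO(n)$, every property of $F$ recorded in those two propositions should transfer to $H$. Concretely I would check two things: that each $H_s$ is a bounded Fredholm map of the form $l + c_s$ with $c_s$ compact and varying continuously with $s$, and that the zero sets $H_s\inv(0)$ stay inside one fixed bounded disk bundle of $\Acal$ for all $s \in [0,1]$. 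These are exactly the two facts verified for $F$ and for $G$, and together with Lemma \ref{L:ExtendCptHtpy} they yield a compact homotopy relative to $l$ in the sense of Definition \ref{D:CompactHomotopy}.

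For the first fact I would write $F_{1-s} = l + c_{1-s}$ with $c_{1-s}$ compact, as supplied by Proposition \ref{P:FDiskBundleFamilies}, so that $H_s = V\inv l V + V\inv c_{1-s} V$. The second summand is compact, being a compact map composed with the bounded bundle maps $V$ and $V\inv$. For the first summand I claim that $V\inv l V - l$ is a zeroth-order operator, precisely of the shape of the term $Q_s$ appearing in (\ref{E:GHomotopyMiddleFamilies}): since $V$ is constant along $S(V_0)$ and $SO(n)$-valued, on the neck it commutes with the ``transverse'' ingredients of $l$ (the operator $D^{S^3}$ in the Dirac part, the operator $\Lcal$ in the self-dual part, and the harmonic projection $\pr$, which is supported where $V$ is the identity), so the Leibniz rule leaves only a term of the form multiplication by the smooth, bounded, fibrewise endomorphism built from $V\inv\partial_t V$, supported in $N_B(1)$. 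Viewed as a map $L^2_k \to L^2_{k-1}$ this is bounded multiplication followed by the Rellich inclusion, hence compact. Thus $H_s = l + \bigl(V\inv l V - l + V\inv c_{1-s} V\bigr)$ is a compact perturbation of $l$ depending continuously on $s$; and $H_s$ is bounded because $F_{1-s}$ is bounded while $V, V\inv$ are bounded isomorphisms by (\ref{E:CVDef}), so $H_s$ carries disk bundles into disk bundles.

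For the second fact, since $V$ is a fibrewise bijection we have $H_s\inv(0) = V\inv\bigl(F_{1-s}\inv(0)\bigr)$. Proposition \ref{P:FDiskBundleFamilies} produces a disk bundle $D \subset \Acal$ of $L^2_k$-radius $2R$ with $F_t\inv(0) \subset D$ for all $t \in [0,1]$, and then (\ref{E:CVDef}) places $H_s\inv(0)$ inside the disk bundle of radius $2R\,C_{V\inv}$ for every $s$. In particular the zero sets are uniformly bounded, so one may choose a disk bundle $D' \subset \Acal$ containing $H_0\inv(0) \cup H_1\inv(0)$ whose bounding sphere bundle misses $H_s\inv(0)$ for all $s$; equivalently $H$ is a homotopy through bounded Fredholm maps relative to $l$, and Lemma \ref{L:ExtendCptHtpy} upgrades it to a compact homotopy on a suitable disk bundle.

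The only step requiring genuine care is the claim that $V\inv l V - l$ is zeroth-order, hence compact; but this is exactly the computation already carried out when $Q_s$ was introduced in the construction of the second homotopy, so no new analysis is needed. Everything else is formal bookkeeping with the bounded isomorphism $V$ applied to the estimates already established for the first homotopy.
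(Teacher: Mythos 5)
Your proof is correct and follows essentially the same route as the paper's. The paper's own proof is the content of your second paragraph: a solution of $H_s^\theta(\psi,a)=0$ gives $F_{1-s}^\theta V(\psi,a)=0$, Proposition \ref{P:FDiskBundleFamilies} places $V(\psi,a)$ in a fixed $L^2_k$-disk of radius $R$ uniformly in $s$ and $\theta$, and the bound (\ref{E:CVDef}) for $V^{-1}$ carries this to an $L^2_k$-disk of radius $C_{V^{-1}}R$ containing $H_s^{-1}(0)$ for all $s$. Your first paragraph --- checking explicitly that $H_s = l + c_s$ with $c_s$ compact, via $V^{-1}lV - l$ being a zeroth-order multiplication operator supported in $N_B(1)$ (the same computation that produced $Q_s$ in the second homotopy) composed with the Rellich inclusion --- is a detail the paper leaves implicit, but it is a genuine requirement of Definition \ref{D:CompactHomotopy} and you are right to verify it. This extra verification makes your argument slightly more complete than what appears in the paper, though the essential step is identical.
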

\begin{proof}
    A solution $(\psi, a) \in (H_s)\inv(0)$ satisfies $F_{1-s}^\theta V(\psi, a) = 0$ for some $b \in B$ and $[\theta] \in \Jcal_b$. Proposition \ref{P:FDiskBundleFamilies} provides a constant $R > 0$, independent of $s$ and $\theta$, such that  
    \begin{align*}
        \|V(\psi, a)\|_{L^2_k} \leq R.
    \end{align*}
    It follows from (\ref{E:CVDef}) that 
    \begin{align*}
        \|(\psi, a)\|_{L^2_k} &= \|V\inv V(\psi, a)\|_{L^2_k}\\
        &\leq C_{V\inv} R.
    \end{align*}
    The constant $C_{V\inv}$ can be chosen independently of $\theta \in \Jcal$. The disk bundle $D \subset \Acal_k$ with fibres of $2L^2_k$-radius $C_{V\inv}R$ contains $H_s\inv(0)$ for all $s \in [0,1]$.
\end{proof}
\begin{proof}[Proof of Theorem \ref{T:FamiliesGluingTheorem}.]
    The concatenation $F \cdot G \cdot H$ is a homotopy from $\mu_E$ to $V\inv \mu_{E^\tau} V$ through compact perturbations of $l$. By Corollary \ref{C:PhiBijection}, the Bauer-Furuta classes $[\mu_E]$ and $[\mu_{E^\tau}]$ are equal in $\pi^{b^+}_{\bT^n, \Ucal}(\Jcal_E, \ind D)$, where the class $[\mu_{E^\tau}]$ is represented by the bounded Fredholm map $V\inv \mu_{E^\tau} V$.
\end{proof}
\begin{remark}
    The definition of the separating neck $N_B(L)$ required that the fibres of the neck components are of the form $S^3 \times [-L,L]$, with the application to connected sums in mind. However in Section \ref{S:MonopolesOnTheNeck}, no particularly special properties of $S^3$ were used. We only used that fact that $S^3$ has a positive scalar curvature metric and that $b_1(S^3) = 0$. Thus Theorem \ref{T:FamiliesGluingTheorem} will extend to the case that the fibres of the neck are a product $M^3 \times [-L,L]$ with $M^3$ any spherical 3-manifold.
\end{remark}

    \section{The Families Bauer-Furuta Connected Sum Formula}\label{Ch:FamConSum}

For $j \in \{1,2\}$, let $E_j \to B$ be a family of closed, oriented 4-manifolds $X_j$. To define the families connected sum, it is necessary to have sections $i_j : B \to E_j$ with normal bundles $V_j \to B$ and an orientation reversing isomorphism $\vphi : V_1 \to V_2$. Since the fibre of $E_j$ is 4-dimensional, $V_j$ is a real 4-dimensional vector bundle. Fix a metric on $V_j$ and identify the open unit disk bundle $D(V_j)$ as a tubular neighbourhood of $i_j$ with $S(V_j)$ the bounding unit sphere bundle. Let $U_j = \overline{E_j - D(V_j)}$ so that
\begin{align}\label{E:E1E2Union}
    E_1 &= U_1 \cup_{S(-V_1)} D(V_1) \nonumber\\
    E_2 &= D(V_2) \cup_{S(V_2)} U_2.
\end{align}
Here we are interpreting $S(V_2)$ as the outgoing boundary of $D(V_2)$ and $S(-V_1)$ as the ingoing boundary of $D(V_1)$, hence the negative sign. Thus $\vphi$ identifies $S(-V_1)$ with $S(V_2)$. Topologically the families connected sum $E = E_1 \#_B E_2$ is defined as 
\begin{align}\label{E:FamConSumUnion}
    E = U_1 \cup_{S(-V_1)} U_2.
\end{align}
We write $S(V) \subset E$ to denote $S(-V_1) \subset U_1$, which has been identified with $\vphi(S(-V_1)) = S(V_2) \subset U_2$. To define a metric on $E$, attach cylinders to $E_1$ and $E_2$ to get 
\begin{align*}
    \Ehat_1 &=  U_1 \cup_{S(-V_1)} \left(S(V_1) \times [0, \infty)\right) \\
    \Ehat_2 &=  \left(S(V_2) \times (\infty, 0]\right) \cup_{S(V_2)} E_2.
\end{align*}
Let $g_1$ be the metric on $S(V_1) \times [0, \infty)$ which restricts to a product of the standard round metric and interval metric on the fibres. The metric $g_1$ can be smoothly extended to $\Ehat_1$ using a collar neighbourhood. Repeat the same process to get a metric $g_2$ on $\Ehat_2$. For $L > 0$, let 
\begin{align*}
    \Ehat_1(L) &= \Ehat_1 -  \left(S(V_1) \times (L+1, \infty)\right)\\
    \Ehat_2(L) &= \Ehat_2 -  \left(S(V_2) \times (-\infty, -L-1)\right).
\end{align*}
For gluing along the cylindrical ends, define a smooth map 
\begin{align*}
    f : S^3 \times [L-1, L+1] &\to S^3 \times [-L-1, -L+1]\\
    f(x, t) &= (x, t - 2L).
\end{align*}
Now let $E(L) = E_1(L) \cup_f E_2(L)$ with metric $g_{E(L)} = g_1 \cup_f g_2$. By construction $E(L)$ is a 4-manifold family with standard fibre $X(L) = X_1 \# X_2$ that has a separating neck of length $2L$. Up to diffeomorphism, the families connected sum $E(L)$ depends only on the given sections $i_1$ and $i_2$ and the orientation reversing diffeomorphism of the normal bundles $\vphi$. 

To get a \spinc structure on $E = E(L)$, let $\sfrak_j$ be a \spinc structure on the vertical tangent bundle $T(E_j/B)$ for $j \in \{1,2\}$. Write $\Scal(E)$ to denote the set of isomorphism classes of \spinc structures on $E$. There is a restriction map defined by 
\begin{align*}
    r : \Scal(E) &\to \Scal(E_1) \times \Scal(E_2) \\
    r(\sfrak) &= (\sfrak|_{E_1}, \sfrak|_{E_2})
\end{align*}
\begin{lemma}
    The restriction map $r : \Scal(E) \to \Scal(E_1) \times \Scal(E_2)$ is a bijection onto the subset $T \subset \Scal(E_1) \times \Scal(E_2)$ defined by 
    \begin{align*}
        T &= \{(\sfrak_1, \sfrak_2) \in \Scal(E_1) \times \Scal(E_2) \mid \sfrak_1|_{S(V)} \cong \sfrak_2|_{S(V)}\}.
    \end{align*}
\end{lemma}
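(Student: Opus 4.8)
The plan is to treat \spinc structures on the relevant vertical tangent bundles as torsors over second cohomology and run two Mayer--Vietoris arguments: one to replace $E_j$ by the complement $U_j$, and one to reassemble \spinc structures over $E = U_1 \cup_{S(V)} U_2$.

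First I would record the input from obstruction theory. On any space $Z$ with an oriented real vector bundle, the set of isomorphism classes of \spinc structures is, when non-empty, a torsor over $H^2(Z;\Z)$; moreover \spinc structures defined over the two pieces of a decomposition $Z = Z_1 \cup_C Z_2$ whose restrictions to $C$ are isomorphic can be glued by a clutching construction, and the resulting class is well-defined up to isomorphism modulo the image of the Mayer--Vietoris connecting homomorphism $H^1(C;\Z) \to H^2(Z;\Z)$ (two gluing isomorphisms over $C$ differ by a gauge transformation classified in $H^1(C;\Z)$). Since $T(E/B)$, $T(E_1/B)$ and $T(E_2/B)$ are canonically identified over the common pieces $U_1, U_2$ and over $S(V)$ (via $\vphi$), it suffices to track \spinc structures on these bundles through this formalism. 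I also record the cohomological facts used below: $S(V_j) \to B$ is an $S^3$-bundle, so by the Gysin sequence the pullback $q^\ast : H^k(B;\Z) \to H^k(S(V_j);\Z)$ is an isomorphism for $k \leq 2$; since the inclusion $S(V_j) \hookrightarrow U_j$ followed by $U_j \to B$ equals the bundle projection $q$, the restriction $H^1(U_j;\Z) \to H^1(S(V_j);\Z)$ is therefore surjective; and since $D(V_j)$ deformation retracts onto the zero section $\cong B$, the restriction $H^2(D(V_j);\Z) \to H^2(S(V_j);\Z)$ is an isomorphism.

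Next I would show that restriction gives a bijection $\Scal(E_j) \to \Scal(U_j)$, using $E_j = U_j \cup_{S(V_j)} D(V_j)$. The surjectivity of $H^1(U_j;\Z) \to H^1(S(V_j);\Z)$ forces the connecting map $H^1(S(V_j);\Z) \to H^2(E_j;\Z)$ to vanish, so $H^2(E_j;\Z) \to H^2(U_j;\Z) \oplus H^2(D(V_j);\Z)$ is injective; combined with injectivity of $H^2(D(V_j);\Z) \to H^2(S(V_j);\Z)$ this yields that $H^2(E_j;\Z) \to H^2(U_j;\Z)$ is injective, hence $\Scal(E_j) \to \Scal(U_j)$ is injective. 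For surjectivity, given $\sigma \in \Scal(U_j)$, its restriction to $S(V_j)$ extends uniquely over $D(V_j)$ because $\Scal(D(V_j)) \to \Scal(S(V_j))$ is a map of torsors over the isomorphism $q^\ast$ and is non-empty (it contains $\sfrak_j|_{D(V_j)}$); clutching $\sigma$ with this extension produces a \spinc structure on $E_j$ restricting to $\sigma$, well-defined up to isomorphism since the gluing ambiguity lies in the image of the (vanishing) connecting map. Under these bijections the map $r$ becomes $\sfrak \mapsto (\sfrak|_{U_1}, \sfrak|_{U_2})$ and, using $S(V) = S(-V_1) = S(V_2)$, the subset $T$ becomes $\{(\sigma_1,\sigma_2) \in \Scal(U_1) \times \Scal(U_2) \mid \sigma_1|_{S(V)} \cong \sigma_2|_{S(V)}\}$.

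Finally I would prove the statement for $E = U_1 \cup_{S(V)} U_2$ directly. The map $\sfrak \mapsto (\sfrak|_{U_1},\sfrak|_{U_2})$ lands in the claimed subset since both restrictions agree on $S(V)$. It is surjective onto that subset by clutching: given $(\sigma_1,\sigma_2)$ with $\sigma_1|_{S(V)} \cong \sigma_2|_{S(V)}$, a choice of gluing isomorphism over $S(V)$ produces $\sfrak \in \Scal(E)$ with the prescribed restrictions. It is injective by the same Mayer--Vietoris argument: if $\sfrak, \sfrak'$ restrict to isomorphic structures on $U_1$ and on $U_2$, fix isomorphisms $\phi_i$ over $U_i$; over $S(V)$ they differ by a gauge transformation whose class in $H^1(S(V);\Z)$ lifts, by the surjectivity of $H^1(U_1;\Z) \to H^1(S(V);\Z)$ recorded above, to $H^1(U_1;\Z)$; modifying $\phi_1$ over $U_1$ by this lift makes $\phi_1$ and $\phi_2$ agree on $S(V)$, and they then glue to an isomorphism $\sfrak \cong \sfrak'$ over $E$. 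Combining the last two steps identifies $r$ with a bijection onto $T$. The main obstacle is the careful bookkeeping of gluing and gauge isomorphisms that makes the clutching construction well-defined up to isomorphism; this is precisely where the vanishing of the Mayer--Vietoris connecting maps, and hence the surjectivity $H^1(U_j;\Z) \to H^1(S(V_j);\Z)$ coming from the $S^3$-bundle structure of $S(V_j)$, is essential.
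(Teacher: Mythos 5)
Your proof is correct and rests on the same key observation as the paper's: $H^1(S(V);\Z)$ is pulled back from $B$ (you prove this via the Gysin sequence for the $S^3$-bundle, the paper via the Serre spectral sequence — equivalent routes), so a gauge discrepancy between gluing isomorphisms over the separating sphere bundle lifts over one side and can be absorbed. Where your argument adds value is in its bookkeeping: the lemma's map $r(\sfrak) = (\sfrak|_{E_1}, \sfrak|_{E_2})$ is only meaningful once one knows that restriction $\Scal(E_j) \to \Scal(U_j)$ is a bijection (since $U_j$, not $E_j$, sits inside $E$), and you establish this as a separate step via a second Mayer--Vietoris argument for $E_j = U_j \cup_{S(V_j)} D(V_j)$. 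The paper's proof glosses over this point and works directly with ``$\sfrak|_{E_j}$'' without spelling out why the restriction to $U_j$ extends uniquely to $E_j$. So your proof follows the same strategy but patches a small gap in the paper's exposition.
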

\begin{proof}
    From (\ref{E:FamConSumUnion}) is it clear that the image of $r$ is contained in $T$. Given $(\sfrak_1, \sfrak_2) \in T$, a \spinc structure $\sfrak$ on $E$ can be obtained from gluing, hence $r$ is surjective. It remains to prove injectivity. Suppose $\sfrak, \sfrak'$ are \spinc structures on $E$ with $r(\sfrak) = r(\sfrak')$. That is, there are isomorphisms $\vphi_j : \sfrak|_{E_j} \to \sfrak'|_{E_j}$ for $j \in \{1,2\}$. If $\vphi_1|_{S(V)} = \vphi_2|_{S(V)}$, then $\vphi_1$ and $\vphi_2$ would glue to give an isomorphism $\sfrak \to \sfrak'$. 

    Let $\psi = \vphi_1\inv|_{S(-V)} \circ \vphi_2|_{S(V)}$ so that $\vphi_2|_{S(V)} = \vphi_1|_{S(V)} \circ \psi$. The map $\psi$ is an automorphism of \spinc structures over $S(V)$ and therefore is determined by a smooth map $f : S(V) \to S^1$. We claim that $f$ extends to a smooth map $\tilde{f} : E_1 \to S^1$. Assuming this claim implies that $\psi$ extends to an automorphism $\tilde{\psi}$ of $\sfrak|_{E_1}$. Setting $\vphi_1' = \vphi_1 \circ \tilde{\psi} : \sfrak|_{E_1} \to \sfrak'|_{E_1}$ gives an isomorphism of \spinc structures with the property that $\vphi_1'|_{S(V)} = \vphi_2|_{S(V)}$ and the result follows by gluing.

    To prove the claim, recall that the set of homotopy class of maps $[S(V), S^1]$ are in bijection with $H^1(S(V) ; \Z)$. The Serre spectral sequence implies that $H^1(S(V) ; \Z)$ is isomorphic to $H^1(B ; \Z)$ by pullback. That is, the homotopy class of $f$ corresponds to the pullback of an element $\alpha \in H^1(B ; \Z)$. Pulling back $\alpha$ to $H^1(E_1 ; \Z)$ corresponds to a homotopy class of $[E_1, S^1]$ and we can choose a representative $\tilde{f}$ that restricts to $f$ on $S(V)$.
\end{proof}
\begin{corollary}\label{C:FamSpincExt}
    For $j \in \{1,2\}$, let $E_j \to B$ be a 4-manifold family equipped with a \spinc structure $\sfrak_j$ on the vertical tangent bundle. Let $i_j : B \to E_j$ be a section with normal bundle $V_j$ and assume that an orientation reversing isomorphism $\vphi : V_1 \to V_2$ is given. An extension of $\sfrak_1$ and $\sfrak_2$ to the families connected sum $E = E_1 \#_B E_2$ exists if and only if 
    \begin{align*}
        \vphi(i_1^*(\sfrak_{E_1})) \cong i_2^*(\sfrak_{E_2}).
    \end{align*}
\end{corollary}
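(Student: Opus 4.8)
The plan is to deduce the corollary from the preceding lemma, which already identifies an extension of $(\sfrak_1,\sfrak_2)$ with the condition $\sfrak_1|_{S(V)} \cong \sfrak_2|_{S(V)}$, i.e.\ that the two induced \spinc structures on the rank three bundle $T(S(V)/B)$ agree. What remains is to translate this into a condition on the normal bundles, and the bridge is a pair of natural identifications over the tubular neighbourhoods. Writing $p_j : D(V_j) \to B$ for the disk bundle projection, the vertical tangent bundle $T(E_j/B)|_{D(V_j)} = T(D(V_j)/B)$ is canonically $p_j^* V_j$ (the tangent space of a fibre $(V_j)_b$ at any point is $(V_j)_b$), and along the sphere bundle the radial direction splits off a trivial summand, giving $T(S(V_j)/B) \oplus \underline{\R} \cong p_j^* V_j$ over $S(V_j)$. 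Since $p_j$ is a homotopy equivalence, the restriction $\sfrak_{E_j}|_{D(V_j)}$ is the pullback of its further restriction $i_j^*(\sfrak_{E_j})$ to the zero section; restricting once more to $S(V_j)$ then shows that $\sfrak_{E_j}|_{S(V_j)}$ is, up to the canonical stabilisation bijection, the pullback of the \spinc structure $i_j^*(\sfrak_{E_j})$ on $V_j$ along $S(V_j) \to B$.

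Next I would show that the resulting assignment $\Scal(V_j) \to \Scal(T(S(V_j)/B))$ (pull back along $S(V_j) \to B$, then apply the inverse of stabilisation) is a bijection. Its source is non-empty because $i_j^*(\sfrak_{E_j})$ is a point of $\Scal(V_j)$; both sides are torsors over $H^2(B ; \Z)$ and $H^2(S(V_j) ; \Z)$ respectively, and the assignment is equivariant for the pullback $H^2(B ; \Z) \to H^2(S(V_j) ; \Z)$. As $S(V_j) \to B$ is a fibre bundle with fibre $S^3$, the Gysin sequence makes this pullback an isomorphism on $H^1$ and $H^2$, so the assignment is a bijection.

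Finally I would check that the orientation-reversing isomorphism $\vphi : V_1 \to V_2$ intertwines everything. It induces a bundle diffeomorphism $S(V_1) \to S(V_2)$ over $B$ — exactly the identification used to build $E = U_1 \cup_{S(V)} U_2$ — together with bundle isomorphisms compatible with the splittings $T(S(V_j)/B) \oplus \underline{\R} \cong p_j^* V_j$. An orientation-reversing isomorphism of vector bundles still induces a well-defined bijection of \spinc structures (conjugation by a reflection in $O(4)$ lifts to $\mathrm{Spin}^c(4)$), and under the identifications of the previous two steps it carries the class of $i_1^*(\sfrak_{E_1})$ to that of $\vphi(i_1^*(\sfrak_{E_1}))$ while matching $\sfrak_1|_{S(V)}$, computed from the $U_1$ side, with $\sfrak_2|_{S(V)}$, computed from the $U_2$ side. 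Combining the three steps, $\sfrak_1|_{S(V)} \cong \sfrak_2|_{S(V)}$ if and only if $\vphi(i_1^*(\sfrak_{E_1})) \cong i_2^*(\sfrak_{E_2})$, and the lemma then yields the corollary.

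The routine inputs — homotopy invariance of \spinc structures and the Gysin sequence — are standard, so the only point needing genuine care is the orientation bookkeeping in the last step: one must verify that restricting $\sfrak_{E_1}$ to $S(V)$ through its incarnation $S(-V_1) \subset U_1$ and restricting $\sfrak_{E_2}$ through $S(V_2) \subset U_2$ really produce \spinc structures on the \emph{same} bundle $T(S(V)/B)$ that are being compared via $\vphi$, with no spurious orientation twist. Everything else is a direct application of the identifications above.
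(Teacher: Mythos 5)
The paper states the corollary without a separate proof, treating it as an immediate consequence of the preceding lemma; your proposal simply fills in the omitted translation between the lemma's condition $\sfrak_1|_{S(V)} \cong \sfrak_2|_{S(V)}$ and the stated condition on the normal bundles, and does so correctly. The chain of identifications $T(D(V_j)/B) \cong p_j^*V_j$, the stabilised splitting $T(S(V_j)/B) \oplus \underline{\R} \cong p_j^*V_j$, the homotopy-invariance step, and the Gysin argument that $H^2(B;\Z) \to H^2(S(V_j);\Z)$ is an isomorphism (making the pullback $\Scal(V_j) \to \Scal(T(S(V_j)/B))$ a bijection of $H^2$-torsors) are all sound and are exactly the bridge the paper leaves implicit; the one point you flag yourself — that the two incarnations of $T(S(V)/B)$ coming from $S(-V_1) \subset U_1$ and $S(V_2) \subset U_2$ are identified by $\vphi$ without an unwanted orientation twist — is indeed the only piece of genuine bookkeeping, and the orientation-reversing hypothesis on $\vphi$ is precisely what makes it come out right.
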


\subsection{Families Bauer-Furuta formula}\label{Ch:FBF-CSF}

The families Bauer-Furuta connected sum formula follows from the Theorem \ref{T:FamiliesGluingTheorem} by the following observations. For a disjoint union of families $E = \coprod_{i=1}^n E_i$ the monopole map $\mu_E : \Acal \to \Ccal$ is the direct sum
\begin{align*}
    \mu_E = \bigoplus_{i=1}^n \mu_{E_i} : \bigoplus_{i=1}^n \Acal_{E_i} \to \bigoplus_{i=1}^n \Ccal_{E_i}.
\end{align*}
Assume that each $E_i$ is connected and let $\Ucal_i$ be an $S^1$-universe for $E_i$ as in (\ref{E:FamUcalDef}). Then $\Ucal = \oplus_i \Ucal_i$ is a $\bT^n$-universe with $\bT^n$ acting component-wise and the Bauer-Furuta class of $\mu_E$ is an element of $\pi_{\bT^n, \Ucal}(\Jcal; \ind l)$.
\begin{proposition}\label{P:MuSmashProduct}
    If $E = \coprod_{i=1}^n E_i$ is a disjoint union of families of 4-manifolds over $B$, then the Bauer-Furuta class $[\mu_E] \in \pi_{\bT^n, \Ucal}(\Jcal; \ind l)$ is given by the fibrewise smash product 
    \begin{align*}
        [\mu_E] &= [\mu_{E_1}] \wedge_\Jcal \cdots \wedge_\Jcal [\mu_{E_n}].
    \end{align*}
\end{proposition}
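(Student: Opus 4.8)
The plan is to unwind the Bauer--Furuta approximation of the direct sum $\mu_E = \bigoplus_{i=1}^{n} \mu_{E_i}$ and to recognise it directly as the fibrewise smash product of the approximations of the summands, and then to invoke the independence of the Bauer--Furuta class on the choice of admissible subrepresentation and Fredholm decomposition (Definition \ref{T:EquivCohomotopyClass}). Since $\wedge_\Jcal$ is associative up to canonical homotopy --- this being the fibrewise, relative analogue of Example \ref{E:SmashProductSpectra} --- it is enough to treat the case $n = 2$ and induct on $n$. So assume $E = E_1 \coprod E_2$, write $\Jcal = \Jcal_{E_1} \times_B \Jcal_{E_2}$ with projections $\pi_j \colon \Jcal \to \Jcal_{E_j}$, and note that $\Acal_E = \pi_1^*\Acal_{E_1} \oplus \pi_2^*\Acal_{E_2}$, $\Ccal_E = \pi_1^*\Ccal_{E_1} \oplus \pi_2^*\Ccal_{E_2}$, while $\Ucal = \Ucal_1 \oplus \Ucal_2$ is the $\bT^2$-universe with $\bT^2 = S^1 \times S^1$ acting factorwise. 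The Fredholm decomposition splits as $\mu_E = l + c$ with $l = l_1 \oplus l_2$ linear Fredholm and $c = c_1 \oplus c_2$ compact, and boundedness of $\mu_E$ is inherited fibrewise from that of $\mu_{E_1}$ and $\mu_{E_2}$.

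First I would fix admissible subrepresentations $V_j \subset \Ucal_j$ for $\mu_{E_j}$ by Proposition \ref{P:admissibleSubspace} and verify that the $\bT^2$-invariant subspace $V := V_1 \oplus V_2$ is admissible for $\mu_E$. Condition (1) of Definition \ref{D:admissibleSubspace} is immediate, since $V' = l\inv(V) = l_1\inv(V_1) \oplus l_2\inv(V_2)$ and $\coker l = \coker l_1 \oplus \coker l_2$. For conditions (2) and (3) I would revisit the proof of Proposition \ref{P:admissibleSubspace}: the construction of $V_j$ there produces, for every finite dimensional $W_j \supset V_j$, an estimate $|(1 - p_{W_j}) c_j(h_j)| < \eps$ on a fixed disk bundle; since $V \subset W$ forces $p_W \geq p_V = p_{V_1} \oplus p_{V_2}$ as orthogonal projections, the pair $c = (c_1, c_2)$ then satisfies $|(1 - p_W) c(h)| < \eps\sqrt{2}$ on the corresponding disk bundle in $\Acal_E$, and with this bound the verifications of (2) and (3) in that proof carry over verbatim to $\mu_E$ (shrinking $\eps$ at the outset absorbs the $\sqrt{2}$).

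Next I would identify the approximation $\vphi_{\mu_E} = \rho_V \widehat{\mu_E}|_{S_{V'}} \colon TV' \to S_V$ computed with this $V$ with the fibrewise smash $\vphi_{\mu_{E_1}} \wedge_\Jcal \vphi_{\mu_{E_2}}$. There are canonical homeomorphisms $TV' = TV_1' \wedge_\Jcal TV_2'$ and $S_V = S_{V_1} \wedge S_{V_2}$, since the Thom space of a direct sum is the fibrewise smash of the Thom spaces and $S_{A \oplus B} = S_A \wedge S_B$ (pullbacks along the $\pi_j$ being implicit in $\wedge_\Jcal$). Under these identifications the restriction $\mu_E|_{V'} = \mu_{E_1}|_{V_1'} \oplus \mu_{E_2}|_{V_2'}$ passes to Thom spaces as the fibrewise smash $\widehat{\mu_{E_1}}|_{S_{V_1'}} \wedge_\Jcal \widehat{\mu_{E_2}}|_{S_{V_2'}}$, because one-point compactification carries a fibre product of proper maps to the fibrewise smash of the compactified maps. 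Finally $\rho_{V_1 \oplus V_2}$ and $\rho_{V_1} \wedge \rho_{V_2}$ are both fibrewise deformation retractions of the relevant subset of $S_\Ucal \setminus S(V^\perp)$ onto $S_V = S_{V_1} \wedge S_{V_2}$, hence are homotopic through maps missing $S(V^\perp)$, which does not disturb the stable class. Composing these identifications gives $[\vphi_{\mu_E}] = [\vphi_{\mu_{E_1}}] \wedge_\Jcal [\vphi_{\mu_{E_2}}] = [\mu_{E_1}] \wedge_\Jcal [\mu_{E_2}]$, all maps and homotopies being equivariant by construction, and induction on $n$ then yields the stated formula.

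The step I expect to be the main obstacle is this last identification: keeping the fibrewise one-point compactifications honest over the product base $\Jcal = \Jcal_{E_1} \times_B \Jcal_{E_2}$, and in particular comparing $\rho_{V_1 \oplus V_2}$ with $\rho_{V_1} \wedge \rho_{V_2}$ up to a homotopy avoiding $S(V^\perp)$. A cleaner route is to work in the Schwarz model rather than with $\rho_V$: by Corollary \ref{C:PhiBijection} one may replace $\vphi_{\mu_E}$ by the Schwarz approximation $\psi_{\mu_E}$, which, computed on a product disk bundle $D_1' \times_B D_2'$, is manifestly the smash $\psi_{\mu_{E_1}} \wedge \psi_{\mu_{E_2}}$, using $p_V = p_{V_1} \oplus p_{V_2}$ and the fact that a quotient of a product of disk--sphere pairs is the smash of the quotients. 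This trades the $\rho_V$ comparison for an elementary rescaling argument.
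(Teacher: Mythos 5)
The paper's own proof is a one-line remark (``The above proposition follows directly from the definition of $[\mu_E]$''), so your proposal is essentially supplying the proof the paper treats as obvious, and the overall argument is correct: decompose $\mu_E = \mu_{E_1} \oplus \mu_{E_2}$, check $V_1 \oplus V_2$ is admissible, and identify the approximation with the fibrewise smash.

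One spot that needs sharpening, in the $\rho$-route, is exactly the step you flag: under the identification $S_\Ucal \cong S_{\Ucal_1} \wedge S_{\Ucal_2}$ the domains of $\rho_{V_1 \oplus V_2}$ and $\rho_{V_1} \wedge \rho_{V_2}$ are not the same. The latter excludes, for instance, any $(u_1, u_2)$ with $u_1 \in S(V_1^\perp)$ and $u_2 \in V_2^\perp$ nonzero, whereas such a point has $|(u_1,u_2)| > 1$ and so does lie in $S_\Ucal \setminus S(V^\perp)$. Thus ``both are deformation retractions of the relevant subset, hence homotopic'' is not quite enough; you must check that $\widehat{\mu_E}(S_{V'})$ lands in the smaller (common) domain, which does follow because $V_1$ and $V_2$ are admissible for $\mu_{E_1}$ and $\mu_{E_2}$ respectively, so each $\widehat{\mu_{E_j}}(S_{V_j'})$ avoids $S(V_j^\perp)$ --- but that is an extra fact beyond admissibility of $V_1 \oplus V_2$ for $\mu_E$. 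Your Schwarz alternative bypasses this cleanly: with $D' = D_1' \times_\Jcal D_2'$ and $p_V = p_{V_1} \oplus p_{V_2}$, the Schwarz approximation of $\mu_E$ is visibly $\psi_{\mu_{E_1}} \oplus \psi_{\mu_{E_2}}$ as a map of pairs, the quotient of the bidisk--bisphere pair is the fibrewise smash of the two disk--sphere quotients, and the bidisk bundle is sandwiched between genuine disk bundles so Lemma \ref{L:ExtendCptHtpy} (with Lemmas \ref{L:PhiWellDefined1}--\ref{L:PhiWellDefined2}) guarantees the class does not depend on that choice. I would promote the Schwarz argument to the primary one and keep the $\rho$-comparison, with the domain point made explicit, only as a remark.
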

The above proposition follows directly from the definition of $[\mu_E]$ outlined in Definition \ref{T:EquivCohomotopyClass}. The next observation demonstrates a method for calculating the Bauer-Furuta invariant in the simplest cases. Recall that $H^+ \to \Jcal$ is the rank $b^+(X)$ trivial bundle with fibre $H^2_+(X ; \R)$ and that $S_{H^+} \to \Jcal$ denotes the unit sphere bundle in $H^+ \oplus \R$. In the case that $b_1(X) = 0$, the Jacobian torus $J(X)$ is just a point and $H^+$ is a bundle over $B$.
\begin{proposition}\label{P:MuTrivial}
    Let $E \to B$ be a 4-manifold family with fibre $X$ such that $b_1(X) = 0$ and assume a \spinc structure on $T(E/B)$ is given. Suppose there exists a family of metrics $\{g_b\}_{b\in B}$ on $E$ with positive scalar curvature and that $E$ admits a family of flat \spinc connections $\{A_b\}_{b\in B}$. Then the class $[\mu_E]$ is stably homotopic to the inclusion
    \begin{align*}
        \iota : B \times S^0 \to S_{H^+}.
    \end{align*}
\end{proposition}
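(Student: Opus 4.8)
The plan is to use the two hypotheses to reduce the computation to linear algebra. First I would take the reference connection $A_0$ to be the given family of flat \spinc connections, so that $F_{A_0} = 0$ on every fibre, and use the given positive scalar curvature family $\{g_b\}_{b\in B}$. Since $b_1(X) = 0$ the Jacobian bundle collapses, so $\Jcal = B$ and, by \eqref{E:MiniMuFredholmFamilies}, $\mu_E = l + c$ is a bounded Fredholm $\bT^n$-bundle map over $B$ whose perturbation is $c(\psi,a,f) = (ia\cdot\psi,\, i\sigma(\psi),\, 0)$. The argument then has two halves: first switch off $c$, reducing the statement to the linear part $l$; then compute the Bauer--Furuta class of $l$ directly.

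For the first half I would consider the straight-line homotopy $\mu_t = l + tc$, $t\in[0,1]$, from $\mu_0 = l$ to $\mu_1 = \mu_E$. A zero of $\mu_t$ satisfies $D_{A_0+tia}\psi = 0$, $d^+a = -ti\,\sigma(\psi)$ and $d^*a + f = 0$; since $A_0$ is flat, $F^+_{A_0+tia} = ti\,d^+a = t^2\sigma(\psi)$ is a nonnegative multiple of $\sigma(\psi)$, so the \Weitzenbock and maximum-principle argument of Lemma \ref{L:WeitzenbockArgument} (applied on all of the closed fibre $X$, there being no neck here) together with $s_X > 0$ forces $\psi\equiv 0$; then $d^+a = 0$, integrating $d^*a + f = 0$ over the closed fibre gives $f = 0$, and $d^+a = d^*a = 0$ with $b_1(X) = 0$ gives $a\equiv 0$. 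Hence $\mu_t^{-1}(0) = \{0\}$ fibrewise for every $t$. As $l$ and $\mu_E$ are both bounded Fredholm with zero set $\{0\}$ and $tc$ is compact and vanishing nowhere on the boundary of any disk bundle, the homotopy $\mu_t$ exhibits $l$ and $\mu_E$ as compactly homotopic in the sense of Definition \ref{D:CompactHomotopy}; by Lemma \ref{L:ExtendCptHtpy} and Corollary \ref{C:PhiBijection} this gives $[\mu_E] = [l]$. Applying the same \Weitzenbock estimate at $t = 0$ to $D^+_{A_0,b}$ and to its adjoint shows $\ker D_{A_0,b} = \coker D_{A_0,b} = 0$ for all $b$, so the index bundle $\ind_\Jcal D$ vanishes; likewise $\ker_\Jcal l = 0$, and $\coker_\Jcal l$ is the cokernel $H^+$ of $d^+$ on coclosed $1$-forms.

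For the second half I would compute $[l]$ from the construction of Section \ref{S:EquivApprox}. Using that $H^+$ is trivial, regard it as a subrepresentation $H^+\subseteq\Ucal$ on which $\bT^n$ acts trivially, and choose an admissible $V\subseteq\Ucal$ with $H^+\subseteq V$. Because $l$ is injective, $V' = l^{-1}(V)\to B$ is a vector bundle, $l$ restricts to a $\bT^n$-equivariant isomorphism $V'\xrightarrow{\cong} l(V')$, and $V = l(V')\oplus H^+$ since $H^+$ is a complement to $\im l$ in $\Ucal$. Under these identifications $\vphi_l = \rho_V\hat{l}|_{S_{V'}}$ is the one-point compactification of the inclusion $l|_{V'}\colon V'\hookrightarrow l(V')\oplus H^+$, that is $\hat{l}\wedge\iota_{H^+}$ with $\iota_{H^+}\colon S^0\to S_{H^+}$ the inclusion sending $1\mapsto 0$. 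Desuspending by $l(V')$ in the colimit of Definition \ref{D:SpanierWhiteheadMorphisms} that computes $\pi^0_{\bT^n,\Ucal}(B;\ind_\Jcal l) = \pi^0_{\bT^n,\Ucal}(B;-H^+) = \pi^{b^+}_{\bT^n,\Ucal}(B;\ind_\Jcal D)$ then identifies $[\vphi_l]$ with the image of the unit of the sphere spectrum under the canonical map $\pi^0_s(S^0)\to\pi^0_{\bT^n,\Ucal}(B;-H^+)$, which is precisely the class of the fibrewise inclusion $\iota\colon B\times S^0\to S_{H^+}$.

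I expect the last step to be the main obstacle: the bundle $V'$, and with it $l(V')$, need not be trivial, so one must check that desuspending by $l(V')$ and the identifications $S_V = S_{l(V')}\wedge S_{H^+}$, $S_{V'}\cong S_{l(V')}$ are compatible with the spectrum structure maps of Definition \ref{D:GSpectrum} and the stabilisation maps of Definition \ref{D:SpanierWhiteheadMorphisms}, so that the resulting stable class is genuinely the stated inclusion rather than merely stably equivalent to it after auxiliary choices; phrased invariantly, this is the assertion that the Bauer--Furuta class of an injective $\bT^n$-equivariant linear Fredholm bundle map is the image of the unit of the equivariant sphere spectrum under the Thom-spectrum map determined by its (here trivialised) cokernel bundle. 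The two \Weitzenbock arguments, by contrast, are routine fibrewise versions of the classical unparameterised vanishing and compactness estimates, made uniform over the compact base $B$.
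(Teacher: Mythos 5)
Your proof is correct and takes essentially the same route as the paper's: the straight-line compact homotopy $\mu_t = l + tc$, the fibrewise Weitzenb\"ock/maximum-principle argument showing each $\mu_t^{-1}(0)$ is just the zero section under $s_X>0$ (hence $\psi\equiv 0$, then $a\equiv 0$ and $f=0$ from $b_1(X)=0$ and integration over the fibre), and then identifying $[l]$ with the inclusion $\iota$ via $\ker D_A = \coker D_A = 0$. The paper simply asserts the last step; you spell it out and flag the possible non-triviality of $V'$, which is a legitimate worry but a resolvable one: since $l$ is fibrewise injective with cokernel $H^+$ one has $V'\oplus H^+ \cong \underline{V}$, so $V'$ is stably trivial, and enlarging $V$ (hence $\operatorname{rank} V'$) past $\dim B$ makes $V'$ honestly trivial, after which the desuspension argument you sketch is elementary.
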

\begin{proof}
    Let $n$ be the number of connected components of $E$. For $t \in [0,1]$ define a homotopy 
    \begin{align*}
        \mu_t : L^2_k(E, W^+ \oplus T^*(E/B)) \oplus \R^n \to L^2_{k-1}(E, W^- \oplus \Lambda^2_+ T^*(E/B) \oplus \R)
    \end{align*}
    by the formula 
    \begin{align*}
        \mu_t(\psi, a, f) = (D_{A+ta}\psi, d^+a - t\sigma(\psi), d^*a + f).
    \end{align*}
    Since $b_1(X) = 0$ and $F_A = 0$, we have $\mu_1 = \mu_E$. Further, $\mu_0$ is the linearised monopole map $l = D_A \oplus d^+ \oplus d^*$. We show that $\mu_t$ is a homotopy through compact perturbations of $l$. Suppose that $\mu_t(\psi, a, f) = 0$ for some $t \in [0,1]$. This implies that 
    \begin{align*}
        D_{A+ta}\psi &= 0\\
        d^+ a &= t\sigma(\psi)\\
        d^*a &= 0\\
        f &= 0.
    \end{align*}
    It follows from the \Weitzenbock formula that 
    \begin{align*}
        \Delta_g |\psi|^2 + \frac{s}{2}|\psi|^2 + \frac{t^2}{2}|\psi|^4 \leq 0.
    \end{align*}
    At a maximum of $|\psi|$ we obtain 
    \begin{align*}
        \frac{s}{2}|\psi|^2_{C^0} + \frac{t^2}{2}|\psi|^4_{C^0} \leq 0.
    \end{align*}
    Since $s > 0$ we have $\psi = 0$. This in turn implies that $d^+ a = 0$. Since $d^*a = 0$ and $b_1(X) = 0$, $a$ is harmonic and therefore $a = 0$. Thus $\mu\inv(0)$ contains only one point and certainly is bounded. That is, $\mu$ is a compact homotopy.

    Recall that $\ind l = \ind D_A - b^+(X)$. The positive scalar curvature and the fact that $F_A = 0$ implies that both $\ker D_A = 0$ and $\coker D_A = 0$. Thus $D_A$ is an isomorphism and therefore the Bauer-Furuta finite dimensional approximation of $l$ is stably homotopic to the inclusion $\iota$.
\end{proof}
Let $V \to B$ be an $SO(4)$-vector bundle with a \spinc structure $\sfrak$ on the vertical tangent space $T(V/B)$. This induces a \spinc structure on $S_V = S(\R \oplus V)$ in the following way. Let $\Fr(V)$ denote the vertical oriented frame bundle of $V$. The \spinc structure on $V$ determines a principle $\Spinc(4)$-bundle $\Pcal_V \to \Fr(V)$ which pulls back to a principle $\Spinc(5)$-bundle $\Pcal_{\R \oplus V} \to \Fr(\R \oplus V)$. Let $i : \Fr(S(V)) \to \Fr(\R \oplus V)$ be the inclusion map of frames defined by the outward normal first convention. Then $i^*(\Pcal_{\R \oplus V}) \to \Fr(S(V))$ is the \spinc structure on $S_V$ induced by $\sfrak$.

\begin{corollary}\label{C:FamMuTriv}
    Let $V \to B$ be an $SO(4)$-bundle with a \spinc structure and give $\pi : S_V \to B$ the induced \spinc structure on the vertical tangent bundle $T(S_V/B)$. Then the class $[\mu_{S_V}]$ is stably homotopic to the identity $\id : B \times S^0 \to B \times S^0$.
\end{corollary}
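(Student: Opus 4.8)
The plan is to reduce the statement to Proposition \ref{P:MuTrivial}. First I would record the relevant topology of the family: $S_V = S(\R\oplus V)\to B$ is a $4$-manifold family whose fibre is $S^4$, so $b_1(S^4)=0$ and $b^+(S^4)=0$. Consequently the Jacobian torus bundle of $S_V$ is $B$ itself, and the bundle $H^+\to\Jcal$ — being trivial of rank $b^+(S^4)=0$ — vanishes, so $S_{H^+}=S(\R\oplus 0)=B\times S^0$ and the inclusion $\iota\colon B\times S^0\to S_{H^+}$ of Proposition \ref{P:MuTrivial} is literally the identity $\id\colon B\times S^0\to B\times S^0$. Thus it is enough to exhibit, for $S_V$ with its induced \spinc structure, a family of positive scalar curvature metrics together with a family of fibrewise flat \spinc connections, and then quote Proposition \ref{P:MuTrivial}.

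For the metrics: since $V$ is an $SO(4)$-bundle, $\R\oplus V$ is an $SO(5)$-bundle via the inclusion $SO(4)\hookrightarrow SO(5)$ fixing the $\R$-summand, so the transition functions of $S_V$ act on the fibre $S^4$ by isometries of the unit round metric. That metric therefore patches to a metric $g_{S_V}$ on $T(S_V/B)$ restricting to the round metric on each fibre $S^4_b$, and this family has constant scalar curvature $12>0$. For the connections: let $\Lcal_{S_V}\to S_V$ be the determinant line bundle of the induced \spinc structure. The key point is that $\Lcal_{S_V}$ is pulled back from $B$. This follows from the Serre spectral sequence of $S^4\hookrightarrow S_V\to B$: since $H^1(S^4;\Z)=H^2(S^4;\Z)=0$, the edge map $\pi^*\colon H^2(B;\Z)\to H^2(S_V;\Z)$ is an isomorphism, so every complex line bundle on $S_V$ — in particular $\Lcal_{S_V}$ — is a pullback $\pi^*L$ for some $L\to B$. (Alternatively this can be read off directly from the frame-bundle construction of the induced \spinc structure preceding the statement: passing to the associated determinant line commutes with pulling the principal $\Spinc$-bundle back to $\Fr(\R\oplus V)$ and to $\Fr(S_V)$, and the auxiliary trivial summands do not affect $c_1$.) Now fix any $U(1)$-connection $\nabla$ on $L$ and transport it to $\Lcal_{S_V}$ along $\Lcal_{S_V}\cong\pi^*L$; its curvature is $\pi^*F_\nabla$, which restricts to $0$ on each fibre $S^4_b$ because $\pi|_{S^4_b}$ is constant. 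Hence the associated family of \spinc connections is flat on every fibre.

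With the round metrics and these flat \spinc connections in place, Proposition \ref{P:MuTrivial} applies and gives that $[\mu_{S_V}]$ is stably homotopic to $\iota$, which we have identified with $\id\colon B\times S^0\to B\times S^0$; this is exactly the assertion of Corollary \ref{C:FamMuTriv}. The only step that requires genuine attention is the verification that the determinant line bundle of the induced \spinc structure is pulled back from $B$ — equivalently, that it is fibrewise trivial in a controlled, smoothly varying way — since that is the one place where the precise "outward normal first" construction of $\sfrak_{S_V}$ (or the small cohomological computation above) is used. Everything else is formal: the round metrics supply positive scalar curvature, the fibres are simply connected with $b^+=0$, and all of the analytic content is absorbed into Proposition \ref{P:MuTrivial}.
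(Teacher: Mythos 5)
Your proposal is correct and follows essentially the same route as the paper: both invoke the Serre spectral sequence for $S^4 \hookrightarrow S_V \to B$ (using $H^1(S^4)=H^2(S^4)=0$) to see that $\pi^*\colon H^2(B;\Z)\to H^2(S_V;\Z)$ is an isomorphism, both use this to produce a connection on the determinant line whose curvature is a pullback form and hence vanishes on every fibre, both equip the fibres with the round metric using the $SO(4)$ structure group, and both then quote Proposition \ref{P:MuTrivial}. The only cosmetic difference is that you pull back the line bundle itself (and a connection on it) whereas the paper pulls back only a representative curvature form for $c_1(\Lcal)$; these are equivalent since line bundles are determined by $c_1$. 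Your added remark that $b^+(S^4)=0$ forces $S_{H^+}=B\times S^0$ so that $\iota$ is literally the identity is a helpful explication of a step the paper leaves implicit.
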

\begin{proof}
    Since $b_1(S^4) = b_2(S^4) = 0$, the pullback map $\pi^* : H^2(B ; \Z) \to \pi^*(S_V ; \Z)$ is an isomorphism by the Serre spectral sequence. Let $\Lcal \to S_V$ be the canonical line bundle of the induced \spinc structure on $T(S_V/B)$. Then the first chern class $c_1(\Lcal) \in H^2(S_V ; \Z)$ is in the image of $\pi^*$. Thus there exists a connection $A$ on $\Lcal$ with curvature $F_A = \pi^*(\omega)$ for some 2-form $\omega \in \Omega^2(B)$. Let $i_b : \pi\inv(b) \to S_V$ be the inclusion of the fibre over $b \in B$. Then the restriction $A_b = i_b^* A$ is flat since $F_{A_b} = i_b^* \pi^* \omega = 0$.

    Since the structure group of $V$ is $SO(4)$, the fibres of $S_V$ can be equipped with the standard round metric which has positive scalar curvature. By Proposition \ref{P:MuTrivial}, $[\mu_{S_V}] = [\id]$.
\end{proof}
Finally, we have all the necessary tools to derive Bauer-Furuta connected sum formula. We begin with the unparameterised case, which was first formulated by Bauer in \cite{BF2}. Afterwards, we prove the families formula which is a new result.
\begin{theorem}[\cite{BF2} Theorem 1.1]\label{T:BFConnectedSumFormula}
    Let $X = \#_{i} X_i$ be a connected sum of $n$ closed, oriented, 4-manifolds. The Bauer-Furuta invariant $[\mu_X]$ is given by the formula
    \begin{align}
        [\mu_X] &= \bigwedge_{i=1}^n [\mu_{X_i}].
    \end{align}
\end{theorem}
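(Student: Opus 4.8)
The plan is to derive Theorem \ref{T:BFConnectedSumFormula} from the Families Permutation Theorem (Theorem \ref{T:FamiliesGluingTheorem}, applied over $B = \{\mathrm{pt}\}$), the disjoint-union formula (Proposition \ref{P:MuSmashProduct}), and the computation $[\mu_{S^4}] = [\mathrm{id}]$ (Corollary \ref{C:FamMuTriv} with $V = \R^4$, so that $S_V = S^4$). Since both $\#$ and $\wedge$ are associative, it suffices to prove the case $n = 2$ and then induct: writing $X_1 \# \cdots \# X_n = (X_1 \# \cdots \# X_{n-1}) \# X_n$ and applying the $n = 2$ formula gives $[\mu_X] = [\mu_{X_1 \# \cdots \# X_{n-1}}] \wedge [\mu_{X_n}]$, after which the inductive hypothesis finishes the argument.

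For $n = 2$, I would consider the disconnected $4$-manifold $W := (X_1 \# S^4) \sqcup (S^4 \# X_2)$, equipped with a \spinc structure restricting to $\sfrak_1$ and $\sfrak_2$ on the $X_1$- and $X_2$-parts and to the standard \spinc structure on the $S^4$-parts; such an extension exists by Corollary \ref{C:FamSpincExt} because \spinc structures on $S^3$ are unique. Since $Y \# S^4 \cong Y$ compatibly with \spinc structures, $W \cong X_1 \sqcup X_2$, so Proposition \ref{P:MuSmashProduct} gives $[\mu_W] = [\mu_{X_1}] \wedge [\mu_{X_2}]$. Next one presents $W$ with a two-component separating neck of length $2L$ by writing $X_1 \# S^4 = (X_1 \setminus B^4) \cup_{S^3} (S^3 \times [-L,L]) \cup_{S^3} B^4$ and $S^4 \# X_2 = B^4 \cup_{S^3} (S^3 \times [-L,L]) \cup_{S^3} (X_2 \setminus B^4)$, so the neck complement is $M = M^- \sqcup M^+$ with $M^- = (X_1 \setminus B^4) \sqcup B^4$ and $M^+ = B^4 \sqcup (X_2 \setminus B^4)$, the first neck running from $X_1 \setminus B^4$ to the $B^4$ of the $X_1$-part and the second from the $B^4$ of the $X_2$-part to $X_2 \setminus B^4$.

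Now apply the transposition $\tau = (1\,2)$ (permitted by Remark \ref{R:tauOdd}; alternatively, add a spectator summand $S^4 \# S^4$ and use the corresponding $3$-cycle to stay within even permutations). In $W^\tau$ the positive end of the $i$-th neck is reglued to $\iota(C^+_{\tau(i)})$; tracing through the gluings, the component containing $X_1 \setminus B^4$ becomes $(X_1 \setminus B^4) \cup_{S^3} (S^3 \times [-L,L]) \cup_{S^3} (X_2 \setminus B^4) = X_1 \# X_2$, while the remaining component becomes $B^4 \cup_{S^3} (S^3 \times [-L,L]) \cup_{S^3} B^4 = S^4$, the induced \spinc structure restricting to the connected-sum structure on $X_1 \# X_2$ and to the standard one on $S^4$. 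Hence $W^\tau \cong (X_1 \# X_2) \sqcup S^4$, and Proposition \ref{P:MuSmashProduct} gives $[\mu_{W^\tau}] = [\mu_{X_1 \# X_2}] \wedge [\mu_{S^4}]$. By Corollary \ref{C:FamMuTriv} the class $[\mu_{S^4}]$ is the identity of $S^0$, so smashing with it is the identity operation, up to the standard identification of the ambient equivariant cohomotopy groups in which the additional $S^1$-factor of the gauge group acts trivially. Theorem \ref{T:FamiliesGluingTheorem} then yields
\begin{align*}
    [\mu_{X_1}] \wedge [\mu_{X_2}] = [\mu_W] = [\mu_{W^\tau}] = [\mu_{X_1 \# X_2}],
\end{align*}
establishing the base case, and the induction completes the proof.

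The analytic difficulty of the statement is already absorbed into Theorem \ref{T:FamiliesGluingTheorem}; what remains requires only care of a bookkeeping nature — verifying the diffeomorphism type and \spinc structure of $W^\tau$, and tracking the passage between disconnected and connected summands (equivalently, the diagonal restriction of the torus that is encoded by smashing with the identity class $[\mu_{S^4}]$) — together with the even-versus-odd permutation point, which is disposed of by Remark \ref{R:tauOdd} or by a spectator $S^4$. I expect the verification that $W^\tau \cong (X_1 \# X_2) \sqcup S^4$ with matching \spinc data to be the step most in need of explicit, careful treatment, even though it is elementary.
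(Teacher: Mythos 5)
Your proof is correct and takes essentially the same approach as the paper: auxiliary copies of $S^4$ are introduced, Propositions \ref{P:MuSmashProduct} and \ref{P:MuTrivial} (equivalently Corollary \ref{C:FamMuTriv}) handle the disjoint-union and trivial-component steps, and Theorem \ref{T:FamiliesGluingTheorem} supplies the permutation. The only cosmetic difference is that the paper works from the start with a three-component neck $Y = (X_1 \# S^4) \amalg (S^4 \# X_2) \amalg (S^4 \# S^4)$ and the $3$-cycle $(123)$ so that the permutation is already even, whereas you begin with a two-component neck and a transposition and then invoke Remark \ref{R:tauOdd} (or your spectator $S^4 \# S^4$, which amounts to the same thing).
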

\begin{proof}
    It is enough to prove the result for a connected sum of two 4-manifolds. Define 
    \begin{align}\label{E:YSums}
        Y_1 &= X_1 \# S^4 \nonumber\\
        Y_2 &= S^4 \# X_2 \nonumber\\
        Y_3 &= S^4 \# S^4.
    \end{align}
    Set $Y = \coprod_i Y_i$. By the connected sum construction outlined in \ref{Ch:FamConSum}, we can choose a metric that gives $Y$ the structure of a separating neck. The negative components of $Y$ are given by the left summands of (\ref{E:YSums}) and the positive components by the right summands. Further, any choice of \spinc structure on $X_1$ and $X_2$ extends uniquely to a \spinc structure on $Y$. Now $[\mu_{Y_1}] = [\mu_{X_1}]$, $[\mu_{Y_2}] = [\mu_{X_2}]$ and Proposition \ref{P:MuTrivial} implies that $[\mu_{Y_3}] = [\id]$. By Proposition \ref{P:MuSmashProduct} we have 
    \begin{align*}
        [\mu_Y] &= [\mu_{X_1}] \wedge [\mu_{X_2}].
    \end{align*}
    Let $\tau$ be the even permutation $\tau = (123)$ so that
    \begin{align*}
        Y^\tau = (X_1 \# X_2) \amalg (S^4 \# S^4) \amalg (S^4 \# S^4).
    \end{align*}
    Applying Propositions \ref{P:MuSmashProduct} and \ref{P:MuTrivial} again yields
    \begin{align*}
        [\mu_{Y^\tau}] &= [\mu_{X_1 \# X_2}].
    \end{align*}
    Thus Theorem \ref{T:FamiliesGluingTheorem} implies that $[\mu_X] = [\mu_{X_1}] \wedge [\mu_{X_2}]$.
\end{proof}
\begin{remark}\label{R:tauOdd}
    In the construction of $X^\tau$ it is assumed that $\tau$ is an even permutation, however this assumption is unnecessary for Theorem \ref{T:FamiliesGluingTheorem}. If $\tau$ happens to be odd, then replace $X$ with the disjoint union 
    \begin{align*}
        X' &= X \amalg (S^4 \# S^4) \amalg (S^4 \# S^4).
    \end{align*} 
    Now include an extra transposition in $\tau$ that swaps the last two $S^4$ components. As shown in the argument above, $[\mu_X] = [\mu_{X'}]$.
\end{remark}

\begin{theorem}[Families Bauer-Furuta Connected Sum Formula]\label{T:FamBFCSF}
    For $j \in \{1,2\}$, let $E_j \to B$ be a 4-manifold family equipped with a \spinc structure $\sfrak_j$ on the vertical tangent bundle. Let $i_j : B \to E_j$ be a section with normal bundle $V_j$ and assume that $\vphi : V_1 \to V_2$ is an orientation reversing isomorphism satisfying 
    \begin{align*}
        \vphi(i_1^*(\sfrak_{E_1})) \cong i_2^*(\sfrak_{E_2}).
    \end{align*} 
    Then the families Bauer-Furuta class of the fiberwise connected sum $E = E_1 \#_B E_2$ is
    \begin{align}
        [\mu_{E}] &= [\mu_{E_1}] \wedge_\Jcal [\mu_{E_2}].
    \end{align}
\end{theorem}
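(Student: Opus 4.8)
The plan is to prove the theorem by the same device used in the unparameterised case (Theorem~\ref{T:BFConnectedSumFormula}), replacing the $4$-sphere throughout by the sphere bundle $S_{V_j}$ of the normal bundle of $i_j$. The observation that makes this possible is that $\vphi$ identifies $S(V_1)$ with $S(V_2)$, so a single sphere bundle $S(V) := S(V_1) \cong S(V_2) \to B$ serves as the model for every neck appearing below; without this common model the families permutation theorem (Theorem~\ref{T:FamiliesGluingTheorem}) could not be invoked.

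First I would form the disjoint union $F = F_1 \amalg F_2 \amalg F_3$ over $B$, where
\begin{align*}
    F_1 = E_1 \#_B S_{V_1}, \qquad F_2 = S_{V_2} \#_B E_2, \qquad F_3 = S_{V_1} \#_B S_{V_2},
\end{align*}
each fibrewise connected sum being taken along $i_j$ (for the $E_j$ factors) and along the evident ``pole'' sections of the $S_{V_j}$ (whose normal bundles are again $V_j$), using $\vphi$ and suitable orientation-reversing identifications as in Section~\ref{Ch:FamConSum}. Since gluing $S_V$ onto a family along such a section and deleting a fibrewise disc is the trivial operation, there are diffeomorphisms of families $F_1 \cong E_1$, $F_2 \cong E_2$ and $F_3 \cong S_{V_1} \cong S_{V_2}$, compatible with the fibrewise-product metrics and — giving each $S_{V_j}$ the \spinc structure induced (as before Corollary~\ref{C:FamMuTriv}) from the \spinc structure on $V_j$ inherited from $\sfrak_{E_j}$ — with the \spinc structures, so that Corollary~\ref{C:FamSpincExt} together with the hypothesis $\vphi(i_1^*\sfrak_{E_1}) \cong i_2^*\sfrak_{E_2}$ supplies a \spinc structure on each $F_i$ extending the given data. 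By construction $F$ carries a $3$-component separating neck of length $2L$. Because $b_1$ is unchanged under connected sum with $S^4$ and $b_1(S^4)=0$, the Jacobian torus bundle of $F$ is $\Jcal := \Jcal_{E_1}\times_B\Jcal_{E_2}$, and the same holds for the permuted family below; matching the tori $\bT^n$ and Jacobian bundles then proceeds exactly as in the unparameterised argument.

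Next, by Proposition~\ref{P:MuSmashProduct}, Corollary~\ref{C:FamMuTriv} and the diffeomorphisms above,
\begin{align*}
    [\mu_F] = [\mu_{F_1}] \wedge_\Jcal [\mu_{F_2}] \wedge_\Jcal [\mu_{F_3}] = [\mu_{E_1}] \wedge_\Jcal [\mu_{E_2}] \wedge_\Jcal [\id].
\end{align*}
Applying the $3$-cycle $\tau = (1\,2\,3)$ to the neck reconnects the negative half of $F_i$ to the positive half of $F_{\tau(i)}$, so (the negative/positive halves of $F_1,F_2,F_3$ being $(E_1,S_{V_1})$, $(S_{V_2},E_2)$, $(S_{V_1},S_{V_2})$ respectively) the permuted family is
\begin{align*}
    F^\tau \cong (E_1 \#_B E_2) \amalg (S_{V_2} \#_B S_{V_2}) \amalg (S_{V_1} \#_B S_{V_1}) \cong E \amalg S_V \amalg S_V,
\end{align*}
with the \spinc structure restricting correctly on each component. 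Hence, by Proposition~\ref{P:MuSmashProduct} and Corollary~\ref{C:FamMuTriv} again, $[\mu_{F^\tau}] = [\mu_E] \wedge_\Jcal [\id] \wedge_\Jcal [\id]$. Theorem~\ref{T:FamiliesGluingTheorem} gives $[\mu_F] = [\mu_{F^\tau}]$, and comparing the two expressions (the trivial smash factors on each side being cancelled, exactly as in Theorem~\ref{T:BFConnectedSumFormula}) yields $[\mu_E] = [\mu_{E_1}] \wedge_\Jcal [\mu_{E_2}]$. If $E_1$ or $E_2$ is disconnected one runs the argument componentwise, and an odd resulting permutation is handled by enlarging $F$ by two further copies of $S_V \#_B S_V$, as in Remark~\ref{R:tauOdd}.

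Given Theorem~\ref{T:FamiliesGluingTheorem}, which is the real content, the rest is largely formal, and the step I expect to be the main obstacle is the \spinc bookkeeping: one must check that the induced \spinc structure on $S_{V_j}$ restricts along the relevant section to the \spinc structure $V_j$ inherits from $\sfrak_{E_j}$, so that the extension criterion of Corollary~\ref{C:FamSpincExt} is met for each $F_i$ and is preserved under $\tau$ — this being the families substitute for the uniqueness of the \spinc structure on $S^4$ — and one must arrange the fibrewise-product metrics so that $F$ and $F^\tau$ genuinely share the neck together with the identification $\Jcal_F = \Jcal_{F^\tau}$, as in the discussion preceding Theorem~\ref{T:FamiliesGluingTheorem}.
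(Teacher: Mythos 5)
Your proof is correct and proves the theorem, but the route is organised differently from the paper's own. You transplant the unparameterised argument of Theorem~\ref{T:BFConnectedSumFormula} verbatim, replacing $S^4$ by the sphere bundle $S_{V_j}$: you build a three-component family $F = (E_1 \#_B S_{V_1}) \amalg (S_{V_2} \#_B E_2) \amalg (S_{V_1} \#_B S_{V_2})$, apply the $3$-cycle $(123)$ (which is even, so Theorem~\ref{T:FamiliesGluingTheorem} applies directly), and cancel two trivial smash factors $[\id]$ against $[\mu_{S_V}]$ via Corollary~\ref{C:FamMuTriv}. The paper instead takes $F = E_1 \amalg E_2$, regarding each $E_j$ as already carrying a separating neck via the decomposition $E_j = U_j \cup D(V_j)$ (so no auxiliary connected sums with sphere bundles are needed), applies the transposition $(12)$, and obtains $F^\tau = E \amalg S_{V_2}$ in one stroke; since $(12)$ is odd, the paper invokes Remark~\ref{R:tauOdd} to legitimise the permutation. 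What your version buys is that the permutation is already even, so Remark~\ref{R:tauOdd} is not needed; what the paper's version buys is that there is no third component to construct and no need to check the diffeomorphisms $F_j \cong E_j$ or the \spinc compatibility on the auxiliary connected sums $E_j \#_B S_{V_j}$ — the \spinc extension question is settled once by Corollary~\ref{C:FamSpincExt} applied to $E = E_1 \#_B E_2$, rather than three times. Your flags about \spinc bookkeeping and the identification $\Jcal_F = \Jcal_{F^\tau}$ are the right things to worry about; the paper handles the latter in the paragraph preceding Theorem~\ref{T:FamiliesGluingTheorem} exactly as you anticipate.
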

\begin{proof}
    By Corollary \ref{C:FamSpincExt}, there is a unique \spinc structure on the vertical tangent space of $E$ that extends $\sfrak_1$ and $\sfrak_2$. Let $U_j = \overline{E_j - D(V_j)}$ as in (\ref{E:E1E2Union}) so that 
    \begin{align*}
        E_1 &= U_1 \cup_{S(-V_1)} D(V_1) \\
        E_2 &= D(V_2) \cup_{S(V_2)} U_2.
    \end{align*}
    Recall that $S(V) \subset E$ denotes $S(-V_1) \subset E_1$ and $S(V_2) = \vphi(S(-V_1)) \subset E_2$. For any $L > 0$, we can choose a metric on $E_1$ and $E_2$ that gives both of them a separating neck of length $2L$. Let $F = E_1 \amalg E_2$ so that $[\mu_F] = [\mu_{E_1}] \wedge_\Jcal [\mu_{E_2}]$ by Proposition \ref{P:MuSmashProduct}. Let $\tau$ be the transposition $(12)$ so that 
    \begin{align*}
        F^\tau &= \left(U_1 \cup_{S(V)} U_2\right) \amalg \left(D(V_2) \cup_{S(V)} D(V_1)\right).
    \end{align*}
    That is, $F^\tau = E \amalg S_{V_2}$. The \spinc structure on $S_{V_2}$ is induced by $\sfrak_2$ and therefore $[\mu_{S_{V_2}}] = [\id]$ by Corollary \ref{C:FamMuTriv}. Thus $[\mu_{F^\tau}] = [\mu_E]$ by Proposition \ref{P:MuSmashProduct}. Theorem \ref{T:FamiliesGluingTheorem} implies that $[\mu_F] = [\mu_{F^\tau}]$ and therefore
    \begin{align*}
        [\mu_E] &= [\mu_{E_1}] \wedge_\Jcal [\mu_{E_2}].
    \end{align*}
    Note that the fact that $\tau$ is an odd permutation is not an issue by Remark \ref{R:tauOdd}. 
\end{proof}
Of course, this formula extends to a connected sum of arbitrarily many families. Further, the diffeomorphism type of the connected sum $E = E_1 \#_B E_2$ depends on the sections $i_1$, $i_2$ and the isomorphism $\vphi$, however the class $[\mu_{E_1}] \wedge_\Jcal [\mu_{E_2}]$ does not. That is, if $E'$ is obtained as a connected sum of $E_1$ and $E_2$ for different $i_1, i_2$ and $\vphi$, then $[\mu_E] = [\mu_{E'}]$. 

\bibliography{Bibliography}{}
\bibliographystyle{plain}

\end{document}